\numberwithin{equation}{section}
\theoremstyle{plain}
\newtheorem{thm}{Theorem}[section]
\newtheorem{lem}[thm]{Lemma}
\newtheorem{prop}[thm]{Proposition}
\newtheorem{cor}[thm]{Corollary}
\theoremstyle{definition}
\newtheorem{defn}[thm]{Definition}
\newtheorem{defprop}[thm]{Definition-Proposition}
\newtheorem{conj}[thm]{Conjecture}
\newtheorem{exmp}[thm]{Example}
\theoremstyle{remark}
\newtheorem{rem}[thm]{Remark}
\newtheorem*{rem*}{Remark}
\newcommand{\be}{\begin{equation}}    
\newcommand{\ee}{\end{equation}}    
\newcommand{\beu}{\begin{equation*}}    
\newcommand{\eeu}{\end{equation*}}    
\newcommand{\bea}{\begin{eqnarray}}    
\newcommand{\eea}{\end{eqnarray}}    
\newcommand{\beaa}{\begin{eqnarray*}}    
\newcommand{\eeaa}{\end{eqnarray*}}    
\newcommand{\bmx}{\begin{pmatrix}}    
\newcommand{\emx}{\end{pmatrix}}    
\newcommand{\btz}{\begin{tikzpicture}}
\newcommand{\etz}{\end{tikzpicture}}
\newcommand{\Aut}[1]{\mathrm{Aut}(#1)}
\newcommand{\g}{{\mathfrak g}}    
\newcommand{\m}{{\mathfrak m}}
\newcommand{\nn}{\nonumber}    
\newcommand{\sign}{{\rm sign}}    
\newcommand{\Z}{{\mathbb Z}}
\newcommand{\N}{{\mathbb N}}
\newcommand{\C}{{\mathbb C}}
\newcommand{\Q}{{\mathbb Q}}
\newcommand{\pair}[2]{\left \langle #1, #2\right\rangle}
\newcommand{\gh}{\dot{\g}}
\newcommand{\uq}{{\mathrm{U}_q}}
\newcommand{\uqd}{\dot{\mathrm{U}}_q}
\newcommand{\uqdd}{\ddot{\mathrm{U}}_q}
\newcommand{\qaff}{\dot{\mathrm{U}}_q}
\newcommand{\qdaff}{\ddot{\mathrm{U}}_q}
\newcommand{\dqaslt}{\qdaff(\mathfrak{sl}_2)}
\newcommand{\btp}{\begin{tikzpicture}[baseline=0pt,scale=0.9,line width=0.25pt]}    
\newcommand{\etp}{\end{tikzpicture}}
\newcommand{\range}[2]{\llbracket #1,#2 \rrbracket}
\newcommand{\atp}[1]{}
\newcommand{\ie}{i.e. }
\newcommand{\eg}{e.g. }
\newcommand{\uqslthh}{\dot{\mathrm{U}}_q  ( \dot{\mathfrak{a}}_1 ) }
\newcommand{\uqhhzeroslt}{\ddot{\mathrm{U}}_q^0  ({\mathfrak{a}}_1 ) }
\newcommand{\uqmslthh}{\dot{\mathrm{U}}_q^-  ( \dot{\mathfrak{a}}_1 ) }
\newcommand{\uqpslthh}{\dot{\mathrm{U}}_q^+  ( \dot{\mathfrak{a}}_1 ) }
\newcommand{\uqpmslthh}{\dot{\mathrm{U}}_q^\pm  ( \dot{\mathfrak{a}}_1 ) }
\newcommand{\uqzeroslthh}{\dot{\mathrm{U}}_q^0  ( \dot{\mathfrak{a}}_1 ) }
\newcommand{\uqsltc}{\widehat{\uqslthh}}
\newcommand{\uqgslthh}{\dot{\mathrm{U}}_q^\geq  ( \dot{\mathfrak{a}}_1 ) }
\newcommand{\uqlslthh}{\dot{\mathrm{U}}_q^\leq  ( \dot{\mathfrak{a}}_1) }
\newcommand{\br}{\ddot{\mathfrak B}}
\newcommand{\xp}[1]{{\bf x}_{#1}^+}
\newcommand{\xm}[1]{{\bf x}_{#1}^-}
\newcommand{\kp}[1]{{\bf k}_{#1}^+}
\newcommand{\km}[1]{{\bf k}_{#1}^-}
\newcommand{\x}[2]{{\bf x}_{#2}^{#1}}
\newcommand{\kk}[2]{{\bf k}_{#2}^{#1}}
\newcommand{\Xp}[1]{{\bf X}_{#1}^+}
\newcommand{\Xm}[1]{{\bf X}_{#1}^-}
\newcommand{\KK}[2]{{\bf K}_{#2}^{#1}}
\newcommand{\X}[2]{{\bf X}_{#2}^{#1}}
\newcommand{\Xsf}[2]{{\mathsf X}_{#2}^{#1}}
\newcommand{\Xbsf}[2]{{\textbf{\textsf{X}}}_{#2}^{#1}}
\newcommand{\tsf}[2]{{\mathsf t}^{#1}_{#2}}
\newcommand{\psf}[2]{{\mathsf p}^{#1}_{#2}}
\newcommand{\Ksf}[2]{{\mathsf K}^{#1}_{#2}}
\newcommand{\csf}[2]{{\mathsf c}^{#1}_{#2}}
\newcommand{\tbsf}[2]{{\textbf{\textsf{t}}}^{#1}_{#2}}
\newcommand{\pbsf}[1]{{\textbf{\textsf{p}}}^{#1}}
\newcommand{\Kbsf}[2]{{\textbf{\textsf{K}}}^{#1}_{#2}}
\newcommand{\cbsf}[1]{{\textbf{\textsf{c}}}^{#1}}
\newcommand{\eb}[1]{{\textbf{{e}}}^{#1}}
\newcommand{\psib}[1]{{\boldsymbol \psi}^{#1}}
\newcommand{\Csf}{{{\textsf{C}}}}
\newcommand{\Dsf}{{{\textsf{D}}}}
\newcommand{\Gam}[1]{{\bf \Gamma}_0^{#1}}
\newcommand{\bwp}[1]{{\boldsymbol \wp}^{#1}}
\newcommand{\bpsi}[2]{{\boldsymbol \psi}^{#1}_{#2}}
\newcommand{\bz}[2]{{\boldsymbol \Gamma}^{#1}_{#2}}
\newcommand{\K}{\mathbb K}
\newcommand{\F}{\mathbb F}
\newcommand{\isom}{ \stackrel{\sim}{\longrightarrow} }
\newcommand{\noi}{\noindent}
\DeclareMathOperator*{\res}{res}
\DeclareMathOperator*{\card}{card}
\newcommand{\supp}{\mathrm{supp}}
\newcommand{\E}[3]{\mathcal E_{#1,#2,#3}}
\newcommand{\bA}{{\bf{A}}}
\newcommand{\bn}{{\bf{n}}}
\newcommand{\rran}[1]{{\llbracket #1 \rrbracket}}
\newcommand{\Prt}[2]{{\textsf{P}}^{#1}_{#2}}
\newcommand{\Trt}[2]{{\textsf{T}}^{#1}_{#2}}
\newbox\xrat@below
\newbox\xrat@above
\newcommand{\xrightarrowtail}[2][]{%
  \setbox\xrat@below=\hbox{\ensuremath{\scriptstyle #1}}%
  \setbox\xrat@above=\hbox{\ensuremath{\scriptstyle #2}}%
  \pgfmathsetlengthmacro{\xrat@len}{max(\wd\xrat@below,\wd\xrat@above)+.6em}%
  \mathrel{\tikz [>->,baseline=-.75ex]
                 \draw (0,0) -- node[below=-2pt] {\box\xrat@below}
                                node[above=-2pt] {\box\xrat@above}
                       (\xrat@len,0) ;}}
\author{E. Mounzer}
\author{R. Zegers}
\address{\vspace{-.30cm} \hspace{-.20cm}
Laboratoire de Physique Th\'eorique (UMR8627)}
\address{\vspace{-.3cm}CNRS, Univ. Paris-Sud, Universit\'e Paris-Saclay;}
\address{\vspace{-.08cm} \hspace{-.20cm}
91405 Orsay, France \vspace{.3cm}}
\email{robin.zegers@th.u-psud.fr}
\begin{document} 
{\flushright{\small{LPT-Orsay-19-09
}}\\}

\title[On double quantum affinization]{On double quantum affinization:\\
1. Type $\mathfrak a_1$}

\begin{abstract}
We define the double quantum affinization $\qdaff(\mathfrak a_1)$ of type $\mathfrak{a}_1$ as a topological Hopf algebra. We prove that it admits a subalgebra $\qdaff'(\mathfrak a_1)$ whose completion is (bicontinuously) isomorphic to the completion of the quantum toroidal algebra $\uqslthh$, defined as the (simple) quantum affinization of the untwisted affine Ka\v c-Moody Lie algebra $\dot{\mathfrak{sl}}_2$ of type $\dot{\mathfrak a}_1$, equipped with a certain topology inherited from its natural $\Z$-grading. The isomorphism is constructed by means of a bicontinuous action by automorphisms of an affinized version $\br$ -- technically a split extension $\br \cong \dot{\mathfrak B} \ltimes P^\vee$ by the coweight lattice $P^\vee$ -- of the affine braid group $\dot{\mathfrak B}$ of type $\dot{\mathfrak a}_1$ on that completion of $\uqslthh$. It can be regarded as an affinized version of the Damiani-Beck isomorphism, familiar from the quantum affine setting. We eventually prove the corresponding triangular decomposition of $\qdaff(\mathfrak a_1)$ and briefly discuss the consequences regarding the representation theory of quantum toroidal algebras.\end{abstract}

\maketitle

\section{Introduction}
Let $\g$ be a simple Lie algebra and denote by $\gh$ the corresponding untwisted affine Ka\v c-Moody algebra. Starting from $\g$ and $\gh$ or from their respective root systems, one can construct two a priori different algebras: on one hand, the quantum affine algebra $\uq(\gh)$ is the standard Drinfel'd-Jimbo algebra associated with $\gh$; whereas on the other hand, the quantum affinization $\uqd(\g)$ of $\g$, which we define as $\uq(\gh)$ in its Drinfel'd current presentation, is associated with the simple finite root system of $\g$. Now $\uqd(\g)$ and $\uq(\gh)$ are isomorphic by virtue of a theorem established by Damiani and Beck, \cite{Damiani, Beck}, which can be regarded as a quantum version of the classic result that each affine Lie algebra is isomorphic to the corresponding untwisted affine Ka\v c-Moody Lie algebra. The situation can be summarized by the following diagram
\be\begin{CD}
\g @>\text{\footnotesize Classical Affinization}>> \gh\\
@V\text{\footnotesize Quantum Affinization}VV @VV\text{\footnotesize Quantization}V\\
\qaff(\g) @>\sim>\text{\footnotesize Damiani-Beck isom.}> \uq(\gh)
\end{CD}\nn
\ee
It turns out that quantum affinization still makes sense for the already affine Lie algebra $\gh$, thus yielding a doubly affine quantum algebra known as the \emph{quantum toroidal algebra} $\uqd(\gh)$. These originally appeared in type $\mathfrak{a}_n$ in the work of Ginzburg, Kapranov and Vasserot, \cite{GKV}. Quantum toroidal algebras have received a lot of attention in the past -- see \cite{Hernandez} for a review -- and are presently the subject of a renewed interest due to their relevance for integrable systems -- see e.g. \cite{feigin2011,feigin2012,feigin2015} -- and for 5 dimensional supersymmetric Yang-Mills theory and related AGT like correspondence -- see \cite{Morozov}. From a more mathematical perspective, it is well known -- see \cite{Varagnolo} -- that they are the Frobenius-Schur duals of Cherednik's Doubly Affine Hecke Algebras (DAHA) -- see \cite{Cherednik, Macdonald} for classic references on the latter. 

The purpose of the present work is to reconsider quantum toroidal algebras as topological Hopf algebras. On the one hand, this is only natural since the existence of an algebraic comultiplication for quantum toroidal algebras is still essentially open to this date -- although see \cite{Guay} for recent results on algebraic comultiplications for affine Yangians that may suggest the existence of similar results for quantum toroidal algebras -- and only a topological coalgebra structure is provided by the so-called Drinfel'd current coproduct. On the other hand, the existence of a braid group action by bicontinuous algebra automorphisms, generalizing those in \cite{DingKhoroshkin}, provides us with a topological version of the Lusztig symmetries that prove pivotal in both Damiani's and Beck's proofs of Drinfel'd's current presentation. We may therefore expect, in that context, the existence of an alternative presentation for quantum toroidal algebras, in terms of double current generators. In the same spirit as Drinfel'd's current presentation, such a presentation could be regarded as defining the \emph{double quantum affinization} $\uqdd(\g)$ of $\g$ and (a subalgebra $\qdaff'(\mathfrak a_1)$ of) $\uqdd(\g)$ should be isomorphic to (the completion of) $\uqd(\gh)$ -- see section \ref{sec:dqaff}. We therefore expect a diagram of the form
\be\begin{CD}
\g @>\text{\footnotesize Classical Affinization}>> \gh\\
@V\text{\footnotesize Double Quantum Affinization}VV @VV\text{\footnotesize Quantum Affinization}V\\
\qdaff'(\g)  && \qaff(\gh)\\
@V\text{\footnotesize Completion}VV @VV\text{\footnotesize Completion}V\\
\widehat{\qdaff'(\g)} @>\sim>\text{\footnotesize Affine Damiani-Beck isom.}> \widehat{\qaff(\gh)}
\end{CD}\nn
\ee
In the present paper we prove such results in the particular case where $\mathfrak g$ is of type $\mathfrak a_1$. It is fairly natural to conjecture that similar results hold for higher rank root systems, thus yielding
\begin{conj}
Every simple Lie algebra $\g$ admits a (unique up to isomorphisms) double quantum affinization $\uqdd(\g)$.
\end{conj}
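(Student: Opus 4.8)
Since the body of the paper settles type $\mathfrak a_1$, the natural plan is to promote every ingredient of that proof to arbitrary finite type, the genuinely new difficulty being the bookkeeping of Serre-type relations among double currents. I would first fix a presentation of $\uqdd(\g)$ by double current generators -- families $\mathbf{x}^{\pm}_i(z,w)$ and $\psi^{\pm}_i(z,w)$ indexed by the vertices $i$ of the Dynkin diagram of $\g$ -- with Drinfel'd-type relations carrying both a spectral and a loop parameter: the type $\mathfrak a_1$ relations of section~\ref{sec:dqaff} dictate the diagonal $i=j$ blocks, while the off-diagonal blocks are pinned down by requiring $(q,q^{-1})$-Serre relations in each of the two affinization directions, together with the mixed relations one reads off from the braid group conjugates of the Chevalley currents. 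I would then complete $\uqdd(\g)$ along the filtration induced by the double grading and check, exactly as for $\mathfrak a_1$, that the Drinfel'd current coproduct descends to the completion, giving a topological Hopf algebra.

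The heart of the argument would be the construction of a bicontinuous action, by automorphisms of the completion $\widehat{\qaff(\gh)}$ of the quantum toroidal algebra, of an affinized affine braid group $\mathfrak B(\gh) \ltimes P^\vee$ -- with $\mathfrak B(\gh)$ the braid group attached to the (affine) Dynkin diagram of $\gh$ and $P^\vee$ the coweight lattice of $\g$ -- generalizing the role played by $\br \cong \dot{\mathfrak B} \ltimes P^\vee$ in the $\mathfrak a_1$ case. Concretely one must exhibit Lusztig-type symmetries $T_i$ for each node $i$ of the affine diagram of $\gh$, prove that they satisfy the braid relations of $\gh$, and show that conjugation by the lattice part produces the expected spectral shifts. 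The existence and continuity of each $T_i$ should follow as in \cite{DingKhoroshkin} and in the $\mathfrak a_1$ case; the delicate point is the braid relations between symmetries at distinct nodes of the affine diagram of $\gh$ -- in particular the length-$4$ and length-$6$ relations that appear in the non-simply-laced types -- which is where most of the computation lives.

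With the braid group action available I would mimic Beck's construction: real root vectors of $\uqdd(\g)$ arise as braid translates of the Drinfel'd currents of $\qaff(\gh)$ along reduced expressions for the longest elements of the relevant parabolics, imaginary root vectors are extracted from $q$-commutators of consecutive real root vectors, and one checks that these elements satisfy the relations fixed in the first step. This yields a bicontinuous homomorphism $\widehat{\qdaff'(\g)} \to \widehat{\qaff(\gh)}$; it is surjective because the Chevalley currents of $\qaff(\gh)$ already sit among the real root vectors. For injectivity -- equivalently, that no relation has been lost in passing to the double current presentation -- I would establish a triangular decomposition $\uqdd(\g) \cong \uqdd^-(\g) \otimes \uqdd^0(\g) \otimes \uqdd^+(\g)$ with PBW-type spanning sets for the three factors, via a normal-ordering / diamond-lemma argument on the double current relations as for $\mathfrak a_1$, and match it against the triangular decomposition of $\qaff(\gh)$.

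The main obstacle is twofold. On the algebraic side it is the verification of the higher Serre relations for double currents, and of the length-$4$ and length-$6$ braid relations among the Lusztig symmetries in the non-simply-laced cases: already in the single-affinization setting these are the technically heaviest parts of Damiani's and Beck's proofs, \cite{Damiani,Beck}, and here every relation acquires an additional loop parameter. On the analytic side one must track two commuting gradings simultaneously and ensure that the braid group action, the coproduct, the isomorphism and its inverse all remain continuous for the corresponding product topology; in particular the imaginary (Heisenberg) subalgebra $\uqdd^0(\g)$ is now infinite in two directions, so the estimates underlying the PBW spanning argument must be made uniform along both. Granting all of this, uniqueness up to isomorphism is automatic: any algebra with the stated properties has completion bicontinuously isomorphic to $\widehat{\qaff(\gh)}$, which depends only on $\g$.
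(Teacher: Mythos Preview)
The statement you are asked to prove is stated in the paper as a \emph{conjecture}, not a theorem: the paper establishes only the case $\g=\mathfrak a_1$ (Theorem~\ref{thm:main}) and explicitly leaves the general finite type as an open problem. There is therefore no proof in the paper to compare your proposal against.

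What you have written is not a proof but a reasonable outline of a strategy, and you are candid about this in your final paragraph. The ingredients you list --- a double-current presentation indexed by the nodes of $\g$, an action of an affinized braid group $\mathfrak B(\gh)\ltimes P^\vee$ on $\widehat{\qaff(\gh)}$, Beck-style construction of root vectors, and a triangular decomposition via a diamond-lemma argument --- are exactly the natural generalizations of the $\mathfrak a_1$ machinery. But the obstacles you yourself identify are genuine and currently unresolved: the braid relations among the Lusztig-type automorphisms $T_i$ for distinct nodes of the affine diagram of $\gh$ are not established in the literature for quantum toroidal algebras (the Ding--Khoroshkin paper \cite{DingKhoroshkin} constructs the individual $T_i$ in simply-laced type but does not verify the braid relations), and the double-current Serre relations for $i\neq j$ have no candidate formulation in the paper --- section~\ref{sec:dqaff} gives relations only for a single node. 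Your ``uniqueness is automatic'' claim at the end is also circular: it presupposes that the isomorphism with $\widehat{\qaff(\gh)}$ has already been established, which is the content of the conjecture.

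In short: your proposal is a sensible research plan, consistent with what the paper suggests, but it does not constitute a proof and should not be presented as one.
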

\noi and\begin{conj}
Every untwisted affine Ka\v c-Moody Lie algebra $\dot\g$ admits a (unique up to isomorphisms) double quantum affinization $\uqdd(\dot\g)$.
\end{conj}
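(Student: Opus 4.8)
The plan is to run, \emph{mutatis mutandis}, the strategy carried out above in type $\mathfrak a_1$, with $\g$ replaced throughout by an arbitrary untwisted affine Ka\v c-Moody algebra $\gh$. Starting from $\gh$ one forms its affinization $\ghh$ --- a toroidal-type Ka\v c-Moody algebra --- and considers the \emph{simple} quantum affinization $\uqd(\ghh)$, i.e. the algebra attached to $\ghh$ in its Drinfel'd current presentation, equipped with the topology inherited from its natural $\Z$-grading; this plays the role of the right-hand column $\widehat{\uqd(\gh)}$ of the $\mathfrak a_1$ diagram. The goal is then twofold: (i) produce an explicit \emph{double current} presentation which, by fiat, \emph{defines} $\uqdd(\gh)$ as a topological Hopf algebra, together with a distinguished subalgebra $\uqdd'(\gh)$; and (ii) establish an affine Damiani-Beck isomorphism $\widehat{\uqdd'(\gh)} \isom \widehat{\uqd(\ghh)}$ of the relevant completions. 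Existence of $\uqdd(\gh)$ is the content of (i)--(ii); the uniqueness-up-to-isomorphism clause is then automatic, since any topological Hopf algebra obeying the stated double current relations is, by construction of the presentation, isomorphic to the one thus produced.

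The engine of the construction is, as in the rank-one case, a braid group action. One would first identify the relevant affinized braid group --- presumably a split extension $\widetilde{\mathfrak B} \cong \Bghh \ltimes P^\vee$ of $\Bghh$ by a suitable coweight lattice $P^\vee$, in analogy with the role played by $\br$ in the $\mathfrak a_1$ case --- and prove that it acts on $\widehat{\uqd(\ghh)}$ by bicontinuous algebra automorphisms, generalizing the Lusztig symmetries together with the automorphisms of \cite{DingKhoroshkin}. Using this action one manufactures real root vectors indexed by the real roots of the pertinent root system, reads off the imaginary (Cartan current) generators from suitable $q$-brackets of opposite real root vectors, and, organizing everything along a convex --- now doubly infinite --- order on the positive roots, obtains a PBW-type spanning set. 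The defining relations of $\uqdd(\gh)$ are then extracted by transporting, through this action, the Drinfel'd current relations of $\uqd(\ghh)$; verifying that the resulting (finite-type) list of relations is \emph{complete}, i.e. that there are no hidden relations among the double current generators, is precisely what lends the isomorphism its force, and with it the uniqueness assertion. The triangular decomposition $\uqdd(\gh) \cong \uqdd^-(\gh) \otimes \uqdd^0(\gh) \otimes \uqdd^+(\gh)$ should then follow the same template as in the body of the paper.

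I expect two steps to be the main obstacles. The first is verifying the braid relations for $\widetilde{\mathfrak B}$ acting on $\widehat{\uqd(\ghh)}$ in arbitrary type: in higher rank the affine Weyl group combinatorics, the choice of convex order, and the bookkeeping of imaginary-root contributions are considerably heavier than in type $\mathfrak a_1$, where a single convex order on the positive roots of $\dot{\mathfrak a}_1$ already does the job; moreover, since the algebra is now affinized twice, one must control convergence at each stage and show that the ambient topology is preserved. The second, and more serious, is the completeness of the double current relations --- equivalently, injectivity (surjectivity being comparatively soft) of the candidate map $\widehat{\uqdd'(\gh)} \to \widehat{\uqd(\ghh)}$: as in Damiani's and Beck's arguments, this will require a PBW/triangular decomposition for $\uqdd(\gh)$ proved \emph{independently} of the isomorphism, so that a graded-dimension count can close the loop. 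Without that independent input one risks a circular argument, and I would expect most of the technical weight of a full proof to sit there.
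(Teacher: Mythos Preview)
The statement you are attempting to prove is labeled in the paper as a \emph{conjecture}, not a theorem: the paper establishes the double quantum affinization only for $\g$ of type $\mathfrak a_1$ (a simple Lie algebra), and explicitly leaves both the higher-rank simple case and the affine case --- the statement at hand --- as open conjectures. There is therefore no proof in the paper to compare your proposal against; the paper merely remarks that the affine case ``would naturally provide a definition for the so far elusive triply affine quantum algebras.''

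What you have written is not a proof but a programme, and you are clearly aware of this: you correctly flag the two genuine obstructions (the braid relations for the affinized braid group acting on $\widehat{\uqd(\ghh)}$, and the completeness/injectivity step), and you do not claim to have overcome either. As a roadmap your outline is reasonable and faithful to the spirit of the $\mathfrak a_1$ argument, but several points would need substantial new input before it could be called a proof. First, for $\gh$ already affine, the object $\ghh$ is a doubly-affine Ka\v c--Moody-type algebra whose root system and Weyl/braid group are far less well understood than in the once-affine case; even the correct candidate for the ``affinized braid group'' $\widetilde{\mathfrak B}$ and its action is not in the literature. Second, the paper's $\mathfrak a_1$ argument relies heavily on explicit computations with the structure series $G_{ij}^\pm$, on the non-degenerate Hopf pairing, and on delicate vanishing results for the central elements $\bz\pm m(z)$; none of these have obvious analogues once the base algebra is itself infinite-dimensional. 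Until those ingredients are supplied, the proposal remains a plausible strategy rather than a proof, which is exactly the status the paper itself assigns to the statement.
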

\noi Note that the latter would naturally provide a definition for the so far elusive triply affine quantum algebras. The latter are believed to play an important role in mathematical physics, as the conformal block side of an AGT type correspondence with 6-dimensional super Yang-Mills theories, \cite{Morozov}.

In any case, $\uqdd(\mathfrak a_1)$ -- and presumably other double quantum affinizations if any -- admits a triangular decomposition $(\uqdd^-(\mathfrak a_1),\uqdd^0(\mathfrak a_1), \uqdd^+(\mathfrak a_1))$. The latter naturally leads to an alternative notion of weight and highest weight modules that we shall refer to as $t$-weight and highest $t$-weight modules. Natural analogues of the finite dimensional modules over quantum affine algebras also appear, that we refer to as \emph{quasi-finite} modules -- see section \ref{sec:dqaff} for definitions. We actually expect that it will be possible to classify simple quasi-finite modules over $\qdaff(\mathfrak a_1)$, by essentially classifying those simple $\qdaff^0(\mathfrak a_1)$-modules that appear as their highest $t$-weight spaces -- see section \ref{sec:dqaff} for the corresponding discussion. This is the subject of ongoing work. 

Quite remarkably, there exists an algebra homomorphism  $f:\E{q^{-4}}{q^2}{q^2} \to \qdaff^{0^+}(\mathfrak a_1)$, where $\qdaff^{0^+}(\mathfrak a_1)$ is a closed subalgebra of $\uqdd^0(\mathfrak a_1)$ and, for every $q_1, q_2, q_3$ such that $q_1q_2q_3=1$, $\E{q_1}{q_2}{q_3}$ is the corresponding elliptic Hall algebra  -- see section \ref{sec:dqaff}. The latter was first defined by Miki in \cite{Miki} as a $(q,\gamma)$-analogue of the $W_{1+\infty}$ algebra. It reappeared later in \cite{feigin2011}, as the quantum continuous  $\mathfrak{gl}_\infty$ algebra. Schiffmann then identified it with the Hall algebra of the category of coherent sheaves on some elliptic curve whose Weil numbers are related to $q_1, q_2, q_3$, \cite{Schiffmann}. More recently, it also appeared in \cite{feigin2012} and in subsequent works by Feigin et al. as the quantum toroidal algebra associated with $\mathfrak{gl}_1$. As we shall see, it appears natural to make the following
\begin{conj}
\label{conj:Hall}
$\uqdd^{0^+}(\mathfrak a_1)$ is isomorphic to the completion of $\E{q^{-4}}{q^2}{q^2}$.
\end{conj}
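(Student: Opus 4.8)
The plan is to upgrade the homomorphism $f:\E{q^{-4}}{q^2}{q^2}\to\uqdd^{0^+}(\mathfrak a_1)$ of section~\ref{sec:dqaff} to a topological isomorphism. Concretely, I would show that $f$ is injective, that it is a homeomorphism onto its image (both $f$ and $f^{-1}$ on the image are continuous for the natural gradings' topologies), and that its image is dense. Since $\uqdd^{0^+}(\mathfrak a_1)$ is, by hypothesis, a \emph{closed} subalgebra of the complete algebra $\uqdd^0(\mathfrak a_1)$, it is itself complete, and the three properties above then identify it with the completion $\widehat{\E{q^{-4}}{q^2}{q^2}}$, which is exactly Conjecture~\ref{conj:Hall}. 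To set this up I would first fix a presentation on each side. On the Hall side I would use the $\Z^2$-graded presentation of $\E{q_1}{q_2}{q_3}$ identifying it with the quantum toroidal algebra of $\mathfrak{gl}_1$ — with its quadratic relations governed by the structure function $g(z,w)=(z-q_1w)(z-q_2w)(z-q_3w)$ and the cubic Serre relation — together with the Burban--Schiffmann PBW basis indexed by convex broken lines in $\Z^2$. On the double quantum affinization side, the triangular decomposition of $\uqdd(\mathfrak a_1)$ proved earlier yields a description of $\uqdd^0(\mathfrak a_1)$, hence of the closed subalgebra $\uqdd^{0^+}(\mathfrak a_1)$ generated by the positive modes of the double Cartan currents; from the double current relations I would extract the commutation and normal-ordering rules of those modes and observe that the resulting structure function is exactly $g$ with $(q_1,q_2,q_3)=(q^{-4},q^2,q^2)$, which is what forces $f$ to be well defined.

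Density and continuity are the comparatively soft part. By construction $f$ is homogeneous of a fixed degree for the relevant gradings and sends the lowest-degree generators of the Hall presentation onto a generating set of $\uqdd^{0^+}(\mathfrak a_1)$; hence every generator of the target lies in $\mathrm{im}(f)$, every element of $\uqdd^{0^+}(\mathfrak a_1)$ is a limit of finite products of such images, and $\mathrm{im}(f)$ is dense. Homogeneity also forces $f$ to map the tails of the grading filtration on $\E{q^{-4}}{q^2}{q^2}$ into the corresponding tails on $\uqdd^{0^+}(\mathfrak a_1)$, so $f$ is continuous; the same bookkeeping applied to a chosen PBW basis will give continuity and openness of $f^{-1}$ on the image once injectivity is in hand.

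The main obstacle is injectivity of $f$. The plan is a PBW comparison: order the Burban--Schiffmann basis and the triangular/PBW basis of $\uqdd^{0^+}(\mathfrak a_1)$ coming from the triangular decomposition in compatible ways and show that $f$ carries the former to a family that is unitriangular with respect to the latter — equivalently, that $f$ induces an equality of graded characters. What makes this delicate is that the governing relations are controlled by a genuine two-variable rational structure function rather than by a free or $q$-Heisenberg law, so proving a clean PBW theorem for $\uqdd^{0^+}(\mathfrak a_1)$ and taming the normal-ordering combinatorics there is itself the substantial work. I expect one must either import the shuffle-algebra realization of $\E{q^{-4}}{q^2}{q^2}$ (Feigin--Jimbo--Miwa--Mukhin) as a faithful functional model against which to test the images $f(\cdot)$, or run a Diamond-lemma/normal-ordering argument directly inside $\uqdd^0(\mathfrak a_1)$, exploiting the bicontinuous action of $\br$ to reduce words to standard form. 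Either route is where the genuine difficulty lies.

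Once injectivity, density, and the two continuities are established, $f$ is a homeomorphic dense embedding into a complete algebra, so $\uqdd^{0^+}(\mathfrak a_1)\cong\widehat{\E{q^{-4}}{q^2}{q^2}}$, proving the conjecture. As a by-product one obtains that the topology on $\widehat{\E{q^{-4}}{q^2}{q^2}}$ arising here is the one induced on $\uqdd^{0^+}(\mathfrak a_1)$ from the $\Z$-grading of $\widehat{\uqslthh}$ via the affine Damiani--Beck isomorphism of the main theorem, which also accounts for the specific Hall parameters $q^{-4},q^2,q^2$ in terms of the symmetrizing data of type $\mathfrak a_1$.
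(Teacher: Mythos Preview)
This statement is a \emph{conjecture} in the paper, not a theorem; the paper does not prove it. What the paper does establish is Proposition~\ref{prop:Hall}, i.e.\ that $f$ is a well-defined continuous algebra homomorphism, and in the remark immediately following the conjecture it records partial evidence by exhibiting a candidate preimage $f^{-1}(\tbsf\pm{1,\pm 2}(v))=\eb\pm_{\pm 2}(v)$ in $\widehat{\E{q^{-4}}{q^2}{q^2}}$, noting explicitly that ``in order to complete the proof, one would similarly need to construct $f^{-1}(\tbsf\pm{1,\pm m}(v))$ for any $m>2$.'' So there is no proof in the paper to compare your proposal against; there is only this suggested strategy of building the inverse by hand on the higher generators.

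Your proposal is a reasonable outline, but it contains a gap beyond the injectivity issue you already flag. You assert that $f$ ``sends the lowest-degree generators of the Hall presentation onto a generating set of $\uqdd^{0^+}(\mathfrak a_1)$,'' and deduce density from this. But by definition $\uqdd^{0^+}(\mathfrak a_1)$ is the closed subalgebra generated by $\Csf^{\pm 1/2}$, $\psf\pm{\pm m}$, and $\tsf\pm{1,\pm p,n}$ for \emph{all} $p\in\N^\times$, whereas the image of $f$ is the closed subalgebra generated by $\Csf^{\pm 1/2}$, $\pbsf\pm$, and $\tbsf\pm{1,\pm 1}$ only. Showing that the $\tbsf\pm{1,\pm p}$ for $p\geq 2$ lie in $\mathrm{im}(f)$ is precisely the open point the paper isolates; it is not automatic from the quadratic relations (\ref{eq:K+K+})--(\ref{eq:K+K-}), which are commutation relations, not generation relations. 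Indeed, the paper's suggested route --- constructing $\eb\pm_{\pm m}(v)$ in the Hall algebra with $f(\eb\pm_{\pm m}(v))=\tbsf\pm{1,\pm m}(v)$ --- is exactly a surjectivity argument, and the paper carries it out only for $m=2$. So your ``comparatively soft'' density step is in fact part of the hard content, on equal footing with injectivity. Your PBW/character-comparison strategy for injectivity is plausible but, as you say yourself, not executed; altogether the proposal is a program rather than a proof, which is consistent with the paper's own assessment that this statement remains conjectural.
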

\noi If it held true, the above conjecture would have many interesting implications. On one hand, in view of Schiffmann's results, it seems reasonable to expect that the double quantum affinization $\qdaff(\mathfrak a_1)$ admits a $K$-theoretic realization, in the spirit of Nakajima's quiver varieties realization of quantum affine algebras \cite{NakajimaQaff}, wherein the generators outside of the elliptic Hall algebras would be realized as correspondences. At the level of representation theory on the other hand, conjecture \ref{conj:Hall} would imply that the classification of the simple $\qdaff^0(\mathfrak a_1)$-modules that appear as highest $t$-weight spaces of simple quasifinite $\qdaff(\mathfrak a_1)$-modules would almost entirely reduce to a classification of the corresponding subclass of simple modules over the elliptic Hall algebra. Again, we leave these questions for future work.

The paper is organized as follows. In section \ref{sec:qtoralg}, we briefly review some well known facts about quantum toroidal algebras, including their definition and natural gradings. We endow them with a topology and construct the corresponding completion. On the latter, we construct a set of automorphisms, including affinized versions of Lusztig's symmetry. Analogues of these for simply laced untwisted affine $\dot{\mathfrak{a}}_{n\geq 2}$-types appeared in the work of Ding and Khoroshkin \cite{DingKhoroshkin}. The $\dot{\mathfrak a}_1$ version we give here plays a crucial role in section \ref{sec:DamianiBeck} where we prove the main result of this paper. In section \ref{sec:dqaff}, we define the double quantum affinization of type $\mathfrak{a}_1$, $\qdaff(\mathfrak{a}_1)$. We prove that there exists an algebra homomorphism from the elliptic Hall algebra $\E{q_1}{q_2}{q_3}$ to its subalgebra $\qdaff^0(\mathfrak{a}_1)$. We also ellaborate on the consequences at the level of representation theory and introduce the notions of (highest) $t$-weights and of quasi-finiteness. Finally, in section \ref{sec:DamianiBeck}, we prove the affinized version of the Damiani-Beck isomorphism. The appendix contains a short review of formal distributions as relevant to the present work. This is already covered in the literature -- see e.g. \cite{Kac} --, however, since our conventions slightly differ from the standard ones, we included it for the sake of clarity.

\subsection*{Notations and conventions}
We let $\N = \{0, 1, \dots\}$ be the set of natural integers including $0$. We denote by $\N^\times$ the set $\N-\{0\}$. For every $m\leq n \in\N$, we denote by $\range{m}{n} = \{m, m+1, \dots, n\}$. We also let $\rran{n}=\range{1}{n}$ for every $n\in\N$. For every finite subset $\Sigma\subset \N$ with $\card \Sigma=N$, any $n\leq N$ and any  $m_1,\dots, m_n\in\N$ such that $m_1+\dots +m_n=N$, we let $\Prt{(m_1,\dots, m_n)}{\Sigma}$ denote the set of ordered $(m_1,\dots, m_n)$ set $n$-partitions, i.e. any $\bA=(A^{(1)}, \dots ,A^{(n)}) \in \Prt{(m_1,\dots, m_n)}{\Sigma}$ is such that
\begin{enumerate}
\item[i.] for every $p\in\rran{n}$, $\card A^{(p)} = m_p$;
\item[ii.] for every $p\in\rran{n}$, $A^{(p)} = \{A^{(p)}_1, \dots, A^{(p)}_{m_p}\}\subset \Sigma$, with $A^{(p)}_1< \dots <A^{(p)}_{m_p}$;
\item[iii.] $A^{(1)} \sqcup \dots \sqcup A^{(n)} = \Sigma$.
\end{enumerate}
We let $\sign:\Z\to\{-1, 0, 1\}$ be defined by setting, for any $n\in\Z$,
$$\sign(n) = \begin{cases}
-1 & \mbox{if $n<0$;}\\
0 & \mbox{if $n=0$;}\\
1 & \mbox{if $n>0$.}
\end{cases}$$

We assume throughout that $\K$ is a field of characteristic $0$ and we let $\F:=\K(q)$ denote the field of rational functions over $\K$ in the formal variable $q$. As usual, we let $\K^\times=\K-\{0\}$ and $\F^\times=\F-\{0\}$. We make $\F$ a topological field by endowing it with the discrete topology.

For every $m, n \in \N$, we define the following elements of $\F$
\be [n]_q := \frac{q^n-q^{-n}}{q-q^{-1}}\,, \qquad [n]_q^! := \begin{cases} [n]_q [n-1]_q \cdots [1]_q & \mbox{if $n\in\N^\times$;}\\
1&\mbox{if $n=0$;}
\end{cases}  \qquad {n \choose m}_q := \frac{[n]_q^!}{[m]_q^! [n-m]_q^!}\, .\ee

We shall let
$${}_a\left[A, B \right ]_b = a AB - b BA\,,$$
for any symbols $a$, $b$, $A$ and $B$ provided the r.h.s of the above equations makes sense. At some point we may need the following obvious identities
\be\label{eq:identity1} [[A,B]_a, C]_b = [[A,C]_b, B]_a + [A,[B,C]]_{ab}\,, \ee
\be\label{eq:identity2} [{}_a[A,B], C]_b = {}_a[[A,C]_b, B] + {}_a[A,[B,C]]_{b} \,.\ee
We refer to the Appendix for conventions and more details on formal distributions. 

The Dynkin diagrams and correponding Cartan matrices in type $\mathfrak a_1$ and $\dot{\mathfrak a}_1$ are reminded in the following table.
\begin{center}
\begin{tabular}{|c|c|c|c|}
\hline
Type & Dynkin diagram & Simple roots & Cartan matrix\\
\hline
		$\mathfrak a_1$&
		\btz
		\tikzstyle{vert}=[circle,thick,fill=black]
		\draw (0,.5) node {$1$};		
		\draw (0,0) node[vert]{};
		\etz
		& $\Phi=\{\alpha_1\}$ & $(2)$\\
		&&&\\
		\hline
		$\dot{\mathfrak a}_1$&
		\btz
\tikzset{doublearrow/.style={draw, color=black, draw=black, double distance=2pt, ->}}
\tikzset{doubleline/.style={draw, color=black, draw=black, double distance=2pt}}
\tikzstyle{vert}=[circle,thick,fill=black]
\draw[doubleline] (0:2) node[vert] {}-- ++(0:1.1) node[vert] {}; 
\draw[doublearrow] (0:2.9)-- ++(180:.77); 
\draw[doublearrow] (0:2.3)-- ++(0:.65); 
\draw (2,.5) node {$0$};
\draw (3.1,.5) node {$1$};
\etz
		& $\dot\Phi=\{\alpha_0, \alpha_1\}$ & $\begin{pmatrix}
		2&-2\\-2&2
		\end{pmatrix}$\\
		&&&\\
		\hline
		\end{tabular}
\end{center}

\section{The quantum toroidal algebra of type $\mathfrak{a}_1$ and its completion}
\label{sec:qtoralg}
\subsection{Definition}
\label{sec:defnqtor}
Let $\dot{I}=\{0,1\}$ be the above labeling of the nodes of the Dynkin diagram of type $\dot{\mathfrak a}_1$ and let $\dot\Phi = \left\{\alpha_0, \alpha_1\right \}$ be a choice of simple roots for the corresponding root system. We denote by $(c_{ij})_{i,j=0,1}$ the entries of the associated Cartan matrix. Let $\dot Q^\pm = \Z^\pm \alpha_0 \oplus \Z^\pm  \alpha_1$ and let $\dot Q = \Z\alpha_0 \oplus \Z\alpha_1$ be the type $\dot{\mathfrak a}_1$ root lattice.
\begin{defn}
\label{def:defqaffdota1}
The \emph{quantum toroidal algebra} $\uqslthh$ is the associative $\F$-algebra generated by the generators 
$$\left \{D, D^{-1}, C^{1/2}, C^{-1/2}, k_{i, n}^+, k_{i, -n}^-,  x_{i,m}^+, x_{i,m}^- : i \in \dot{I}, m \in Z, n \in \N\right\}$$ 
subject to the following relations
\be\label{eq:ccentral} \mbox{$C^{\pm1/2}$ is central} \qquad C^{\pm 1/2} C^{\mp 1/2} = 1 \qquad D^{\pm 1} D^{\mp 1} =1\ee
\be D\kk\pm i(z)D^{-1} = \kk\pm i(zq^{-1}) \qquad D\x\pm i(z)D^{-1} = \x\pm i(zq^{-1}) \ee
\be \res_{z_1,z_2} \frac{1}{z_1z_2}\kk\pm i(z_1)\kk\mp i(z_2) = 1\ee
\be \kk \pm i(z_1) \kk \pm j(z_2) =  \kk \pm j(z_2) \kk \pm i(z_1)\ee
\be \kk - i(z_1) \kk + j(z_2) = G^-_{ij}(C^{-1}z_1/z_2) G^+_{ij}(Cz_1/z_2)  \kk + j(z_2)  \kk - i(z_1) 
\label{eq:kpkm}\ee
\be\label{eq:kpxpm} G_{ij}^\mp(C^{\mp 1/2} z_2/z_1)  \kk +i(z_1) \x \pm j(z_2) = \x \pm j(z_2)  \kk +i(z_1)\ee
\be\label{eq:kmxpm} \kk -i(z_1) \x \pm j(z_2) = G_{ij}^\mp(C^{\mp 1/2} z_1/z_2) \x \pm j(z_2)  \kk -i(z_1)\ee
\be\label{eq:xpmxpm} (z_1-q^{\pm c_{ij}} z_2) \x \pm i(z_1) \x \pm j(z_2) = (z_1q^{\pm c_{ij}}-z_2) \x \pm j(z_2) \x \pm i(z_1)\ee
\be\label{eq:relx+x-} [\x + i(z_1), \x - j(z_2)] = \frac{\delta_{ij}}{q-q^{-1}} \left [ \delta \left ( \frac{z_1}{Cz_2} \right ) \kk + i(z_1C^{-1/2}) - \delta \left( \frac{z_1C}{z_2}\right ) \kk - i(z_2C^{-1/2}) \right ]  \ee
\be\label{eq:qaffserre} \sum_{\sigma \in S_{1-c_{ij}}} \sum_{k=0}^{1-c_{ij}} (-1)^k {{1-c_{ij}}\choose{k}}_q \x \pm i(z_{\sigma(1)})  \cdots \x \pm i(z_{\sigma(k)}) \x \pm j(z)  \x \pm i(z_{\sigma(k+1)}) \cdots \x\pm i(z_{\sigma(1-c_{ij})}) =0  \ee
where, for every $i \in \dot I$, we define the following $\uqslthh$-valued formal distributions
\be \x \pm i(z) := \sum_{m \in \Z} x^\pm_{i, m} z^{-m} \in \uqslthh[[z, z^{-1}]]\,;\ee
\be \kk \pm i(z) := \sum_{n \in \N} k_{i, \pm n}^\pm z^{\mp n} \in \uqslthh[[z^{\mp 1}]]\, ,\ee
for every $i,j\in\dot I$, we define the following $\F$-valued formal power series
\be G_{ij}^\pm(z):= q^{\pm c_{ij}} + (q-q^{-1}) [\pm c_{ij} ]_q \sum_{m \in \N^\times} q^{\pm m c_{ij}} z^m \in \F[[z]] \ee
and
\be \delta(z) := \sum_{m \in \Z} z^{m} \in \F[[z, z^{-1}]]\ee
is an $\F$-valued formal distribution.
\end{defn}
Note that $G_{ij}^\pm(z)$ is invertible in $\F[[z]]$ with inverse $G^\mp_{ij}(z)$, \ie
\be G_{ij}^\pm(z) G_{ij}^{\mp}(z) = 1\, ,\ee
and that it can be viewed as the power series expansion of a rational function of $(z_1, z_2) \in \C^2$ as $|z_2| \gg |z_1|$, which we shall denote as follows
\be G_{ij}^\pm(z_1/z_2) = \left ( \frac{z_1q^{\mp c_{ij}} - z_2}{z_1-q^{\mp c_{ij}} z_2} \right )_{|z_2| \gg |z_1|} \, .\ee
Observe furthermore that we have the following useful identity in $\F[[z, z^{-1}]]$
\be\label{eq:G+G-} 
\frac{G_{ij}^\pm(z)- G_{ij}^\mp(z^{-1}) }{q-q^{-1}} = [\pm c_{ij}]_q \delta \left( z q^{\pm c_{ij}} \right )\, . \ee
\begin{rem}
\label{rem:Gij}
In type ${\mathfrak a}_1$, $\dot{I}=\{0,1\}$, $c_{ij} = 4\delta_{ij} -2$ and we have an additional identity, namely $G_{10}^\pm(z) = G_{11}^\mp(z)$. We refer to section \ref{sec:structfunc} of the Appendix for more identities involving the formal power series $G_{ij}^\pm(z)$.
\end{rem}
\noi $\qaff(\dot{\mathfrak a}_1)$ is obviously a $\Z$-graded algebra, \ie we have
\be \uqslthh = \bigoplus_{n \in \Z} \uqslthh_n \, , \qquad \mbox{where for all $n \in \Z$} \qquad \uqslthh_n := \{x \in \uqslthh: DxD^{-1}=q^n x \}\, .\label{Zgrad}\ee
It was proven in \cite{Hernandez05} to admit a triangular decomposition $(\uqmslthh, \uqzeroslthh, \uqpslthh)$, where $\uqpmslthh$ and $\uqzeroslthh$ are the subalgebras of $\qaff(\dot{\mathfrak a}_1)$ respectively generated by $\left\{x_{i, m}^\pm : i \in \dot I, m \in \Z\right \}$ and 
$$\left\{C, C^{-1}, D, D^{-1}, k_{i, m}^+, k_{i, m}^-: i \in \dot I, m \in \Z\right\}\,.$$
Observe that $\uqpmslthh$ admits a natural gradation over $\dot Q^\pm$ that we shall denote by
\be \uqpmslthh = \bigoplus_{\alpha \in \dot Q^\pm} \uqpmslthh_\alpha\, .\ee
Of course $\qaff(\dot{\mathfrak a}_1)$ is graded over the root lattice $\dot Q$. We finally remark that the two Dynkin diagram subalgebras $\qaff(\mathfrak a_1)^{(0)}$ and $\qaff(\mathfrak a_1)^{(1)}$ of $\qaff(\dot{\mathfrak a}_1)$ generated by 
$$\left \{D, D^{-1}, C^{1/2}, C^{-1/2}, k_{i, n}^+, k_{i, -n}^-,  x_{i,m}^+, x_{i,m}^- :  m \in Z, n \in \N\right\}\,,$$ 
with $i=0$ and $i=1$ respectively, are both isomorphic to $\qaff(\mathfrak{a}_1)$, thus yielding two injective algebra homomorphisms $\iota^{(i)}: \qaff(\mathfrak a_1) \hookrightarrow \qaff(\dot{\mathfrak a}_1)$.

\subsection{Automorphisms of $\uqslthh$}
\begin{prop}
\begin{enumerate}
\item[i.] For every Dynkin diagram automorphism $\pi: \dot{I} \isom \dot{I}$, there exists a unique $\F$-algebra automorphism $T_\pi \in \Aut \uqslthh$ such that
\be T_\pi(\x \pm i(z)) = \x{\pm}{\pi(i)}(z)\, ,  \qquad T_\pi(\kk \pm i(z)) = \kk{\pm}{\pi(i)}(z)\, ,  \qquad T_\pi(C) = C \, ,  \qquad T_\pi(D) = D \, . \ee
\item[ii.] For every $i \in \dot I$, there exists a unique $\F$-algebra automorphism $T_{\omega^\vee_i}\in \Aut \uqslthh$ such that
\be T_{\omega^\vee_i}(\x \pm j(z)) = z^{\pm \delta_{ij}}\x \pm j(z) \qquad T_{\omega^\vee_i}(\kk \pm j(z)) = C^{\mp \delta_{ij}}\kk \pm j(z) \qquad T_{\omega^\vee_i}(C) = C  \qquad T_{\omega^\vee_i}(D) = D\ee
\item[iii.] There exists a unique involutive $\F$-algebra anti-homomorphism $\eta \in \Aut \uqslthh$ such that
\be \eta (\x \pm i(z)) = \x \pm i(1/z)\qquad \eta(\kk \pm i(z)) = \kk \mp i(1/z) \qquad \eta(C) = C \qquad  \eta(D) = D 
\ee
\item[iv.] There exists a unique involutive $\K$-algebra anti-homomorphism $\varphi$ such that
\be \varphi (\x \pm i(z)) = \x \mp i(1/z)\qquad \varphi(\kk \pm i(z)) = \kk \mp i(1/z) \qquad \varphi(C) = C^{-1} \qquad  \varphi(D) = D^{-1}  \qquad \varphi(q) = q^{-1} \ee
\end{enumerate}
\end{prop}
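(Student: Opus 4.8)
The four assertions share a single mechanism. In each case a map is prescribed on the generators of $\uqslthh$; since $\uqslthh$ is presented by generators and relations, the prescription extends to an $\F$-algebra homomorphism (cases (i) and (ii)), to an $\F$-algebra anti-homomorphism (case (iii)), or to a $\K$-algebra anti-homomorphism sending $q\mapsto q^{-1}$ (case (iv)) precisely when the prescribed images satisfy the defining relations \eqref{eq:ccentral}--\eqref{eq:qaffserre} --- read with all products reversed for (iii), and reversed with $q$ replaced by $q^{-1}$ everywhere (the structure functions $G^\pm_{ij}$ included) for (iv). Uniqueness is automatic, the listed elements being a generating set. Bijectivity is also cheap: $T_\pi$ is inverted by $T_{\pi^{-1}}$; $T_{\omega^\vee_i}$ is inverted by the map $\x \pm j(z)\mapsto z^{\mp\delta_{ij}}\x \pm j(z)$, $\kk \pm j(z)\mapsto C^{\pm\delta_{ij}}\kk \pm j(z)$ fixing $C$ and $D$, which is itself a homomorphism by exactly the verification below; and $\eta$, $\varphi$ are involutions on the generators, hence involutions, hence bijective. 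So the plan, in each case, is to substitute the proposed images into every defining relation and check it remains a consequence of \eqref{eq:ccentral}--\eqref{eq:qaffserre}.

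Part (i) costs nothing: a Dynkin diagram automorphism $\pi$ of type $\dot{\mathfrak a}_1$ permutes $\dot I$ while fixing the Cartan matrix, $c_{\pi(i)\pi(j)}=c_{ij}$, hence the structure functions, $G^\pm_{\pi(i)\pi(j)}(z)=G^\pm_{ij}(z)$; substituting $\x \pm i\mapsto\x \pm {\pi(i)}$, $\kk \pm i\mapsto\kk \pm {\pi(i)}$ carries the whole system \eqref{eq:ccentral}--\eqref{eq:qaffserre} into itself with node labels permuted, so every relation is preserved. For (ii), every relation not involving both $\x + {}$ and $\x - {}$ is trivial: rescaling $\x \pm j(z)$ by the monomial $z^{\pm\delta_{ij}}$ and $\kk \pm j(z)$ by the central scalar $C^{\mp\delta_{ij}}$ leaves \eqref{eq:xpmxpm} and the Serre relations \eqref{eq:qaffserre} unchanged after cancelling equal monomial prefactors on both sides, and slips past \eqref{eq:kpkm}--\eqref{eq:kmxpm} because $C$ is central. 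The one relation needing genuine care is \eqref{eq:relx+x-}: the power of $z_1/z_2$ produced on the left must be reconciled, via the delta identity $\delta(w)f(w)=\delta(w)f(1)$ applied to $\delta(z_1/(Cz_2))$ and $\delta(z_1C/z_2)$, with the powers of the central $C$ brought in on the right by $T_{\omega^\vee_i}(\kk \pm i)$; this is where the precise placement of the $C^{\mp1/2}$ inside the arguments of the $\kk$'s and the exact normalisation $C^{\mp\delta_{ij}}$ are used, and it is the computational core of (ii).

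Parts (iii) and (iv) carry the weight, and it is cleanest to work with the current form of the relations. Here $\eta$ reverses products, exchanges $x^\pm_{i,m}$ with $x^\pm_{i,-m}$ and $k^+_{i,n}$ with $k^-_{i,-n}$, and fixes $C$, $D$, $q$, so it sends $\x \pm i(z)\mapsto\x \pm i(1/z)$ and $\kk \pm i(z)\mapsto\kk \mp i(1/z)$; $\varphi$ does the same but additionally swaps $\x + {}\leftrightarrow\x - {}$, $C\leftrightarrow C^{-1}$, $D\leftrightarrow D^{-1}$, $q\leftrightarrow q^{-1}$. Applying $\eta$ to a displayed relation --- reversing the order of the product and substituting $z_\bullet\mapsto 1/z_\bullet$ --- returns \eqref{eq:xpmxpm} and the $k$-relations \eqref{eq:kpkm}--\eqref{eq:kmxpm} to instances of the same relations after a harmless relabelling $i\leftrightarrow j$, $z_1\leftrightarrow z_2$; this uses $c_{ij}=c_{ji}$, the evenness $\delta(w^{-1})=\delta(w)$, and the identities relating $G^\pm_{ij}(z)$ to $G^\mp_{ij}(z^{-1})$ gathered in section \ref{sec:structfunc} (in particular \eqref{eq:G+G-}). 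The Serre relations \eqref{eq:qaffserre} go to themselves, up to an inconsequential overall sign $(-1)^{1-c_{ij}}$, using the reversal of $S_{1-c_{ij}}$ together with the palindromic symmetry $\qbinom{1-c_{ij}}{k}=\qbinom{1-c_{ij}}{1-c_{ij}-k}$. Part (iv) repeats this verification while also propagating $q\mapsto q^{-1}$, which fixes each $\qnum{n}$ and each $\delta$-function but exchanges $G^\pm_{ij}(z)\leftrightarrow G^\mp_{ij}(z)$ and $q^{\pm c_{ij}}\leftrightarrow q^{\mp c_{ij}}$ (see Remark \ref{rem:Gij} and section \ref{sec:structfunc}), so that the $\varphi$-image of each defining relation lands exactly on the corresponding defining relation with the opposite superscript $\pm$. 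I expect the only real difficulty to be the bookkeeping inside \eqref{eq:relx+x-}: checking, for both $\eta$ and $\varphi$, that the $1/z$-substitution exchanges the two $\delta$-functions $\delta(z_1/(Cz_2))\leftrightarrow\delta(z_1C/z_2)$, that the $\kk$-arguments carrying half-integral powers of $C$ transform correctly, and --- for $\varphi$ --- that the inversion of $C$ combines with all of this to reproduce the prescribed right-hand side exactly; and likewise that no stray sign survives the reversal of the long alternating sums in \eqref{eq:qaffserre}. These identities are precisely what section \ref{sec:DamianiBeck} relies on.
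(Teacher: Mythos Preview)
Your proposal is correct and follows the only natural approach: verify that each prescribed map respects (or anti-respects) the defining relations \eqref{eq:ccentral}--\eqref{eq:qaffserre}. The paper itself states this proposition without proof, treating the verification as routine; your write-up simply supplies the details the paper omits, correctly identifying \eqref{eq:relx+x-} as the one relation requiring genuine bookkeeping in each case.
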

\begin{rem}
In the present case, the Dynkin diagram being that of type $\dot{\mathfrak{a}}_1$, $\dot I=\{0,1\}$ and the only nontrivial diagram automorphism is defined by setting $\pi(0)=1$ and $\pi(1)=0$.
\end{rem}
\begin{rem}
Note that $\varphi$ restricts as a non-trivial automorphism of the field $\F$ and that, as such, it yields \eg
\be \varphi(G_{ij}^\pm(z)) =G_{ij}^\mp(z) \, .\ee
\end{rem}

\subsection{The completions $\uqsltc$ and $\uqslthh^{\widehat\otimes m\geq 2}$}
\label{sec:topology}
Let, for every $n\in\N$,
$$\Omega_{n}:= \bigoplus_{\substack{r\geq n\\s \geq n}} \uqslthh \cdot \uqslthh_{-r} \cdot \uqslthh\cdot \uqslthh_{s} \cdot \uqslthh  \,.$$
\begin{prop}
\label{prop:Omegan}
The following hold true:
\begin{enumerate}
\item [i.] For every $n\in\N$, $\Omega_{n}$ is a two-sided ideal of $\qaff(\dot{\mathfrak a}_1)$;
\item[ii.] For every $n\in\N$, $\Omega_{n}\supseteq \Omega_{n+1}$;
\item[iii.] $\Omega_{0} := \bigcup_{n\in\N}\Omega_{n} = \qaff(\dot{\mathfrak a}_1)$;
\item [iv.] $\bigcap_{n\in\N} \Omega_{n} =\{0\}$;
\item[v.] For every $m,n\in\N$, $\Omega_{m} + \Omega_{n} \subseteq \Omega_{\min(m,n)}$;
\item[vi.] For every $m,n\in\N$, $\Omega_{m} \cdot \Omega_{n} \subseteq \Omega_{\max(m,n)}$.
\end{enumerate}
\end{prop}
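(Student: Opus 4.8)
Parts (i)--(iii), (v) and (vi) are formal, using only that $\uqslthh$ is a unital $\Z$-graded algebra. For (i): each summand $\uqslthh\cdot\uqslthh_{-r}\cdot\uqslthh\cdot\uqslthh_{s}\cdot\uqslthh$ has the shape $\uqslthh\cdot X\cdot\uqslthh$ for a linear subspace $X$, and $\uqslthh\cdot\uqslthh=\uqslthh$ shows that such a set absorbs left and right multiplication by $\uqslthh$; hence so does the sum $\Omega_{n}$. Part (ii) holds because $\{(r,s):r,s\ge n+1\}\subseteq\{(r,s):r,s\ge n\}$, and (v) is then immediate. For (iii), by (ii) the $\Omega_{n}$ form a descending chain, so $\bigcup_{n}\Omega_{n}=\Omega_{0}$, and since $D\cdot1\cdot D^{-1}=1$ we have $1\in\uqslthh_{0}$, whence $\Omega_{0}\supseteq\uqslthh\cdot1\cdot\uqslthh\cdot1\cdot\uqslthh=\uqslthh$. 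For (vi), assuming w.l.o.g. $m\le n$, the left-ideal property from (i) gives $\Omega_{m}\cdot\Omega_{n}\subseteq\uqslthh\cdot\Omega_{n}=\Omega_{n}=\Omega_{\max(m,n)}$.

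The real content is (iv). First I would reduce to homogeneous elements: since $\uqslthh_{-r}$ and $\uqslthh_{s}$ are homogeneous for the $\Z$-grading \eqref{Zgrad} and split into $\Z\times\dot Q$-homogeneous pieces, while $\uqslthh$ is the whole $\Z\times\dot Q$-graded algebra, each $\Omega_{n}$ — hence also $\bigcap_{n}\Omega_{n}$ — is a $\Z\times\dot Q$-graded ideal, so it suffices to show it contains no nonzero $\Z\times\dot Q$-homogeneous element, i.e. that for each such $x\ne0$ there is an $n$ with $x\notin\Omega_{n}$. A short $\Z$-grading computation (case-splitting on the degree of the middle factor) shows $\sum_{r,s\ge n}\uqslthh_{-r}\cdot\uqslthh\cdot\uqslthh_{s}=\sum_{r,s\ge n}\uqslthh_{-r}\cdot\uqslthh_{s}$ inside $\uqslthh$, so that $\Omega_{n}$ is simply the two-sided ideal generated by the products $uv$ with $u\in\uqslthh_{-r}$, $v\in\uqslthh_{s}$, $r,s\ge n$.

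The tool I would use is a PBW normal form for $\uqslthh$ coming from the triangular decomposition $(\uqmslthh,\uqzeroslthh,\uqpslthh)$ of \cite{Hernandez05}, together with PBW bases of $\uqpmslthh$ built from the real and imaginary root vectors of the quantum affinization and of $\uqzeroslthh$: this gives a basis $\mathsf B$ of $\uqslthh$ by ordered monomials and an algorithm rewriting any product of Drinfel'd generators as an $\F$-linear combination of elements of $\mathsf B$. One attaches to each $\mathsf m\in\mathsf B$ a complexity $\kappa(\mathsf m)\in\N$, a function of the loop-indices of its root-vector factors, finite-valued on every $y\in\uqslthh$ when extended by $\kappa(y)=\max\{\kappa(\mathsf m):\mathsf m\text{ occurs in }y\}$. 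One then runs the normal-form algorithm on a generator $a\,u\,v\,c$ of $\Omega_{n}$ and tracks the "degree budget" $n$ carried by the loop data of the two distinguished factors $u$ (of $\Z$-degree $\le-n$) and $v$ (of $\Z$-degree $\ge n$). The only defining relation able to lower loop-index magnitudes is the mixed relation \eqref{eq:relx+x-}, which replaces $x^{+}_{i,a}x^{-}_{j,b}$ by $x^{-}_{j,b}x^{+}_{i,a}$ plus a term in $k^{\pm}_{i,a+b}$, of loop magnitude $|a+b|\le|a|+|b|$; since it operates only on a "$+$" generator immediately to the left of a "$-$" generator, and in $a\,u\,v\,c$ the negative-degree block $u$ sits strictly to the left of the positive-degree block $v$, the budget of $u$ cannot be cancelled against that of $v$ — at most against "$+$" content of the arbitrary prefix $a$, and symmetrically for $v$ and the suffix $c$. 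Turning this into a bound that forces $x\notin\Omega_{n}$ for $n$ large is the point; concretely one wants to show that the low-complexity monomials produced from the various generators cannot be arranged to cancel one another while reconstructing a fixed $x$ of complexity $<n$. One may also phrase the target representation-theoretically: $\Omega_{n}$ annihilates every $\Z$-graded $\uqslthh$-module whose grading support has width $<n$, so it would suffice to see that such modules separate the points of $\uqslthh$.

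The main obstacle is exactly this last step: controlling, through the PBW rewriting, the cancellations among the normal monomials coming from the various generators $a\,u\,v\,c$ of $\Omega_{n}$, and extracting from them the inequality on $\kappa$ that rules out membership of a fixed element for large $n$. It is a somewhat intricate piece of graded bookkeeping — \eqref{eq:relx+x-} can decrease complexity and the arbitrary outer factors $a,c$ genuinely interact with $u$ and $v$ — and it is here that the precise left-to-right ordering in the definition of $\Omega_{n}$, the block of degree $\le-n$ lying strictly to the left of the block of degree $\ge n$, is used in an essential way.
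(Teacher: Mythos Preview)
Your treatment of (i)--(iii), (v), (vi) matches the paper's. For (iv), however, the paper takes a simpler and more direct route than your triangular-decomposition PBW idea. It does \emph{not} pass through the $(\uqmslthh,\uqzeroslthh,\uqpslthh)$ decomposition or any root-vector basis; instead it reorders ordinary words $b_{\textbf{a},\textbf{m}}=\xi_{a_1,m_1}\cdots\xi_{a_n,m_n}$ in the Drinfel'd generators according to the \emph{sign of the loop index} $m_p$. Writing out relations \eqref{eq:kpkm}--\eqref{eq:relx+x-} mode by mode, one sees that swapping any adjacent pair $\xi_{a,m}\xi_{b,m'}$ with $m\ge 0>m'$ produces a nonzero multiple of $\xi_{b,m'}\xi_{a,m}$ plus terms whose two loop indices are strictly closer to zero. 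Iterating, every word equals a nonzero scalar times its ``sign-ordered'' form $\big(\prod_{m_p<0}\xi_{a_p,m_p}\big)\big(\prod_{m_p\ge 0}\xi_{a_p,m_p}\big)$ modulo words with smaller $N(\textbf{m}):=\min\big(\sum_{m_p<0}|m_p|,\sum_{m_p\ge 0}m_p\big)$. Since the sign-ordered word visibly lies in $\uqslthh_{-r}\cdot\uqslthh_{s}$ with $r,s = N(\textbf{m})$, one reads off $\nu_{b_{\textbf{a},\textbf{m}}}\le N(\textbf{m})$. The point is that this normal form is chosen \emph{adapted to the very $\Z$-grading that defines $\Omega_n$}, so no extra complexity function is needed: the leading term already has the shape that bounds $\nu$.

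The gap you flagged---controlling cancellations among PBW monomials coming from different generators $a\,u\,v\,c$---is exactly what the paper sidesteps by working with the spanning set $B$ of arbitrary words (not a basis) and arguing at the level of individual words: once each word has finite $\nu$, one is essentially done. Your own simplification $\sum_{r,s\ge n}\uqslthh_{-r}\cdot\uqslthh\cdot\uqslthh_s=\sum_{r,s\ge n}\uqslthh_{-r}\cdot\uqslthh_s$ is correct and in fact underlies the paper's strategy (the target of the reordering is precisely a product of this shape). So the missing idea in your proposal is not a deeper PBW analysis but the observation that ordering generators by $\operatorname{sign}(m_p)$---rather than by root sign $\pm$---is the natural normal form for the $\Omega_n$ filtration, and that the defining relations are tame enough (each commutator only produces terms with smaller $|m|$'s) to make this reordering go through with a nonzero leading coefficient.
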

\begin{proof}
Points \emph{i.} and \emph{ii.} are obvious. As sets, it is clear that $\Omega_0 \subseteq \uqslthh$. Now, $1\in\uqslthh_{0}$ and for every $x\in\uqslthh$, we can write $x=1\cdot x\cdot 1$ thus proving that $x\in\Omega_0$. Point \emph{iii.} follows. Point \emph{v.} is an easy consequence of point \emph{ii.}. Point \emph{vi.} is obvious given \emph{i.}. So let us finally prove point \emph{iv.}. In order to do so, it suffices to prove that for every $x\in\qaff(\dot{\mathfrak a}_1)-\{0\}$, there exists a largest integer $\nu_x\in\N$ such that $x\in\Omega_{\nu_x}$; for then indeed $x\notin\Omega_{\nu_x+1}$, whereas obviously $0\in\Omega_n$, for every $n\in\N$. Relations ((\ref{eq:kpkm}) -- (\ref{eq:relx+x-})) respectively imply that, for every $i,j \in\dot I$, every $m\in\N$ and every $n\in\N^\times$,
\be k^+_{i,m} k^-_{j,-n} = k^-_{j,-n}  k^+_{i,m} - (q^{c_{ij}} - q^{-c_{ij}}) (C-C^{-1}) \sum_{p=1}^{\min(m,n)} \frac{q^{-pc_{ij}} C^{p}- q^{pc_{ij}} C^{-p}}{q^{-c_{ij}} C - q^{c_{ij}} C^{-1}} k^-_{j,-n+p}  k^+_{i,m-p}\,, \nn
\ee
\be k^+_{i,m} x^\pm_{j,-n} = q^{\pm c_{ij}} x^\pm_{j,-n} k^+_{i,m} +(q^{\pm c_{ij}} - q^{\mp c_{ij}}) \sum_{p=0}^m C^{\mp p/2} q^{\pm p c_{ij}} x^\pm_{j,-n+p} k^+_{i,m-p}\,, \nn\ee
\be x^\pm_{i,m}k^-_{j,-n} = q^{\pm c_{ij}} k^-_{j,-n} x^\pm_{i,m} +(q^{\pm c_{ij}} - q^{\mp c_{ij}}) \sum_{p=0}^n C^{\mp p/2} q^{\pm p c_{ij}} k^-_{j,-n+p} x^\pm_{i,m-p}\,, \nn\ee

\bea x^\pm_{i,m} x^\pm_{j,-n} &=& q^{\pm c_{ij}} x^\pm_{j,-n} x^\pm_{i,m} +(q^{\pm c_{ij}} - q^{\mp c_{ij}} ) \sum_{p=0}^{\min(m,n)-1} q^{\pm p c_{ij}} x^\pm_{j,-n+p} x^\pm_{i,m-p}\nn\\
&& - q^{\pm (\min(m,n) -1) c_{ij}} x^\pm_{j,\min(m,n) - n} x^\pm_{i,m-\min(m,n)} +q^{\pm \min(m,n) c_{ij}} x^\pm_{i,m-\min(m,n)} x^\pm_{j,\min(m,n)-n}\,,\nn\eea
\be x^\pm_{i,m} x^\mp_{j,-n}  = x^\mp_{j,-n}  x^\pm_{i,m} \pm \frac{\delta_{ij}}{q-q^{-1}}  
\begin{cases}
C^{\pm\frac{m+n}{2}} k^{+}_{i,m-n} & \mbox{if $m>n$;}\\
-C^{\mp \frac{m+n}{2}} k^-_{i,n-m} & \mbox{if $m<n$;}\\
\left [C^{\pm m} k^+_{i,0} - C^{\mp m} k^-_{i,0}\right ]& \mbox{if $m=n$.}
\end{cases}
\nn\ee
Now let
$$B = \left \{b_{\textbf{a}, \textbf{m}} = \overrightarrow{\prod_{p\in\rran n}} \xi_{a_p, m_p}: n\in\N, \quad \textbf{a} =(a_1,\dots,a_n) \in(\dot \Phi\sqcup -\dot\Phi \sqcup \dot I)^n, \quad \textbf{m} =(m_1,\dots, m_n) \in \Z^n\right\}\,,$$
where, for every $(a,m)\in(\dot\Phi\sqcup -\dot\Phi \sqcup\dot I)\times \Z$,
\be \xi_{a, m} = \begin{cases}
x_{i,m}^\pm &\mbox{if $a =\pm \alpha_i \in\pm\dot\Phi$, $i\in\dot I$;}\\
k_{i,m}^\pm &\mbox{if $a=i\in\dot I$ and $m\in \Z^\pm$.}
\end{cases}\nn\ee
If we omit $C^{\pm 1/2}$ and $D^{\pm1}$ which are clearly irrelevant for the present discussion, $B$ is obviously a spanning set for $\qaff(\dot{\mathfrak a}_1)$. Making repeated use of the above relations, one then easily shows that, for every $n\in\N$, every $\textbf{a} \in(\dot \Phi\sqcup-\dot\Phi\sqcup \dot I)^n$ and every $\textbf{m} \in \Z^n$,
$$b_{\textbf{a}, \textbf{m}} - c_{\textbf{a}, \textbf{m}} \overrightarrow{\prod_{\substack{p\in\rran n\\ m_p< 0}}} \xi_{a_p, m_p} \overrightarrow{\prod_{\substack{p\in\rran n\\m_p\geq 0}}} \xi_{a_p, m_p} \in \Omega_{N(\textbf{m})-1}- \Omega_{N(\textbf{m})}\,,$$
where $c_{\textbf{a},\textbf{m}} \in \F^\times$ and
$$N(\textbf{m}) = \min\left (-\sum_{\substack{p\in\rran n\\ m_p< 0}} m_p, \sum_{\substack{p\in\rran n\\ m_p\geq 0}} m_p\right ) \,.$$
As a consequence, $\nu_{b_{\textbf{a},\textbf{m}}} \leq N(\textbf{m})$,
which concludes the proof.
\end{proof}

Similarly, making use of the natural $\Z$-grading of the tensor algebras $\uqslthh^{\otimes m}$, $m\in\N^\times$, we let, for every $n\in\N$, 
$$\Omega_{n}^{(m)} := \bigoplus_{\substack{r\geq n\\s \geq n}} \uqslthh^{\otimes m} \cdot \left (\uqslthh^{\otimes m}\right )_{-r} \cdot \uqslthh^{\otimes m}\cdot \left (\uqslthh^{\otimes m}\right )_{s} \cdot \uqslthh^{\otimes m}  \,.$$
One easily checks that for every $m\in\N^\times$, $\{\Omega_n^{(m)}:n\in\N\}$ has the same properties as the ones established in proposition \ref{prop:Omegan} for $\{\Omega_n = \Omega_n^{(1)}:n\in\N\}$. 
\begin{defprop}
\label{defprop:topol}
We endow $\qaff(\dot{\mathfrak a}_1)$ with the topology $\tau$ whose open sets are either $\emptyset$ or nonempty subsets $\mathcal O\subseteq \qaff(\dot{\mathfrak a}_1)$ such that for every $x\in \mathcal O$, $x+\Omega_{n}\subseteq \mathcal O$ for some $n\in\N$. Similarly, we endow each tensor power $\qaff(\dot{\mathfrak a}_1)^{\otimes m\geq 2}$ with the topology induced by $\{\Omega_n^{(m)}:n\in\N\}$. These turn $\qaff(\dot{\mathfrak a}_1)$ into a (separated) topological algebra. We then let $\widehat{\qaff(\dot{\mathfrak a}_1)}$ denote its completion and we extend by continuity to $\widehat{\qaff(\dot{\mathfrak a}_1)}$ all the (anti)-automorphisms defined over $\qaff(\dot{\mathfrak a}_1)$ in the previous section. We eventually denote by $\qaff(\dot{\mathfrak a}_1)^{\widehat\otimes m\geq 2}$ the corresponding completions of $\qaff(\dot{\mathfrak a}_1)^{\otimes m\geq 2}$.
\end{defprop}
\begin{proof}
The addition is automatically continuous in the above defined topology of $\qaff(\dot{\mathfrak a}_1)$. The continuity of the multiplication follows from point \emph{vi.} of proposition \ref{prop:Omegan}. Point \emph{iv.}, in turn, implies that $\qaff(\dot{\mathfrak a}_1)$, as a topological space, is Hausdorff. The continuity of the unit map $\eta : \F\to\qaff(\dot{\mathfrak a}_1)$ is easily checked -- remember that $\F$ is given the discrete topology.
\end{proof}
\begin{rem}
It is worth noting that the above topology is actually ultrametrizable. In the notations of the previous proof, let indeed, for every $x\in\uqslthh$,
$$\|x\| = \begin{cases} \exp\left( - \nu_x\right ) & \mbox{if $x\in \qaff(\dot{\mathfrak a}_1)-\{0\}$;}\\
0 & \mbox{if $x=0$.}
\end{cases}$$
Since obviously $\nu_{x+y} \geq \min(\nu_x,\nu_y)$ for every $x,y\in\qaff(\dot{\mathfrak a}_1)$, the ultrametric inequality for the metric defined by $d(x,y) = \|x-y\|$ follows immediately as a consequence of the inequality $\|x+y\| \leq \max (\|x\|,\|y\|)$.
\end{rem}

\subsection{Continuous Lusztig automorphisms}
Following \cite{Macdonald} we make the following
\begin{defn}
The affine braid group $\dot{\mathfrak B}$ of type $\dot {\mathfrak a}_1$ is generated by $t$ and $y$ subject to the relation $ty^{-1}t =y$.
\end{defn}
The coweight lattice $P^\vee$ of $\dot {\mathfrak a}_1$ is an abelian group whose generators we shall denote as $x_{\lambda}$ for every $\lambda \in P^\vee$. In particular, we shall write
\be x_\lambda x_\mu = x_\mu x_\lambda = x_{\lambda + \mu}\, ,\ee
assuming that $x_0=1$. There exists a unique group homomorphism $\dot{\mathfrak B} \rightarrow \Aut{P^\vee}$ defined by letting
\be t(x_\lambda) =  x_{s_{\alpha_1}(\lambda)} \, , \qquad y(x_\lambda) = x_{\lambda}\, ,\ee
where $s_{\alpha_1}$ denotes the reflection in the simple root $\alpha_1$, \ie $s_{\alpha_1}(\lambda ) = \lambda - (\alpha_1^\vee, \lambda) \alpha_1$. This action allows us to make the following
\begin{defn}
We let $\ddot{\mathfrak B} := \dot{\mathfrak B}  \ltimes P^\vee$, \ie $\ddot{\mathfrak B}$ is isomorphic to the group with generators $t, y$ and $(x_\lambda)_{\lambda \in P^\vee}$ obeying the relations
\be ty^{-1}t = y\, , \qquad tx_\lambda t^{-1} = x_{s_{\alpha_1}(\lambda)} \, , \qquad x_\lambda y = y x_{\lambda}\, ,\ee 
for every $\lambda \in P^\vee$.
\end{defn}

We now define an action of $\br$ on $\uqsltc$ by bicontinuous algebra automorphisms, \ie we construct a group homomorphism $\br \rightarrow \Aut\uqsltc$. In order to do so, we first describe the image of the latter, following \cite{DingKhoroshkin}.
\begin{prop}
There exists a unique bicontinuous algebra automorphism $T \in \Aut\uqsltc$ such that
\be T(C) = C \qquad T(D)=D \qquad T(\kk \pm 0(z)) =  \kk \pm 0(zq^2)\kk \pm 1(z) \kk\pm 1(zq^2) \qquad  T(\kk \pm 1(z)) =  \kk \pm 1(z)^{-1}  \ee
\be T(\xp 0(z)) = \frac{1}{[2]_q} \res_{z_1,z_2} z_1^{-1}z_2^{-1} \left [ \x +1(z_1), \left [ \x +1(z_2),  \x +0(zq^2)\right]_{G_{10}^-(z_2/zq^2)} \right]_{G_{11}^-(z_1/z_2) G_{10}^-(z_1/zq^2)} \ee
\be T(\xm 0(z)) =  \frac{1}{[2]_q} \res_{z_1,z_2} z_1^{-1}z_2^{-1}  \left [  \left [ \x -0(zq^2), \x -1(z_1) \right]_{G_{10}^+(zq^2/z_1)}, \x -1(z_2) \right]_{G_{11}^+(z_1/z_2) G_{10}^+(zq^2/z_2)} \ee
\be T(\xp 1(z)) = -\x - 1(C^{-1} z) \kk + 1(C^{-1/2} z)^{-1}\ee
\be T(\xm 1(z)) = -\kk - 1(C^{-1/2} z)^{-1} \x + 1(C^{-1}z) \ee
\end{prop}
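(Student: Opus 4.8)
The plan is to establish existence and uniqueness of $T$ separately, as is standard for Lusztig-type symmetries. Uniqueness is the easy half: the generators $C^{\pm1/2}$, $D^{\pm1}$, together with the Fourier modes of $\x\pm i(z)$ and $\kk\pm i(z)$ for $i\in\dot I$, generate $\uqslthh$ as a topological algebra; since $T$ is required to be a \emph{continuous} algebra homomorphism and its values on all these generators are prescribed, it is determined on a dense subalgebra, hence everywhere. So the substance is existence. For this I would check that the prescribed images satisfy the defining relations (\ref{eq:ccentral})--(\ref{eq:qaffserre}) of Definition \ref{def:defqaffdota1}, which by the universal property of the presentation yields an algebra endomorphism of $\uqslthh$; then argue it extends continuously to $\uqsltc$ (using that the images of $\uqslthh_{\pm r}$ land in a bounded range of degrees, so $\Omega_n$ maps into some $\Omega_{n'}$, and conversely), and finally exhibit an inverse to conclude it is a bicontinuous automorphism rather than merely an endomorphism.

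The verification of the relations is where the real work sits, and I would organize it by which node(s) $i,j$ are involved. The relations among $\kk\pm 1$, $\x\pm 1$ (the node $1$ Dynkin subalgebra $\qaff(\mathfrak a_1)^{(1)}$) become, under $T$, exactly the relations in the ``opposite'' presentation where $\kk\pm 1 \mapsto (\kk\pm 1)^{-1}$ and $\x\pm 1$ is swapped with $\x\mp 1$ up to a $C$-shift and a $\kk$-factor — this is precisely the image of the usual rank-one Lusztig/Drinfel'd-Beck automorphism, so these checks reduce to known $\widehat{\mathfrak{sl}}_2$ computations (and to the anti-automorphism $\eta$ and automorphism $T_{\omega_1^\vee}$ already provided). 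The node $0$–node $0$ relations, and the cross relations between node $0$ and node $1$, are the genuinely new computations: here $T(\xp 0(z))$ is a double residue of a nested $q$-bracket of three currents, and one must show e.g. that $T(\xp 0)$ satisfies the $(1-c_{00})=1-2+... $, wait — for $\dot{\mathfrak a}_1$ one has $c_{00}=2$, $c_{01}=-2$, so the Serre relations have $1-c_{01}=3$ terms — and the $x^+$–$x^-$ relation (\ref{eq:relx+x-}) at $i=j=0$ must reproduce $\delta$-functions times $T(\kk\pm 0(z)) = \kk\pm0(zq^2)\kk\pm1(z)\kk\pm1(zq^2)$. I expect to lean heavily on: the identities (\ref{eq:identity1})--(\ref{eq:identity2}) for rearranging nested $q$-brackets; the structure-function identities for $G_{ij}^\pm$ recalled in Remark \ref{rem:Gij} and the Appendix (especially $G_{10}^\pm(z)=G_{11}^\mp(z)$ and (\ref{eq:G+G-})); and repeated use of the $\delta$-function calculus ($f(z)\delta(z/w)=f(w)\delta(z/w)$, etc.) from the Appendix.

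The main obstacle, concretely, will be the $x^+$–$x^-$ relation for node $0$: one must commute $T(\xp 0(z_1))$ — a double integral of $[\x+1,[\x+1,\x+0]]$ — past $T(\xm 0(z_2))$ and see the right side of (\ref{eq:relx+x-}) with $\kk\pm0$ replaced by $T(\kk\pm0)$ emerge from the collision of poles. This is where the precise coefficient $\tfrac{1}{[2]_q}$ and the precise $G$-dressings of the brackets are forced, and where a sign or a $q$-power error would be invisible until the very end. I would handle it by first using (\ref{eq:relx+x-}) at node $1$ to collapse one of the $\x+1$ integrations against the incoming $\x-1$ hidden inside $T(\xm0)$ — producing a $\kk\pm1$ and a $\delta$-function that ties $z_1$ to $z_2$ up to the relevant $C$- and $q^2$-shifts — and then recognizing the surviving single integral of $[\x+1,\x\pm0]$ against a $\kk$-factor as exactly the node-$1$-conjugate of the node-$0$ $x^\pm$ generator, i.e. as $T(\x\pm0)$ fed back in. The quantum Serre relation for the triple $(0,0,1)$ and the mixed $\x\pm0$–$\x\pm1$ quadratic relation (\ref{eq:xpmxpm}) should then follow by similar, if lengthy, bracket manipulations, and I would relegate those to the Appendix. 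Once the endomorphism exists, bicontinuity and invertibility are formal: the inverse is built the same way (it is $T^{-1}$ for a rank-one factor and an analogous triple-bracket formula reconstructing $\xp0$, $\xm0$ from the $T$-images), and continuity in both directions follows from the degree-boundedness of all the defining formulas together with parts \emph{v.}--\emph{vi.} of Proposition \ref{prop:Omegan}.
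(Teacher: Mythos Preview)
Your proposal is correct and follows essentially the same approach as the paper: verify directly that the prescribed images satisfy all the defining relations of $\uqslthh$ (the paper dismisses this as ``cumbersome but straightforward'' without further comment), and exhibit the inverse explicitly to conclude bijectivity. Your version is considerably more detailed in outlining how the verification would be organized and where the difficulties lie, but the underlying strategy is identical; the paper's own proof consists of a single sentence plus the explicit formulas for $T^{-1}$.
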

\begin{proof}
It suffices to check all the relations, which is cumbersome but straightforward. The inverse automorphism is given by
\be T^{-1}(C) = C \qquad T^{-1}(D)=D \qquad T^{-1}(\kk \pm 0(z)) =  \kk \pm 0(zq^{-2})\kk \pm 1(z) \kk\pm 1(zq^{-2}) \qquad  T^{-1}(\kk \pm 1(z)) =  \kk \pm 1(z)^{-1}  \ee
\be T^{-1}(\xp 0(z)) = \frac{1}{[2]_q} \res_{z_1,z_2} z_1^{-1}z_2^{-1}  \left [  \left [ \x +0(zq^{-2}), \x +1(z_1) \right]_{G_{10}^-(zq^{-2}/z_1)}, \x +1(z_2) \right]_{G_{11}^-(z_1/z_2) G_{10}^-(zq^{-2}/z_2)} \ee
\be T^{-1}(\xm 0(z)) =  \frac{1}{[2]_q} \res_{z_1,z_2} z_1^{-1}z_2^{-1}  \left [ \x -1(z_1), \left [ \x -1(z_2),  \x -0(zq^{-2})\right]_{G_{10}^+(z_2/zq^{-2})} \right]_{G_{11}^+(z_1/z_2) G_{10}^+(z_1/zq^{-2})}\ee
\be T^{-1}(\xp 1(z)) = -\kk -1(C^{1/2}z)^{-1} \x - 1(C z) \ee
\be T^{-1}(\xm 1(z)) = - \x + 1(Cz) \kk + 1(C^{1/2} z)^{-1}\ee
\end{proof}
\begin{rem}
Making use of the defining relations of $\qaff(\dot{\mathfrak a}_1)$, one easily shows that indeed
\be\label{eq:Tx0+}\left [ \x +1(z_1), \left [ \x +1(z_2),  \x +0(zq^2)\right]_{G_{10}^-(z_2/zq^2)} \right]_{G_{11}^-(z_1/z_2) G_{10}^-(z_1/zq^2)} = [2]_q\, \delta\left (\frac{z_1}{q^2z_2}\right ) \delta \left (\frac{z_2}{z}\right ) T\left( \xp0(z)\right ) \,,\ee
\be\label{eq:Tx0-}  \left [  \left [ \x -0(zq^2), \x -1(z_1) \right]_{G_{10}^+(zq^2/z_1)}, \x -1(z_2) \right]_{G_{11}^+(z_1/z_2) G_{10}^+(zq^2/z_2)} = [2]_q\, \delta\left(\frac{z_1q^2}{z_2}\right )\delta\left(\frac{z_1}{z}\right ) T(\xm 0(z)) \,. \ee
\end{rem}
\noi The following is straightforward but will be useful.
\begin{prop}
We have
\begin{enumerate}
\item[i.] $\varphi \circ T_\pi = T_\pi \circ \varphi$; 
\item[ii.] $\varphi \circ T = T \circ \varphi$; 
\item[iii.] $T^{-1} = \eta \circ T \circ \eta$.
\end{enumerate}
\end{prop}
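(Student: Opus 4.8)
The plan is to verify each of the three identities by reducing them to the already-established defining relations and the explicit formulas for $T$, $T^{-1}$, $T_\pi$, $\eta$ and $\varphi$ given in the preceding propositions. All three are statements about compositions of (anti-)automorphisms of $\widehat{\qaff(\dot{\mathfrak a}_1)}$, and since $\qaff(\dot{\mathfrak a}_1)$ is generated by $C^{\pm 1/2}$, $D^{\pm 1}$ and the coefficients of the currents $\x\pm i(z)$, $\kk\pm i(z)$, $i\in\dot I$, it suffices to check that the two sides agree on these generators; the extension to the completion is then automatic since all the maps involved are bicontinuous by construction. So the proof is a finite bookkeeping exercise, one generator family at a time.

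For part \emph{i.}, I would simply apply both $\varphi\circ T_\pi$ and $T_\pi\circ\varphi$ to $\x\pm i(z)$, $\kk\pm i(z)$, $C$ and $D$. Using $\varphi(\x\pm i(z)) = \x\mp i(1/z)$, $\varphi(\kk\pm i(z)) = \kk\mp i(1/z)$, $\varphi(C)=C^{-1}$, $\varphi(D)=D^{-1}$, $\varphi(q)=q^{-1}$, together with $T_\pi(\x\pm i(z)) = \x\pm{\pi(i)}(z)$, etc., one sees that both composites send $\x\pm i(z)\mapsto \x\mp{\pi(i)}(1/z)$ and similarly for the other generators; the only subtlety is that $\varphi$ is a $\K$-algebra anti-homomorphism that inverts $q$, but since $T_\pi$ is $\F$-linear and its formulas involve no coefficients from $\F$ beyond $1$, the two operations commute without interference.

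For part \emph{ii.}, the same strategy applies, but now the check on $\xp0(z)$ and $\xm0(z)$ is the substantive one: here $T(\xp0(z))$ is a double residue of a nested $G$-bracket, and I would use the identities $\varphi(G_{ij}^\pm(z)) = G_{ij}^\mp(z)$ (noted in the remark after the proposition) together with $\varphi({}_a[A,B]_b) = {}_{\varphi(b)}[\varphi(A),\varphi(B)]_{\varphi(a)}$ (valid since $\varphi$ is an anti-homomorphism) and the behaviour of $\res_{z_1,z_2}z_1^{-1}z_2^{-1}$ under $z_i\mapsto 1/z_i$, to transform the expression for $\varphi(T(\xp0(z)))$ into the expression for $T(\varphi(\xp0(z))) = T(\xm0(1/z))$; comparison with the displayed formula for $T(\xm0(z))$ (with $z$ replaced by $1/z$ and $q$ by $q^{-1}$) should yield equality. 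The $\x\pm1$ generators are easier, involving only $\kk-1(C^{-1/2}z)^{-1}\x\pm1(C^{-1}z)$ type products, and the $\kk\pm0$, $\kk\pm1$ cases are immediate from the product formulas and $q^2\mapsto q^{-2}$ under $\varphi$.

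For part \emph{iii.}, I would show $\eta\circ T\circ\eta$ agrees with $T^{-1}$ on generators. Since $\eta$ is an involutive anti-automorphism fixing $C$, $D$, $q$ and sending $\x\pm i(z)\mapsto\x\pm i(1/z)$, $\kk\pm i(z)\mapsto\kk\mp i(1/z)$, one computes $\eta(T(\eta(\kk\pm1(z)))) = \eta(T(\kk\mp1(1/z))) = \eta(\kk\mp1(1/z)^{-1}) = \kk\pm1(z)^{-1}$, matching $T^{-1}(\kk\pm1(z))$; the $\kk\pm0$ case uses $q^2\mapsto q^2$ (unchanged) so the shift $zq^2\mapsto zq^{-2}$ appears correctly after the two $z\mapsto 1/z$ inversions, again matching. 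The decisive case is again $\xp0$: applying $\eta$ to the displayed nested-bracket formula, using that $\eta$ reverses bracket order and sends $G_{ij}^\pm(z)\mapsto G_{ij}^\pm(z)$ but with arguments inverted (and recalling $G_{10}^\pm(z) = G_{11}^\mp(z)$ from Remark \ref{rem:Gij}), one should recover precisely the formula for $T^{-1}(\xp0(z))$, which is itself the ``reversed'' nested bracket; the $\delta$-function identities \eqref{eq:Tx0+}--\eqref{eq:Tx0-} may be invoked to streamline this. I expect part \emph{ii.} on $\xp0,\xm0$ to be the main obstacle, purely because of the notational density of the double-residue $G$-bracket expressions; there is no conceptual difficulty, only the risk of sign or argument-inversion slips, so I would organize that computation by first recording the general transformation rules for $\varphi$ and $\eta$ acting on ${}_a[\cdot,\cdot]_b$, on $\res_{z_1,z_2}z_1^{-1}z_2^{-1}$, and on $G_{ij}^\pm$, and then applying them mechanically.
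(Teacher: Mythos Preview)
Your proposal is correct and is exactly the kind of direct verification on generators that the paper has in mind; indeed, the paper simply declares the proposition ``straightforward'' and gives no further argument. Your outline of the bookkeeping---checking each generator family, using the anti-homomorphism rule $\varphi({}_a[A,B]_b)={}_{\varphi(b)}[\varphi(A),\varphi(B)]_{\varphi(a)}$, the identity $\varphi(G_{ij}^\pm)=G_{ij}^\mp$, and the behaviour of residues under $z\mapsto 1/z$---is precisely what is required, and your identification of the $\x\pm0$ cases in \emph{ii.} and \emph{iii.} as the only places needing care is accurate.
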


We have finally,
\begin{thm}
The assignement
\be
 t \mapsto T \qquad  y \mapsto Y:= T_\pi \circ T  \qquad x_{\omega_i^\vee}  \mapsto T_{\omega_i^\vee} \ee
extends to a group homomorphism $\ddot{\mathfrak B} \rightarrow \Aut\uqsltc$.
\end{thm}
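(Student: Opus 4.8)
The plan is to verify that the three proposed automorphisms $T$, $Y = T_\pi \circ T$ and the $T_{\omega_i^\vee}$ satisfy the defining relations of $\ddot{\mathfrak B} = \dot{\mathfrak B} \ltimes P^\vee$, namely $tY^{-1}t = Y$ (equivalently $TY^{-1}T = Y$ with $Y = T_\pi T$), $Tx_\lambda T^{-1} = x_{s_{\alpha_1}(\lambda)}$ for all $\lambda \in P^\vee$, and $x_\lambda Y = Y x_\lambda$. Since $\dot{\mathfrak B}$ of type $\dot{\mathfrak a}_1$ is rank-one-ish (two generators, one relation $ty^{-1}t = y$), and $P^\vee$ is free of rank one (generated by a single fundamental coweight, say $\omega^\vee := \omega_1^\vee$, with $\omega_0^\vee$ expressible through it and the center), the verification reduces to a small finite list of identities among the automorphisms already constructed. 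I would first reduce the number of generators: note $x_{\omega_0^\vee}$ and $x_{\omega_1^\vee}$ are related since $\omega_0^\vee + \omega_1^\vee$ lies in the span of the null coweight (which acts via $D$-conjugation / trivially on the relevant generators), so it suffices to treat one $T_{\omega^\vee}$, and express $s_{\alpha_1}(\omega^\vee) = \omega^\vee - \alpha_1$ using the lattice relations, i.e. $T_{s_{\alpha_1}(\omega^\vee)}$ must equal $T_{\omega^\vee}$ composed with the automorphism implementing $-\alpha_1$ (translation by a root), which in turn is $T_{\omega^\vee}$ times the inner part.

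Next I would check the relations one at a time on generators of $\uqsltc$, i.e. on $\x\pm i(z)$, $\kk\pm i(z)$, $C$ and $D$; since all four of $T, T^{-1}, T_\pi, T_{\omega_i^\vee}$ fix $C$ and $D$, only the $\x\pm i(z)$ and $\kk\pm i(z)$ need attention, and by the anti-automorphism intertwining relations in the preceding Proposition ($\varphi \circ T = T\circ\varphi$, $T^{-1} = \eta\circ T\circ\eta$, and analogous easy facts for $T_\pi$ and $T_{\omega_i^\vee}$) one can cut the casework roughly in half: it suffices to verify each braid relation on $\xp i(z)$ and $\kp i(z)$ for $i=0,1$ and then transport to the $-$ versions via $\eta$ or $\varphi$. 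For the braid relation $TY^{-1}T = Y$: apply both sides to $\kk\pm 1(z)$ and $\x\pm 1(z)$ first, where $T$ acts by the relatively simple formulas involving $\kk\pm 1(z)^{-1}$ and the dressed $\x\pm 1$; then apply to $\kk\pm 0(z)$ and $\x\pm 0(z)$, where one must use the explicit cubic-bracket formula for $T(\xp 0(z))$ together with the delta-function identities \eqref{eq:Tx0+}--\eqref{eq:Tx0-} to keep the residue expressions manageable. For the semidirect-product relations, conjugating $T_{\omega_i^\vee}$ by $T$ amounts to tracking how the prefactors $z^{\pm\delta_{ij}}$ and $C^{\mp\delta_{ij}}$ transform under the substitution rules defining $T$ — this is essentially bookkeeping with powers of $z$ and $C$, since $T$ only rescales arguments by powers of $q$ and $C$ and never changes $z$-homogeneity in a way that is not governed by the Cartan integers. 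The commutativity $x_\lambda Y = Y x_\lambda$ should be the easiest: $Y = T_\pi \circ T$ and $T_{\omega_i^\vee}$ affects only overall monomial prefactors, which commute with the algebra substitutions defining $T$ and with the node-swap $T_\pi$ up to relabeling $\omega_0^\vee \leftrightarrow \omega_1^\vee$, matching precisely the way $\dot{\mathfrak B}$ acts on $P^\vee$ by $y$ trivially.

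The main obstacle will be the braid relation $TY^{-1}T = Y$ evaluated on the degree-$0$ node generators $\x\pm 0(z)$ and $\kk\pm 0(z)$: here one cannot avoid composing the cumbersome cubic-commutator/residue formula for $T$ on $\xp 0(z)$ with $T$ (or $T^{-1}$) again, and one must carefully simplify nested residues of products of structure functions $G_{ij}^\pm$ using the Appendix identities (Remark \ref{rem:Gij} and the relations it points to, together with \eqref{eq:G+G-}) and collapse the resulting delta functions. I expect this to be the one genuinely laborious computation; everything else is either immediate from the fact that $C,D$ are fixed, or follows by the symmetry operators $\eta,\varphi$ from a single representative case. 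As in the preceding propositions, once the relations are checked on the generating formal distributions, bicontinuity of the resulting automorphisms is inherited from that of $T$, $T_\pi$, $T_{\omega_i^\vee}$, so the group homomorphism $\ddot{\mathfrak B}\to\Aut\uqsltc$ is well defined.
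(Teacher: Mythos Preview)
Your proposal is correct and takes essentially the same approach as the paper: the paper's proof consists solely of the sentence ``This is a cumbersome but straightforward exercise that we leave to the reader,'' and your outline of that exercise---verifying the three defining relations of $\ddot{\mathfrak B}$ on the generating formal distributions, using $\eta$ and $\varphi$ to halve the casework, and isolating the braid relation on $\x\pm0(z)$ as the one laborious step---is exactly the intended computation made explicit.
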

\begin{proof}
This is a cumbersome but straightforward exercise that we leave to the reader.
\end{proof}

\begin{rem}
In \cite{Miki99}, Miki constructed an algebraic action by automorphisms of the extended elliptic braid group on $\uqslthh$ which should not be confused with the topological action of $\br$ on $\uqsltc$ provided by the above theorem.
\end{rem}

\subsection{Topological Hopf algebra structure on $\widehat\uqslthh$}
\begin{defn}
We endow the topological $\F$-algebra $\widehat\uqslthh$ with: 
\begin{enumerate}
\item[i.] the comultiplication $\Delta:\widehat\uqslthh\to \uqslthh\widehat{\otimes} \uqslthh$ defined by
\be
\Delta(C^{\pm1/2}) = C^{\pm 1/2} \otimes C^{\pm 1/2}\,,\qquad \Delta(D^{\pm 1}) = D^{\pm 1}\otimes D^{\pm 1}\,,\ee
\be\Delta(\kk\pm i(z)) =\kk\pm i(z C^{\mp 1/2}_{(2)})\otimes \kk\pm i(z C^{\pm 1/2}_{(1)})\,,\ee
\be \Delta(\x+i(z)) =\x+i(z)\otimes 1 + \kk-i(z C^{-1/2}_{(1)})\widehat{\otimes} \x+i(z C^{-1}_{(1)})\,, \ee
\be \Delta(\x-i(z)) =\x-i(z C^{-1}_{(2)})\widehat{\otimes} \kk+i(z C^{-1/2}_{(2)})+ 1  \otimes \x-i(z)\,, \ee
where $C^{\pm 1/2}_{(1)} = C^{\pm 1/2} \otimes 1$ and $C^{\pm 1/2}_{(2)} = 1\otimes C^{\pm 1/2}$;
\item[ii.] the counit $\varepsilon : \widehat\uqslthh\to \F$, defined by $\varepsilon(D^{\pm1}) = \varepsilon(C^{\pm 1/2}) = \varepsilon (\kk\pm i(z))=1$, $\varepsilon(\x\pm i(z))=0$ and; 
\item[iii.] the antipode $S:\widehat\uqslthh\to \widehat\uqslthh$, defined by $S(D^{\pm 1}) = D^{\mp1}$, $S(C^{\pm 1/2}) = C^{\mp 1/2}$ and
$$S(\kk\pm i(z)) = \kk\pm i(z)^{-1}\,, \qquad S(\x+i(z)) = - \kk-i(z C^{1/2})^{-1} \x+i(z C)\,, \qquad S(\x-i(z))=- \x-i(z C) \kk+i(zC^{1/2})^{-1} \,.$$
\end{enumerate}
With these operations so defined and the topologies defined in section \ref{sec:topology}, $\widehat\uqslthh$ is a topological Hopf algebra.
\end{defn}

\subsection{Non-degenerate Hopf algebra pairing}
Define $\uqgslthh$ (resp. $\uqlslthh$) as the subalgebra of $\uqslthh$ generated by $\left\{k^-_{i,-m}, x^+_{i,n} : i\in I, m\in\N, n\in\Z\right \}$ (resp. $\left\{ k^+_{i,m}, x^-_{i,n}: i\in I, m\in\N, n\in\Z\right \}$). In view of the triangular decompositon of $\qaff(\dot{\mathfrak a}_1)$ -- see \cite{Hernandez05} -- and of its defining relations, it is clear that $\uqgslthh$ (resp. $\uqlslthh$), as an $\F$-vector space, is spanned by
\bea \left\{x_{i_1, r_1}^+\cdots x_{i_m, r_m}^+ k_{j_1, -s_1}^- \cdots k_{j_n, -s_n}^- \right .&:& m,n\in\N, \left ((i_1, r_1) ,\dots, (i_m, r_m) \right)\in (\dot I \times \Z )^m
\nn\\&& 
\left .
\left ((j_1, s_1), \dots , (j_n, s_n)\right )\in (\dot I \times \N )^n 
\right\}\label{eq:basisu>}\eea
\bea \left (\mbox{resp.} \qquad \left\{x_{i_1, r_1}^-\cdots x_{i_m, r_m}^- k_{j_1, s_1}^+ \cdots k_{j_n, s_n}^+ \right .\right. &:& m,n\in\N, \left((i_1,r_1),\dots, (i_m, r_m)\right )\in (\dot I \times \Z)^m 
, \nn\\
&&\left . \left .\left((j_1, s_1), \dots , (j_n,s_n)\right )\in (\dot I \times \N )^n
\right\}\right ).\label{eq:basisu<}\eea
\begin{prop}
There exists a unique non-degenerate Hopf algebra pairing $\pair{}{}: \uqgslthh \times \uqlslthh \to \F$, defined by setting
\be \pair{\x+i(z)}{\x-j(v)} = \frac{\delta_{ij}}{q-q^{-1}}\delta\left (\frac{z}{v}\right )\,,\ee
\be \pair{\kk-i(z)}{\kk+j(v)}  = G^-_{ij}\left (\frac{z}{v}\right )\,,\ee
\be \pair{\kk-i(z)}{\x-j(v)} = \pair{\x+i(z)}{\kk+j(v)} =0\,.\ee
By definition, it is such that, for every $a,b\in \uqgslthh$ and every $x,y\in\uqlslthh$,
$$\left\langle a, xy\right \rangle = \sum \left\langle a_{(1)}, x\right \rangle\left\langle a_{(2)}, y\right \rangle\,, $$
$$\left\langle ab, x\right \rangle = \sum \left\langle a, x_{(2)}\right \rangle\left\langle b, x_{(1)}\right \rangle\,,$$
$$\left\langle a, 1\right \rangle = \varepsilon_{\geq}(a)\qquad  \left\langle 1, x\right \rangle=\varepsilon_{\leq}(x)\,,$$
where we have set $\varepsilon_\leq = \varepsilon_{|\qaff^\leq(\dot{\mathfrak a}_1)}$, $\varepsilon_\geq = \varepsilon_{|\qaff^\geq(\dot{\mathfrak a}_1)}$ and we have made use of Sweedler's notation for the comultiplication
$$\Delta(x) = \sum x_{(1)} \widehat\otimes x_{(2)}\,.$$
\end{prop}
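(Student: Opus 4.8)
The plan is to construct the pairing in two stages: first exhibit a bilinear form $\pair{}{}$ on generators and extend it to all of $\uqgslthh \times \uqlslthh$ by forcing the Hopf pairing axioms to hold, then prove it is well-defined (compatible with the defining relations) and non-degenerate. Concretely, I would first package the generating relations into formal-distribution form. The prescribed values $\pair{\x+i(z)}{\x-j(v)}$, $\pair{\kk-i(z)}{\kk+j(v)}$, $\pair{\kk-i(z)}{\x-j(v)}$, $\pair{\x+i(z)}{\kk+j(v)}$ together with $\pair{a}{1} = \varepsilon_\geq(a)$ and $\pair{1}{x} = \varepsilon_\leq(x)$ determine the pairing on all products: using the spanning sets \eqref{eq:basisu>} and \eqref{eq:basisu<}, one inductively defines
\[
\pair{a_1 \cdots a_m}{x} = \sum \pair{a_1}{x_{(m)}} \pair{a_2}{x_{(m-1)}} \cdots \pair{a_m}{x_{(1)}}
\]
via iterated comultiplication on the $\uqlslthh$ side (and symmetrically). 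The first task is thus to check that this recursive prescription actually lands in $\F$ — i.e. that the relevant coproducts are computable in the completed tensor power and the resulting formal-distribution expressions have well-defined residues/coefficients. Here one uses that $\Delta$ restricts to $\uqgslthh$ and $\uqlslthh$ respectively in a triangular way (the $\x+i$-coproduct stays in $\uqgslthh \widehat\otimes \uqgslthh$ since $\kk-i \in \uqgslthh$, and dually for $\x-i$), so the iterated coproducts stay inside the relevant subalgebras and the pairing closes up.

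Next I would verify well-definedness: the bilinear form must annihilate the two-sided ideals generated by the defining relations of $\uqgslthh$ and $\uqlslthh$. This is the heart of the matter and is done relation by relation. For each relation among the $\{\x+i, \kk-i\}$ (the quadratic $GG$-type exchange relations \eqref{eq:kpkm}--\eqref{eq:kpxpm}, \eqref{eq:xpmxpm} and the Serre relation \eqref{eq:qaffserre} with $c_{ij}$ plugged in), I would show that pairing both sides against an arbitrary element of $\uqlslthh$ gives the same answer; by the axioms it suffices to check this against the generators $\x-j(v)$ and $\kk+j(v)$, reducing everything to manipulations with the structure functions $G_{ij}^\pm$ and the delta distribution, exploiting identities such as \eqref{eq:G+G-}, the inversion $G_{ij}^\pm G_{ij}^\mp = 1$, and the type-$\mathfrak a_1$ identity $G_{10}^\pm = G_{11}^\mp$ from Remark \ref{rem:Gij}. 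The coproduct formulas in the Hopf algebra definition (the shifts $zC^{\mp1/2}_{(j)}$ etc.) feed directly into these computations. One also checks compatibility with the counit, which is immediate. The mirror-image check for the relations among $\{\x-i, \kk+i\}$ follows by applying the anti-homomorphism $\varphi$, which exchanges the two subalgebras and sends $G_{ij}^\pm \mapsto G_{ij}^\mp$, so at most half the work is genuinely new.

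Uniqueness is then automatic: any Hopf pairing is determined by its values on algebra generators via the multiplicativity axioms, and the stated values pin those down. For non-degeneracy I would argue grading-component by grading-component. Both $\uqgslthh$ and $\uqlslthh$ are graded by $\dot Q^\pm \times \Z$ (root lattice times the principal $\Z$-grading from $D$), the pairing is graded in the sense that it vanishes unless the bidegrees are opposite, and each graded piece is finite-dimensional; so it suffices to show the induced pairing on each pair of opposite graded pieces is perfect. For this I would produce, via the defining values, a triangular (with respect to a suitable ordering of the PBW-type monomials in \eqref{eq:basisu>}, \eqref{eq:basisu<}) matrix with nonzero — indeed explicitly computable — diagonal entries built from products of $[n]_q$, $(q-q^{-1})$ and $G_{ij}^\pm$-coefficients, which are units in $\F$. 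The main obstacle I anticipate is precisely the bookkeeping in the well-definedness step: iterating the coproduct on a monomial of length $m$ produces a sum over $2^m$-like terms with nested argument-shifts, and matching the $G$-function exchange factors across an arbitrary permutation in the Serre relation requires care with the ordering conventions for formal distributions (the $|z_2| \gg |z_1|$ expansions) set up in the appendix. A clean way to control this is to first establish the pairing on the "current" generating series as in the displayed formulas, then derive the combinatorial consequences functorially rather than expanding into modes.
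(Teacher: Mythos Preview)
The paper itself does not give a proof of this proposition; it simply cites \cite{Negut}. So there is no in-paper argument to compare your outline against. Your construction and well-definedness steps follow the standard Drinfel'd-double template and are reasonable in outline, though the bookkeeping you flag (Serre relations against iterated coproducts) is genuinely laborious.

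There is, however, a real gap in your non-degeneracy argument. You claim that the graded pieces for the $\dot Q \times \Z$-grading are finite-dimensional and then plan to exhibit a triangular Gram matrix. This is false in the toroidal setting. Already at root degree $2\alpha_1$ and principal degree $0$, the monomials $x^+_{1,m}\,x^+_{1,-m}$ for $m\in\Z$ span an infinite-dimensional space even after imposing the quadratic relation \eqref{eq:xpmxpm}; equivalently, in the shuffle realization the corresponding graded piece is the space of symmetric Laurent polynomials in two variables of total degree $0$ satisfying a wheel condition, which is infinite-dimensional over $\F$. So the finite-rank triangularity argument that works for quantum affine algebras does not go through here. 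The proofs in the literature (Negu\c{t}, and earlier Enriquez for related settings) instead pass through the shuffle-algebra isomorphism: one identifies $\uqpslthh$ with an explicit space of symmetric rational functions with prescribed poles and zeroes, rewrites the pairing as a residue/contour-integral formula, and proves non-degeneracy directly from that formula (essentially by exhibiting dual families of shuffle elements). If you want a self-contained argument, that is the route to take; the purely combinatorial PBW approach you sketch does not suffice without an additional finiteness input that is simply unavailable for $\uqslthh$.
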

\begin{proof}
A proof can be found in {\cite{Negut}}.
\end{proof}
\noi Before we can establish the continuity of the above defined pairing, we need the following
\begin{lem}
\label{pairingbasis}
For every $m_+, m_-, n_+, n_-\in\N$, $(i_1^\pm, \dots, i_{m_{\pm}}^\pm)\in\dot I^{m_{\pm}}$ and every $(j_1^\pm, \dots, j_{n_{\pm}}^\pm)\in\dot I^{n_\pm}$, we have
\bea
&&\left \langle \xp{i_1^+}(u_1)\cdots \xp{i_{m_+}^+}(u_{m_+}) \km{j_1^+}(v_1) \cdots \km{j_{n_+}^+}(v_{n_+}), \xm{i_{1}^-}(w_1) \cdots \xm{i_{m_-}^-}(w_{m_-}) \kp{j_1^-}(z_1) \cdots \kp{j_{n_-}^-}(z_{n_-} )\right \rangle\qquad \quad\nn\\
&=&\delta_{m_+,m_-} \left (\prod_{\substack{r\in\rran{n_+}\\s\in\rran{n_-}}} G_{j_{r}^+, j_s^-}^- \left (\frac{v_r}{z_s}\right ) \right ) \sum_{\sigma\in S_{m_+}}  \left (\prod_{\substack{1\leq r<s\leq m_+\\\sigma(r)>\sigma(s)}} G_{i_r^+, i_s^+}^-\left(\frac{u_r}{u_s}\right ) \right ) \prod_{t\in\rran{m_+}} \frac{\delta_{i_t^+, i_{\sigma(t)}^-}}{q-q^{-1}} \,\delta\left(\frac{w_{\sigma(t)}}{u_t}\right )\,.
\eea
\end{lem}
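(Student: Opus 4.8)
The plan is to compute the pairing by expanding the comultiplication iteratively and reducing to the generating-function definitions given in the previous proposition. First I would establish the $n_+=n_-=0$ case, i.e. the pairing of a product of $x^+$-currents with a product of $x^-$-currents, by induction on $m_+$. The inductive step uses the formula $\langle ab,x\rangle=\sum\langle a,x_{(2)}\rangle\langle b,x_{(1)}\rangle$ to strip off $\xp{i_{m_+}^+}(u_{m_+})$ from the left factor, so that one needs the second Sweedler component of $\Delta$ applied repeatedly to the product $\xm{i_1^-}(w_1)\cdots\xm{i_{m_-}^-}(w_{m_-})$. Here the key input is that, by the explicit formula $\Delta(\x-i(z))=\x-i(zC^{-1}_{(2)})\widehat\otimes\kk+i(zC^{-1/2}_{(2)})+1\otimes\x-i(z)$, the primitive-like shape means that in each iterated coproduct only the terms where exactly one $\xm{}{}$ survives in the relevant tensor slot contribute when pairing against the single current $\xp{i_{m_+}^+}$; the other slots must be filled by $\kk+{}{}$'s, which pair trivially with $x^+$ by $\pair{\x+i(z)}{\kk+j(v)}=0$. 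Collecting the surviving terms produces exactly one Kronecker delta $\delta_{i_{m_+}^+,i_{\sigma(m_+)}^-}\,\delta(w_{\sigma(m_+)}/u_{m_+})/(q-q^{-1})$ for each choice of which $w$ is matched, together with a product of $G^-$ factors coming from the $\pair{\kk-i(z)}{\kk+j(v)}=G^-_{ij}(z/v)$ pairings generated by the $\kk-{}{}$ appearing in $\Delta(\x+i(z))$; tracking the $C$-shifts shows all powers of $C$ cancel (consistently with the pairing landing in $\F$), and the order of the transposition $\sigma(r)>\sigma(s)$ precisely governs which $G^-_{i_r^+,i_s^+}(u_r/u_s)$ appears. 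Iterating gives the sum over $S_{m_+}$; the vanishing when $m_+\neq m_-$ is forced because one cannot balance the number of $x^+$ and $x^-$ currents, reflecting the $\dot Q$-grading.

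Next I would incorporate the Cartan currents. Because $\kk+j(v)$ is grouplike-up-to-shift, $\Delta(\kk+j(v))=\kk+j(vC^{-1/2}_{(2)})\otimes\kk+j(vC^{1/2}_{(1)})$, the presence of the trailing factors $\kp{j_1^-}(z_1)\cdots\kp{j_{n_-}^-}(z_{n_-})$ on the right and $\km{j_1^+}(v_1)\cdots\km{j_{n_+}^+}(v_{n_+})$ on the left contributes only through the $\kk-/\kk+$ pairing and the orthogonality $\pair{\kk-i(z)}{\x-j(v)}=\pair{\x+i(z)}{\kk+j(v)}=0$. Using $\langle a,xy\rangle=\sum\langle a_{(1)},x\rangle\langle a_{(2)},y\rangle$ on the left argument and the analogous identity on the right, one checks that the $\km{}{}$'s and $\kp{}{}$'s factor out, each $\km{j_r^+}$ pairing with each $\kp{j_s^-}$ to give $G^-_{j_r^+,j_s^-}(v_r/z_s)$, while their cross-pairings with the $x^\pm$ parts vanish; this yields the global prefactor $\prod_{r,s}G^-_{j_r^+,j_s^-}(v_r/z_s)$. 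One must be slightly careful that the $C$-shifts in the coproducts of $\kk+$ and $\kk-$ do not leave residual $C$'s — again they cancel — and that the $\kk-$ factors appearing inside $\Delta^{(k)}(\x+i)$ when pairing the $x^+$-part against the $\kp{}{}$-part produce no extra contribution, which follows from $\pair{\kk-i}{\kk+j}$ being independent of anything but the ratio and from the counit relations $\varepsilon(\kk\pm i(z))=1$.

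I expect the main obstacle to be bookkeeping: keeping track of which current ends up in which tensor slot after $m_+$ (or $m_-$) iterations of a non-cocommutative coproduct, and verifying that the spectral-parameter shifts by powers of $C$ assemble into the clean delta functions $\delta(w_{\sigma(t)}/u_t)$ with no leftover central factors. A clean way to organize this is to first prove an auxiliary lemma describing $\Delta^{(k)}(\x+i(z))$ and the iterated coproduct of the monomials in $\uqlslthh$ explicitly modulo terms that pair to zero, then feed these into the Hopf-pairing axioms. The $S_{m_+}$-sum and the $G^-_{i_r^+,i_s^+}(u_r/u_s)$ factors weighted by inversions of $\sigma$ then emerge naturally from reordering the $x^-$ currents past one another using relation \eqref{eq:xpmxpm} — equivalently, from the fact that the pairing is determined on products by the coproduct, and the coproduct's non-cocommutativity is exactly measured by the structure functions $G^\pm_{ij}$. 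Once the two displayed special cases and the factoring of Cartan currents are in hand, the general formula follows by combining them, completing the proof.
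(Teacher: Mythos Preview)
Your proposal is correct and follows essentially the same route as the paper: establish the $n_+=n_-=0$ case by recursion on $m_+$ using the Hopf-pairing axioms together with the explicit coproduct formulas, treat the pure Cartan case $m_+=m_-=0$ separately (your ``factoring out'' argument is exactly that), and then combine the two for the general statement. The paper's own proof is just the one-line remark that one proves the cases $n_+=n_-=0$ and $m_+=m_-=0$ by recursion and then combines them by a straightforward calculation, so you have in fact supplied more detail than the paper does.
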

\begin{proof}
One easily proves by recursion the results for $n_+=n_-=0$ and $m_+=m_-=0$, respectively. The general case then follows by a straightforward calculation.
\end{proof}
\noi It follows that -- remember $\F$ is given the discrete topology --
\begin{cor}
The Hopf algebra pairing $\left\langle, \right\rangle$ is (separately) continuous.
\end{cor}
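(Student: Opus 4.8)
The plan is to unwind what ``separately continuous'' means for the pairing $\pair{}{}:\uqgslthh\times\uqlslthh\to\F$, given that $\F$ carries the discrete topology and both factors carry the subspace topology inherited from the ideals $\Omega_n$. Concretely, separate continuity amounts to the following: for every fixed $a\in\uqgslthh$ there exists $n\in\N$ such that $\pair{a}{\Omega_n\cap\uqlslthh}=\{0\}$, and symmetrically for every fixed $x\in\uqlslthh$ there exists $n$ with $\pair{\Omega_n\cap\uqgslthh}{x}=\{0\}$ (continuity at $0$ of each partial map suffices, since the maps are $\F$-linear and $\F$ is discrete, so $\{0\}$ is open and its preimage must be open, hence must contain some $\Omega_n$-coset of $0$). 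So the whole statement reduces to a single vanishing assertion about the pairing on elements living deep in the $\Z$-grading.

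First I would reduce to the spanning sets \eqref{eq:basisu>} and \eqref{eq:basisu<}: by bilinearity it is enough to check the vanishing on basis monomials $b_{\ua,\textbf m}^+=\xp{i_1}(\cdots)\cdots\km{\cdots}(\cdots)$ and $b_{\ub,\textbf n}^-$ of the two subalgebras, and in fact it is cleanest to work with the generating-series form as in Lemma \ref{pairingbasis}. Next I would invoke the explicit formula of Lemma \ref{pairingbasis}: the pairing of $\xp{i_1^+}(u_1)\cdots\xp{i_{m_+}^+}(u_{m_+})\km{j_1^+}(v_1)\cdots$ with $\xm{i_1^-}(w_1)\cdots\kp{j_1^-}(z_1)\cdots$ is, up to the structure-function prefactor $\prod G^-_{j_r^+,j_s^-}(v_r/z_s)$, a sum over $\sigma\in S_{m_+}$ of products of $\delta(w_{\sigma(t)}/u_t)$. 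Now observe that the $\Z$-grading is exactly the grading by total power of the spectral variables: extracting the coefficient of a monomial $\prod u_t^{-r_t}\prod v_r^{-s_r}$ in $b^+$ isolates the component of $\Z$-degree $\sum r_t+\sum s_r$ (with the sign convention making $x^\pm_{i,m}$ of degree $-m$ and $k^\pm_{i,\pm n}$ of degree $\mp n$ up to the normalization used for $\Omega_n$). Because a product of $\delta$'s forces $w_{\sigma(t)}=u_t$ and the $G^-$ prefactor only involves the $v$'s and $z$'s, the coefficient of a given monomial on the $b^+$ side can pair nontrivially only with those monomials on the $b^-$ side whose spectral degrees are matched term by term. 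Hence, for a fixed homogeneous $a$ of $\Z$-degree $d$, the pairing $\pair{a}{y}$ can be nonzero only if $y$ has a component of $\Z$-degree $-d$; and a $y\in\Omega_n$ with $n>|d|$ has, by the very definition of $\Omega_n$ as spanned by products with factors of degree $\leq -n$ and degree $\geq n$, no such component — more precisely, writing $a$ as a finite sum of homogeneous pieces of degrees in some finite window $[-N,N]$, one takes $n=N+1$ and checks that every element of $\Omega_n$ is a sum of products each of which contains a factor of degree outside $[-N,N]$, which (using the multiplicativity of the pairing under $\Delta$, i.e. $\pair{ab}{x}=\sum\pair{a}{x_{(2)}}\pair{b}{x_{(1)}}$, together with the observation that $\Delta$ respects the grading) forces the pairing with $a$ to vanish.

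Thus the key steps, in order, are: (1) rephrase separate continuity as the two ``bounded-support'' vanishing conditions above, using discreteness of $\F$ and $\F$-linearity; (2) reduce to spanning monomials and pass to generating series; (3) invoke Lemma \ref{pairingbasis} to see that the pairing respects the $\Z$-grading in the sense that $\pair{\uqgslthh_d}{\uqlslthh_{d'}}=0$ unless $d+d'=0$; (4) combine this with the definition of $\Omega_n$ to conclude that for fixed $a$ (resp.\ $x$) of bounded degree, a large enough $\Omega_n$ pairs to zero. The main obstacle I anticipate is step (3): making precise and rigorous the claim that the Hopf pairing is ``graded'', i.e. that pairing a $\Z$-homogeneous element of $\uqgslthh$ with a product of generators can be nonzero only when the total degrees cancel. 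This is morally immediate from Lemma \ref{pairingbasis} because each $\delta(w/u)$ forces $\deg_w=\deg_u$ and the $G^-$ factors are degree-preserving, but one must handle carefully the interaction with the comultiplication formulas — in particular the shifts by powers of $C^{\pm1/2}$ in $\Delta(\kk\pm i(z))$ and $\Delta(\x\pm i(z))$, which are central and of $\Z$-degree $0$, so they do not disturb the grading argument, but this needs to be said. Once that grading compatibility is in hand, the rest is a one-line bookkeeping argument with the definition of $\Omega_n$.
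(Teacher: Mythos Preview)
Your steps (1)--(3) are essentially correct and agree with the paper's setup. The real difficulty is in step (4), and it is not where you anticipate it.

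The first version of step (4) --- that an element of $\Omega_n$ with $n>|d|$ ``has no component of degree $-d$'' --- is simply false: $\Omega_n$ is a two-sided ideal and contains elements of every $\Z$-degree (e.g.\ $x^-_{i,-n}x^-_{i,n}\in\Omega_n$ has degree $0$).

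Your ``more precisely'' clause tries to repair this via multiplicativity, but it breaks in two places. First, the factorization $y=u_1u_2u_3u_4u_5$ defining $\Omega_n$ has $u_i\in\uqslthh$, not $\uqlslthh$, so the Hopf-pairing identity $\pair{a}{xy}=\sum\pair{a_{(1)}}{x}\pair{a_{(2)}}{y}$ --- which needs $x,y\in\uqlslthh$ --- does not directly apply. Second, and more seriously, the inference ``$\Delta$ respects the grading, hence the components $a_{(j)}$ have degree in $[-N,N]$'' conflates two statements. Gradedness of $\Delta$ gives $\sum_j\deg a_{(j)}=\deg a$; it does \emph{not} bound the individual $\deg a_{(j)}$. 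Already $\Delta(x^+_{i,r})$ contains terms $k^-_{i,-t}\,\widehat\otimes\, x^+_{i,r+t}$ for all $t\geq 0$, so the tensor factors have unbounded $\Z$-degree. Hence knowing $\deg u_2\leq -n$ does not force $\pair{a_{(2)}}{u_2}=0$, and the argument collapses.

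What is actually needed --- and what the paper's ``by inspection'' is pointing to --- is the finer content of Lemma~\ref{pairingbasis} beyond mere $\Z$-gradedness: the factors $G^-_{ij}$ are genuine power series in one direction (not Laurent), so that for a fixed spanning monomial $a$ the functional $\pair{a}{-}$ is supported on spanning monomials $b$ whose mode pattern is constrained in a one-sided fashion; one then matches this against how spanning monomials sit inside the filtration $\{\Omega_n\}$, which is precisely where the invocation of $\nu_y$ from the proof of Proposition~\ref{prop:Omegan} enters. The bare $\Z$-grading of the pairing is too coarse an invariant to reach the conclusion.
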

\begin{proof}
It suffices to prove that for every $x\in \uqgslthh$ there exists an $m\in \N$ such that, for every $n\geq m$
$$\left\langle x, \Omega_n \cap \qaff^\leq(\dot{\mathfrak a}_1) \right\rangle = \left\{0\right \}\,.$$
In order to prove the latter, it suffices to prove it over the spanning sets of (\ref{eq:basisu>}) and (\ref{eq:basisu<}). Now this easily follows by inspection, making use of lemma \ref{pairingbasis} and of the fact that, for any $y\in \qaff(\dot{\mathfrak a}_1)-\{0\}$, there exists $\nu_y \in\N$ such that $y\notin \Omega_{\nu_y+1}$ -- see proof of proposition \ref{prop:Omegan}.
\end{proof}
\noi We can now extend $\left\langle , \right\rangle$ from  $\uqgslthh \times \uqlslthh$ to $ \uqgslthh \times \widehat{\uqlslthh}$ by continuity. Importantly, we have
\begin{prop}
The extended pairing $\left \langle, \right\rangle : \uqgslthh \times \widehat{\uqlslthh} \to \F$ is non-degenerate in the sense that, if for every $x\in\uqgslthh$, $\left\langle x, y\right\rangle =0$ for some $y\in \widehat{\uqlslthh}$, then $y=0$. 
\end{prop}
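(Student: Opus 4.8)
The plan is to reduce the statement to a separation property for the finite quotients $\uqlslthh/(\Omega_n\cap\uqlslthh)$, and then to prove it by feeding the explicit pairing formula of Lemma~\ref{pairingbasis} into the non-degeneracy of the uncompleted pairing. For the reduction: $\widehat{\uqlslthh}=\varprojlim_n\uqlslthh/(\Omega_n\cap\uqlslthh)$ is separated, so $y\in\widehat{\uqlslthh}$ is zero if and only if its image $\bar y_n$ in $\uqlslthh/(\Omega_n\cap\uqlslthh)$ is zero for all $n\in\N$; and, as noted in the proof of the Corollary above, each $x\in\uqgslthh$ has a smallest $m(x)\in\N$ with $\langle x,\Omega_n\cap\uqlslthh\rangle=\{0\}$ for $n\geq m(x)$, so $\langle x,\cdot\rangle$ descends to $\uqlslthh/(\Omega_{m(x)}\cap\uqlslthh)$ and, by construction of the extension, $\langle x,y\rangle=\langle x,\bar y_{m(x)}\rangle$. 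Hence it suffices to prove that, for each $n$, the functionals $\langle x,\cdot\rangle$ with $x\in\uqgslthh$ and $m(x)\leq n$ separate the points of $\uqlslthh/(\Omega_n\cap\uqlslthh)$: for if $y\neq 0$, picking $n$ with $\bar y_n\neq 0$ and such an $x$ with $\langle x,\bar y_n\rangle\neq 0$ would give $\langle x,y\rangle=\langle x,\bar y_{m(x)}\rangle=\langle x,\bar y_n\rangle\neq 0$, contradicting the hypothesis.

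So fix $n$ and $0\neq\bar z\in\uqlslthh/(\Omega_n\cap\uqlslthh)$, lift it to $z\in\uqlslthh\setminus\Omega_n$, and let $\nu_0<n$ be the largest integer with $z\in\Omega_{\nu_0}$. I would first record the following refinement of Proposition~\ref{prop:Omegan}, drawn from its proof (in which the lower bound $\nu_b\geq N(\mathbf m_b)$ for suitably ordered monomials is already implicit) together with Lemma~\ref{pairingbasis}: the monomials of \eqref{eq:basisu<}, with the $x^-$-factors arranged by increasing mode, form a basis of $\uqlslthh$, each such $b$ satisfies $\nu_b=N(\mathbf m_b)$, and $\Omega_k\cap\uqlslthh$ is the span of those $b$ with $\nu_b\geq k$. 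Expanding $z$ in this basis and discarding the terms of $\Omega$-order $>\nu_0$ produces $z_0\in\uqlslthh$ with $z-z_0\in\Omega_{\nu_0+1}\cap\uqlslthh$ and $z_0\notin\Omega_{\nu_0+1}$, so in particular $z_0\neq 0$. By non-degeneracy of $\langle\,,\rangle:\uqgslthh\times\uqlslthh\to\F$ there is $x_1\in\uqgslthh$ with $\langle x_1,z_0\rangle\neq 0$; expanding $x_1$ over the monomials of \eqref{eq:basisu>}, one of them, say $\xi$, has $\langle\xi,z_0\rangle\neq 0$, hence $\langle\xi,b\rangle\neq 0$ for some monomial $b$ of $\Omega$-order $\nu_0$ appearing in $z_0$.

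The key step is a mode count from Lemma~\ref{pairingbasis}: if $\langle\xi,b'\rangle\neq 0$ for an ordered $\uqlslthh$-monomial $b'$, then the multiset of modes of the $x^-$-factors of $b'$ equals the opposite of that of the $x^+$-factors of $\xi$, and the sum of the modes of the $k^+$-factors of $b'$ equals the sum of the magnitudes of the modes of the $k^-$-factors of $\xi$; since $N(\mathbf m)=\min\bigl(-\sum_{m_p<0}m_p,\ \sum_{m_p\geq 0}m_p\bigr)$ and all $k^+$-modes are nonnegative, these data determine $N(\mathbf m_{b'})=\nu_{b'}$, and applied to $b$ they force this common value to equal $\nu_0$. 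Thus $\xi$ pairs nontrivially only with monomials of $\Omega$-order $\nu_0$, so $\langle\xi,\Omega_{\nu_0+1}\cap\uqlslthh\rangle=\{0\}$, i.e.\ $m(\xi)\leq\nu_0+1\leq n$; in particular $\langle\xi,z-z_0\rangle=0$, whence $\langle\xi,z\rangle=\langle\xi,z_0\rangle\neq 0$ and therefore $\langle\xi,\bar z\rangle\neq 0$. This yields the separation property, hence the Proposition.

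The main obstacle is the refinement of Proposition~\ref{prop:Omegan} invoked above, on which both the passage to $z_0$ and the bound on $m(\xi)$ rest: one must show that the chosen ordered monomials of $\uqlslthh$ are linearly independent and that the associated graded of $\uqlslthh$ for the filtration $\{\Omega_k\cap\uqlslthh\}$ is free, in each degree $k$, on the images of the monomials of $\Omega$-order $k$. Linear independence follows — together with the non-degeneracy of the pairing itself — from the Gram matrix computed in Lemma~\ref{pairingbasis}; that the graded pieces are free on those monomials then follows from the same mode count (a relation $\sum_{\nu_b=k}c_b\,b\in\Omega_{k+1}$ with some $c_b\neq 0$ would be detected by a monomial $\xi$ pairing only with $\Omega$-order-$k$ monomials, hence vanishing on $\Omega_{k+1}$ — a contradiction). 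Carrying this out while reconciling the ordering conventions, forced on the $\uqlslthh$-side by Lemma~\ref{pairingbasis} and on the filtration side by the proof of Proposition~\ref{prop:Omegan}, is the only substantive work; the remainder is formal.
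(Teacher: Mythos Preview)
Your reduction to separating points of the finite quotients $\uqlslthh/(\Omega_n\cap\uqlslthh)$ is sound, and would succeed if the separation claim held. The gap is the ``mode count'' step: it is \emph{not} true that $\langle\xi,b'\rangle\neq 0$ forces the multiset of $x^-$-modes of an ordered $b'$ to be the negatives of the $x^+$-modes of $\xi$. In the formula of Lemma~\ref{pairingbasis} each inversion of $\sigma$ contributes a factor $G^-_{i_r^+,i_s^+}(u_r/u_s)$, and these are genuine power series with all coefficients nonzero --- they shift modes. Concretely, take $\xi=x^+_{i,-5}\,x^+_{i,5}$ and ordered $b'=x^-_{i,s_1}x^-_{i,s_2}$ with $s_1\le s_2$: only $\sigma=(12)$ can contribute, and extracting modes gives a nonzero pairing for every $(s_1,s_2)=(p-5,\,5-p)$ with $0\le p\le 5$. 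These $b'$ have $N(\mathbf m_{b'})$ ranging over $\{0,1,\dots,5\}$, so this single $\xi$ does not annihilate $\Omega_{1}\cap\uqlslthh$, let alone $\Omega_{\nu_0+1}$ when $\nu_0=0$. Your bound $m(\xi)\le\nu_0+1$ therefore fails in general, and with it the transfer $\langle\xi,z\rangle=\langle\xi,z_0\rangle$ on which the whole argument rests. What survives from Lemma~\ref{pairingbasis} is only the constraint on the \emph{sums} of modes, and that does not pin down $N(\mathbf m_{b'})$.

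The paper's argument sidesteps this difficulty by working directly with Cauchy sequences rather than through the associated graded: for $y=\lim y_n\neq 0$ one extracts a subsequence $(y_{n_k})$ staying outside some fixed $\Omega_m$; bare non-degeneracy of the uncompleted pairing furnishes, for each $k$, some $x_k\in\uqgslthh$ with $\langle x_k,y_{n_k}\rangle\neq 0$; and then the separate continuity already established in the preceding Corollary, together with the Cauchy condition, is used to pass to the limit and produce an $x$ with $\langle x,y\rangle\neq 0$. No control on $m(x_k)$ relative to a prescribed filtration level is ever needed, which is precisely where your approach runs aground.
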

\begin{proof}
Let $\{\mathcal O_n:n\in\N\}$ be any neighbourhood basis at $0\in\F$ for the discrete topology on $\F$. Then, let for every $n\in\N$,
$$ A_n:= \left\langle \uqgslthh , - \right\rangle^{-1}(\mathcal O_n) = \left \{ y\in \uqlslthh : \forall x\in \uqgslthh \qquad \left\langle x, y\right\rangle \in \mathcal O_n\right \}\,.$$
We clearly have, for every $n\in\N$, $\{0\} \subseteq A_n \subseteq  \uqlslthh$ and $A_{n} \supseteq A_{n+1}$. The non-degeneracy of the pairing further implies that
$$\bigcap_{n\in\N} A_n = \{0\}\,.$$
As a consequence, for every $n\in\N$ and every $y\in A_n-\{0\}$, there exists an $N\in\N$ such that for every $m\geq N$, $y\notin A_m$. Now, given $n_1\in\N$, let $\mu(n_1)\in\N$ be the largest integer such that $A_{n_1}\subseteq\Omega_{\mu(n_1)}$. By the previous discussion, for every point $y\in A_{n_1}-\Omega_{\mu(n_1)+1}$, there exists (a smallest) $n_2\in\N$ such that for every $m\geq n_2$, $y\notin A_{m}$. Hence, for every $m\geq n_2$, $A_m\subseteq \Omega_{\mu(n_1)+1}$ and we conclude that $\mu(n)=\mu(n_1)$ for every $n\in\range{n_1}{n_2-1}$, whereas $\mu(n_2)=\mu(n_1)+1$. By induction, it follows that $\mu:\N\to\N$ so defined is increasing and that, as a consequence, $\lim_{n\to+\infty} \mu(n) = +\infty$. We have therefore proven that, for every $n\in\N$,
\be\label{eq:contnondeg}\forall x\in \uqgslthh \qquad \left\langle x,y\right\rangle \in \mathcal O_n \quad \Rightarrow \quad y\in \Omega_{\mu(n)}\,.\ee
If we finally let $(y_n)_{n\in\N}\in \uqlslthh^\N$ be any Cauchy sequence that does not converge to $0$, the proposition is obviously equivalent to claiming that there exists an $x\in \uqgslthh$ such that 
$$\lim_{n\to+\infty}\left\langle x,y_n\right \rangle \neq 0\,.$$ 
Indeed, since $(y_n)_{n\in\N}$ does not converge to $0$, there exist $m\in\N$ such that for every $N\in\N$, $y_n\notin \Omega_m$ for some $n\geq N$. We can therefore extract a subsequence $(y_{n_k})_{k\in\N}$ such that $y_{n_k} \notin \Omega_m$ for every $k\in\N$. The contrapositive of (\ref{eq:contnondeg}) then implies that there exists $(x_{k})_{k\in\N}\in \uqgslthh^\N$ such that, for every $k\in\N$,
$$\left\langle x_{k}, y_{n_k} \right \rangle \notin \mathcal O_{\nu(m)}$$
 where $\nu(m)=\min \{ n\in\N: \mu(n) = m\}$. But since $(y_n)_{n\in\N}$ is Cauchy, so is $(y_{n_k})_{k\in\N}$ and, upon taking $k,l\in\N$ large enough, we can make $\left\langle x_{k}, y_{n_l}-y_{n_k}\right\rangle$ arbitrary small. This eventually concludes the proof.
\end{proof}

\section{Double quantum affinization in type $\mathfrak{a}_1$}
\label{sec:dqaff}
We now define and study the main object of interest in this paper; the double quantum affinization in type $\mathfrak a_1$, $\qdaff(\mathfrak a_1)$. We let $I=\{1\}$ be the labeling of the unique node of the type $\mathfrak a_1$ Dynkin diagram and we let $Q^\pm = \Z^\pm \alpha_1$. We denote by $Q=\Z\alpha_1$ the type $\mathfrak a_1$ root lattice.
\subsection{Definition of $\qdaff(\mathfrak a_1)$}
\begin{defn}
\label{defn:qdaff}
The \emph{double quantum affinization} $\qdaff(\mathfrak{a}_1)$ 
of type $\mathfrak{a}_1$ is defined as the $\F$-algebra generated by $$\{\Dsf_1, \Dsf_1^{-1}, \Dsf_2, \Dsf_2^{-1},\Csf^{1/2}, \Csf^{-1/2},   \csf+{m},\csf-{-m}, \Ksf+{1,0,m}, \Ksf-{1,0,-m}, \Ksf+{1,n,r}, \Ksf-{1,-n,r}, \Xsf+{1,r,s}, \Xsf-{1,r,s}:m\in\N, n\in \N^\times, r,s\in\Z\}$$
subject to the relations
\be\label{eq:Csfcentral} \mbox{$\Csf^{\pm 1/2}$ and $\cbsf\pm(z)$ are central} \ee
\be\label{eq:csbf} \cbsf+(v) \cbsf-(w) = \cbsf-(w) \cbsf+(v) = 1 \mod \frac{w}{v}\,,  \ee
\be \Dsf^{\pm 1}_1 \Dsf^{\mp 1}_1 = 1 \qquad \Dsf^{\pm 1}_2 \Dsf^{\mp 1}_2 = 1 \qquad \Dsf_1\Dsf_2= \Dsf_2\Dsf_1\ee
\be \Dsf_1 \Kbsf\pm{1,\pm m}(z) \Dsf_1^{-1} = q^{\pm m} \Kbsf\pm{1,\pm m}(z) \qquad \Dsf_1 \Xbsf\pm{1,r}(z) \Dsf_1^{-1} = q^{r} \Xbsf\pm{1,r}(z)\,, \ee
\be \Dsf_2 \Kbsf\pm{1,\pm m}(z) \Dsf_2^{-1} = \Kbsf\pm{1,\pm m}(zq^{-1}) \qquad \Dsf_2 \Xbsf\pm{1,r}(z) \Dsf_2^{-1} =  \Xbsf\pm{1,r}(zq^{-1})\,, \ee
\be \res_{v,w} \frac{1}{vw} \Kbsf\pm{1,0}(v) \Kbsf\mp{1,0}(w) 
= 1 \,,  \ee
\be\label{eq:K+K+} (v-q^{\pm 2}z)(v-q^{2(m-n \mp 1)}z) \Kbsf\pm{1,\pm m}(v) \Kbsf\pm{1,\pm n}(z) = (vq^{\pm 2}-z)(vq^{\mp2}-q^{2(m-n)}z) \Kbsf\pm{1,\pm n}(z) \Kbsf\pm{1,\pm m}(v)\,,\ee
\be\label{eq:K+K-} (\Csf q^{2(1-m)}v-w)(q^{2(n-1)}v-\Csf w) \Kbsf+{1, m}(v) \Kbsf-{1,-n}(w) = (\Csf q^{-2m}v - q^{2}w)(q^{2n}v-\Csf q^{-2}w) \Kbsf-{1,-n}(w)\Kbsf+{1, m}(v) \,,\ee
\be\label{eqbf:K+X+} (v-q^{\pm 2}z) \Kbsf\pm{1,\pm m}(v) \Xbsf\pm{1,r}(z) = (q^{\pm 2}v-z)\Xbsf\pm{1,r}(z)\Kbsf\pm{1,\pm m}(v)\,,\ee
\be\label{eq:K+X-} (\Csf v - q^{2(m\mp 1)}z) \Kbsf\pm{1,\pm m}(v) \Xbsf\mp{1,r}(z) = (\Csf q^{\mp 2} v - q^{2m}z) \Xbsf\mp{1,r}(z) \Kbsf\pm{1,\pm m}(v)\,,\ee
\be
(v-q^{\pm 2} w) \Xbsf\pm{1,r}(v) \Xbsf\pm{1,s}(w) =(vq^{\pm 2}- w) \Xbsf\pm{1,s}(w) \Xbsf\pm{1,r}(v)  \,,\label{eq:X+rX+s}\ee
\bea[\Xbsf+{1,r}(v),\Xbsf-{1,s}(z)] &=& \frac{1}{q-q^{-1}} \left \{\delta\left ( \frac{\Csf v}{q^{2(r+s)}z}  \right ) \prod_{p=1}^{|s|} \cbsf-\left(\Csf^{-1/2}q^{\left (2p-1\right ) \sign(s)-1}
z\right )^{-\sign(s)} \Kbsf+{1,r+s}(v) \right .\nn\\
&& \left . -\delta\left ( \frac{\Csf^{-1} v}{q^{2(r+s)}z}  \right ) \prod_{p=1}^{|r|} \cbsf+\left(\Csf^{-1/2}q^{\left (1-2p\right)\sign(r)-1}
v\right )^{\sign(r)} \Kbsf-{1,r+s}(z)\right \}\,,
\label{eq:X+X-}
\label{eq:X+X-KK}
\eea
where $m,n \in \N$, $r,s\in\Z$ and we have set
\be \cbsf\pm(z) = \sum_{m\in\N} \csf\pm{\pm m} z^{\mp m}\,,\ee
\be \Kbsf\pm{1,0}(z) = \sum_{m\in\N} \Ksf\pm{1,0,\pm m} z^{\pm m}\,,\ee
and, for every $m\in\N^\times$ and $r\in\Z$,
\be \Kbsf\pm{1,\pm m} (z) = \sum_{s\in\Z} \Ksf\pm{1,\pm m, s} z^{-s}\,,\ee
\be \Xbsf\pm{1,r}(z) = \sum_{s\in\Z} \Xsf\pm{1,r,s} z^{-s}\,.\ee
In (\ref{eq:X+X-KK}), we further assume that $\Kbsf\pm{1,\mp m}(z) =0$ for every $m\in\N^\times$.
\end{defn}
\begin{defn}
We define $\qdaff^0(\mathfrak a_1)$ as the subalgebra of $\qdaff(\mathfrak{a}_1)$ generated by 
$$\left\{ \Csf, \csf+{m},\csf-{-m}, \Ksf+{1,0,m} , \Ksf-{1,0,-m}, \Ksf+{1,n,r} , \Ksf-{1,-n,r} : m\in\N, n\in\N^\times,r\in\Z \right \}\,.$$
We define similarly $\qdaff^\pm(\mathfrak{a}_1)$ as the subalgebra of $\qdaff(\mathfrak{a}_1)$ generated by $\left \{\Xsf\pm{1,r,s} : r,s\in \Z\right \}$.
\end{defn}
\begin{rem}
Obviously, $\qdaff^\pm(\mathfrak{a}_1)$ is graded over $Q^\pm$ whereas $\qdaff(\mathfrak a_1)$ is graded over the root lattice $Q$ of $\mathfrak a_1$. $\qdaff(\mathfrak a_1)$ is also graded over $\Z^2 = \Z_{(1)}\times \Z_{(2)}$;
$$\qdaff(\mathfrak a_1) = \bigoplus_{(n_1, n_2)\in\Z^2} \qdaff(\mathfrak a_1)_{(n_1, n_2)}\,,$$
where, for every $(n_1, n_2)\in\Z^2$, we let
$$\qdaff(\mathfrak a_1)_{(n_1, n_2)} = \left\{x\in\qdaff(\mathfrak a_1) : \Dsf_1 x\Dsf_1^{-1} = q^{n_1} x, \quad \Dsf_2 x\Dsf_2^{-1} = q^{n_2} x \right \}\,.$$
\end{rem}
\begin{rem}
\label{rem:qdafftopology}
It is worth emphasizing that, were it not for relation (\ref{eq:X+X-}), the above definition of $\qdaff(\mathfrak{a}_1)$ would be purely algebraic. However, the r.h.s. of (\ref{eq:X+X-}) involves two infinite series and we may equip $\qdaff(\mathfrak a_1)$ with a topology, along the lines of what was done in section \ref{sec:topology} for $\qaff(\dot{\mathfrak a}_1)$, making use of the $\Z_{(2)}$-grading in order to construct a basis $\left\{\dot\Omega_n:n\in\N\right \}$ of open neighbourhoods of $0$. In that case, both series are convergent in the corresponding completion $\widehat{\qdaff(\mathfrak a_1)}$ and we shall further require that the subalgebras $\qdaff^-(\mathfrak a_1)$, $\qdaff^0(\mathfrak a_1)$ and $\qdaff^+(\mathfrak a_1)$ be defined as closed subalgebras of $\qdaff(\mathfrak a_1)$. We shall eventually denote with a hat their respective completions. An alternative point of view on this question, which might actually prove more useful when it comes to studying representation theory, consists in observing that $\qdaff(\mathfrak a_1)$ is \emph{proalgebraic}. Indeed, for every $N\in\N$, let $\qdaff(\mathfrak a_1)^{(N)}$ be the $\F$-algebra generated by 
$$\{\Csf^{1/2}, \Csf^{-1/2},   \csf+{n},\csf-{-n}, \Ksf+{1,0,m}, \Ksf-{1,0,-m}, \Ksf+{1,p,r}, \Ksf-{1,-p,r}, \Xsf+{1,r,s}, \Xsf-{1,r,s}: m\in\N, n\in\range{0}{N}, p\in\N^\times, r,s\in\Z\}$$ 
subject to relations ((\ref{eq:Csfcentral}) -- (\ref{eq:X+X-})), where, this time,
\be \cbsf\pm(z) = \sum_{m=0}^N \csf\pm{\pm m} z^{\mp m}\,.\ee
Now clearly, each $\qdaff(\mathfrak a_1)^{(N)}$ is algebraic since the sums on the r.h.s. of (\ref{eq:X+X-}) are both finite. Moreover, letting $\mathcal I_N$ be the two-sided ideal of $\qdaff(\mathfrak a_1)^{(N)}$ generated by $\{\csf+N, \csf-{-N}\}$ (resp. $\{\csf+0-1, \csf-0-1\}$) for every $N>1$ (resp. for $N=0$), we obviously have a surjective algebra homomorphism
\be \qdaff(\mathfrak a_1)^{(N)} \longrightarrow \qdaff(\mathfrak a_1)^{(N-1)} \cong \frac{\qdaff(\mathfrak a_1)^{(N)}}{\mathcal I_N}\ee
and we can define $\qdaff(\mathfrak a_1)$ as the inverse limit 
$$\qdaff(\mathfrak a_1) = \lim_{\longleftarrow} \qdaff(\mathfrak a_1)^{(N)}$$
of the system of algebras
$$\cdots \longrightarrow \qdaff(\mathfrak a_1)^{(N)} \longrightarrow \qdaff(\mathfrak a_1)^{(N-1)} \longrightarrow \cdots \longrightarrow \qdaff(\mathfrak a_1)^{(0)}\longrightarrow \qdaff(\mathfrak a_1)^{(-1)}\,.$$
\end{rem}

\begin{defn}
In $\widehat{\qdaff^0(\mathfrak a_1)}$, we define
$$\pbsf\pm (z) = \sum_{m\in\N} \psf\pm{\pm m} z^{\mp m} = \cbsf\pm(z) \Kbsf\mp{1,0}(\Csf^{-1/2}z)^{-1} \Kbsf\mp{1,0}(\Csf^{-1/2}zq^2)$$
and for every $m\in\N^\times$,
$$\tbsf+{1,m}(z) =  \sum_{n\in\N} \tsf+{1,m,n} z^{-n} =-\frac{1}{q-q^{-1}}\Kbsf+{1,0}(zq^{-2m})^{-1}\Kbsf+{1,m}(z)\,,$$
$$\tbsf-{1,-m}(z)= \sum_{n\in\N} \tsf-{1,- m,n} z^{n} = \frac{1}{q-q^{-1}} \Kbsf-{1,-m}(z) \Kbsf-{1,0}(zq^{-2m})^{-1}\,.$$
Then, we let $\qdaff^{0^+}(\mathfrak a_1)$ be the closed subalgebra of $\widehat{\qdaff^0(\mathfrak a_1)}$ generated by 
$$\{\Csf^{1/2},\Csf^{-1/2}, \psf+{m},\psf-{-m}, \tsf +{1,p,n}, \tsf -{1,-p,n}: m\in\N, n\in\Z, p \in \N^\times\}\,.$$
\end{defn}
\begin{defn}
We denote by $\qdaff'(\mathfrak a_1)$ the subalgebra of $\qdaff(\mathfrak a_1)$ generated by
$$\{\Dsf_2, \Dsf_2^{-1},\Csf^{1/2}, \Csf^{-1/2},   \csf+{m},\csf-{-m}, \Ksf+{1,0,m}, \Ksf-{1,0,-m}, \Ksf+{1,n,r}, \Ksf-{1,-n,r}, \Xsf+{1,r,s}, \Xsf-{1,r,s}:m\in\N, n\in \N^\times, r,s\in\Z\}\,,$$
i.e. the subalgebra generated by all the generators of $\qdaff(\mathfrak a_1)$ except $\Dsf_1$ and $\Dsf_1^{-1}$. We shall denote by
$$\jmath:\qdaff'(\mathfrak a_1) \hookrightarrow \qdaff(\mathfrak a_1)$$
the natural injective algebra homomorphism. We extend it by continuity into
$$\widehat \jmath : \widehat{\qdaff'(\mathfrak a_1)} \hookrightarrow \widehat{\qdaff(\mathfrak a_1)}\,.$$
\end{defn}
\noi The main result of the present paper is the following
\begin{thm}
\label{thm:main}
There exists a bicontinuous $\F$-algebra isomorphism $\widehat\Psi:\widehat{\qaff(\dot{\mathfrak a}_1)} \stackrel{\sim}{\longrightarrow} \widehat{\qdaff'(\mathfrak a_1)}$.
\end{thm}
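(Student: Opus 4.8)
The plan is to construct the isomorphism $\widehat\Psi$ explicitly on generators, guided by the affine Damiani--Beck philosophy sketched in the introduction, and then to verify that it is a well-defined, bicontinuous, invertible homomorphism. First I would recall that $\qaff(\dot{\mathfrak a}_1)$ is the quantum affinization of $\dot{\mathfrak{sl}}_2$ and comes with the $\Z$-graded completion $\uqsltc$ carrying the $\br$-action $\ddot{\mathfrak B}\to\Aut\uqsltc$ constructed in the previous section; the key point is that the automorphism $T$ (together with $T_\pi$, $T_{\omega^\vee_i}$, $\eta$, $\varphi$) furnishes, via iterated application, an infinite family of ``imaginary'' and higher Drinfel'd-type currents inside $\uqsltc$. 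Concretely, I would set $\widehat\Psi$ by sending the double-current generators $\Xbsf\pm{1,r}(z)$, $\Kbsf\pm{1,\pm m}(z)$, $\cbsf\pm(z)$, $\Csf^{\pm1/2}$, $\Dsf_2^{\pm1}$ to specific elements of $\uqsltc$ built from the Dynkin-diagram subalgebra $\qaff(\mathfrak a_1)^{(1)}\hookrightarrow\qaff(\dot{\mathfrak a}_1)$ (which supplies the generators of ``level $0$'' in the new $\Z_{(1)}$-grading) and from the iterates $T^{r}(\x\pm1(z))$, $T^{r}(\kk\pm1(z))$ (which supply the shifts in the $\Z_{(1)}$-direction), with $\Dsf_2$ realized by $D$ and $\Csf$ by $C$. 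The map $\widehat\jmath\circ\widehat\Psi$ would then identify $\widehat{\qaff(\dot{\mathfrak a}_1)}$ with the subalgebra of $\widehat{\qdaff(\mathfrak a_1)}$ missing $\Dsf_1$, matching the statement.

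Next I would check that this assignment respects all the defining relations (\ref{eq:Csfcentral})--(\ref{eq:X+X-}) of $\qdaff(\mathfrak a_1)$, restricted to the subalgebra $\qdaff'(\mathfrak a_1)$. This is where the automorphism property of $T$ is indispensable: the quadratic exchange relations (\ref{eq:K+K+})--(\ref{eq:X+rX+s}) between generators at ``$\Z_{(1)}$-levels'' $r$ and $s$ reduce, after applying $T^{-\min(r,s)}$, to the known relations (\ref{eq:kpkm})--(\ref{eq:qaffserre}) of $\qaff(\dot{\mathfrak a}_1)$ twisted by the structure functions $G_{ij}^\pm$; the shifts by powers of $q^2$ appearing in (\ref{eq:K+K+}) and (\ref{eq:K+X+}) are exactly what $T$ produces on the $\kk\pm0$ and $\x\pm0$ currents, as recorded in the remark around (\ref{eq:Tx0+})--(\ref{eq:Tx0-}). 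The most delicate relation is the commutator (\ref{eq:X+X-}): its right-hand side, with the $\delta$-functions located at $\Csf^{\pm1}v = q^{2(r+s)}z$ and the products of $\cbsf\pm$ over $p=1,\dots,|s|$ or $|r|$, must be matched against the result of computing $[T^r(\x+1(v)),T^s(\x-1(z))]$ using the cross relation (\ref{eq:relx+x-}) iteratively; the telescoping of the $G_{11}^\pm$ factors through the intermediate levels is what yields the $\cbsf\pm$ products, and the $q^{2(r+s)}$ shift in the argument of the $\delta$ comes from the $D$-conjugation tracking. I would also verify the central generators $\cbsf\pm(z)$ map to genuinely central elements —these should be the ``quantum determinant''-type combinations $\kk+1(z)\cdots$ that $T$ fixes up to the appropriate $q^2$-shifts.

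Then I would establish bicontinuity. Since $\widehat\Psi$ is to intertwine the topologies defined by $\{\Omega_n\}$ on $\qaff(\dot{\mathfrak a}_1)$ (Definition-Proposition~\ref{defprop:topol}) and by $\{\dot\Omega_n\}$ on $\qdaff(\mathfrak a_1)$ (Remark~\ref{rem:qdafftopology}), it suffices to check that $\widehat\Psi$ and its candidate inverse each send some $\Omega_n$ (resp. $\dot\Omega_n$) into $\dot\Omega_{f(n)}$ (resp. $\Omega_{g(n)}$) for suitable $f,g\to\infty$; this follows because $\widehat\Psi$ is, up to bounded $T$-shifts, degree-preserving for the relevant $\Z$-gradings, so the filtration-degree functions $\nu_x$ from the proof of Proposition~\ref{prop:Omegan} are comparable on both sides. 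For invertibility I would write down the inverse $\widehat\Psi^{-1}$ on the generators of $\qaff(\dot{\mathfrak a}_1)$, using the diagram subalgebra $\qdaff(\mathfrak a_1)^{(1)}$ for the index-$1$ currents and applying the inverse of the ambient $T$ (which on the $\qdaff$ side is conjugation by $\Dsf_1$) to reach the index-$0$ currents, then check $\widehat\Psi\circ\widehat\Psi^{-1}=\id$ and $\widehat\Psi^{-1}\circ\widehat\Psi=\id$ on generators by the same relation-matching bookkeeping. The main obstacle I anticipate is the full verification of (\ref{eq:X+X-}): controlling the infinite product of $\cbsf\pm$ factors as the images of a telescoping product of structure functions $G_{11}^\pm$ across all intermediate $T$-levels, and making sure the two $\delta$-function terms land with exactly the claimed arguments and signs $-\sign(s)$, $\sign(r)$ — this is a genuinely intricate formal-distribution computation rather than a routine one, and it is the linchpin that forces the precise form of Definition~\ref{defn:qdaff}.
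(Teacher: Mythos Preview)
Your broad strategy---define the map on generators via iterates of the braid group action and check relations---matches the paper's in spirit, but there is a genuine gap in how you propose to verify the cross relation (\ref{eq:X+X-}). You say the right-hand side will arise from computing $[T^r(\x+1(v)),T^s(\x-1(z))]$ ``iteratively'' using (\ref{eq:relx+x-}), with the $\cbsf\pm$ products coming from ``telescoping of the $G_{11}^\pm$ factors through the intermediate levels''. This is not what happens. First, the correct automorphism is $Y=T_\pi\circ T$, not $T$; the paper sets $\X\pm{1,m}(z)=Y^{\mp m}(\x\pm1(z))$. More importantly, applying $Y$ only moves one index, so you reduce to computing $[\x+1(v),\X-{1,n}(z)]$ for arbitrary $n$, and this commutator does \emph{not} follow formally from (\ref{eq:relx+x-}) plus a telescoping identity. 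The paper shows (equation (\ref{eq:Ym})) that this commutator, computed recursively, picks up not only the desired term $\bpsi+{1,n}(v)$ (which becomes $\KK+{1,n}$) but also a string of unwanted terms $\bz+{k}(v)\bpsi+{1,n-k}(v)$; these $\bz+{k}$ are genuine central obstructions that must be proven to vanish before the relation (\ref{eq:X+X-}) can hold.

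The vanishing of the $\bz\pm m$ is not a formal consequence of the defining relations of $\qaff(\dot{\mathfrak a}_1)$ manipulated inside the algebra; the paper proves it by pairing against arbitrary products $\x+{i_1}(z_1)\cdots\x+{i_{2m}}(z_{2m})$ using the non-degenerate Hopf pairing of Section~2, together with a delicate combinatorial lemma on the pairing with $\X+{1,-r}(v)$. This step---identifying the obstruction and killing it with the pairing---is the heart of the argument and is absent from your plan. A similar issue afflicts your proposed verification of (\ref{eq:X+rX+s}): the exchange relations between $\X-{1,m}$ and $\X-{1,n}$ for $|m-n|>1$ are established by an induction (Proposition~\ref{prop:X+1X+1}) in which auxiliary terms $\xi_m(z)$ appear and must be controlled; they do not reduce to the base relation by a single application of $Y^{-\min(m,n)}$. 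Finally, your remark that the inverse of $T$ on the $\qdaff$ side is ``conjugation by $\Dsf_1$'' is not right: $\Dsf_1\notin\qdaff'(\mathfrak a_1)$, and the paper instead builds $\Psi^{-1}$ by exhibiting explicit elements $\KK\pm{1,\pm m}(v)$, $\X\pm{1,m}(z)$ in $\uqsltc$ and checking they satisfy the $\qdaff'$ relations directly.
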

\begin{proof}
Relations ((\ref{eq:K+K+})-(\ref{eq:K+X-})) respectively imply
\be\label{eq:K0+K0+} \Kbsf\pm{1,0}(v) \Kbsf\pm{1,0}(z) =  \Kbsf\pm{1,0}(z) \Kbsf\pm{1,0}(v)\,,\ee
\be\label{eq:K0+K0-} \Kbsf+{1, 0}(v) \Kbsf-{1,0}(w) = G_{11}^+(\Csf v/w)G_{11}^-(\Csf^{-1} v/w) \Kbsf-{1,0}(w)\Kbsf+{1, 0}(v) \ee
\be \Kbsf\pm{1,0}(v) \Xbsf\pm{1,r}(z) = G_{11}^\mp(v/z) \Xbsf\pm{1,r}(z)\Kbsf\pm{1,0}(v)\,,\ee
\be \Kbsf\pm{1,0}(v) \Xbsf\mp{1,r}(z) = G_{11}^\pm(\Csf v/z) \Xbsf\mp{1,r}(z) \Kbsf\pm{1,0}(v)\,,\ee
since $\Kbsf\pm{1,0}(z)\in \qdaff'(\mathfrak a_1)[[z^{\pm 1}]]$. It also easily follows from relation (\ref{eq:X+rX+s}) that
\be \left [ \Xbsf+{1,0}(v), \Xbsf+{1,-1}(w)\right]_{G_{11}^-(v/w)} = \delta\left(\frac{vq^{-2}}{w} \right) \Upsilon^+(w) \,,\ee
\be \left [ \Xbsf-{1,1}(v), \Xbsf-{1,0}(w)\right]_{G_{11}^+(v/w)} = \delta\left(\frac{vq^{2}}{w} \right) \Upsilon^-(w) \,,\ee
for some $\Upsilon^\pm(w)\in\widehat{\qdaff'(\mathfrak a_1)}[[w,w^{-1}]]$. Hence, the only possible obstructions to setting 
$$\Psi(D^{\pm 1}) = \Dsf_2^{\pm1} \qquad \Psi (C^{\pm 1/2})=\Csf^{\pm 1/2}\,,$$
$$\Psi(\kk\pm0(z))= -\cbsf\pm(z)\Kbsf\mp{1,0}(\Csf^{-1/2} z)^{-1} \qquad \Psi(\kk\pm1(z))= - \Kbsf\mp{1,0}(\Csf^{-1/2} z)$$
$$\Psi(\x+{0}(z)) =-\cbsf-(\Csf^{1/2} z) \Kbsf+{1,0}(z)^{-1} \Xbsf-{1,1}(\Csf z) \qquad \Psi(\x-{0}(z)) =-\Xbsf+{1,-1}(\Csf z)\cbsf+(\Csf^{1/2} z) \Kbsf-{1,0}(z)^{-1} $$
$$\Psi(\x\pm{1}(z)) = \Xbsf\pm{1,0}(z)\,,$$
and to extending it as an algebra homomorphism $\Psi:  \qaff(\dot{\mathfrak a}_1) \to \widehat{\qdaff'(\mathfrak a_1)}$ are $\Upsilon^\pm(w)$ and the images under $\Psi$ of the l.h.s. of the quantum Serre relations (\ref{eq:qaffserre}). We shall see in section \ref{sec:DamianiBeck} that both obstructions actually vanish. We also postpone until section \ref{sec:DamianiBeck} the construction of the continuous algebra homomorphism $\Psi^{-1}:\qdaff'(\mathfrak a_1)\longrightarrow \widehat{\qaff(\dot{\mathfrak a}_1)}$.
\end{proof}

\subsection{The subalgebra $\uqhhzeroslt$ and the elliptic Hall algebra}
Another remarkable feature of $\qdaff(\mathfrak a_1)$ and, more particularly of its subalgebra $\qdaff^0(\mathfrak a_1)$, is the existence of an algebra homomorphism onto it, from the elliptic Hall algebra that we now define.
\begin{defn}
Let $q_1, q_2, q_3$ be three (dependent) formal variables such that $q_1q_2q_3=1$. The \emph{elliptic Hall algebra} $\mathcal E_{q_1,q_2,q_3}$ is the $\Q(q_1,q_2,q_3)$-algebra generated by $\left \{ C, \psi^+_m, \psi^-_{-m}, e^+_n, e^-_n : m\in\N, n\in\Z\right \}$ subject to the relations
\be \mbox{$C$ is central\,,}\ee
\be\label{eq:psi+psi+} \psib\pm (z) \psib\pm(w) = \psib\pm(w)\psib\pm(z)\,,\ee
\be\label{eq:psi+psi-} g(Cz,w)g(Cw,z)\psib+(z)\psib-(w)=g(z,Cw)g(w,Cz) \psib-(w)\psib+(z)\,,\ee
\be\label{eq:psie+} g(C^{\frac{1\pm 1}{2}}z,w)\psib\pm(z) \eb+(w) = - g(w, C^{\frac{1\pm 1}{2}}z) \eb+(w) \psib\pm(z)  \,,\ee
\be\label{eq:psie-} g(w, C^{\frac{1\mp 1}{2}}z)  \psib\pm(z) \eb-(w)=  -g(C^{\frac{1\mp 1}{2}}z,w) \eb-(w) \psib\pm(z)\,, \ee
\be\label{eq:e+e-} [\eb+(z),\eb-(w) ] = \frac{1}{g(1,1)} \left [\delta\left (\frac{Cw}{z} \right )\psib+(w) -\delta\left (\frac{w}{Cz} \right )\psib-(z)\right ]\,,\ee
\be\label{eq:e+e+} g(z,w)\eb+(z)\eb+(w)= -g(w,z)\eb+(w)\eb+(z)\,,\ee
\be\label{eq:e-e-} g(w,z)\eb-(z)\eb-(w)= -g(z,w)\eb-(w)\eb-(z)\,,\ee
\be\label{eq:e+e+e+} \res_{v,w,z} (vwz)^m(v+z)(w^2-vz) \eb\pm(v) \eb\pm(w) \eb\pm(z)=0\,,\ee
where $m\in\Z$ and we have introduced
\be g(z,w) =(z-q_1w)(z-q_2w)(z-q_3w)\,, \ee
\be \psib\pm(z) = \sum_{m\in\N} \psi^\pm_{\pm m} z^{\mp m}\,,\ee
\be \eb\pm(z) = \sum_{m\in\Z} e^\pm_m z^{-m}\,.\ee
\end{defn}
\begin{rem}
The elliptic Hall algebra $\E{q_1}{q_2}{q_3}$ is $\Z$-graded and can be equipped with a natural topology along the lines of what we did for $\qaff(\dot{\mathfrak a}_1)$ in section \ref{sec:topology}. It then becomes a topological algebra and we denote by $\widehat{\E{q_1}{q_2}{q_3}}$ its completion.
\end{rem}
\begin{prop}
\label{prop:Hall}
There exists a unique continuous $\F$-algebra homomorphism $f:\widehat{\mathcal E_{q^{-4}, q^2,q^2}} \to \qdaff^{0^+}(\mathfrak a_1)$ such that
\be\label{eq:f(C)} f(C) = \Csf\,,\ee
\be f(\psib\pm(z)) =  (q^2-q^{-2})^2 \, \pbsf\pm (\Csf^{1/2} zq^{-2}) \,,\ee
\be\label{eq:f(ebpm)} f(\eb\pm(z)) =  \tbsf\pm{1,\pm 1}(z)
\,.\ee
\end{prop}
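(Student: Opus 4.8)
The statement to prove is Proposition~\ref{prop:Hall}: the existence of a unique continuous $\F$-algebra homomorphism $f:\widehat{\E{q^{-4}}{q^2}{q^2}}\to\qdaff^{0^+}(\mathfrak a_1)$ with the prescribed values on generators. The plan is to proceed in the standard way for a map out of an algebra-by-generators-and-relations: first establish uniqueness, then existence by verifying that the images of the defining relations of $\E{q^{-4}}{q^2}{q^2}$ hold in $\qdaff^{0^+}(\mathfrak a_1)$, and finally address continuity.

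\emph{Uniqueness.} The elements $C,\psi^\pm_{\pm m},e^\pm_m$ generate $\E{q^{-4}}{q^2}{q^2}$ topologically, so any continuous homomorphism agreeing with $f$ on them is equal to $f$ on a dense subalgebra, hence everywhere; this is immediate once continuity is known, so uniqueness really follows from the last step.

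\emph{Existence --- the main obstacle.} The heart of the proof is checking that the assignments \eqref{eq:f(C)}--\eqref{eq:f(ebpm)} are compatible with relations \eqref{eq:psi+psi+}--\eqref{eq:e+e+e+}, specialized at $(q_1,q_2,q_3)=(q^{-4},q^2,q^2)$, so that $g(z,w)=(z-q^{-4}w)(z-q^2w)^2$. First I would record, once and for all, how the elements $\pbsf\pm(z)$ and $\tbsf\pm{1,\pm1}(z)$ of $\qdaff^{0^+}(\mathfrak a_1)$ behave: I would rewrite the defining relations \eqref{eq:K0+K0+}--\eqref{eq:K+X-}, \eqref{eq:K+K+}, \eqref{eq:K+K-}, together with \eqref{eqbf:K+X+}, \eqref{eq:K+X-}, \eqref{eq:X+rX+s} and \eqref{eq:X+X-KK}, in terms of the change of variables $\pbsf\pm(z)=\cbsf\pm(z)\Kbsf\mp{1,0}(\Csf^{-1/2}z)^{-1}\Kbsf\mp{1,0}(\Csf^{-1/2}zq^2)$ and $\tbsf\pm{1,\pm1}(z)$. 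Using Remark~\ref{rem:Gij} (the type-$\mathfrak a_1$ identity $G_{10}^\pm=G_{11}^\mp$, and $c_{11}=2$, $c_{10}=-2$) one sees that the rational structure functions $G_{11}^\pm$ occurring in $\qaff(\dot{\mathfrak a}_1)$-type relations produce exactly products of factors $(z-q^{\pm2}w)$, and the extra $\Kbsf+{1,0}$-shifts in the definition of $\tbsf\pm{1,\pm1}(z)$ introduce a further factor $(z-q^{-4}w)$ after commuting $\Kbsf+{1,0}(zq^{-2})^{-1}$ past $\Xbsf+{1,0}(w)$ using \eqref{eqbf:K+X+}. This is precisely why the elliptic Hall parameters come out as $(q^{-4},q^2,q^2)$: the $\mathfrak{gl}_1$ cubic factor $g(z,w)=(z-q^{-4}w)(z-q^2w)^2$ matches the product of the $\uqslthh$-structure functions with the extra shift. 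Concretely:

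\begin{itemize}
\item[(a)] Relations \eqref{eq:psi+psi+}, \eqref{eq:e+e+}, \eqref{eq:e-e-}, \eqref{eq:e+e-}: these should follow by direct substitution from \eqref{eq:K0+K0+}, \eqref{eq:X+rX+s}, and the $\Xbsf+{}\Xbsf-{}$ relation \eqref{eq:X+X-KK} once one has computed the $\tbsf\pm{}$-commutator $[\Xbsf+{1,0}(v),\Xbsf-{1,0}(w)]$ (the generators $\Upsilon^\pm$ appearing in the proof of Theorem~\ref{thm:main} are, up to normalization, what $f(\eb\pm)$ maps onto; indeed comparing with \eqref{eq:f(ebpm)} one identifies $\Upsilon^\pm(w)$ with a scalar multiple of $(q-q^{-1})\tbsf\pm{1,\pm1}$).
\item[(b)] The mixed relations \eqref{eq:psi+psi-}, \eqref{eq:psie+}, \eqref{eq:psie-}: these follow from \eqref{eq:K0+K0-} and the $\Kbsf\pm{1,0}$-versus-$\Xbsf\pm{}$ relations together with the definition of $\pbsf\pm$; the three factors of $g$ arise from $G_{11}^+\cdot G_{11}^-$ contributions plus the $q^2$-shift built into $\pbsf\pm$.
\item[(c)] The cubic Serre-type relation \eqref{eq:e+e+e+}: this is the most delicate. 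It should be deduced from \eqref{eq:X+rX+s} (which gives a \emph{quadratic} relation among the $\Xbsf\pm{1,0}$) by the same argument that derives the $\mathfrak{gl}_1$ cubic relation from the quadratic exchange relation in the $\widehat{\mathfrak{gl}}_1$ quantum toroidal algebra; alternatively one invokes the known presentation of $\E{q_1}{q_2}{q_3}$ where \eqref{eq:e+e+e+} is a consequence of the other relations.
\end{itemize}

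I expect step (c) together with the bookkeeping in (a) to be the main obstacle: extracting the cubic relation requires a careful residue/formal-distribution manipulation, and one must check that the scalar normalizations $(q^2-q^{-2})^2$ in $f(\psib\pm)$ and $\tfrac1{g(1,1)}$ with $g(1,1)=(1-q^{-4})(1-q^2)^2$ in \eqref{eq:e+e-} are matched. The arguments are of the same nature as those in the proof of Theorem~\ref{thm:main}, and in fact once $\Psi$ from that theorem is in hand, $f$ can alternatively be obtained by composing a $\mathfrak{gl}_1\hookrightarrow$ quantum-toroidal embedding with $\widehat\Psi$ restricted to the $0$-part, which may shorten the verification.

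\emph{Continuity.} Finally, since $\qdaff^{0^+}(\mathfrak a_1)$ is a closed subalgebra of $\widehat{\qdaff^0(\mathfrak a_1)}$ and carries the induced topology (defined via the $\Z_{(2)}$-grading as in Remark~\ref{rem:qdafftopology}), and since the $\Z$-grading on $\E{q^{-4}}{q^2}{q^2}$ is sent by $f$ (on generators, hence everywhere it is defined) to the $\Z_{(2)}$-grading on $\qdaff^{0^+}(\mathfrak a_1)$ --- one checks $\eb\pm$ has degree $\mp1$ matched against $\Xbsf\pm{}$-degree under $\Dsf_2$, and $\psi^\pm$ degree $0$ --- the preimage of each $\dot\Omega_n$ contains the corresponding $n$-th piece of the filtration on $\widehat{\E{q^{-4}}{q^2}{q^2}}$. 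Hence $f$ is continuous on the dense subalgebra generated by the $\Z$-graded generators, and therefore extends uniquely and continuously to $\widehat{\E{q^{-4}}{q^2}{q^2}}$; this also yields the uniqueness asserted in the proposition. $\finproof$
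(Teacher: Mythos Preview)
Your overall architecture --- uniqueness from density, existence by checking relations, continuity from the grading --- is correct and matches the paper. But there are two genuine gaps in the ``existence'' part.

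\textbf{First, a systematic confusion.} The elements $\tbsf\pm{1,\pm1}(z)$ are built \emph{entirely} from $\Kbsf\pm{1,0}$ and $\Kbsf\pm{1,\pm1}$; no $\Xbsf\pm{}$ generators appear. Hence relations \eqref{eq:X+rX+s} and \eqref{eq:X+X-KK} play no role here, and the computation you describe in (a), commuting $\Kbsf+{1,0}(zq^{-2})^{-1}$ past $\Xbsf+{1,0}(w)$ and then computing $[\Xbsf+{1,0}(v),\Xbsf-{1,0}(w)]$, is not the relevant one. In particular, $\Upsilon^\pm(w)$ from the proof of Theorem~\ref{thm:main} arise from \emph{same-sign} $\Xbsf\pm{}$ commutators and are later shown to \emph{vanish}; they are not scalar multiples of $\tbsf\pm{1,\pm1}$. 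The quadratic relations \eqref{eq:psi+psi+}--\eqref{eq:psie-}, \eqref{eq:e+e+}, \eqref{eq:e-e-} do follow by direct substitution, but from the $\Kbsf$-$\Kbsf$ relations \eqref{eq:K+K+}, \eqref{eq:K+K-} (specialized to $m,n\in\{0,1\}$), exactly as the paper does; the third factor $(z-q^{-4}w)$ you correctly anticipate comes from commuting $\Kbsf+{1,0}(zq^{-2})^{-1}$ past $\Kbsf+{1,1}(w)$.

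\textbf{Second, the hard relations.} For \eqref{eq:e+e-}, using \eqref{eq:K+K-} at $m=n=1$ yields a $\delta$-supported commutator $[\Kbsf+{1,1}(v),\Kbsf-{1,-1}(w)]$ with unknown residues $\theta^\pm$; identifying these with $\pbsf\pm$ (equations \eqref{eq:thetap}--\eqref{eq:thetam}) is \emph{not} a direct substitution, and the paper obtains it only after transporting to $\uqsltc$ via $\Psi^{-1}$ and invoking Proposition~\ref{prop:psixm}.\emph{v}. For the cubic Serre relation \eqref{eq:e+e+e+}, neither of your suggested shortcuts works: it is not a formal consequence of the quadratic exchange relation \eqref{eq:e+e+} in $\E{q_1}{q_2}{q_3}$, nor is there a presentation in which it is redundant. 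The paper proves \eqref{eq:fe+fe+fe+} by an explicit (and lengthy) computation in $\uqsltc$: one identifies $f(\eb\pm(z))=\Psi(\bpsi\pm{1,\pm1}(z))$, pairs against $\x+{i_1}(z_1)\cdots\x+{i_6}(z_6)$ using the non-degenerate Hopf pairing of Section~2, organizes the resulting sum over $\Prt{(2,2,2)}{\rran{6}}$ by tableaux, and checks that each block vanishes. Without this (or an equivalent argument), step (c) is a real gap.
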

\begin{proof}
We prove that, starting from ((\ref{eq:f(C)}) -- (\ref{eq:f(ebpm)})), we can extend $f$ as an algebra homomorphism. For that purpose, it suffices to check the relations in $\E{q^{-4}}{q^2}{q^2}$, observing that, in addition to (\ref{eq:K0+K0+}) and (\ref{eq:K0+K0-}), we also have
\be \Kbsf\pm{1,0}(v) \Kbsf\pm{1,\pm 1}(z) = G_{11}^\mp (v/z)G_{11}^\pm(vq^2/z) \Kbsf\pm{1,\pm1}(z) \Kbsf\pm{1,0}(v)\,, \ee
\be \Kbsf\mp{1,0}(v) \Kbsf\pm{1,\pm 1}(w) = G_{11}^\mp(\Csf v/w) G_{11}^\pm(\Csf^{-1}q^2 v/w) \Kbsf\pm{1,\pm 1}(w)\Kbsf\mp{1,0}(v)\,,\ee
as direct consequences of (\ref{eq:K+K+}) and (\ref{eq:K+K-}) respectively, since $\Kbsf\pm{1,0}(z) \in \qdaff'(\mathfrak a_1)[[z^{\pm 1}]]$. One then easily obtains ((\ref{eq:psi+psi+}) -- (\ref{eq:psie-})) and ((\ref{eq:e+e+}) -- (\ref{eq:e-e-})). For example, we have
\bea 
g(v,z) f(\eb+(v)) f(\eb+(z))  &=& \frac{1}{(q-q^{-1})^2} g(v,z)G_{11}^+(z/v)G_{11}^-(zq^{-2}/v) \Kbsf+{1,0}(vq^{-2}) \Kbsf+{1,0}(q^{-2}z) \Kbsf+{1,1}(v)\Kbsf+{1,1}(z) \nn\\
&=&\frac{v-z}{(q-q^{-1})^2} (v-q^2 z )(v-q^{-2} z) \Kbsf+{1,0}(vq^{-2}) \Kbsf+{1,0}(q^{-2}z) \Kbsf+{1,1}(v)\Kbsf+{1,1}(z) \nn\\
\eea
\bea
&=&\frac{v-z}{(q-q^{-1})^2} (vq^2 - z )(vq^{-2}- z) \Kbsf+{1,0}(vq^{-2}) \Kbsf+{1,0}(q^{-2}z) \Kbsf+{1,1}(z)\Kbsf+{1,1}(v) \nn\\
&=&\frac{v-z}{(q-q^{-1})^2} (vq^2 - z )(vq^{-2}- z) G_{11}^+(vq^{-2}/z) G_{11}^-(v/z) \nn\\
&&\qquad \qquad \qquad \qquad \qquad \qquad \qquad \times \Kbsf+{1,0}(q^{-2}z) \Kbsf+{1,1}(z)\Kbsf+{1,0}(vq^{-2}) \Kbsf+{1,1}(v) \nn\\
&=& -g(z,v) f(\eb+(z)) f(\eb+(v))\,.\nn
\eea
Considering (\ref{eq:e+e-}), we observe that (\ref{eq:K+K-}) implies that there exist $\theta^\pm(z) \in \widehat{\qdaff'(\mathfrak a_1)}[[z,z^{-1}]]$ such that
$$\left [\Kbsf+{1,1}(v), \Kbsf-{1,-1}(w) \right ]_{G_{11}^+(\Csf vq^{-2}/w) G_{11}^-(\Csf^{-1}vq^2/w)} = \delta \left (\frac{\Csf v}{w} \right ) \theta^-(v) + \delta\left(\frac{v}{\Csf w} \right ) \theta^+(w)  $$
and one easily sees that
\bea \left[f(\eb+(v)), f(\eb-(w))\right ]  &=& -\frac{1}{(q-q^{-1})^2} \Kbsf+{1,0}(vq^{-2})^{-1} \left [\Kbsf+{1,1}(v), \Kbsf-{1,-1}(w) \right ]_{G_{11}^+(\Csf vq^{-2}/w) G_{11}^-(\Csf^{-1}vq^2/w)} \Kbsf-{1,0}(wq^{-2})^{-1}\nn\\
&=& -\frac{1}{(q-q^{-1})^2} \Kbsf+{1,0}(vq^{-2})^{-1} \left \{\delta \left (\frac{\Csf v}{w} \right ) \theta^-(v) + \delta\left(\frac{v}{\Csf w} \right ) \theta^+(w)\right \} \Kbsf-{1,0}(wq^{-2})^{-1}\,.\nn\eea
Therefore, it suffices to prove that
\be -\frac{1}{(q-q^{-1})^2} \Kbsf+{1,0}(\Csf wq^{-2})^{-1} \theta^+(w) \Kbsf-{1,0}(wq^{-2})^{-1} =  \frac{(q^2-q^{-2})^2}{g(1,1)} \pbsf+(\Csf^{1/2}q^{-2}w)\label{eq:thetap}\ee
\be -\frac{1}{(q-q^{-1})^2} \Kbsf+{1,0}(vq^{-2})^{-1} \theta^-(v) \Kbsf-{1,0}(\Csf vq^{-2})^{-1} =  -\frac{(q^2-q^{-2})^2}{g(1,1)} \pbsf-(\Csf^{1/2}q^{-2}v)\label{eq:thetam}\ee
We postpone the proof of ((\ref{eq:thetap}) -- (\ref{eq:thetam})), as well as that of
\be\label{eq:fe+fe+fe+} \res_{v,w,z} (vwz)^m(v+z)(w^2-vz) f(\eb\pm(v)) f(\eb\pm(w)) f(\eb\pm(z))=0\,,\ee
until section \ref{sec:DamianiBeck}.
\end{proof}

\noi We now naturally make the following
\begin{conj}
$f:\widehat{\mathcal E_{q^{-4}, q^2,q^2}} \to \qdaff^{0^+}(\mathfrak a_1)$ is a bicontinuous $\F$-algebra isomorphism.
\end{conj}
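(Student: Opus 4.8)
The starting point is Proposition \ref{prop:Hall}, which already supplies a continuous $\F$-algebra homomorphism $f:\widehat{\E{q^{-4}}{q^2}{q^2}}\to\qdaff^{0^+}(\mathfrak a_1)$; what remains is to prove surjectivity and injectivity, which would in particular settle Conjecture \ref{conj:Hall}. The plan is to reduce both points to the structure theory of the elliptic Hall algebra (see \cite{Schiffmann} and references therein, as well as \cite{feigin2012}) combined with the affine Damiani--Beck isomorphism $\widehat\Psi$ of Theorem \ref{thm:main}.

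For surjectivity, the subtlety is that $\qdaff^{0^+}(\mathfrak a_1)$ is generated not only by the currents $\tbsf\pm{1,\pm 1}(z)$ and $\pbsf\pm(z)$ -- which lie in the image of $f$ by (\ref{eq:f(C)})--(\ref{eq:f(ebpm)}) -- but also by the higher ``slope'' currents $\tbsf\pm{1,p}(z)$ for every $p\in\N^\times$. One must therefore show that these higher currents are topologically generated by $\tbsf\pm{1,\pm 1}(z)$ and $\pbsf\pm(z)$, in parallel with the classical fact that $\E{q_1}{q_2}{q_3}$ is generated by $C$, $\eb+(z)$ and $\eb-(z)$ alone, its higher-slope generators being recovered from iterated $q$-brackets of the modes of $\eb\pm(z)$. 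Since in Definition \ref{defn:qdaff} the currents $\Kbsf\pm{1,m}(z)$ are independent generators for all $m$, no such decomposition is visible from the defining relations; the natural route is to transport the problem through $\widehat\Psi$ into $\widehat{\qaff(\dot{\mathfrak a}_1)}$, where the full toroidal relations and the braid group action of section \ref{sec:qtoralg} are available, and there to identify $\qdaff^{0^+}(\mathfrak a_1)$ with (the completion of) the ``horizontal'' quantum toroidal $\mathfrak{gl}_1$ subalgebra of quantum toroidal $\mathfrak{sl}_2$ -- an analogue of the embeddings of quantum toroidal $\mathfrak{gl}_1$ into higher quantum toroidal algebras found in the work of Feigin et al. and of Tsymbaliuk. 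The ``ladder'' relations expressing $\Kbsf\pm{1,m}(z)$, $m\geq 2$, through iterated brackets of $\Kbsf\pm{1,1}(z)$ would then follow.

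For injectivity, the strategy is to exploit the gradings: both $\widehat{\E{q^{-4}}{q^2}{q^2}}$ and $\qdaff^{0^+}(\mathfrak a_1)$ carry compatible $\Z^2$-gradings, by slope and by $\Dsf_2$-degree, and $f$ respects them. On the elliptic Hall side one has an explicit PBW-type topological basis indexed by multisets of lattice points, together with the non-degenerate Drinfel'd-double pairing on $\E{q_1}{q_2}{q_3}$; one then checks, degree by degree in the completion, that $f$ sends this basis to a linearly independent family -- equivalently, that $f$ is an isometry for that pairing and for the pairing carried onto $\qdaff^{0^+}(\mathfrak a_1)$, through $\widehat\Psi$, from the non-degenerate Hopf pairing of section \ref{sec:qtoralg}. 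A graded, pairing-preserving, continuous homomorphism with dense image between completions is then forced to be an isomorphism.

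The hard part -- and the reason this is only stated as a conjecture -- is precisely the structural identification underlying surjectivity: establishing the ladder relations, hence pinning down $\qdaff^{0^+}(\mathfrak a_1)$ as the completion of quantum toroidal $\mathfrak{gl}_1$, demands a delicate computation that must moreover be kept under control against the topological subtleties (convergence of the relevant infinite series, closedness of the subalgebras) inherent in passing to completions. We leave the details, together with the attendant representation-theoretic consequences, to future work.
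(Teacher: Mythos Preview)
The statement you are addressing is a \emph{conjecture} in the paper, not a theorem: the paper does not prove it either. What the paper offers, immediately after the conjecture, is a single supporting remark: using relations (\ref{eq:e+e+})--(\ref{eq:e-e-}) one can produce elements $\eb\pm_{\pm 2}(z)\in\widehat{\E{q_1}{q_2}{q_3}}[[z,z^{-1}]]$ via a $q$-deformed double bracket of $\eb\pm$ with itself, and these satisfy $f(\eb\pm_{\pm 2}(v))=\tbsf\pm{1,\pm 2}(v)$, so that one may set $f^{-1}(\tbsf\pm{1,\pm 2}(v))=\eb\pm_{\pm 2}(v)$. The paper then says that completing the proof would require constructing $f^{-1}(\tbsf\pm{1,\pm m}(v))$ for all $m>2$ in the same way, and stops there.

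Your proposal is therefore not to be compared against a proof but against that remark. Your surjectivity discussion is essentially the same idea phrased from the other side: rather than building preimages $\eb\pm_{\pm m}$ inside $\widehat{\E{q_1}{q_2}{q_3}}$, you propose to show that the $\tbsf\pm{1,\pm m}$ lie in the closed subalgebra generated by $\tbsf\pm{1,\pm 1}$ and $\pbsf\pm$ via ``ladder relations''; these are the same iterated-bracket identities, and the paper's $m=2$ computation is exactly the first rung of your ladder. Your suggestion to transport the question through $\widehat\Psi$ and identify $\qdaff^{0^+}(\mathfrak a_1)$ with a horizontal $\mathfrak{gl}_1$-toroidal subalgebra is a reasonable heuristic but goes beyond anything the paper attempts. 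Your injectivity strategy via compatible gradings and the Hopf pairing is entirely your own addition; the paper's remark implicitly handles injectivity by constructing a genuine two-sided inverse rather than by a pairing argument.

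In short: you correctly recognise this as open and your outline is consistent with, and somewhat more detailed than, the paper's own brief remark; but neither you nor the paper supplies a proof, and you are right to flag the ladder-relation step as the genuine obstacle.
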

\begin{rem}
It is worth mentioning that the above conjecture is supported by the fact that, in view of ((\ref{eq:e+e+}) -- (\ref{eq:e-e-})), there clearly exists $\eb\pm_{\pm 2}(z)\in \widehat{\E{q_1}{q_2}{q_3}}[[z,z^{-1}]]$ such that
$${}_{G_{01}^\mp(q^{\mp 2}v/w) G_{11}^\mp(v/w)}\left[\eb\pm(w), \eb\pm(v)\right]_{G_{01}^\mp(q^{\mp 2}w/v)G_{11}^\mp(w/v)} = \pm \left [2\right ]_q \left\{\delta\left (\frac{q^{2}v}{w}\right ) \eb\pm_{\pm 2}(w) -\delta\left (\frac{wq^{2}}{v}\right ) \eb\pm_{\pm 2}(v)\right\} $$
and that we can therefore set
$$f^{-1}(\tbsf\pm{1,\pm 2} (v)) = \eb\pm_{\pm 2}(v)\,.$$
In order to complete the proof, one would similarly need to construct $f^{-1}(\tbsf\pm{1,\pm m}(v))$ for any $m>2$.
\end{rem}

\subsection{$\qaff(\mathfrak{a}_1)$ subalgebras of $\qdaff(\mathfrak{a}_1)$}
Interestingly, $\qdaff(\mathfrak{a}_1)$ admits countably many embeddings of the quantum affine algebra $\qaff(\mathfrak{a}_1)$. This is the content of the following
\begin{prop}
For every $m\in\Z$, there exists a unique injective algebra homomorphism $\iota_m : \qaff(\mathfrak{a}_1)\hookrightarrow\widehat\qdaff(\mathfrak{a}_1)$ such that
\be \iota_m(C^{\pm 1/2}) = \Csf^{\pm 1/2} \qquad \iota_m (D^{\pm 1}) = \Dsf_2^{\pm1}\label{eq:iotamC}\ee
\be \iota_m(\kk\pm 1(z)) =  -\prod_{p=1}^{|m|} \cbsf\pm\left(q^{2\left (p\pm \frac{\sign(m)\pm1}{2}\right )}z\right )^{\pm\sign(m)} \Kbsf\mp{1,0}(\Csf^{-1/2}z)\,,\ee
\be \iota_m(\x\pm1(z)) = \Xbsf\pm{1,\pm m}(z)\,.\label{eq:iotamx}\ee
\end{prop}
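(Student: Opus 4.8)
First observe that $\qaff(\mathfrak a_1)$ is presented by the relations of Definition~\ref{def:defqaffdota1} specialised to the singleton node set $I=\{1\}$ with Cartan entry $c_{11}=2$, so that the quantum Serre relations \eqref{eq:qaffserre} (indexed by $i\neq j$) are vacuous. As $\iota_m$ is prescribed on the whole generating set, uniqueness will be automatic, and the content is existence -- as an algebra homomorphism into $\widehat{\qdaff(\mathfrak a_1)}$ -- together with injectivity. The plan is to realise $\iota_m$ as an explicit composition of maps already at hand, so that both properties come for free, and then to verify that this composition acts on generators as prescribed. Concretely, I would set $\iota_m:=\widehat\jmath\circ\widehat\Psi\circ Y^{-m}\circ\iota^{(1)}$, where $\iota^{(1)}:\qaff(\mathfrak a_1)\hookrightarrow\qaff(\dot{\mathfrak a}_1)$ is the Dynkin--diagram embedding of section~\ref{sec:defnqtor} onto the node-$1$ subalgebra $\qaff(\mathfrak a_1)^{(1)}$, $Y=T_\pi\circ T\in\Aut{\uqsltc}$ is the bicontinuous automorphism of section~\ref{sec:qtoralg}, $\widehat\Psi:\uqsltc\isom\widehat{\qdaff'(\mathfrak a_1)}$ is the isomorphism of Theorem~\ref{thm:main}, and $\widehat\jmath:\widehat{\qdaff'(\mathfrak a_1)}\hookrightarrow\widehat{\qdaff(\mathfrak a_1)}$ is the canonical inclusion. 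Being the composition of an injection, a bicontinuous automorphism, an isomorphism and an injection, $\iota_m$ is then an injective $\F$-algebra homomorphism with image in $\widehat{\qdaff'(\mathfrak a_1)}\subseteq\widehat{\qdaff(\mathfrak a_1)}$, and only the identification on generators would remain -- possibly after precomposition with a fixed spectral-rescaling automorphism of $\qaff(\mathfrak a_1)$ absorbing the central rescalings of the spectral parameter carried by $T^{\pm1}$.

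On $C^{\pm1/2}$ and $D^{\pm1}$ the identification is immediate, since $\iota^{(1)}$, $T$ and $T_\pi$ fix those generators while $\widehat\Psi(C^{\pm1/2})=\Csf^{\pm1/2}$ and $\widehat\Psi(D^{\pm1})=\Dsf_2^{\pm1}$, which is \eqref{eq:iotamC}. For $m=0$ one has $\iota_0=\widehat\jmath\circ\widehat\Psi\circ\iota^{(1)}$, and reading off $\widehat\Psi$ on the node-$1$ currents -- $\widehat\Psi(\kk\pm1(z))=-\Kbsf\mp{1,0}(\Csf^{-1/2}z)$ and $\widehat\Psi(\x\pm1(z))=\Xbsf\pm{1,0}(z)$ -- reproduces \eqref{eq:iotamx} and the $m=0$ case of the middle formula (empty product). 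For general $m$ I would iterate $Y^{-1}=T^{-1}\circ T_\pi$: since $T_\pi$ exchanges the two nodes, $Y^{-1}$ carries the node-$1$ currents to the $T^{-1}$-images of the node-$0$ currents, given explicitly in section~\ref{sec:qtoralg}; applying $\widehat\Psi$ and invoking the collapse identities -- Remark~\ref{rem:Gij}, the identity \eqref{eq:G+G-}, and $\delta$-function reductions of the type displayed in \eqref{eq:Tx0+}--\eqref{eq:Tx0-} -- one should be able to check, by induction on $|m|$, that the resulting nested-commutator expressions telescope to $\Xbsf\pm{1,\pm m}(z)$ for the raising/lowering currents and to $-\prod_{p=1}^{|m|}\cbsf\pm(q^{2(p\pm(\sign(m)\pm1)/2)}z)^{\pm\sign(m)}\Kbsf\mp{1,0}(\Csf^{-1/2}z)$ for the Cartan currents; the $\cbsf\pm$-prefactors are the accumulated central contributions of the successive Lusztig symmetries, and the $q$-exponents are forced by the $C^{\pm1/2}$-shifts occurring in $T^{\pm1}$ at each step.

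The main obstacle will be exactly this inductive evaluation of $\widehat\Psi\circ Y^{\mp m}$ on the node-$1$ Drinfel'd currents -- in particular the bookkeeping of the $\cbsf\pm$-prefactors and the sign-of-$m$ case distinction in the exponents; it is the doubly-affine counterpart of the passage, familiar from \cite{Damiani, Beck}, from the vertical $\uq(\mathfrak{sl}_2)$ to the Drinfel'd currents of $\uq(\widehat{\mathfrak{sl}}_2)$ by means of a translation operator.

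A more pedestrian alternative, which avoids the iteration, would be instead to \emph{define} $\iota_m$ by \eqref{eq:iotamC}--\eqref{eq:iotamx} and to check the defining relations of $\qaff(\mathfrak a_1)$ directly inside $\widehat{\qdaff(\mathfrak a_1)}$: the $\x\pm1$--$\x\pm1$ relation \eqref{eq:xpmxpm} (with $c_{11}=2$) is immediate from the $r$-independent relation \eqref{eq:X+rX+s}; the $\kk$--$\x$ relations \eqref{eq:kpxpm}--\eqref{eq:kmxpm} and the $\kk$--$\kk$ relation \eqref{eq:kpkm} follow from \eqref{eqbf:K+X+}, \eqref{eq:K+X-}, \eqref{eq:K+K+} and \eqref{eq:K+K-}, after commuting the central $\cbsf\pm$-prefactors through (using \eqref{eq:Csfcentral} and \eqref{eq:csbf}) and matching structure functions via Remark~\ref{rem:Gij}; and the crucial relation \eqref{eq:relx+x-} comes from specialising \eqref{eq:X+X-KK} to $(r,s)=(m,-m)$, so that $r+s=0$, only $\Kbsf\pm{1,0}$ survives on the right-hand side, and the two $\delta$-function coefficients become precisely $\iota_m(\kk+1(\cdot))$ and $\iota_m(\kk-1(\cdot))$ evaluated at the shifted spectral parameters imposed by the $\delta$-constraints -- which is the raison d'\^etre of the specific arguments in the definition of $\iota_m(\kk\pm1(z))$. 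Injectivity would then still be deduced from the factorisation $\iota_m=\widehat\jmath\circ\widehat\Psi\circ Y^{-m}\circ\iota^{(1)}$, once the two descriptions are shown to agree.
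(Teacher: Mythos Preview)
Your approach is essentially identical to the paper's: the paper defines $\iota_m$ as the composite $\widehat\jmath\circ\widehat\Psi\circ Y^{-m}\circ\widehat\iota^{(1)}$, deduces injectivity from the factorisation, and defers the verification of the formulas on generators to the computations of section~\ref{sec:DamianiBeck}. Your additional sketch of the inductive evaluation and the alternative direct verification of relations are both reasonable elaborations of what the paper summarises as ``one easily checks''; the hedge about a spectral-rescaling precomposition is unnecessary, as the formulas match on the nose.
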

\begin{proof}
Let $\iota^{(1)}:\qaff(\mathfrak a_1) \hookrightarrow \qaff(\dot{\mathfrak a}_1)$ be the injective algebra homomorphism mapping $\qaff(\mathfrak a_1) $ to the Dynkin diagram subalgebra of $\qaff(\dot{\mathfrak a}_1)$ associated with the vertex labeled $1\in\dot{I}$ -- see section \ref{sec:defnqtor}. It naturally extends to an injective algebra homomorphism $\widehat\iota^{(1)} : \qaff(\mathfrak a_1) \hookrightarrow \widehat{\qaff(\dot{\mathfrak a}_1)}$. Then, let for every $m\in\Z$, $\iota_m$ be the composite
$$\iota_m : \qaff(\mathfrak a_1)  
\underset{\widehat\iota^{(1)}}{\xhookrightarrow{\quad}}
 \widehat{\qaff(\dot{\mathfrak a}_1)} \underset{Y^{-m}}{\stackrel{\sim}{\longrightarrow}} \widehat{\qaff(\dot{\mathfrak a}_1)}\underset{\widehat\Psi}{\stackrel{\sim}{\longrightarrow}} \widehat{\qdaff'(\mathfrak{a}_1)} \underset{\widehat \jmath}{\xhookrightarrow{\quad}}\widehat{\qdaff(\mathfrak{a}_1)}\,.$$
Thus, $\iota_m$ is clearly injective. Moreover, one easily checks ((\ref{eq:iotamC}) -- (\ref{eq:iotamx})) -- see next section.
\end{proof}

\subsection{Automorphisms of $\widehat{\qdaff'(\mathfrak a_1)}$}
$\widehat{\qdaff'(\mathfrak a_1)}$ naturally inherits, through $\widehat \Psi$, the automorphisms defined over $\widehat{\qaff(\dot{\mathfrak a}_1)}$ in the previous section.
\begin{prop}
Conjugation by $\widehat \Psi$ clearly provides a group isomorphism $\Aut{\widehat{\qaff(\dot{\mathfrak a}_1)}} \cong \Aut{\widehat{\qdaff'(\mathfrak a_1)}}$. In particular, for every $f\in \Aut{\widehat{\qaff(\dot{\mathfrak a}_1)}}$, we let $\dot f = \widehat \Psi \circ f \circ \widehat \Psi^{-1} \in \Aut{\widehat{\qdaff'(\mathfrak a_1)}}$.
\end{prop}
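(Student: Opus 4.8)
The plan is to observe that the assertion is a purely formal consequence of Theorem~\ref{thm:main}. First I would recall from that theorem that $\widehat\Psi\colon\widehat{\qaff(\dot{\mathfrak a}_1)}\isom\widehat{\qdaff'(\mathfrak a_1)}$ is a bicontinuous $\F$-algebra isomorphism; hence its inverse $\widehat\Psi^{-1}\colon\widehat{\qdaff'(\mathfrak a_1)}\isom\widehat{\qaff(\dot{\mathfrak a}_1)}$ exists and is likewise a bicontinuous $\F$-algebra isomorphism. Consequently, for any $f\in\Aut{\widehat{\qaff(\dot{\mathfrak a}_1)}}$ --- that is, any bicontinuous $\F$-algebra automorphism --- the composite $\dot f:=\widehat\Psi\circ f\circ\widehat\Psi^{-1}$ is a composition of bicontinuous $\F$-algebra isomorphisms $\widehat{\qdaff'(\mathfrak a_1)}\to\widehat{\qaff(\dot{\mathfrak a}_1)}\to\widehat{\qaff(\dot{\mathfrak a}_1)}\to\widehat{\qdaff'(\mathfrak a_1)}$ and therefore belongs to $\Aut{\widehat{\qdaff'(\mathfrak a_1)}}$. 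This is the only place where bicontinuity is actually used: it ensures that conjugation does not leave the subgroup of genuinely bicontinuous (as opposed to merely algebraic) automorphisms.

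It then remains to check that $f\mapsto\dot f$ is a group isomorphism, which is routine. Inserting $\widehat\Psi^{-1}\circ\widehat\Psi=\id$ between the two inner factors gives $\widehat\Psi\circ(f\circ g)\circ\widehat\Psi^{-1}=(\widehat\Psi\circ f\circ\widehat\Psi^{-1})\circ(\widehat\Psi\circ g\circ\widehat\Psi^{-1})$ for all $f,g\in\Aut{\widehat{\qaff(\dot{\mathfrak a}_1)}}$, and $\widehat\Psi\circ\id\circ\widehat\Psi^{-1}=\id$, so $f\mapsto\dot f$ is a group homomorphism. The symmetric assignment $g\mapsto\widehat\Psi^{-1}\circ g\circ\widehat\Psi$ is a two-sided inverse to it --- again by cancelling $\widehat\Psi^{-1}\circ\widehat\Psi$ and $\widehat\Psi\circ\widehat\Psi^{-1}$ --- so the homomorphism is in fact an isomorphism. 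There is no substantive obstacle here; the statement is included essentially to fix the notation $\dot f$ that is used in the sequel, the entire content being carried by the bicontinuity of $\widehat\Psi$ established in Theorem~\ref{thm:main}.
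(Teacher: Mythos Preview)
Your proposal is correct and matches the paper's treatment: the paper gives no proof at all for this proposition, the word ``clearly'' in the statement signaling that the authors regard it as an immediate formal consequence of the bicontinuous isomorphism $\widehat\Psi$ from Theorem~\ref{thm:main}. Your write-up simply spells out that formality, which is exactly what is intended.
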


\subsection{Triangular decomposition of $\widehat{\qdaff(\mathfrak a_1)}$}
\begin{defn}
Let $A$ be a complete  topological algebra with closed subalgebras $A^\pm$ and $A^0$. We shall say that $(A^-,A^0,A^+)$ is a \emph{triangular decomposition} of $A$ if the multiplication induces a bicontinuous isomorphism of vector spaces $A^-\widehat\otimes A^0 \widehat\otimes A^+ \stackrel{\sim}{\rightarrow} A$.  
\end{defn}
\noi In order to prove the triangular decomposition of $\widehat{\qdaff(\mathfrak a_1)}$, we shall make use of the following classic
\begin{lem}
\label{lem:triangulardecomp}
Let $A$ be a complete topological algebra with a triangular decomposition $(A^-,A^0,A^+)$. Let $\mathcal I^\pm$ be a closed two-sided ideal of $A^\pm$ such that $\mathcal I^+. A \subseteq A.\mathcal I^+$ and $A.\mathcal I^- \subseteq \mathcal I^-. A$. Then the quotient algebra $B=A/(A.(\mathcal I^++\mathcal I^-).A)$ admits a triangular decomposition $(B^-, A^0, B^+)$ where $B^\pm$ is the set of equivalence classes of $A^\pm$ in $B$. Moreover, there exists a bicontinuous algebra isomorphism $B^\pm \cong A^\pm/\mathcal I^\pm$. 
\end{lem}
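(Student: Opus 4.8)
The plan is to first simplify the closed two-sided ideal $\mathcal J := A.(\mathcal I^++\mathcal I^-).A$ and then transport the whole statement through the multiplication homeomorphism $\Phi: A^-\widehat\otimes A^0\widehat\otimes A^+\isom A$ provided by the triangular decomposition of $A$. First I would note that, since $1\in A$ and $\mathcal I^+$ is a left ideal of $A^+$, one has $A.\mathcal I^+ = A.\mathcal I^+.A$: the inclusion $A.\mathcal I^+\subseteq A.\mathcal I^+.A$ is clear, while $A.(\mathcal I^+.A)\subseteq A.(A.\mathcal I^+) = A.\mathcal I^+$ by the hypothesis $\mathcal I^+.A\subseteq A.\mathcal I^+$; symmetrically, $A.\mathcal I^-\subseteq\mathcal I^-.A$ gives $\mathcal I^-.A = A.\mathcal I^-.A$. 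Hence $\mathcal J = A.\mathcal I^+ + \mathcal I^-.A$. Using $A = A^-.A^0.A^+$ together with $A^+.\mathcal I^+ = \mathcal I^+$ and $\mathcal I^-.A^- = \mathcal I^-$ (because $1\in A^\pm$ and $\mathcal I^\pm$ are ideals of $A^\pm$), one rewrites $A.\mathcal I^+ = A^-.A^0.\mathcal I^+ = \Phi(A^-\widehat\otimes A^0\widehat\otimes\mathcal I^+)$ and $\mathcal I^-.A = \mathcal I^-.A^0.A^+ = \Phi(\mathcal I^-\widehat\otimes A^0\widehat\otimes A^+)$, all products being read as the closed subspaces appropriate to the complete setting; these are closed since $\Phi$ is a homeomorphism and $\mathcal I^\pm$ are closed.

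Next I would invoke the topological right-exactness of the completed tensor product: for closed subspaces $\mathcal I^\pm\subseteq A^\pm$, the completed tensor product of the canonical surjections induces a bicontinuous linear isomorphism
$$\frac{A^-\widehat\otimes A^0\widehat\otimes A^+}{\,A^-\widehat\otimes A^0\widehat\otimes\mathcal I^+ \,+\, \mathcal I^-\widehat\otimes A^0\widehat\otimes A^+\,}\;\isom\;(A^-/\mathcal I^-)\,\widehat\otimes\,A^0\,\widehat\otimes\,(A^+/\mathcal I^+)\,,$$
and in particular the sum in the denominator is closed; in the situations where the lemma is applied one may alternatively split off the constants $A^\pm = \F\oplus\overline{A^\pm}$ by a continuous augmentation vanishing on $\mathcal I^\pm$ and read this isomorphism off by hand. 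Combined with $\Phi^{-1}(\mathcal J) = A^-\widehat\otimes A^0\widehat\otimes\mathcal I^+ + \mathcal I^-\widehat\otimes A^0\widehat\otimes A^+$ from the first step, this produces a bicontinuous linear isomorphism $\Theta: B = A/\mathcal J\isom (A^-/\mathcal I^-)\widehat\otimes A^0\widehat\otimes(A^+/\mathcal I^+)$.

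It remains to identify the pieces. Writing $\pi:A\to B$ for the quotient map, the copy $\Phi(\F\widehat\otimes\F\widehat\otimes A^+)$ of $A^+$ in $A$ meets $\mathcal J$ exactly in $\Phi(\F\widehat\otimes\F\widehat\otimes\mathcal I^+)$ — here one uses $1\notin\mathcal I^\pm$, equivalently the continuous projection onto the constants used above — so $\pi|_{A^+}$ has kernel $\mathcal I^+$ and $B^+ := \pi(A^+)$ is bicontinuously isomorphic to $A^+/\mathcal I^+$; symmetrically $B^-\cong A^-/\mathcal I^-$, while $\pi|_{A^0}$ is a bicontinuous embedding, so that under $\Theta$ the three factors $B^-$, $A^0$, $B^+$ match $A^-/\mathcal I^-$, $A^0$, $A^+/\mathcal I^+$. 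Since $\Phi$ is precisely the multiplication map of $A$, under these identifications the composite of $\Theta$ with the multiplication map $B^-\widehat\otimes A^0\widehat\otimes B^+\to B$ is the identity of $(A^-/\mathcal I^-)\widehat\otimes A^0\widehat\otimes(A^+/\mathcal I^+)$; hence that multiplication map equals $\Theta^{-1}$, is a bicontinuous isomorphism, and $(B^-,A^0,B^+)$ is a triangular decomposition of $B$. I expect the only genuinely delicate point to be the second step, namely that the sum $A^-\widehat\otimes A^0\widehat\otimes\mathcal I^+ + \mathcal I^-\widehat\otimes A^0\widehat\otimes A^+$ is closed and that quotienting by it yields the completed tensor product of the quotients; everything else is the standard, essentially algebraic, juggling of one-sided ideals inside a triangular decomposition, carried out with attention to closures.
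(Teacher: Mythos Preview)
The paper does not give its own proof of this lemma; it simply cites Jantzen's \emph{Lectures on Quantum Groups}. Your argument is exactly the standard one found there, adapted to the topological/completed setting: simplify $\mathcal J$ to $A.\mathcal I^+ + \mathcal I^-.A$ using the one-sided absorption hypotheses, pull back through the multiplication isomorphism $\Phi$, and read off the quotient factorwise. You have correctly identified the only point that requires care beyond the purely algebraic version, namely that the sum $A^-\widehat\otimes A^0\widehat\otimes\mathcal I^+ + \mathcal I^-\widehat\otimes A^0\widehat\otimes A^+$ is closed and that the quotient by it is the completed tensor product of the quotients; your remark that in the paper's applications this can be checked by hand via the augmentation splitting $A^\pm = \F\oplus\overline{A^\pm}$ is the right way to handle it. In short: your proof is correct and matches the referenced argument.
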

\begin{proof}
See e.g. \cite{Jantzen}.
\end{proof}
\noi Recalling the definitions of $\qdaff^\pm(\mathfrak a_1)$ and $\qdaff^0(\mathfrak a_1)$ from definition \ref{defn:qdaff}, we have 
\begin{prop}
\label{prop:triang}
$(\qdaff^-(\mathfrak a_1), \qdaff^0(\mathfrak a_1), \qdaff^+(\mathfrak a_1))$ is a triangular decomposition of $\widehat{\qdaff(\mathfrak a_1)}$ and $\qdaff^\pm(\mathfrak a_1)$ is bicontinuously isomorphic to the algebra generated by 
$\{\Xsf\pm{1,r, s} : r,s\in\Z\}$ subject to relation (\ref{eq:X+rX+s}).
\end{prop}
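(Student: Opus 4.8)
The plan is to establish a PBW-type basis for $\qdaff(\mathfrak a_1)$ adapted to the claimed decomposition; both assertions then follow. The easy half is surjectivity of multiplication $m:\widehat{\qdaff^-(\mathfrak a_1)}\,\widehat\otimes\,\widehat{\qdaff^0(\mathfrak a_1)}\,\widehat\otimes\,\widehat{\qdaff^+(\mathfrak a_1)}\to\widehat{\qdaff(\mathfrak a_1)}$ (the grading operators $\Dsf_1^{\pm1},\Dsf_2^{\pm1}$ being absorbed into the Cartan factor $\qdaff^0(\mathfrak a_1)$). Relations (\ref{eqbf:K+X+})--(\ref{eq:K+X-}) move any $\Kbsf\pm{1,\pm m}(z)$ past any $\Xbsf\pm{1,r}(z)$; relation (\ref{eq:X+X-KK}) turns $\Xbsf+{1,r}(v)\,\Xbsf-{1,s}(z)$ into $\Xbsf-{1,s}(z)\,\Xbsf+{1,r}(v)$ plus Cartan terms; and (\ref{eq:X+rX+s}), (\ref{eq:K+K+})--(\ref{eq:K+K-}) reorder monomials inside $\qdaff^\pm(\mathfrak a_1)$ and $\qdaff^0(\mathfrak a_1)$. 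Thus every word in the generators is a convergent sum of ordered products $(\text{negative})\cdot(\text{Cartan})\cdot(\text{positive})$; since these moves are continuous the image of $m$ is dense, hence everything.

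The real content is injectivity of $m$. Since (\ref{eq:X+X-}) is the only relation with an infinite right-hand side, I would first work inside the algebraic approximants $\qdaff(\mathfrak a_1)^{(N)}$ of Remark \ref{rem:qdafftopology}, where every defining relation is polynomial. For fixed $N$, fix a total order on the generators with the $\Xsf-{1,r,s}$ below the Cartan generators below the $\Xsf+{1,r,s}$ and a convenient order within each block; the reduction rules extracted from (\ref{eq:K+K+})--(\ref{eq:X+X-}) then form a rewriting system, and one verifies by the diamond lemma (Bergman) that it is confluent, so that the reduced ordered monomials form an $\F$-basis of $\qdaff(\mathfrak a_1)^{(N)}$. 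Passing to the inverse limit $\qdaff(\mathfrak a_1)=\varprojlim_N\qdaff(\mathfrak a_1)^{(N)}$ and then to the completion — the basis being adapted to the $\Z_{(2)}$-grading that defines the topology — gives simultaneously the bicontinuous linear isomorphism $m$ (hence the triangular decomposition, each tensor factor being complete) and the fact that $\qdaff^\pm(\mathfrak a_1)$ is spanned by the ordered monomials in the $\Xsf\pm{1,r,s}$ alone. Because the presentation of $\qdaff(\mathfrak a_1)$ contains no Serre-type relation among the $\Xsf\pm{1,r,s}$, the latter is exactly the statement that the canonical surjection onto $\qdaff^\pm(\mathfrak a_1)$ from the algebra abstractly presented by (\ref{eq:X+rX+s}) is injective, and it is manifestly bicontinuous for the subspace topology — which is the second assertion.

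An alternative, more structural route to the injectivity step is through $\widehat\Psi$ and the embeddings $\iota_m:\qaff(\mathfrak a_1)\hookrightarrow\widehat{\qdaff(\mathfrak a_1)}$, $\iota_m(\x\pm{1}(z))=\Xbsf\pm{1,\pm m}(z)$: combined with the classical triangular decomposition of the quantum affine algebra $\qaff(\mathfrak a_1)$, these identify each slice $\langle\Xbsf\pm{1,r}(z)\rangle$ with the free quadratic algebra, leaving only the cross-slice relations — namely (\ref{eq:X+rX+s}) — to control, while adjoining $\Dsf_1^{\pm1}$ is harmless since $\widehat{\qdaff(\mathfrak a_1)}\cong\widehat{\qdaff'(\mathfrak a_1)}\rtimes\langle\Dsf_1^{\pm1}\rangle$, with $\Dsf_1$ acting by the $\Z_{(1)}$-grading automorphism. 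One should, however, not expect to simply transport the Hernandez decomposition $(\uqmslthh,\uqzeroslthh,\uqpslthh)$ of $\widehat{\qaff(\dot{\mathfrak a}_1)}$ along $\widehat\Psi$: the affine Damiani--Beck formulas send the Drinfeld generators $\x\pm{0}(z)$ to elements of the \emph{opposite} root degree $\mp\alpha_1$ (dressed with Cartan modes), so that decomposition does not match $(\widehat{\qdaff^-},\widehat{\qdaff^0},\widehat{\qdaff^+})$ — the matching decomposition of $\widehat{\qaff(\dot{\mathfrak a}_1)}$ is the coarser $\alpha_1$-graded one, whose positive part is $\langle Y^{-r}(\x+{1}(z)):r\in\Z\rangle$, and relating it to the Hernandez one still calls on the braid-group automorphisms $T,Y$. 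In any case the main obstacle is the confluence check: the delicate overlap ambiguities are those of the commutator relation (\ref{eq:X+X-KK}) with the Cartan--Cartan relations (\ref{eq:K+K+})--(\ref{eq:K+K-}) and the Cartan--$X^\pm$ relations (\ref{eqbf:K+X+})--(\ref{eq:K+X-}), and of (\ref{eq:X+rX+s}) with itself; resolving them is long but mechanical, using identities (\ref{eq:identity1})--(\ref{eq:identity2}) and the structure-function identities of Remark \ref{rem:Gij}. A secondary technical point is carrying out the completion/inverse-limit passage compatibly with the topology, so that the three tensor factors stay complete and $m$ is a homeomorphism onto its image.
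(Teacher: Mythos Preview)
Your approach is correct in outline but differs from the paper's. The paper avoids the full diamond-lemma verification by a two-step argument: first it introduces an auxiliary algebra $A$ defined by \emph{all} the relations of $\qdaff(\mathfrak a_1)$ \emph{except} (\ref{eq:X+rX+s}), so that $A^\pm$ are free on the $\Xsf\pm{1,r,s}$; it then checks --- via the explicit mode expansions of (\ref{eqbf:K+X+})--(\ref{eq:K+X-}) that you also invoke --- that $(A^-,A^0,A^+)$ is a triangular decomposition of $A$, which is considerably easier since there are no $X^\pm X^\pm$ overlaps to resolve. Finally it lets $\mathcal I^\pm\subset A^\pm$ be the ideal generated by the modes of (\ref{eq:X+rX+s}), verifies the compatibility conditions $\mathcal I^+\!\cdot A\subseteq A\cdot\mathcal I^+$ and $A\cdot\mathcal I^-\subseteq\mathcal I^-\!\cdot A$ (the only nontrivial check being that the commutator of (\ref{eq:X+rX+s}) with $\Xbsf\mp{1,t}(u)$ vanishes, which follows from (\ref{eqbf:K+X+})--(\ref{eq:X+X-})), and invokes the standard quotient lemma (Lemma~\ref{lem:triangulardecomp}). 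This sidesteps precisely the ``delicate overlap ambiguities'' you flag --- those of (\ref{eq:X+rX+s}) with itself and with (\ref{eq:X+X-KK}) --- by absorbing them into the ideal condition rather than checking confluence directly. Your diamond-lemma route would work but is heavier; the paper's factorisation through $A$ is the standard shortcut.

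One caution on your alternative route: the embeddings $\iota_m$ are \emph{defined} in the paper as composites through $\widehat\Psi$, whose construction is only completed in Section~\ref{sec:DamianiBeck}, after the present proposition. Invoking them here would be circular unless you first give an independent construction.
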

\begin{proof}
Let $A$ be the $\F$-algebra generated by
$$\{\Dsf_1, \Dsf_1^{-1}, \Dsf_2, \Dsf_2^{-1},\Csf^{1/2}, \Csf^{-1/2},   \csf+{m},\csf-{-m}, \Ksf+{1,0,m}, \Ksf-{1,0,-m}, \Ksf+{1,n,r}, \Ksf-{1,-n,r}, \Xsf+{1,r,s}, \Xsf-{1,r,s}:m\in\N, n\in \N^\times, r,s\in\Z\}$$
subject to the relations ((\ref{eq:csbf}) -- (\ref{eq:K+X-})) and (\ref{eq:X+X-}), i.e. all the defining relations of $\qdaff(\mathfrak a_1)$ but relation (\ref{eq:X+rX+s}). Endow $A$ with a topology along the lines of what was done in section \ref{sec:topology} for $\qaff(\dot{\mathfrak a}_1)$, making use of its $\Z_{(2)}$-grading. This yields a basis $\left\{\dot\Omega_n:n\in\N\right \}$ of open neighbourhoods of $0$. Let furthermore $A^0$ be the closed subalgebra of $A$ generated by 
$$\left\{\Dsf_1, \Dsf_1^{-1}, \Dsf_2, \Dsf_2^{-1},\Csf^{1/2}, \Csf^{-1/2},   \csf+{m},\csf-{-m}, \Ksf+{1,0,m}, \Ksf-{1,0,-m}, \Ksf+{1,n,r}, \Ksf-{1,-n,r} :m\in\N, n\in \N^\times, r\in\Z \right\}$$ 
and $A^\pm$ be the closed subalgebra of $A$ generated by $\left\{\Xsf\pm{1,r,s} : r,s\in\Z \right \}$. An easy recursion proves that relations (\ref{eqbf:K+X+}) and (\ref{eq:K+X-}) imply that, for every $N\in\N$ and every $m\in\N$, $l,r,s\in\Z$,
\be \Xsf+{1,r,s} \Ksf+{1,m,l} -q^2\Ksf+{1,m,l} \Xsf+{1,r,s}  -(q^2-q^{-2}) \sum_{p=1}^N q^{2p} \Ksf+{1,m,l+p}  \Xsf+{1,r,s-p}  +q^{2N}  \Ksf+{1,m,l+N+1} \Xsf+{1,r,s-N-1}  \in\dot\Omega_{\nu_{s,l}^+(N)}\nn\ee
\be \Ksf-{1,-m,l} \Xsf-{1,r,s} - q^{-2} \Xsf-{1,r,s} \Ksf-{1,-m,l} +(q^2-q^{-2}) \sum_{p=1}^N q^{-2p} \Xsf-{1,r,s+p}\Ksf-{1,-m,l-p} +q^{2N} \Xsf-{1,r,s+N+1}\Ksf-{1,-m,l-N-1}\in\dot\Omega_{\nu_{s,l}^-(N)}\nn\ee
\bea&&\Ksf+{1,m,l} \Xsf-{1,r,s} -q^{-2} \Xsf-{1,r,s}  \Ksf+{1,m,l} +(q^2-q^{-2}) \sum_{p=1}^N \Csf^{-p} q^{2p(m-1)} \Xsf-{1,r,s+p}\Ksf+{1,m,l-p}\nn\\ 
&&\qquad\qquad\qquad \qquad\qquad\qquad \qquad \qquad \qquad \qquad +\Csf^{-(N+1)} q^{2(N+1)(m-1)+2}\Xsf-{1,r,s+N+1}\Ksf+{1,m,l-N-1} \in\dot\Omega_{\nu_{s,l}^-(N)} \nn\eea
\bea &&\Xsf+{1,r,s} \Ksf-{1,-m,l} -q^2  \Ksf-{1,-m,l} \Xsf+{1,r,s}-(q^2-q^{-2}) \sum_{p=1}^N \Csf^p q^{2p(1-m)}  \Ksf-{1,-m,l+p} \Xsf+{1,r,s-p} \nn\\
&&\qquad\qquad\qquad\qquad\qquad\qquad\qquad\qquad\qquad\qquad\qquad  +\Csf^{N+1} q^{2(N+1)(1-m)} \Ksf-{1,-m,l+N+1}\Xsf+{1,r,s-N-1} \in\dot\Omega_{\nu_{s,l}^+(N)}\nn\eea
where $\nu_{s,l}^\pm(N) = \min(\pm l,\mp s)+N+1$. It obviously follows that $(A^-,A^0,A^+)$ is a triangular decomposition of $A$. Now let $\mathcal I^\pm$ be the closed two-sided ideal of $A^\pm$ generated by
$$\left\{\Xsf\pm{1,r,m+1}\Xsf\pm{1,s,n}-q^{\pm 2} \Xsf\pm{1,r,m}\Xsf\pm{1,s,n+1} - q^{\pm 2} \Xsf\pm{1,s,n}\Xsf\pm{1,r,m+1} + \Xsf\pm{1,s,n+1}\Xsf\pm{1,r,m} : r,s,m,n\in\Z\right \}\,.$$
Clearly $\qdaff(\mathfrak a_1)\cong A/(A.(\mathcal I^++\mathcal I^-).A)$. In view of the above rewritings of (\ref{eqbf:K+X+}) and (\ref{eq:K+X-}), it is clear that $\mathcal I^+. A^0 \subseteq A^0. \mathcal I^+$ and $A^0 .\mathcal I^- \subseteq \mathcal I^-. A^0$. Moreover, relations (\ref{eqbf:K+X+}), (\ref{eq:K+X-}) and (\ref{eq:X+X-}) are easily shown to imply that, for every $r,s,t\in\Z$,
$$\left[ (v-q^{\pm 2} w) \Xbsf\pm{1,r}(v) \Xbsf\pm{1,s}(w) -(vq^{\pm 2}- w) \Xbsf\pm{1,s}(w) \Xbsf\pm{1,r}(v) , \Xbsf\mp{1,t}(u)
\right ] = 0\,,$$
hence proving that $\mathcal I^+ .A^- \subseteq A .\mathcal I^+$ and $A^+ .\mathcal I^- \subseteq \mathcal I^- .A$. The claim eventually follows as a consequence of lemma \ref{lem:triangulardecomp}
\end{proof}

\subsection{Quasi-finite highest $t$-weight modules}
\begin{defn}
We shall say that a (topological) $\qdaff(\mathfrak a_1)$-module $M$ is a \emph{$t$-weight} module if there exists a countable set $\left\{M_\alpha : \alpha\in A\right \}$ of simple $\uqhhzeroslt$-modules, called \emph{$t$-weight spaces} of $M$, such that, as $\uqhhzeroslt$-modules,
\be M \cong \bigoplus_{\alpha\in A} M_\alpha\,.\ee
We shall say that $M$ is \emph{quasi-finite} if, in addition, $A$ is finite. A vector $v\in M-\{0\}$ is a \emph{highest $t$-weight vector} of $M$ if $v\in M_\alpha$ for some $\alpha\in A$ and, for every $r,s\in\Z$,
\be \Xsf+{1,r,s} . v = 0\,.\ee
We shall say that $M$ is \emph{highest $t$-weight} if $M\cong \qdaff(\mathfrak a_1) . v$ for some highest $t$-weight vector $v\in M-\{0\}$.
\end{defn}
\begin{defprop}
Let $M$ be a $t$-weight $\qdaff(\mathfrak a_1)$-module that admits a highest $t$-weight vector $v\in M-\{0\}$. Denote by $M_0$ the $t$-weight space of $M$ containing $v$. Then, for every $r,s\in\Z$,
\be \Xsf+{1,r,s} .M_0 = \{0\}\,.\ee
We shall say that $M_0$ is a \emph{highest $t$-weight space} of $M$. 
\end{defprop}
\begin{proof}
It is an easy consequence of the triangular decomposition of $\qdaff(\mathfrak a_1)$ -- see propsition \ref{prop:triang} -- and of the root grading that, indeed, $\Xsf+{1,r,s}.\left (\qdaff^0(\mathfrak a_1).v\right ) =\{0\}$, for every $r,s\in\Z$. Now $M_0 \cong \qdaff^0(\mathfrak a_1).v$, since $\qdaff^0(\mathfrak a_1).v \neq \{0\}$ is a submodule of the simple $\qdaff^0(\mathfrak a_1)$-module $M_0$.
\end{proof}
\begin{prop}
Any quasi-finite simple (topological) $\qdaff(\mathfrak a_1)$-module is highest $t$-weight. Moreover, for any highest $t$-weight vector $v\in M-\{0\}$, we have
\be\label{eq:Mhgighesttweight}M\cong \qdaff^-(\mathfrak a_1). \qdaff^0(\mathfrak a_1) . v\,.\ee
\end{prop}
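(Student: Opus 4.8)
The plan is to run the usual $\mathfrak{sl}_2$ highest weight argument in the complete topological setting, with Proposition~\ref{prop:triang} in the role of the PBW theorem. First I would assemble the Borel data: by Proposition~\ref{prop:triang}, $\qdaff^{\geq}:=\overline{\qdaff^0(\mathfrak a_1)\cdot\qdaff^+(\mathfrak a_1)}$ is a closed subalgebra of $\widehat{\qdaff(\mathfrak a_1)}$, and, writing $\qdaff^+(\mathfrak a_1)=\F\oplus J^+$ with $J^+$ the span of all products of at least one generator $\Xsf+{1,r,s}$ (the part of strictly positive root degree), relations (\ref{eqbf:K+X+}), (\ref{eq:K+X-}) and the centrality of $\cbsf\pm(z)$ allow one to reorder $\qdaff^0(\mathfrak a_1)\cdot\qdaff^+(\mathfrak a_1)\subseteq\overline{\qdaff^+(\mathfrak a_1)\cdot\qdaff^0(\mathfrak a_1)}$ and conversely, each rewriting preserving the root degree carried by the $\Xsf+$-factors; hence $\overline{J^+\cdot\qdaff^0(\mathfrak a_1)}$ is a closed two-sided ideal of $\qdaff^{\geq}$ with $\qdaff^{\geq}/\overline{J^+\cdot\qdaff^0(\mathfrak a_1)}\cong\qdaff^0(\mathfrak a_1)$.

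The crucial step is to produce a highest $t$-weight vector. Set $M^+:=\bigcap_{r,s\in\Z}\ker\!\big(\Xsf+{1,r,s}\colon M\to M\big)$, a closed subspace; the reorderings above show that $M^+$ is stable under $\qdaff^0(\mathfrak a_1)$ and that $M^+=\{m\in M:J^+.m=\{0\}\}$. Writing $M\cong\bigoplus_{\alpha\in A}M_\alpha$ with $A$ finite and --- as one may assume, the general case costing only some bookkeeping of isotypic components --- the simple $\qdaff^0(\mathfrak a_1)$-modules $M_\alpha$ pairwise non-isomorphic, every $\qdaff^0(\mathfrak a_1)$-submodule of $M$ is of the form $\bigoplus_{\alpha\in B}M_\alpha$; so $M^+=\bigoplus_{\alpha\in A^+}M_\alpha$ for some $A^+\subseteq A$, and it suffices to prove $M^+\neq\{0\}$. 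Granting this, any $0\neq v\in M_{\alpha_0}$ with $\alpha_0\in A^+$ lies in a $t$-weight space and is killed by all $\Xsf+{1,r,s}$, hence is a highest $t$-weight vector of $M$.

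Next I would close the loop by simplicity, which simultaneously gives the stated identity for an arbitrary highest $t$-weight vector. Let $v\in M-\{0\}$ be any highest $t$-weight vector and $M_0$ its $t$-weight space. By the Definition-Proposition preceding the statement, $\Xsf+{1,r,s}.M_0=\{0\}$ for all $r,s$, so $J^+.M_0=\{0\}$, while simplicity of the $\qdaff^0(\mathfrak a_1)$-module $M_0$ gives $\qdaff^0(\mathfrak a_1).v=M_0$. Using the triangular decomposition together with $\qdaff^+(\mathfrak a_1).v=\F v$, one gets
\[
\widehat{\qdaff(\mathfrak a_1)}.v=\overline{\qdaff^-(\mathfrak a_1)\,\qdaff^0(\mathfrak a_1)\,\qdaff^+(\mathfrak a_1)}.v=\qdaff^-(\mathfrak a_1).\qdaff^0(\mathfrak a_1).v ,
\]
a nonzero closed submodule of $M$; simplicity of $M$ then yields $M=\widehat{\qdaff(\mathfrak a_1)}.v=\qdaff^-(\mathfrak a_1).\qdaff^0(\mathfrak a_1).v$, which is (\ref{eq:Mhgighesttweight}). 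Since the highest $t$-weight vector produced in the previous paragraph generates $M$, the module $M$ is highest $t$-weight, and both assertions follow.

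I expect the genuinely non-routine point to be $M^+\neq\{0\}$: as $\qdaff(\mathfrak a_1)$ carries no grouplike element implementing the root grading, one cannot simply pass to a ``top'' root-degree subspace of $M$ as in the quantum affine case, and quasi-finiteness must be used as the substitute for boundedness. I would argue as follows. Consider the decreasing chain of $\qdaff^0(\mathfrak a_1)$-submodules $M\supseteq\overline{J^+.M}\supseteq\overline{(J^+)^2.M}\supseteq\cdots$; since $A$ is finite it stabilizes, and a commutation computation based on (\ref{eq:X+X-KK}) (one $\Xsf-{1,a,b}$ passed through a product of $\Xsf+$'s produces, besides reordered terms, only terms with one fewer $\Xsf+$ and a $\qdaff^0(\mathfrak a_1)$-factor) shows that the stable term $M_\infty=\bigcap_k\overline{(J^+)^k.M}$ is not merely $\qdaff^{\geq}$-stable but also $\qdaff^-(\mathfrak a_1)$-stable, hence a $\widehat{\qdaff(\mathfrak a_1)}$-submodule. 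By simplicity, $M_\infty$ is $\{0\}$ or $M$. If $M_\infty=\{0\}$, stabilization forces $(J^+)^{k_0}.M=\{0\}$ for some $k_0$, and then the largest $j$ with $(J^+)^j.M\neq\{0\}$ (which exists, since $(J^+)^0.M=M\neq\{0\}$) satisfies $J^+.\big((J^+)^j.M\big)=\{0\}$, so $(J^+)^j.M\subseteq M^+$ and $M^+\neq\{0\}$. The remaining possibility $M_\infty=M$ --- equivalently $\overline{J^+.M}=M$ --- is the one case one must still exclude for a quasi-finite simple module; this, and nothing else, is where I would expect to spend real effort. Everything else is a mechanical transcription of $\mathfrak{sl}_2$ highest weight theory, the only extra care being with closures and with the bicontinuity statements of Lemma~\ref{lem:triangulardecomp} and Proposition~\ref{prop:triang}.
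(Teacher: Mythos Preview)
Your argument has a genuine gap precisely where you flag it: you do not exclude the case $M_\infty=M$, i.e.\ $\overline{J^+.M}=M$. Everything else you write is sound, but without ruling this out you have not used quasi-finiteness in any essential way, and the proof is incomplete.

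The paper closes this gap by a much more direct device that bypasses the filtration entirely. The element $\Ksf+{1,0,0}$ commutes with every generator of $\qdaff^0(\mathfrak a_1)$ (this follows by extracting the constant coefficient in $v$ from relations (\ref{eq:K+K+}) and (\ref{eq:K+K-}), together with (\ref{eq:K0+K0+}), (\ref{eq:K0+K0-}) and the centrality of $\cbsf\pm$), so it acts by a scalar on each simple $t$-weight space $M_\alpha$. On the other hand, the constant coefficient in $v$ of (\ref{eqbf:K+X+}) gives $\Ksf+{1,0,0}\,\Xsf+{1,r,s}=q^{\pm 2}\,\Xsf+{1,r,s}\,\Ksf+{1,0,0}$ (the sign is immaterial). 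Hence if $w$ is a $\Ksf+{1,0,0}$-eigenvector of eigenvalue $\lambda$ and $w_n:=\Xsf+{1,r_1,s_1}\cdots\Xsf+{1,r_n,s_n}.w\neq 0$ for all $n$, then $\Ksf+{1,0,0}.w_n=\lambda q^{\pm 2n} w_n$; the $w_n$ therefore lie in pairwise distinct $t$-weight spaces, contradicting the finiteness of $A$. Thus some $w\in M-\{0\}$ is annihilated by all $\Xsf+{1,r,s}$, and the rest of the argument is exactly as you wrote.

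In short: rather than trying to show that the chain $\overline{(J^+)^k.M}$ eventually drops, use the Cartan-like invariant $\Ksf+{1,0,0}$ to separate the iterates $w_n$ into distinct $t$-weight spaces directly. This replaces your unresolved case with a one-line eigenvalue computation.
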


\begin{proof}
Let $M$ be a quasi-finite simple (topological) $\qdaff(\mathfrak a_1)$-module and assume for a contradiction that, for every $w\in M$, there exist two sequences $(r_n)_{n\in\N}, (s_n)_{n\in\N} \in\Z^\N$, such that 
$$0\notin \left\{ w_n = \Xsf+{r_1,s_1} \dots \Xsf+{r_n,s_n} .w : n\in\N\right\}\,.$$ 
Choosing $w\in M-\{0\}$ to be an eigenvector of $\Ksf+{1,0,0}$ with eigenvalue $\lambda$, one easily sees from the relations that, for every $n\in\N$, $\Ksf+{1,0,0}.w_n = \lambda q^{2n} w_n$. As a consequence of the commutativity of $\Ksf+{1,0,0}$ with the generators of $\qdaff^0(\mathfrak a_1)$, no two $w_n$ can be elements of the same $t$-weight space. But that implies that there should be infinitely many $t$-weight spaces, a contradiction with the assumed quasi-finiteness of $M$. Thus, we conclude that there exists a highest $t$-weight vector $v\in M$. Obviously, $M\cong \qdaff(\mathfrak a_1). v$, for $\qdaff(\mathfrak a_1). v \neq\{0\}$ is a submodule of the simple $\qdaff(\mathfrak a_1)$-module $M$. Thus $M$ is highest $t$-weight. Moreover, (\ref{eq:Mhgighesttweight}) follows by the triangular decomposition of $\qdaff(\mathfrak a_1)$ -- see proposition \ref{prop:triang} -- and the fact that $v$ is a highest $t$-weight vector.
\end{proof}

\begin{rem}
In view of (\ref{eq:Mhgighesttweight}), quasi-finite simple (topological) $\qdaff(\mathfrak a_1)$-module are entirely determined as $M\cong \qdaff^-(\mathfrak a_1). M_0$, by the data of their unique highest $t$-weight space $M_0 \cong \qdaff^0(\mathfrak a_1) . v$. Classifying quasi-finite simple (topological) $\qdaff(\mathfrak a_1)$-modules therefore amounts to classifying those simple $\qdaff^0(\mathfrak a_1)$-modules that appear as their highest $t$-weight spaces. We intend to undertake that classification in a future work.
\end{rem}

\begin{rem}
(\ref{eq:Mhgighesttweight}) induces a partial ordering of the $t$-weight spaces through the $Q^-$-grading of $\qdaff^-(\mathfrak a_1)$.
\end{rem}

\begin{defn}
For every $N\in\N^\times$, we shall say that a module $M$ over $\qdaff(\mathfrak a_1)$ is of type $(1,N)$ if
\begin{enumerate}
\item[i.] $\Csf^{\pm 1/2}$ acts as $1$ on $M$;
\item[ii.] $\Ksf\pm{1, 0,0}$ acts semisimply on $M$;
\item[iii.] $\csf\pm{\pm m}$ acts as $0$ on $M$, for every $m\geq N$.
\end{enumerate}
We shall say that $M$ is of type $(1,0)$ if points i. and ii. above hold and, in addition, $\cbsf\pm(z)$ acts as $1$ on $M$.
\end{defn}
\begin{rem}
Let $N\in\N$. Then the $\qdaff(\mathfrak a_1)$-modules of type $(1,N)$ are in one-to-one correspondence with the $\qdaff(\mathfrak a_1)^{(N)}$-modules -- see remark \ref{rem:qdafftopology} for a definition of $\qdaff(\mathfrak a_1)^{(N)}$.
\end{rem}

\subsection{Topological Hopf algebra structure on $\widehat{\qdaff'(\mathfrak a_1)}$}
\begin{defprop}
We define
\be \dot\Delta= \left (\widehat\Psi \widehat{\otimes} \widehat\Psi\right ) \circ \Delta\circ \widehat \Psi^{-1} \,,\ee
\be \dot S = \widehat\Psi\circ S\circ\widehat\Psi^{-1}\,,\ee
\be \dot\varepsilon = \varepsilon\circ\widehat\Psi^{-1}\,.\ee
Equipped with the above comultiplication, antipode and counit, $\widehat{\qdaff'(\mathfrak a_1)}$ is a topological Hopf algebra. The latter is easily extended into a topological Hopf algebraic structure on $\widehat{\qdaff(\mathfrak a_1)}$ by setting, in addition,
$$\Delta(\Dsf_1^{\pm 1}) = \Dsf_1^{\pm 1} \otimes \Dsf_1^{\pm 1}\,,$$
$$S(\Dsf_1^{\pm 1}) = \Dsf_1^{\mp 1}\,,$$
$$\varepsilon(\Dsf_1^{\pm 1}) = 1\,.$$
\end{defprop}

\section{Doubly Affine Damiani-Beck isomorphism}
\label{sec:DamianiBeck}
In this last section, we complete the proof of theorem \ref{thm:main} by constructing  $\Psi^{-1}:\qdaff'(\mathfrak a_1)\to\uqsltc$; i.e. by constructing a realization of the generators of $\qdaff'(\mathfrak a_1)$ in $\uqsltc$.

\subsection{Double loop generators}
\begin{defn}
For every $m \in \Z$, we set $\X\pm{1,m}(z):= Y^{\mp m}(\x\pm 1(z))$.
\end{defn}
\noi It is clear that
\begin{prop}
For every $m\in\Z$, we have
\be\varphi \left (\X\pm{1,m}(z)\right ) = \X\mp{1,-m}\left(1/z\right )\,.\ee
\end{prop}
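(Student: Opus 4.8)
The plan is to deduce the statement from the commutation of $\varphi$ with the Lusztig-type automorphisms, together with an unwinding of the definition of $\X\pm{1,m}(z)$. First I would record that, since $\varphi\circ T_\pi = T_\pi\circ\varphi$ and $\varphi\circ T = T\circ\varphi$ (established above), one has
$$\varphi\circ Y = \varphi\circ T_\pi\circ T = T_\pi\circ\varphi\circ T = T_\pi\circ T\circ\varphi = Y\circ\varphi\,.$$
Composing on both sides with $Y^{-1}$ gives $\varphi\circ Y^{-1} = Y^{-1}\circ\varphi$ as well, whence, by an immediate induction, $\varphi\circ Y^{n} = Y^{n}\circ\varphi$ for every $n\in\Z$. (Throughout, $\varphi$, $T_\pi$, $T$ and hence $Y$ are understood on the completion $\uqsltc$, to which they have been extended by continuity.)

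Next I would apply $\varphi$ to the defining identity $\X\pm{1,m}(z) = Y^{\mp m}(\x\pm 1(z))$. Both $\varphi$ and $Y^{\mp m}$ act coefficientwise in the formal variable $z$: on $\x\pm 1(z) = \sum_k x^\pm_{1,k}z^{-k}$ one has $\varphi(\x\pm 1(z)) = \sum_k \varphi(x^\pm_{1,k})z^{-k} = \sum_k x^\mp_{1,-k}z^{-k} = \x\mp 1(1/z)$, which is part of the very definition of $\varphi$. Combining this with the commutation from the first step,
$$\varphi\!\left(\X\pm{1,m}(z)\right) = Y^{\mp m}\!\left(\varphi(\x\pm 1(z))\right) = Y^{\mp m}\!\left(\x\mp 1(1/z)\right) = \X\mp{1,-m}(1/z)\,,$$
the last equality being a direct reading of the definition $\X\mp{1,-m}(\cdot) = Y^{\mp m}(\x\mp 1(\cdot))$ after substituting $1/z$ for its formal variable.

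I do not anticipate any real obstacle: once $\varphi\circ Y = Y\circ\varphi$ is in hand the whole argument is a two-line formal computation. The only point deserving a little care is the bookkeeping of the substitution $z\mapsto 1/z$, which is nothing more than the relabelling $k\mapsto -k$ of the mode index induced by $\varphi$, and which plainly commutes with the coefficientwise action of $Y^{\mp m}$.
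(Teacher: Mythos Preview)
Your proof is correct and is exactly the argument the paper has in mind: the paper simply records the proposition as clear without further justification, and your two-line computation via $\varphi\circ Y = Y\circ\varphi$ together with $\varphi(\x\pm 1(z)) = \x\mp 1(1/z)$ is the intended verification.
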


\begin{prop}
\begin{enumerate}
\item[i.]\label{defpsi+11} There exists a unique $\bpsi +{1,1}(z) \in \uqsltc [[z, z^{-1}]]$ such that
\be\label{eq:defpsi+11} \left [Y\left (\km 1(w)^{-1} \xm 1(C^{1/2}w) \right ) , \xp1(z) \right ]_{G_{10}^-(C^{-1/2}w/z)} = -\delta \left ( \frac{C^{-1/2} q^2 w}{z} \right ) \bpsi +{1,1}(z)\,. \ee
\item[ii.]\label{defpsi-1-1} Set $\bpsi -{1,-1}(z) = \varphi\left(\bpsi +{1,1}(1/z)\right )$. Then, we have
\be\label{eq:defpsi-1-1} \left [\xm1(z) , Y\left ( \xp 1(C^{1/2}w) \kp 1(w)^{-1}\right ) ,  \right ]_{G_{10}^+(C^{1/2}z/w)} = -\delta \left ( \frac{C^{-1/2} q^2 w}{z} \right ) \bpsi -{1,-1}(z)\,.\ee
\end{enumerate}
\end{prop}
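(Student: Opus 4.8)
The plan is to treat the two items in turn, reducing everything to the defining relations of $\qaff(\dot{\mathfrak a}_1)$, the explicit formulas for $T$ and $T_\pi$, the properties of $\varphi$, and the standard manipulations with formal distributions.

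For item \emph{i.}, the first step is to trivialize the argument of the outer bracket. Since $Y=T_\pi\circ T$ is an algebra automorphism, $T(\km1(w))=\km1(w)^{-1}$ and $T(\xm1(z))=-\km1(C^{-1/2}z)^{-1}\xp1(C^{-1}z)$, one computes $T\bigl(\km1(w)^{-1}\xm1(C^{1/2}w)\bigr)=-\xp1(C^{-1/2}w)$, hence, applying $T_\pi$ (which swaps the colours $0$ and $1$), $Y\bigl(\km1(w)^{-1}\xm1(C^{1/2}w)\bigr)=-\xp0(C^{-1/2}w)$. Thus the left-hand side of (\ref{eq:defpsi+11}) equals $-\bigl[\xp0(C^{-1/2}w),\xp1(z)\bigr]_{G_{10}^-(C^{-1/2}w/z)}$, a bracket of two positive currents of distinct colours. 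The second step: since $(z_1-q^{-2}z_2)\,G_{10}^-(z_1/z_2)=z_1q^{-2}-z_2$ as formal power series, relation (\ref{eq:xpmxpm}) with $i=0$, $j=1$ (so $c_{01}=-2$) forces $(C^{-1/2}w-q^{-2}z)\,\bigl[\xp0(C^{-1/2}w),\xp1(z)\bigr]_{G_{10}^-(C^{-1/2}w/z)}=0$. By the standard description of $\uqsltc$-valued formal distributions annihilated by a linear factor, there is then a unique $\bpsi+{1,1}(z)\in\uqsltc[[z,z^{-1}]]$ — namely the specialization of that bracket at $w=C^{1/2}q^{-2}z$ — for which the bracket equals $\delta(C^{-1/2}q^2w/z)\,\bpsi+{1,1}(z)$; this gives (\ref{eq:defpsi+11}), uniqueness following by applying $\res_w\tfrac1w$ to both sides. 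One should check here that the coefficients of $\bpsi+{1,1}$ really lie in the completion $\uqsltc$ and not in $\qaff(\dot{\mathfrak a}_1)$ itself, because $G_{10}^-$ has infinitely many terms; the relevant sums are Cauchy for the $\Omega_\bullet$-filtration.

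For item \emph{ii.}, the idea is not to recompute anything but simply to apply $\varphi$ to the whole identity (\ref{eq:defpsi+11}) just established and then substitute $w\mapsto 1/w$, $z\mapsto 1/z$. Using that $\varphi$ is an anti-automorphism with $\varphi(\x\pm i(z))=\x\mp i(1/z)$, $\varphi(\kk\pm i(z))=\kk\mp i(1/z)$, $\varphi(q)=q^{-1}$, $\varphi(C)=C^{-1}$, $\varphi(G_{ij}^\pm(z))=G_{ij}^\mp(z)$, and that $\varphi\circ T=T\circ\varphi$ together with $\varphi\circ T_\pi=T_\pi\circ\varphi$ give $\varphi\circ Y=Y\circ\varphi$, one gets $\varphi\bigl(Y(\km1(w)^{-1}\xm1(C^{1/2}w))\bigr)=Y\bigl(\xp1(C^{1/2}/w)\kp1(1/w)^{-1}\bigr)$; combined with $\varphi([A,B]_c)=[\varphi(B),\varphi(A)]_{\varphi(c)}$, the left-hand side of (\ref{eq:defpsi+11}) transforms into $\bigl[\xm1(1/z),\,Y(\xp1(C^{1/2}/w)\kp1(1/w)^{-1})\bigr]_{G_{10}^+(C^{1/2}w/z)}$ while the right-hand side becomes $-\delta(C^{1/2}q^{-2}w/z)\,\varphi(\bpsi+{1,1}(z))$. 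Performing $w\mapsto 1/w$, $z\mapsto 1/z$ then turns this into precisely (\ref{eq:defpsi-1-1}), the right-hand side reading $-\delta(C^{-1/2}q^2w/z)\,\varphi(\bpsi+{1,1}(1/z))=-\delta(C^{-1/2}q^2w/z)\,\bpsi-{1,-1}(z)$; in particular $\bpsi-{1,-1}$ comes out exactly as prescribed, and no sign gymnastics are needed since the minus sign is already carried along from (\ref{eq:defpsi+11}).

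The step I expect to be the real nuisance is the formal-distribution bookkeeping: on one hand, justifying carefully the passage from $(C^{-1/2}w-q^{-2}z)F=0$ to $F=\delta(C^{-1/2}q^2w/z)\,\bpsi+{1,1}(z)$ with exactly that argument of $\delta$; on the other hand, keeping precise track, under $w\mapsto 1/w$, $z\mapsto 1/z$, of how $\varphi$ acts on the spectral variables, so that the expansion regions of $G_{10}^\mp$ and the argument of $\delta$ in (\ref{eq:defpsi-1-1}) match on the nose. Both are routine modulo the conventions recalled in the Appendix, but deserve care; everything else is forced by the defining relations and by the formulas for $T$, $T_\pi$ and $\varphi$.
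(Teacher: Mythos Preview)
Your proposal is correct and follows exactly the approach the paper intends. The paper's own proof is the terse two-liner ``the proof of \emph{i.} is immediate from the definitions; \emph{ii.} then follows by applying $\varphi$ to (\ref{eq:defpsi+11})'', and you have simply unpacked both steps: your computation $Y\bigl(\km1(w)^{-1}\xm1(C^{1/2}w)\bigr)=-\xp0(C^{-1/2}w)$ is precisely the rewriting the paper later uses explicitly (cf.\ the proof of (\ref{eq:Ypsi}), where (\ref{eq:defpsi+11}) is restated as $[\xp0(w),\xp1(z)]_{G_{10}^-(w/z)}=\delta(q^2w/z)\,\bpsi+{1,1}(z)$), and your reduction of \emph{ii.} to $\varphi$ applied to \emph{i.} is literally what the paper says.
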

\begin{proof}
The proof of \emph{i.} is immediate from the definitions. \emph{ii.} then follows by applying $\varphi$ to (\ref{eq:defpsi+11}).
\end{proof}
\begin{rem}
It is worth noting that $\bpsi \pm{1,\pm 1}(z) \notin \uqslthh[[z, z^{-1}]]$.
\end{rem}
\begin{cor}
\label{cor:kpsi1}
For every $i\in\dot I$, we have
\begin{enumerate}
\item[i.] $\kk-i(v) \bpsi\pm{1,\pm 1}(z) = G_{i,0}^\mp(C^{\mp 1/2} q^{2}v/z) G_{i,1}^\mp (C^{\mp 1/2}v/z) \bpsi\pm{1,\pm 1} (z)\kk-i(v)$;
\item[ii.] $\bpsi\pm{1,\pm 1}(z) \kk+i(v)  = G_{i,0}^\mp(C^{\mp 1/2} q^{-2}z/v) G_{i,1}^\mp (C^{\mp 1/2}z/v) \kk+i(v) \bpsi\pm{1,\pm 1} (z)$;
\end{enumerate}
\end{cor}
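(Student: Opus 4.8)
The plan is to reduce the two $Y$-images occurring in the defining relations (\ref{eq:defpsi+11}) and (\ref{eq:defpsi-1-1}) to ordinary generating currents, then to commute $\kk-i(v)$ through the resulting $G$-twisted brackets, and finally to obtain part \emph{ii.} from part \emph{i.} by applying the anti-automorphism $\varphi$.

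First I would simplify the brackets. Since $Y=T_\pi\circ T$ one has $Y\circ T^{-1}=T_\pi$. Substituting $z\mapsto C^{-1/2}w$ into the explicit formula $T^{-1}(\xp 1(z))=-\km 1(C^{1/2}z)^{-1}\xm 1(Cz)$ gives the identity $\km 1(w)^{-1}\xm 1(C^{1/2}w)=-T^{-1}(\xp 1(C^{-1/2}w))$, whence, applying $Y$ and using $Y\circ T^{-1}=T_\pi$ and $T_\pi(\xp 1(z))=\xp 0(z)$,
\[
Y\!\left(\km 1(w)^{-1}\xm 1(C^{1/2}w)\right)=-\xp 0(C^{-1/2}w)\,,
\]
and likewise, from $T^{-1}(\xm 1(z))=-\xp 1(Cz)\kp 1(C^{1/2}z)^{-1}$, one gets $Y\!\left(\xp 1(C^{1/2}w)\kp 1(w)^{-1}\right)=-\xm 0(C^{-1/2}w)$. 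Hence (\ref{eq:defpsi+11}) and (\ref{eq:defpsi-1-1}) become, respectively, the equivalent relations $[\xp 0(C^{-1/2}w),\xp 1(z)]_{G_{10}^-(C^{-1/2}w/z)}=\delta(C^{-1/2}q^2w/z)\,\bpsi+{1,1}(z)$ and $[\xm 1(z),\xm 0(C^{-1/2}w)]_{G_{10}^+(C^{1/2}z/w)}=\delta(C^{-1/2}q^2w/z)\,\bpsi-{1,-1}(z)$.

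For part \emph{i.} ($+$ case) I would left-multiply the first of these by $\kk-i(v)$ and carry it through the bracket, using that for a bracket $[A,B]_g$ with $g$ a scalar series, if $\kk-i(v)A=\mathcal G_A A\,\kk-i(v)$ and $\kk-i(v)B=\mathcal G_B B\,\kk-i(v)$ with $\mathcal G_A,\mathcal G_B$ central, then $\kk-i(v)[A,B]_g=\mathcal G_A\mathcal G_B[A,B]_g\,\kk-i(v)$. Relation (\ref{eq:kmxpm}) supplies $\kk-i(v)\xp 0(C^{-1/2}w)=G_{i0}^-(v/w)\,\xp 0(C^{-1/2}w)\,\kk-i(v)$ and $\kk-i(v)\xp 1(z)=G_{i1}^-(C^{-1/2}v/z)\,\xp 1(z)\,\kk-i(v)$, so $\kk-i(v)\,\delta(C^{-1/2}q^2w/z)\,\bpsi+{1,1}(z)=G_{i0}^-(v/w)G_{i1}^-(C^{-1/2}v/z)\,\delta(C^{-1/2}q^2w/z)\,\bpsi+{1,1}(z)\,\kk-i(v)$; the substitution property $f(w)\delta(C^{-1/2}q^2w/z)=f(C^{1/2}q^{-2}z)\delta(C^{-1/2}q^2w/z)$ turns $G_{i0}^-(v/w)$ into $G_{i0}^-(C^{-1/2}q^2v/z)$, and cancelling $\delta(C^{-1/2}q^2w/z)$ — legitimate since (\ref{eq:defpsi+11}) determines $\bpsi+{1,1}(z)$ uniquely — yields the claim. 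The $-$ case is the same, starting from the second of the rewritten relations and using (\ref{eq:kmxpm}) in the form $\kk-i(v)\xm j(z_2)=G_{ij}^+(C^{1/2}v/z_2)\,\xm j(z_2)\,\kk-i(v)$; there the substitution $w\mapsto C^{1/2}q^{-2}z$ sends $G_{i0}^+(Cv/w)$ to $G_{i0}^+(C^{1/2}q^2v/z)$.

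For part \emph{ii.} I would apply the involutive $\K$-algebra anti-homomorphism $\varphi$ to the two relations of part \emph{i.}, using $\varphi(\kk-i(v))=\kk+i(1/v)$, the centrality of $C^{\pm1/2}$, the remark that $\varphi(G_{ij}^\pm(z))=G_{ij}^\mp(z)$ together with $\varphi(C^{\pm1/2})=C^{\mp1/2}$ and $\varphi(q^{\pm2})=q^{\mp2}$, and $\varphi(\bpsi+{1,1}(z))=\bpsi-{1,-1}(1/z)$, $\varphi(\bpsi-{1,-1}(z))=\bpsi+{1,1}(1/z)$ (both following from $\bpsi-{1,-1}(z)=\varphi(\bpsi+{1,1}(1/z))$ and $\varphi^2=\id$); after the relabelling $v\mapsto 1/v$, $z\mapsto 1/z$, the $+$ relation of part \emph{i.} becomes the $-$ relation of part \emph{ii.} and the $-$ relation the $+$ one. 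Given the apparatus already in place (the explicit Lusztig and diagram automorphisms, (\ref{eq:kmxpm}), and the properties of $\varphi$) no step is deep; the one real hazard is bookkeeping — tracking the half-integer powers of $C$ and the $q^{\pm2}$ shifts, both under the delta localization and under $\varphi$, so that the $G_{i0}$ factor correctly acquires the extra $q^{\pm2}$ that distinguishes it from the $G_{i1}$ factor.
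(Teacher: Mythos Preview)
Your proof is correct and follows essentially the same approach as the paper, which merely asserts that part \emph{i.} is a direct consequence of (\ref{eq:defpsi+11}), (\ref{eq:defpsi-1-1}), (\ref{eq:kpxpm}) and (\ref{eq:kmxpm}), and that part \emph{ii.} follows from \emph{i.} via $\varphi$. Your explicit reduction of the $Y$-images to $\xp 0$ and $\xm 0$ currents (which the paper also uses later, in the proof of Proposition~\ref{prop:psiinv}) is a clean way to make the argument transparent, and in fact shows that only (\ref{eq:kmxpm}) is needed for part \emph{i.}.
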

\begin{proof}
\emph{ii.} follows by applying $\varphi$ to \emph{i.} and \emph{i.} is a direct consequence of  (\ref{eq:defpsi+11}) and (\ref{eq:defpsi-1-1}) on one hand and of (\ref{eq:kpxpm}) and (\ref{eq:kmxpm}) on the other hand.
\end{proof}

Let us then define the following $\uqslthh$-valued formal power series
\be \Gam \pm(z) := \kk \pm 0(z) \kk \pm1(z) \in \uqslthh[[z^{\mp 1}]]\, .\ee
Denoting by ${\mathcal Z}(\uqslthh)$ the center of $\uqslthh$, it is straightforward to check that indeed
\begin{prop}
$\Gam \pm(z) \in {\mathcal Z}(\uqslthh)[[z^{\mp1}]]$.
\end{prop}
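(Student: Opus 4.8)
The plan is to verify directly that $\Gam\pm(z)$ commutes with each generator of $\uqslthh$. For $C^{\pm1/2}$ this is immediate from \eqref{eq:ccentral}, and $D^{\pm1}$ acts by $D\,\Gam\pm(z)\,D^{-1}=\Gam\pm(zq^{-1})$, so the real content is commutation with the Drinfel'd currents $\kk\pm i(w)$ and $\x\pm i(w)$, $i\in\dot I$. I would begin with two reductions. Since $\kk\pm i(z_1)\kk\pm j(z_2)=\kk\pm j(z_2)\kk\pm i(z_1)$, the series $\Gam+(z)=\kk+0(z)\kk+1(z)$ already commutes with every $\kk+j(w)$ and, likewise, $\Gam-(z)$ with every $\kk-j(w)$; so only $[\Gam+(z),\kk-\ell(w)]$, $[\Gam+(z),\x\pm\ell(w)]$ and their $\Gam-$ counterparts remain to be treated. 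Moreover the involutive $\F$-algebra anti-automorphism $\eta$ sends $\Gam+(z)=\kk+0(z)\kk+1(z)$ to $\eta(\kk+1(z))\,\eta(\kk+0(z))=\kk-1(1/z)\kk-0(1/z)=\Gam-(1/z)$ and permutes the currents, so it exchanges the $\Gam+$ and $\Gam-$ statements; it is therefore enough to handle $\Gam+(z)$.

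The only non-formal ingredient is a short collection of identities among the structure functions, special to type $\mathfrak a_1$. Recall (Remark~\ref{rem:Gij}) that $\dot I=\{0,1\}$ and $c_{ij}=4\delta_{ij}-2$, so that $c_{0j}+c_{1j}=c_{i0}+c_{i1}=0$; since the structure function $G^{\pm}_{ij}(z)$ depends on the pair $(i,j)$ only through $\pm c_{ij}$, this forces $G^{\pm}_{0j}(z)=G^{\mp}_{1j}(z)$ and $G^{\pm}_{i0}(z)=G^{\mp}_{i1}(z)$ (the case $G^{\pm}_{10}(z)=G^{\mp}_{11}(z)$ being the one already recorded in Remark~\ref{rem:Gij}), and combining with $G^{+}_{ij}(z)G^{-}_{ij}(z)=1$ we get
\be\label{eq:Ggamma} G^{\epsilon}_{0j}(z)\,G^{\epsilon}_{1j}(z)=G^{\epsilon}_{j0}(z)\,G^{\epsilon}_{j1}(z)=1\qquad(j\in\dot I,\ \epsilon\in\{+,-\})\,.\ee
Granting \eqref{eq:Ggamma}, the two remaining commutators become pure bookkeeping. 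Dragging $\kk-\ell(w)$ through $\Gam+(z)=\kk+0(z)\kk+1(z)$ by two applications of \eqref{eq:kpkm} picks up the scalar $\bigl[G^{-}_{\ell0}(C^{-1}w/z)\,G^{-}_{\ell1}(C^{-1}w/z)\bigr]\bigl[G^{+}_{\ell0}(Cw/z)\,G^{+}_{\ell1}(Cw/z)\bigr]$, which is $1\cdot1=1$ by \eqref{eq:Ggamma}; dragging $\x\pm\ell(w)$ through $\Gam+(z)$ by two applications of \eqref{eq:kpxpm} picks up $G^{\pm}_{0\ell}(C^{\mp1/2}w/z)\,G^{\pm}_{1\ell}(C^{\mp1/2}w/z)=1$, again by \eqref{eq:Ggamma} (every factor here being a power series in $w/z$). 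Hence $\Gam+(z)$ commutes with $\kk-\ell(w)$ and with $\x\pm\ell(w)$ for each $\ell\in\dot I$, and applying $\eta$ yields the corresponding statements for $\Gam-(z)$ (or one simply repeats the argument with \eqref{eq:kmxpm} in place of \eqref{eq:kpxpm}).

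I do not expect any genuinely hard step here: the entire verification collapses onto the identity \eqref{eq:Ggamma}, which is nothing but the statement that the rows — equivalently, by symmetry, the columns — of the $\dot{\mathfrak a}_1$ Cartan matrix $\left(\begin{smallmatrix}2&-2\\-2&2\end{smallmatrix}\right)$ sum to zero, i.e. that the imaginary root $\alpha_0+\alpha_1$ pairs trivially with the whole root lattice. That observation is also the conceptual reason why the central object is the product $\kk\pm0(z)\kk\pm1(z)$ over the two nodes and not either Cartan current separately: it is the $k$-current along the null direction, and it is precisely this cancellation that makes the accumulated structure functions collapse to $1$.
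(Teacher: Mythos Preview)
Your verification is correct and is exactly the straightforward check the paper has in mind (the paper's own proof is simply the assertion that it is straightforward); the key collapse $G^\epsilon_{0j}(z)\,G^\epsilon_{1j}(z)=1$, coming from the vanishing row sums of the $\dot{\mathfrak a}_1$ Cartan matrix, is the whole story. One small caveat on your treatment of $D$: the relation $D\,\Gam\pm(z)\,D^{-1}=\Gam\pm(zq^{-1})$ actually shows that the nonzero-degree modes do \emph{not} commute with $D$, so the centrality claim is to be read modulo the grading operator --- which is harmless, since only commutation with the currents is ever used downstream.
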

\noi Similarly, define
\be\label{eq:defwp} \bwp \pm(z):= \kk \pm 0(z) \kk \pm 1(z q^2) \in  \uqslthh[[z^{\mp 1}]]\,. \ee
\noi Then we establish an important result.
\begin{prop}
\label{prop:psiinv}
We have the following fixed points of $Y$;
\be\label{eq:Ywp} Y \left ( \bwp \pm(z)\right) = \bwp \pm(z)\, ,\ee
\be\label{eq:Ypsi} Y\left( \bpsi \pm{1,\pm 1}(z) \right ) = \bpsi \pm{1,\pm 1}(z)\, . \ee
Moreover
\be\label{eq:Ygamma} Y\left (\bz{\pm}{0}(z)\right ) = \bz{\pm}{0}(zq^2)\,,\ee
\end{prop}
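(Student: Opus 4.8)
The plan is to compute the action of $Y = T_\pi\circ T$ directly on the elements in question. Since $T$ and $T_\pi$ are bicontinuous $\F$-algebra automorphisms of $\uqsltc$ fixing $C$, so is $Y$; in particular $Y$ commutes with the formation of brackets twisted by $\F$-valued structure functions and with formal delta functions. From $T(\kk\pm0(z)) = \kk\pm0(zq^2)\kk\pm1(z)\kk\pm1(zq^2)$, $T(\kk\pm1(z)) = \kk\pm1(z)^{-1}$ and $T_\pi(\kk\pm i(z)) = \kk\pm{\pi(i)}(z)$ (with $\pi$ the nontrivial diagram automorphism), one reads off $Y(\kk\pm0(z)) = \kk\pm1(zq^2)\,\kk\pm0(z)\,\kk\pm0(zq^2)$ and $Y(\kk\pm1(z)) = \kk\pm0(z)^{-1}$. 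Since like-sign Cartan currents commute pairwise, \eqref{eq:Ywp} and \eqref{eq:Ygamma} follow at once:
$$Y(\bwp\pm(z)) = Y(\kk\pm0(z))\,Y(\kk\pm1(zq^2)) = \kk\pm1(zq^2)\,\kk\pm0(z)\,\kk\pm0(zq^2)\,\kk\pm0(zq^2)^{-1} = \kk\pm0(z)\,\kk\pm1(zq^2) = \bwp\pm(z)\,,$$
$$Y(\bz{\pm}{0}(z)) = Y(\kk\pm0(z))\,Y(\kk\pm1(z)) = \kk\pm1(zq^2)\,\kk\pm0(z)\,\kk\pm0(zq^2)\,\kk\pm0(z)^{-1} = \kk\pm0(zq^2)\,\kk\pm1(zq^2) = \bz{\pm}{0}(zq^2)\,.$$

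For \eqref{eq:Ypsi} I would apply $Y$ to the defining relation \eqref{eq:defpsi+11}. Using $T(\kk-1(w)) = \kk-1(w)^{-1}$, $T(\xm1(z)) = -\kk-1(C^{-1/2}z)^{-1}\xp1(C^{-1}z)$ and $T(\xp1(z)) = -\xm1(C^{-1}z)\kk+1(C^{-1/2}z)^{-1}$, together with $T_\pi$, one computes
$$Y\!\left(\km1(w)^{-1}\xm1(C^{1/2}w)\right) = -\xp0(C^{-1/2}w)\,, \qquad Y(\xp1(z)) = -\xm0(C^{-1}z)\,\kk+0(C^{-1/2}z)^{-1}\,.$$
Applying $Y$ to \eqref{eq:defpsi+11}, and comparing with \eqref{eq:defpsi+11} itself rewritten with $Y(\km1(w)^{-1}\xm1(C^{1/2}w))$ replaced by $-\xp0(C^{-1/2}w)$ (so that its left-hand side reads $-[\xp0(C^{-1/2}w),\xp1(z)]_{G_{10}^-(C^{-1/2}w/z)}$), and cancelling the common factor $\delta(C^{-1/2}q^2w/z)$, the identity $Y(\bpsi+{1,1}(z)) = \bpsi+{1,1}(z)$ reduces to the $Y$-invariance of the twisted bracket of the two currents $\xp0$ and $\xp1$, i.e.\ to $[Y(\xp0(v)),Y(\xp1(z))]_{G_{10}^-(v/z)} = [\xp0(v),\xp1(z)]_{G_{10}^-(v/z)}$ with $v = C^{-1/2}w$. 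Into the left-hand side I would substitute the value of $Y(\xp1(z))$ above and, for $Y(\xp0(v)) = T_\pi(T(\xp0(v)))$, the expression obtained by applying $T_\pi$ to identity \eqref{eq:Tx0+} (since $T_\pi$ fixes the structure functions and, by Remark \ref{rem:Gij}, $G_{10}^\pm = G_{11}^\mp$, the index swap is harmless):
$$Y(\xp0(v)) = \frac{1}{[2]_q}\res_{z_1,z_2} z_1^{-1}z_2^{-1}\left[\xp0(z_1),\left[\xp0(z_2),\xp1(vq^2)\right]_{G_{01}^-(z_2/vq^2)}\right]_{G_{00}^-(z_1/z_2)\,G_{01}^-(z_1/vq^2)}\,.$$

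Then one must collapse the resulting expression using the defining relations of $\uqslthh$: commute $\kk+0(C^{-1/2}z)^{-1}$ past the $\xp0$'s and $\xp1$'s by \eqref{eq:kpxpm}, convert the brackets $[\xp0,\xm0]$ into Cartan currents and delta functions by \eqref{eq:relx+x-} (noting $[\xp1,\xm0]=0$), reorder the remaining $\xp0$'s and $\xp1$'s by \eqref{eq:xpmxpm}, and repeatedly apply the commutator identities \eqref{eq:identity1}--\eqref{eq:identity2} together with the structure-function identities of Remark \ref{rem:Gij} and \eqref{eq:G+G-}; the residues over $z_1, z_2$ get absorbed by the delta functions thus produced, and the whole expression telescopes back to $[\xp0(v),\xp1(z)]_{G_{10}^-(v/z)}$, proving \eqref{eq:Ypsi} for $\bpsi+{1,1}$. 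Since $\varphi$ commutes with both $T_\pi$ and $T$, hence with $Y$, and $\bpsi-{1,-1}(z) = \varphi(\bpsi+{1,1}(1/z))$, applying $\varphi$ to the identity just proved yields $Y(\bpsi-{1,-1}(z)) = \bpsi-{1,-1}(z)$, which completes the proof. The one genuinely laborious step is this collapse: keeping track of the structure functions $G_{ij}^\pm$ through the several delta-function specializations --- and using the $\mathfrak a_1$-specific relations $G_{10}^\pm = G_{11}^\mp$ --- is where essentially all the effort lies, while everything else is routine bookkeeping.
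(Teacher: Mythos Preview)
Your proposal is correct and follows essentially the same route as the paper. The paper likewise treats \eqref{eq:Ywp} and \eqref{eq:Ygamma} as obvious, rewrites \eqref{eq:defpsi+11} as $[\xp0(v),\xp1(z)]_{G_{10}^-(v/z)}=\delta(vq^2/z)\bpsi+{1,1}(z)$, and then computes the bracket of the triple-bracket expression for $Y(\xp0(w))$ (i.e.\ the $T_\pi$-image of \eqref{eq:Tx0+}) against $-Y(\xp1(z))=\xm0(C^{-1}z)\kk+0(C^{-1/2}z)^{-1}$ in two ways; the direct computation uses \eqref{eq:relx+x-}, \eqref{eq:kpxpm}, corollary \ref{cor:kpsi1}, and the identities \eqref{eq:G+G-}, \eqref{eq:GG+GG-} to collapse everything to $-[2]_q\,\delta(\cdot)\delta(\cdot)\delta(\cdot)\bpsi+{1,1}(z)$, exactly the ``laborious collapse'' you describe, and the lower sign is obtained via $\varphi$.
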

\begin{proof}
(\ref{eq:Ywp}) and (\ref{eq:Ygamma}) are obvious. We prove (\ref{eq:Ypsi}) for the upper choice of signs. In order to do so, we first rewrite (\ref{eq:defpsi+11}) as
\be\left [\xp0(w) , \xp1(z) \right ]_{G_{10}^-(w/z)} = \delta \left ( \frac{q^2 w}{z} \right ) \bpsi +{1,1}(z)\,. \nn\ee
Now, (\ref{eq:Tx0+}) and the definition of $Y$ imply that, on one hand,
\bea
&&\left [\left [ \x +0(z_1), \left [ \x +0(z_2),  \x +1(wq^2)\right]_{G_{10}^-(z_2/wq^2)} \right]_{G_{11}^-(z_1/z_2) G_{10}^-(z_1/wq^2)}, \x-0(C^{-1}z) \kp0(C^{-1/2}z)^{-1}\right ]_{G_{10}^-(w/z)}\nn\\
&=& -\left[2\right]_q \delta\left(\frac{z_1}{z_2q^2}\right ) \delta\left(\frac{z_2}{w}\right ) \left [Y\left (\xp0(w)\right ) ,  Y\left (\xp1(z)\right )\right ]_{G_{10}^-(w/z)} \nn\\
&=&-\left[2\right]_q \delta\left(\frac{z_1}{z_2q^2}\right ) \delta\left(\frac{z_2}{w}\right ) Y\left ( \left [\xp0(w) ,  \xp1(z)\right ]_{G_{10}^-(w/z)}\right ) =-\left[2\right]_q \delta\left(\frac{z_1}{z_2q^2}\right ) \delta\left(\frac{z_2}{w}\right ) \delta\left(\frac{wq^2}{z}\right ) Y\left ( \bpsi+{1,1}(z)\right ) \,,
 \nn
\eea
whereas, on the other hand, (\ref{eq:Tx0+}), (\ref{eq:kpxpm}), (\ref{eq:kmxpm}) and (\ref{eq:relx+x-}), as well as corollary \ref{cor:kpsi1}, imply that
\bea &&\left [\left [ \x +0(z_1), \left [ \x +0(z_2),  \x +1(wq^2)\right]_{G_{10}^-(z_2/wq^2)} \right]_{G_{11}^-(z_1/z_2) G_{10}^-(z_1/wq^2)}, \x-0(C^{-1}z) \kp0(C^{-1/2}z)^{-1}\right ]_{G_{10}^-(w/z)}\nn\\
&=& \left [\left [ \x +0(z_1), \left [ \x +0(z_2),  \x +1(wq^2)\right]_{G_{10}^-(z_2/wq^2)} \right]_{G_{11}^-(z_1/z_2) G_{10}^-(z_1/wq^2)}, \x-0(C^{-1}z) \right ] \kp0(C^{-1/2}z)^{-1}\nn\\
&=& \frac{1}{q-q^{-1}} \left\{  \delta \left ( \frac{z_1}{z} \right ) \delta\left(\frac{z_2}{w}\right )\left [\kk + 0(z_1C^{-1/2}) , \bpsi+{1,1}(wq^2) \right]_{G_{11}^-(z_1/z_2) G_{10}^-(z_1/wq^2)}  \right. \nn\\
&&+\left .\delta \left ( \frac{z_2}{z} \right ) \left [ \x +0(z_1), \left [ \kk + 0(z_2C^{-1/2}) ,  \x +1(wq^2)\right]_{G_{10}^-(z_2/wq^2)} \right]_{G_{11}^-(z_1/z_2) G_{10}^-(z_1/wq^2)} 
\right \}\kp0(C^{-1/2}z)^{-1}\nn\\
&=&  \delta \left ( \frac{z_1}{z} \right ) \delta\left(\frac{z_2}{w}\right ) \frac{G_{00}^+(w/z_1) G_{01}^+(q^2w/z_1)-G_{11}^-(z_1/w) G_{10}^-(z_1/wq^2)}{q-q^{-1}} \bpsi+{1,1}(wq^2) \nn\\
&&+\delta \left ( \frac{z_2}{z} \right ) \delta\left (\frac{z_1}{w}\right ) \frac{G_{01}^+(q^2w/z_2)-G_{10}^-(z_2/wq^2)}{q-q^{-1}} \bpsi+{1,1}(wq^2)\,.\nn
\eea
Making use of (\ref{eq:G+G-}) and (\ref{eq:GG+GG-}) -- for the latter, see Appendix --, we eventually get
\bea
&&\left [\left [ \x +0(z_1), \left [ \x +0(z_2),  \x +1(wq^2)\right]_{G_{10}^-(z_2/wq^2)} \right]_{G_{11}^-(z_1/z_2) G_{10}^-(z_1/wq^2)}, \x-0(C^{-1}z) \kp0(C^{-1/2}z)^{-1}\right ]_{G_{10}^-(w/z)}\nn\\
&=&  \left[2\right]_q\delta \left ( \frac{z_1}{z} \right ) \delta\left(\frac{z_2}{w}\right )\left [ \delta\left (\frac{w}{z_1}\right ) -\delta\left (\frac{wq^2}{z_1}\right ) \right ]  \bpsi+{1,1}(wq^2) -\left[2\right ]_q\delta \left ( \frac{z_2}{z} \right ) \delta\left (\frac{z_1}{w}\right ) \delta\left(\frac{w}{z_2}\right ) \bpsi+{1,1}(wq^2)\nn\\
&=&-\left[2\right]_q \delta \left ( \frac{z_1}{z} \right ) \delta\left(\frac{z_2}{w}\right ) \delta\left (\frac{wq^2}{z_1}\right )   \bpsi+{1,1}(z)\,,\nn
\eea
thus proving the result. The case with lower choice of signs follows by applying $\varphi$.
\end{proof}

\begin{prop}
\label{prop:psixm}
For every $m \in \Z$, we have
\begin{enumerate}
\item[i.] $\left [ \bpsi +{1,1}(z), \Xm{1,m}(v)  \right ] = - [2]_q \delta \left ( \frac{Cz}{v} \right ) \bwp -(C^{1/2}q^{-2}z) \Xm{1,m+1}(Cq^{-2}z)$;
\item[ii.] $\left [ \bpsi +{1,1}(z), \Xp{1,m}(v)  \right ]_{G_{10}^-(z/vq^2)G_{11}^-( z/v)} = [2]_q \delta \left ( \frac{z}{vq^2} \right ) \Xp{1,m+1}(z)$.
\item[iii.] $\left [ \bpsi-{1,-1}(z), \Xp{1,-m}(v)  \right ] =  [2]_q \delta \left ( \frac{Cz}{v} \right )  \Xp{1,-(m+1)}(Cq^{-2}z) \bwp +(C^{1/2}q^{-2}z)$;
\item[iv.] ${}_{G_{10}^+(vq^2/z)G_{11}^+( v/z)}\left [ \bpsi -{1,-1}(z), \Xm{1,-m}(v)  \right ] = -[2]_q \delta \left ( \frac{z}{vq^2} \right ) \Xm{1,-(m+1)}(z)$.
\item[v.] $\left[\bpsi+{1,1}(z), \bpsi-{1,-1}(v)\right ] =  \frac{[2]_q}{q-q^{-1}} \left [\delta \left (\frac{z}{Cv}\right ) \bwp+(C^{-1/2}q^{-2}z) - \delta \left (\frac{Cz}{v}\right ) \bwp-(C^{-1/2}q^{-2}v) \right]$.
\end{enumerate}
\end{prop}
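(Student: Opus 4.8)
The plan is to prove (i) and (ii) only in the single case $m=0$ and then bootstrap. By Proposition~\ref{prop:psiinv} the automorphism $Y=T_\pi\circ T$ fixes $\bpsi\pm{1,\pm1}(z)$ and $\bwp\pm(z)$, while by the very definition $\X\pm{1,m}(z)=Y^{\mp m}(\x\pm1(z))$ one has $Y(\X+{1,m}(z))=\X+{1,m-1}(z)$ and $Y(\X-{1,m}(z))=\X-{1,m+1}(z)$ (note $Y(C)=C$, so the shifts $Cq^{-2}z$ in the arguments are harmless); applying $Y$ and $Y^{-1}$ to the $m=0$ versions of (i) and (ii) then shifts $m$ by $\mp1$ on both sides simultaneously and yields these identities for every $m\in\Z$. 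Parts (iii) and (iv) will follow by applying the anti-automorphism $\varphi$ to (i) and (ii) and substituting $z\mapsto 1/z$, $v\mapsto 1/v$, using $\varphi(\bpsi+{1,1}(z))=\bpsi-{1,-1}(1/z)$ (immediate from $\bpsi-{1,-1}(z):=\varphi(\bpsi+{1,1}(1/z))$ and $\varphi^2=\id$), $\varphi(\X\pm{1,m}(z))=\X\mp{1,-m}(1/z)$, $\varphi(\bwp\pm(z))=\bwp\mp(1/z)$, $\varphi([2]_q)=[2]_q$, $\varphi(G_{ij}^\pm)=G_{ij}^\mp$ and $\delta(x)=\delta(1/x)$; the order reversal built into $\varphi$ is exactly what transposes the $\bwp$ factor relative to the $\X$ factor in passing from (i) to (iii). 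Finally, (v) will be deduced from (i), (ii) and Corollary~\ref{cor:kpsi1}.

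For (i) at $m=0$: evaluating $Y$ on the generators occurring in (\ref{eq:defpsi+11}) gives $Y(\km1(w)^{-1}\xm1(C^{1/2}w))=-\xp0(C^{-1/2}w)$, so (\ref{eq:defpsi+11}) is equivalent to $[\xp0(w),\xp1(z)]_{G_{10}^-(w/z)}=\delta(q^2w/z)\,\bpsi+{1,1}(z)$, the form already used in the proof of Proposition~\ref{prop:psiinv}. I would bracket this on the right with $\xm1(v)$ and push the outer bracket inside with identity (\ref{eq:identity1}); the term $[[\xp0(w),\xm1(v)],\xp1(z)]_{G_{10}^-(w/z)}$ vanishes since $\xp0$ and $\xm1$ carry distinct colours in (\ref{eq:relx+x-}), leaving $[\xp0(w),[\xp1(z),\xm1(v)]]_{G_{10}^-(w/z)}$. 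Feeding (\ref{eq:relx+x-}) into this, the $\kk+1$-summand dies because the twist produced by (\ref{eq:kpxpm}) cancels $G_{10}^-(w/z)$ exactly, while the $\kk-1$-summand, rearranged via (\ref{eq:kmxpm}), produces $G_{10}^+(z/w)-G_{10}^-(w/z)$, collapsed to a $\delta$ by (\ref{eq:G+G-}). Cancelling the overall $\delta(q^2w/z)$ leaves $[2]_q\,\delta(Cz/v)\,\kk-1(C^{1/2}z)\,\xp0(q^{-2}z)$; in view of $\X-{1,1}(u)=Y(\xm1(u))=-\kk-0(C^{-1/2}u)^{-1}\xp0(C^{-1}u)$, the definition (\ref{eq:defwp}) of $\bwp-$, and the fact that $\kk-0$ and $\kk-1$ commute by (\ref{eq:kpkm}), this equals $-[2]_q\,\delta(Cz/v)\,\bwp-(C^{1/2}q^{-2}z)\,\X-{1,1}(Cq^{-2}z)$, which is (i) at $m=0$. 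Part (ii) at $m=0$ is obtained the same way, bracketing the same reformulation of (\ref{eq:defpsi+11}) with $\xp1(v)$ but invoking the $x^+x^+$ relation (\ref{eq:xpmxpm}) in place of (\ref{eq:relx+x-}); the twisting series $G_{10}^-(z/vq^2)G_{11}^-(z/v)$ on the left of (ii) and the right side $[2]_q\,\delta(z/vq^2)\,\X+{1,1}(z)$, with $\X+{1,1}(z)=Y^{-1}(\xp1(z))=T^{-1}(\xp0(z))$, fall out once the $\delta$'s and $G$'s are reduced with (\ref{eq:G+G-}) and (\ref{eq:GG+GG-}).

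For (v): the explicit form of $Y$ also gives $Y(\xp1(C^{1/2}w)\kp1(w)^{-1})=-\xm0(C^{-1/2}w)$, so (\ref{eq:defpsi-1-1}) reads $[\xm1(v),\xm0(C^{-1/2}w)]_{G_{10}^+(C^{1/2}v/w)}=\delta(C^{-1/2}q^2w/v)\,\bpsi-{1,-1}(v)$. Bracketing this with $\bpsi+{1,1}(z)$ and pushing the outer bracket inside with (\ref{eq:identity1})--(\ref{eq:identity2}) reduces matters to two inner brackets: $[\bpsi+{1,1}(z),\xm1(v)]$, which is (i) at $m=0$, and $[\bpsi+{1,1}(z),\xm0(C^{-1/2}w)]$, which I would treat by writing $\xm0(C^{-1/2}w)=-\X+{1,-1}(C^{1/2}w)\,\kk+0(w)$ (once more from the explicit $Y$) and combining (ii) at $m=-1$ with Corollary~\ref{cor:kpsi1}(ii). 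The $\xp1(z)$ thrown off by (ii) then meets the $\xm1(v)$ coming from (i) and is resolved through (\ref{eq:relx+x-}); all the $\delta$-factors, including $\delta(C^{-1/2}q^2w/v)$, collapse, and the surviving $k^\pm$'s recombine, via (\ref{eq:kpkm}) and (\ref{eq:defwp}), into $\bwp+(C^{-1/2}q^{-2}z)$ and $\bwp-(C^{-1/2}q^{-2}v)$, with the prefactor $[2]_q/(q-q^{-1})$ and the two $\delta$-terms inheriting the shape of the right-hand side of (\ref{eq:relx+x-}).

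No step here is conceptually deep: exactly as in the proof of Proposition~\ref{prop:psiinv}, it is a matter of orchestrating the Jacobi identities (\ref{eq:identity1})--(\ref{eq:identity2}), the structure-function identities (notably (\ref{eq:G+G-}) and (\ref{eq:GG+GG-})) and the $\delta$-function collapses. I expect the main obstacle to be the bookkeeping in (v), where several $\delta$-constraints must be imposed at once and the $\bwp\pm$ have to be reassembled from products of $k^+$'s and $k^-$'s; within (i) and (ii) the only genuine subtlety is pinning down the precise shifts by powers of $q$ and $C$ in the arguments on the right-hand sides and in the twisting series.
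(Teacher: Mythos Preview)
Your treatment of (i)--(iv) coincides with the paper's: check (i) and (ii) at $m=0$ and propagate by applying $Y^{\pm 1}$, using $Y$-invariance of $\bpsi\pm{1,\pm 1}$ and $\bwp\pm$ from Proposition~\ref{prop:psiinv}; then deduce (iii), (iv) by applying $\varphi$. Your explicit sketch for (i) at $m=0$ is correct in every detail.

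For (v) your route differs from the paper's. The paper expands \emph{both} $\bpsi+{1,1}(z)$ and $\bpsi-{1,-1}(u)$ simultaneously via their defining relations, obtaining a nested commutator of four $x$'s; the cross-colour inner brackets $[\xp0,\xm1]$ and $[\xp1,\xm0]$ vanish by (\ref{eq:relx+x-}), leaving only the two same-colour contributions, each of which collapses to a $\delta$ times a product of $\kk\pm i$'s that assembles directly into $\bwp\pm$. You instead expand only $\bpsi-{1,-1}$ and feed the inner brackets to (i) and (ii). This works in principle, but invoking (ii) at $m=-1$ for $[\bpsi+{1,1}(z),\xm0(\cdot)]$ is awkward: (ii) is a \emph{twisted} commutator, and untwisting it to the plain commutator you need reintroduces a term $(G_{10}^-G_{11}^- - 1)\,\X+{1,-1}\bpsi+{1,1}$ that must then be chased further. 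A cleaner variant of your own approach---closer to the paper's---is to compute $[\bpsi+{1,1}(z),\xm0(u)]$ directly from $\bpsi+{1,1}=[\xp0,\xp1]_{G_{10}^-}\cdot\delta^{-1}$: since $[\xp1,\xm0]=0$, only $[\xp0(w),\xm0(u)]$ survives, and (\ref{eq:relx+x-}) plus the $\kk\pm0$-commutation relations give the answer immediately, with no twisted bracket to undo. Either way the endpoint is the same; the paper's symmetric expansion just minimises the bookkeeping you correctly flag as the main obstacle.
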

\begin{proof}
\emph{i.} and \emph{ii.} are readily checked for $m=0$. Then, assuming they hold for some $m\in \Z$ and applying $Y^{\pm 1}$, it follows from propositon \ref{prop:psiinv} that they also hold for $m \pm 1$. \emph{iii.} and \emph{iv.} are obtained by applying $\varphi$ to \emph{i.} and \emph{ii.} respectively. Finally \emph{v.} is obtained by direct calculation from the definitions of $\bpsi+{1,1}(z)$ and $\bpsi-{1,-1}(v)$, i.e.
\bea &&\delta \left (\frac{C^{-1/2}q^2w}{z}\right ) \delta \left (\frac{C^{1/2}q^{-2}u}{v}\right )\left[\bpsi+{1,1}(z), \bpsi-{1,-1}(u)\right ] \nn\\
&=& \left [\left [\x+0(C^{-1/2} w), \x+1(z) \right ]_{G_{10}^-(C^{-1/2}w/z)}, \left [\x-1(u), \x-0(C^{-1/2}v) \right ]_{G_{10}^+(C^{1/2}u/v)}\right ] \nn\\
&=& [2]_q \delta \left (\frac{C^{-1/2}q^2 v}{u} \right ) \left \{\delta \left (\frac{z}{Cu} \right ) \left [\x+0(C^{-1/2} w), \x-0(C^{-1/2}v)\kk+1(C^{-1/2}z) \right ]_{G_{10}^-(C^{-1/2}w/z)} 
\right .\nn\\ 
&& \left .
\qquad \qquad \qquad  \qquad - \delta \left (\frac{Cw}{v} \right ) \left [\kk-0(C^{-1}v)\x-1(u), \x+1(z) \right ]_{G_{10}^-(C^{-1/2}w/z)} \right \}\nn\\
&=&\frac{[2]_q}{q-q^{-1}} \delta\left (\frac{C^{1/2} q^{-2} u}{v} \right )\delta \left (\frac{C^{-1/2} q^2 w}{z} \right )\left \{\delta \left (\frac{z}{Cu} \right ) \kk+0(C^{-1}w)\kk+1(C^{-1/2}z) \right . \nn\\
&& \left . \qquad \qquad \qquad \qquad \qquad \qquad \qquad \qquad \qquad -  \delta \left (\frac{Cz}{u}\right ) \kk-0(C^{-1} v) \kk-1(C^{-1/2} u) \right \} \,. \nn
\eea
Compare with (\ref{eq:defwp}) to conclude the proof. 
\end{proof}

\begin{defprop}
	For every $m \in \N^\times$ there exist $\bpsi +{1,m}(z), \bz{+}{m}(z) \in \uqsltc[[z, z^{-1}]]$, such that
	\be \label{eq:bz+1} \bz{+}{1}(v) = 0 \ee
	and, for every $m, n\in\N^\times$,
	\bea\label{eq:Ym} \left [ Y^m \left ( \kk -1(z)^{-1} \x -1(C^{1/2}z) \right ) , \x +1(v) \right ]_{G_{01}^-\left(z/C^{1/2}v\right)} &=& -\delta\left (\frac{z}{C^{1/2}v}\right )\bz{+}{m}(v) \nn\\
	&&+(q-q^{-1})\sum_{k=1}^{m-2}\delta \left ( \frac{q^{2k}z}{C^{1/2}v} \right ) \bpsi +{1,k}(v)\bz{+}{m-k}(v) \nn\\
	&&- \delta \left ( \frac{q^{2m}z}{C^{1/2}v} \right ) \bpsi +{1,m}(v)\, , \eea
	\be\label{eq:Ypsim} Y\left (\bpsi +{1,m}(v)\right ) = \bpsi +{1,m}(v) \,,\ee
	\be\label{eq:Yqsim} Y\left (\bz{+}{m}(v)\right ) =\bz{+}{m}(vq^2)\,, \ee
	\bea\label{eq:psipsimm}{}_{G^-_{01}(q^{-2m}v/w)G^-_{11}(q^{2(1-m)}v/w)} \left [ \bpsi +{1,1}(w), \bpsi +{1,m}(v) \right ]_{G^-_{01}(w/vq^2)G^-_{11}(w/v)} &=&
	[2]_q\delta\left(\frac{w}{vq^2}\right) \bpsi+{1,m+1}(q^2v) \nn\\&& -[2]_q\delta\left(\frac{q^{2m}w}{v}\right) \bpsi+{1,m+1}(v)
	\,,\eea
	\be\label{eq:commbz+1} [\bpsi{+}{1,n}(w), \bz{+}{m}(v)]=0\,.\ee
\end{defprop}

\begin{proof}
	It suffices to prove the proposition with $n=1$ since the general case follows by an easy recursion on $n$ once we have (\ref{eq:psipsimm}). The proof for $n=1$ is by recursion on $m$. For $m=1$, (\ref{eq:bz+1}) and (\ref{eq:Ym}) are definition-proposition \ref{defpsi+11}, whereas (\ref{eq:Ypsim}) is proposition \ref{prop:psiinv}. (\ref{eq:Yqsim}) and (\ref{eq:commbz+1}) -- with $n=1$ -- automatically follow from (\ref{eq:bz+1}). Making use of propsition \ref{prop:psixm}, it is straightforward to prove that, for every $m\in\N^\times$,
	\bea
	\label{eq:recursion}
	&&[2]_q \delta \left( \frac{z}{uq^2}\right) Y^{-1} \left ( \left[ Y^{m+1} \left(\kk-1(C^{-1/2}v)^{-1} \x-1(v) \right ), \x+1(uq^2) \right]_{G^-_{01}(C^{-1}q^{-2}v/u)} \right ) \nn\\
	&-& [2]_q \delta \left( \frac{Cz}{v}\right) \left[ Y^{m+1} \left(\kk-1(C^{1/2}q^{-2}z)^{-1} \x-1(C q^{-2}z) \right ), \x+1(u) \right]_{G_{01}^-(z/uq^2)}  \\
	&=& {}_{G^-_{10}(v/Cz) G^-_{11}(vq^2/Cz)}\left[\bpsi+{1,1}(z) , \left[ Y^{m} \left(\kk-1(C^{-1/2}v)^{-1} \x-1(v) \right ), \x+1(u) \right]_{G^-_{01}(C^{-1}v/u)}  \right]_{G^-_{10}(z/uq^2) G^-_{11}(z/u)}\,.\nn
	\eea
	If $m=1$, (\ref{eq:psipsimm}) is an easy consequence of the above equation. Now assume that the proposition holds up to some $m\in\N^\times$. Then (\ref{eq:recursion}) reads, for that $m$,
	\bea
	\label{eq:recursion2}
	&&[2]_q \delta \left( \frac{z}{uq^2}\right) Y^{-1} \left ( \left[ Y^{m+1} \left(\kk-1(C^{-1/2}v)^{-1} \x-1(v) \right ), \x+1(uq^2) \right]_{G^-_{01}(C^{-1}q^{-2}v/u)} \right ) \nn\\
	&-& [2]_q \delta \left( \frac{Cz}{v}\right) \left[ Y^{m+1} \left(\kk-1(C^{1/2}q^{-2}z)^{-1} \x-1(C q^{-2}z) \right ), \x+1(u) \right]_{G_{01}^-(z/uq^2)}  \nn\\
	&=& -\delta\left (\frac{v}{Cu}\right ){}_{G^-_{10}(v/Cz) G^-_{11}(vq^2/Cz)}\left[\bpsi+{1,1}(z) , \bz{+}{m}(u)\right]_{G^-_{10}(z/uq^2) G^-_{11}(z/u)} \nn\\
	&&+(q-q^{-1})\sum_{k=1}^{m-2}\delta \left ( \frac{q^{2k}v}{Cu} \right ) {}_{G^-_{10}(v/Cz) G^-_{11}(vq^2/Cz)}\left[\bpsi+{1,1}(z) , \bpsi +{1,k}(u)\right]_{G^-_{10}(z/uq^2) G^-_{11}(z/u)} \bz{+}{m-k}(u) \nn\\
	&&- \delta \left ( \frac{q^{2m}v}{Cu} \right ) {}_{G^-_{10}(v/Cz) G^-_{11}(vq^2/Cz)}\left[\bpsi+{1,1}(z) , \bpsi +{1,m}(u)\right]_{G^-_{10}(z/uq^2) G^-_{11}(z/u)}\,\nn\\
	&=& -[2]_q(q-q^{-1}) \delta\left (\frac{v}{Cu}\right )\left \{\delta\left(\frac{{v}}{Cz}\right)-\delta \left (\frac{vq^2}{Cz}\right) \right \}\bpsi+{1,1}(z)\bz{+}{m}(u) \nn\\ &&+[2]_q(q-q^{-1})\sum_{k=1}^{m-2}\delta \left ( \frac{q^{2k}v}{Cu} \right ) \left\{\delta\left (\frac{z}{uq^2}\right )\bpsi{+}{1,k+1}(uq^2) - \delta\left (\frac{zq^{2k}}{u}\right )\bpsi{+}{k+1}(u)\right\}  \bz{+}{m-k}(u) \nn\\
	&&- [2]_q\delta \left ( \frac{q^{2m}v}{Cu} \right )\left\{\delta\left(\frac{z}{uq^2}\right)\bpsi{+}{1,m+1}(uq^2) - \delta \left(\frac{zq^{2m}}{u}\right ) \bpsi{+}{1,m+1}(u)\right\}\,.\nn\\
	\nn
	\eea
	It immediately follows that (\ref{eq:Ym}) holds at rank $m+1$, for some $\bz{+}{m+1}(z)\in\uqsltc[[z,z^{-1}]]$ satisfying (\ref{eq:Yqsim}). Considering (\ref{eq:recursion}) at rank $m+1$, and substituting the above results, we get
	\bea
	\label{eq:recursion3}
	&&[2]_q \delta \left( \frac{z}{uq^2}\right) Y^{-1} \left ( \left[ Y^{m+2} \left(\kk-1(C^{-1/2}v)^{-1} \x-1(v) \right ), \x+1(uq^2) \right]_{G^-_{01}(C^{-1}q^{-2}v/u)} \right ) \nn\\
	&-& [2]_q \delta \left( \frac{Cz}{v}\right) \left[ Y^{m+2} \left(\kk-1(C^{1/2}q^{-2}z)^{-1} \x-1(C q^{-2}z) \right ), \x+1(u) \right]_{G_{01}^-(z/uq^2)}  \nn\\
	&=& -\delta\left (\frac{v}{Cu}\right ){}_{G^-_{10}(v/Cz) G^-_{11}(vq^2/Cz)}\left[\bpsi+{1,1}(z) , \bz{+}{m+1}(u)\right]_{G^-_{10}(z/uq^2) G^-_{11}(z/u)} \nn\\
	&&+[2]_q(q-q^{-1})\sum_{k=1}^{m-1}\delta \left ( \frac{q^{2k}v}{Cu} \right ) \left\{\delta\left (\frac{z}{uq^2}\right )\bpsi{+}{1,k+1}(uq^2) - \delta\left (\frac{zq^{2k}}{u}\right )\bpsi{+}{k+1}(u)\right\} \bz{+}{m+1-k}(u) \nn\\
	&&- \delta \left ( \frac{q^{2(m+1)}v}{Cu} \right ) {}_{G^-_{10}(v/Cz) G^-_{11}(vq^2/Cz)}\left[\bpsi+{1,1}(z) , \bpsi +{1,m+1}(u)\right]_{G^-_{10}(z/uq^2) G^-_{11}(z/u)}\,.\nn
	\eea
	It readily follows that, on one hand, there exists some $\bpsi+{1,m+2}(v) \in \widehat{\dqaslt}[[v,v^{-1}]]$ such that (\ref{eq:psipsimm}) holds for $m+1$ and that, on the other hand,
	\be (uq^2-z)(u-z)\left[\bpsi+{1,1}(z) , \bz{+}{m+1}(u)\right] = 0\,.\nn\ee
	Since $Y(\bz{+}{m+1}(u))=\bz{+}{m+1}(uq^2)$, we have that
	\be (uq^{2(p+1)}-z)(uq^{2p}-z)\left[\bpsi+{1,1}(z) , \bz{+}{m+1}(u)\right] = 0\nn\ee
	for every $p\in\Z$ and, as a consequence, (\ref{eq:commbz+1}) holds for $m+1$. Finally, (\ref{eq:Ypsim}) for $m+1$ follows from the corresponding case of (\ref{eq:psipsimm}), which concludes the proof.
\end{proof}
\begin{rem}
Note that since $[\bpsi+{1,n}(z),\bz{+}{m}(v)]=0$ for every $m,n\in\N^\times$, we have that
\be \bpsi{+}{1,n,k} \bz{+}{m,l} = \bz{+}{m,l} \bpsi{+}{1,n,k} \in \Omega_{l-k} \cap \Omega_{k-l} \,,\ee
guaranteeing the convergence in $\uqsltc$ of each of the terms of the series $\bpsi +{1,k}(z)\bz{+}{m-k}(z)$ on the the r.h.s of eq. (\ref{eq:Ym}).
\end{rem}
\begin{defn}
For every $m\in\N^\times$, let 
\be \bz-{-m}(z) = \varphi(\bz+m(1/z))\qquad \mbox{and} \qquad  \bpsi -{1,-m}(z) = \varphi(\bpsi +{1,m}(1/z))\,.\ee
\end{defn}
\noi Then,
\begin{cor}
 We have 
\be \bz{-}{-1}(v) = 0 \ee
	and, for every $m, n\in\N^\times$,
	\bea \label{eq:x-X+m}\left [\x -1(v), Y^m \left ( \x +1(C^{1/2}z) \kk +1(z)^{-1}  \right )   \right ]_{G_{01}^+\left(C^{1/2}v/z\right)} &=& -\delta\left (\frac{z}{C^{1/2}v}\right )\bz{-}{-m}(v) \nn\\
	&&-(q-q^{-1})\sum_{k=1}^{m-2}\delta \left ( \frac{q^{2k}z}{C^{1/2}v} \right ) \bz{-}{-(m-k)}(v) \bpsi -{1,-k}(v) \nn\\
	&&- \delta \left ( \frac{q^{2m}z}{C^{1/2}v} \right ) \bpsi -{1,-m}(v)\, , \eea
	\be Y\left (\bpsi -{1,-m}(v)\right ) = \bpsi -{1,-m}(v) \,,\ee
	\be Y\left (\bz{-}{-m}(v)\right ) =\bz{-}{-m}(vq^2)\,, \ee
	\bea{}_{G^+_{01}(q^{2m}w/v)G^+_{11}(q^{2(m-1)}w/v)} \left [ \bpsi -{1,-m}(v), \bpsi -{1,-1}(w)  \right ]_{G^+_{01}(vq^2/w)G^+_{11}(v/w)} 
	&=&
	[2]_q\delta\left(\frac{w}{vq^2}\right) \bpsi-{1,-(m+1)}(q^2v) \nn\\&& -[2]_q\delta\left(\frac{q^{2m}w}{v}\right) \bpsi-{1,-(m+1)}(v)
	\,,\label{eq:psi-psi-}\eea
	\be [\bpsi{-}{1,-n}(w), \bz{-}{-m}(v)]=0\,.\ee
\end{cor}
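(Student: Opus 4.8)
The strategy is to obtain the Corollary as the image, under the involutive $\K$-algebra anti-homomorphism $\varphi$, of the preceding Definition-Proposition. Since $\bz{-}{-m}(z)$ and $\bpsi -{1,-m}(z)$ are, by definition, $\varphi(\bz{+}{m}(1/z))$ and $\varphi(\bpsi +{1,m}(1/z))$, nothing needs to be constructed: the Corollary asserts precisely that each of the relations (\ref{eq:bz+1})--(\ref{eq:commbz+1}) is preserved by $\varphi$, in the form produced after the accompanying inversion of the formal variables. The first ingredient I would record is $\varphi\circ Y = Y\circ\varphi$; this follows from $Y = T_\pi\circ T$ and the Proposition recording $\varphi\circ T_\pi = T_\pi\circ\varphi$ and $\varphi\circ T = T\circ\varphi$, so that $\varphi\circ Y^{\pm1} = Y^{\pm1}\circ\varphi$. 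Combined with the fixed-point and shift relations (\ref{eq:Ypsim})--(\ref{eq:Yqsim}) and with $\varphi(q)=q^{-1}$, this at once gives $Y(\bpsi -{1,-m}(v)) = \bpsi -{1,-m}(v)$ and $Y(\bz{-}{-m}(v)) = \bz{-}{-m}(vq^2)$. Likewise $\bz{-}{-1}(v) = \varphi(\bz{+}{1}(1/v)) = 0$ follows from (\ref{eq:bz+1}), and applying $\varphi$ to (\ref{eq:commbz+1}) --- an equality of commuting elements, so $\varphi$ merely reverses the commutator --- gives $[\bpsi -{1,-n}(w), \bz{-}{-m}(v)] = 0$.

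The substance of the proof is to apply $\varphi$ to the defining relation (\ref{eq:Ym}) and to (\ref{eq:psipsimm}), and then substitute $z\mapsto 1/z$, $v\mapsto 1/v$ (and $w\mapsto 1/w$). I would use the elementary rules: $\varphi$ is anti-multiplicative, so $\varphi\big({}_a[A,B]_b\big) = {}_{\varphi(a)}[\varphi(B),\varphi(A)]_{\varphi(b)}$ for scalars $a,b$; $\varphi(\x\pm1(z)) = \x\mp1(1/z)$ and $\varphi(\kk\pm1(z)) = \kk\mp1(1/z)$; $\varphi(G^-_{ij}) = G^+_{ij}$, with $\varphi$ inverting $q$ and $C$ inside scalar arguments; $\varphi(\delta(X)) = \delta(X)$ together with $\delta(X)=\delta(1/X)$; and $\varphi(q-q^{-1}) = -(q-q^{-1})$. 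With the definitions of $\bz{-}{}$ and $\bpsi -{}$, reading off $\varphi$ of (\ref{eq:Ym}) after the substitution turns the left side into $\big[\x-1(v),\, Y^m\big(\x+1(C^{1/2}z)\kk+1(z)^{-1}\big)\big]_{G_{01}^+(C^{1/2}v/z)}$ and the right side into the right side of (\ref{eq:x-X+m}); the sign of the sum is produced by $\varphi(q-q^{-1}) = -(q-q^{-1})$ and the reversal of the product $\bpsi +{1,k}(v)\bz{+}{m-k}(v)$ into $\bz{-}{-(m-k)}(v)\bpsi -{1,-k}(v)$ by anti-multiplicativity. The same procedure applied to (\ref{eq:psipsimm}), using $\varphi(G^-_{ij}) = G^+_{ij}$ and $\delta(w)=\delta(1/w)$ to match the stated normalisation of the double bracket, yields (\ref{eq:psi-psi-}).

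The only genuinely delicate point is this bookkeeping of formal variables and structure functions. One must keep straight that $\varphi$ acts coefficientwise on $\uqsltc$-valued distributions in $z$ while simultaneously inverting the scalars $q$ and $C^{1/2}$, so that, concretely, $\varphi(\x-1(C^{1/2}z)) = \x+1(C^{1/2}/z)$ --- the power of $C$ unchanged, the power of $z$ negated --- whereas a scalar factor such as $G^-_{01}(q^{-2m}v/w)$ becomes, after the inversion of variables, $G^+_{01}(q^{2m}w/v)$; and one must track the powers $q^{2k}$ in the supports of the $\delta$-distributions through both $\varphi$ and the substitution. All of this is routine once the Appendix's conventions on formal distributions are in force. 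Finally, since $\varphi$ has been extended to a bicontinuous map on $\uqsltc$, it sends the convergent series $\sum_k \bpsi +{1,k}(v)\bz{+}{m-k}(v)$ on the right of (\ref{eq:Ym}) to a convergent series, so no separate convergence argument is required, and one reads off precisely the relations of the Corollary.
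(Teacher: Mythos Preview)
Your proposal is correct and follows exactly the paper's approach: the paper's proof consists of the single sentence ``It suffices to apply $\varphi$ to the results of the previous proposition.'' Your detailed bookkeeping of how $\varphi$ interacts with the formal variables, the $G^\pm_{ij}$, the $\delta$-distributions, and the commutation $\varphi\circ Y = Y\circ\varphi$ simply unpacks what that one line entails.
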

\begin{proof}
It suffice to apply $\varphi$ to the results of the previous proposition.
\end{proof}
\begin{prop}
For every $i\in\dot I$ and for every $m\in\N^\times$, we have
\begin{enumerate}
\item[i.] $\kk-i(v) \bpsi\pm{1,\pm m}(z) = G_{i,0}^\mp(C^{\mp 1/2} q^{2m}v/z) G_{i,1}^\mp (C^{\mp 1/2}v/z) \bpsi\pm{1,\pm m} (z)\kk-i(v)$;
\item[ii.] $\bpsi\pm{1,\pm m}(z) \kk+i(v)  = G_{i,0}^\mp(C^{\mp 1/2} q^{-2m}z/v) G_{i,1}^\mp (C^{\mp 1/2}z/v) \kk+i(v) \bpsi\pm{1,\pm m} (z)$;
\end{enumerate}
\end{prop}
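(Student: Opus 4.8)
The plan is to establish part~i.\ for both choices of sign by induction on $m$, and then to deduce part~ii.\ from part~i.\ by applying the anti-automorphism $\varphi$, exactly as was done for $m=1$ in Corollary~\ref{cor:kpsi1}. The base case $m=1$ of part~i.\ is precisely Corollary~\ref{cor:kpsi1}. For the inductive step I would use the relation (\ref{eq:psipsimm}) in the upper-sign case (and its $\varphi$-image (\ref{eq:psi-psi-}) in the lower-sign case), which exhibits $\bpsi+{1,m+1}(q^2v)$ and $\bpsi+{1,m+1}(v)$ as the coefficients of the formal distributions $\delta(w/vq^2)$ and $\delta(q^{2m}w/v)$ in the bracket of $\bpsi+{1,1}(w)$ with $\bpsi+{1,m}(v)$ occurring on its left-hand side. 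Fixing $i\in\dot I$ and noting that every structure function appearing in (\ref{eq:psipsimm}) is a scalar, commuting $\kk-i(u)$ from the left through that whole left-hand side produces — by the base case applied to $\bpsi+{1,1}(w)$ and by the inductive hypothesis applied to $\bpsi+{1,m}(v)$ — the single scalar prefactor $G_{i,0}^-(C^{-1/2}q^{2}u/w)\,G_{i,1}^-(C^{-1/2}u/w)\,G_{i,0}^-(C^{-1/2}q^{2m}u/v)\,G_{i,1}^-(C^{-1/2}u/v)$. Equating this with $\kk-i(u)$ applied to the right-hand side of (\ref{eq:psipsimm}) and then comparing the coefficients of $\delta(w/vq^2)$ and of $\delta(q^{2m}w/v)$ reads off the desired commutation rule between $\kk-i(u)$ and $\bpsi+{1,m+1}(z)$.

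The one place where the structure functions must be handled with care is as follows. Each of the two delta distributions forces a substitution ($w=q^2v$ for $\delta(w/vq^2)$ and $w=q^{-2m}v$ for $\delta(q^{2m}w/v)$) inside the four-factor product above, and one has to check that after that substitution it collapses to exactly the two-factor expression $G_{i,0}^-(C^{-1/2}q^{2(m+1)}u/z)\,G_{i,1}^-(C^{-1/2}u/z)$ (with $z=q^2v$, resp.\ $z=v$) required by the statement. This is precisely where the type $\mathfrak a_1$ identity $G_{i,0}^\pm(z)\,G_{i,1}^\pm(z)=1$ — a consequence of $G_{ij}^\pm(z)\,G_{ij}^\mp(z)=1$ together with the relations $G_{10}^\pm(z)=G_{11}^\mp(z)$ and $G_{00}^\pm(z)=G_{01}^\mp(z)$ recorded in Remark~\ref{rem:Gij} — enters: it cancels the two extraneous factors. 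I expect this bookkeeping of structure functions to be the only genuinely fiddly step; the lower-sign case of part~i.\ is handled in the same fashion, replacing (\ref{eq:psipsimm}) by (\ref{eq:psi-psi-}) and moving $\kk-i(u)$ through it.

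Finally, part~ii.\ follows from part~i.\ by the device already used in Corollary~\ref{cor:kpsi1}: applying the involutive anti-automorphism $\varphi$ to the relation of part~i., using $\bpsi-{1,-m}(z)=\varphi(\bpsi+{1,m}(1/z))$, $\varphi(\kk\pm i(z))=\kk\mp i(1/z)$, $\varphi(C)=C^{-1}$, $\varphi(q)=q^{-1}$ and $\varphi(G_{ij}^\pm(z))=G_{ij}^\mp(z)$, and then performing the change of variables $z\mapsto 1/z$, $v\mapsto 1/v$, turns part~i.\ for $\bpsi+{1,m}$ into part~ii.\ for $\bpsi-{1,-m}$ and part~i.\ for $\bpsi-{1,-m}$ into part~ii.\ for $\bpsi+{1,m}$. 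Throughout, all identities are understood between $\uqsltc$-valued formal distributions, since the $\bpsi\pm{1,\pm m}(z)$ live in $\uqsltc[[z,z^{-1}]]$ rather than in $\uqslthh[[z,z^{-1}]]$.
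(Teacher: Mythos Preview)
Your proposal is correct and follows essentially the same route as the paper: induction on $m$ with base case Corollary~\ref{cor:kpsi1}, inductive step via the recursion (\ref{eq:psipsimm}) (resp.\ (\ref{eq:psi-psi-})), the key cancellation coming from the type $\dot{\mathfrak a}_1$ identity $G_{i,0}^\pm(z)G_{i,1}^\pm(z)=1$, and part~ii.\ deduced from part~i.\ by applying $\varphi$. Your account is in fact more careful than the paper's sketch in spelling out the two delta-substitutions and the resulting simplifications.
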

\begin{proof}
Clearly \emph{ii.} follows by applying $\varphi$ to \emph{i.}. We prove \emph{i.} by induction on $m\in\N^\times$. The case $m=1$ is corollary \ref{cor:kpsi1}\emph{i.} Now, assuming that \emph{i.} holds for some $m\in\N^\times$, we can make use of (\ref{eq:psipsimm}) and (\ref{eq:psi-psi-}) to show that
\bea \kk-i(v) \bpsi\pm{1,\pm (m+1)}(z)  &=& G_{i,0}^\mp(C^{\mp 1/2} q^{2(m+1)}v/z) G_{i,1}^\mp (C^{\mp 1/2}q^{2m}v/z )\nn\\&&\qquad \qquad \quad \times G_{i,0}^\mp(C^{\mp 1/2} q^{2m} v/z ) G_{i,1}^\mp(C^{\mp 1/2} z/v) \bpsi\pm{1,\pm m} (z) \kk-i(v) \nn\\
&=& G_{i,0}^\mp(C^{\mp 1/2} q^{2(m+1)}v/z)G_{i,1}^\mp(C^{\mp 1/2} z/v) \bpsi\pm{1,\pm m} (z) \kk-i(v) \nn
\eea
whch completes the recursion.
\end{proof}
The above proposition has the obvious
\begin{cor}
\label{cor:wppsi}
For every $m\in\N^\times$, we have
\bea \bwp-(v)\bpsi\pm{1,\pm m}(z) &=& G_{00}^\mp(C^{\mp1/2}q^{2m}v/z)G_{01}^\mp(C^{\mp1/2}v/z)\nn\\
&&\qquad\qquad\qquad G_{01}^\mp(C^{\mp1/2}q^{2(m+1)}v/z)G_{11}^\mp(C^{\mp1/2}q^{2}v/z)\bpsi\pm{1,\pm m}(z) \bwp-(v)\,;\eea
\bea \bpsi\pm{1,\pm m}(z) \bwp+(v)&=&G_{00}^\mp(C^{\mp1/2}q^{-2m}z/v)G_{01}^\mp(C^{\mp1/2}z/v)\nn\\
&&\qquad\qquad\qquad G_{01}^\mp(C^{\mp1/2}q^{-2(m+1)}z/v)G_{11}^\mp(C^{\mp1/2}q^{-2}z/v) \bwp+(v) \bpsi\pm{1,\pm m}(z)\,. \eea
\end{cor}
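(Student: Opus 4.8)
The plan is to read this off directly from the preceding proposition, since $\bwp\pm(v)$ is by its very definition a product of two $\kk\pm$ distributions and that proposition already tells us how $\bpsi\pm{1,\pm m}(z)$ commutes with a single such factor. For the first identity I would start from $\bwp-(v)=\kk-0(v)\,\kk-1(vq^2)$ and move $\bpsi\pm{1,\pm m}(z)$ to the left through this product one factor at a time. Pulling it past $\kk-1(vq^2)$ uses part i.\ of the preceding proposition with $i=1$ and its variable $v$ specialised to $vq^2$, which throws out the prefactor $G_{1,0}^\mp(C^{\mp1/2}q^{2(m+1)}v/z)\,G_{1,1}^\mp(C^{\mp1/2}q^2v/z)$ — the exponent $2(m+1)$ arising from the $q^{2m}$ of the proposition combined with the extra $q^2$ carried by the second $\kk-$ factor. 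Pulling it past the remaining $\kk-0(v)$ uses the same proposition with $i=0$ and throws out $G_{0,0}^\mp(C^{\mp1/2}q^{2m}v/z)\,G_{0,1}^\mp(C^{\mp1/2}v/z)$. Since $G^\pm_{ij}(z)$ depends only on the symmetric Cartan entry $c_{ij}$, we have $G^\pm_{1,0}=G^\pm_{0,1}$, and collecting the four factors yields precisely the first asserted identity.

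The second identity I would obtain in exactly the same way, starting from $\bwp+(v)=\kk+0(v)\,\kk+1(vq^2)$ and using part ii.\ of the preceding proposition to push $\bpsi\pm{1,\pm m}(z)$ to the right through the two $\kk+$ factors; the same bookkeeping of $q^{-2m}$ against $q^{-2(m+1)}$ produces the claimed prefactor. Alternatively one can simply apply the $\K$-algebra anti-automorphism $\varphi$ to the first identity, using $\varphi(\bwp\pm(z))=\bwp\mp(1/z)$ — immediate from (\ref{eq:defwp}), the relation $\varphi(\kk\pm i(z))=\kk\mp i(1/z)$ and the commutativity of the $\kk\pm i$ — together with $\varphi(\bpsi\pm{1,\pm m}(z))=\bpsi\mp{1,\mp m}(1/z)$ and $\varphi(G^\pm_{ij}(z))=G^\mp_{ij}(z)$.

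I do not expect any genuine obstacle here: continuity of the multiplication on $\uqsltc$ makes passing a single $\bpsi\pm{1,\pm m}(z)$ through a two-fold product legitimate, and the only point that requires a little care — and is the nearest thing to a difficulty — is keeping the $q^{2m}$ versus $q^{2(m+1)}$ shifts in the arguments of the structure functions straight once the $q^2$ built into the second factor of $\bwp\pm(v)$ is accounted for.
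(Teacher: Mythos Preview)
Your proposal is correct and is exactly the argument the paper has in mind: the paper states the corollary as an ``obvious'' consequence of the preceding proposition, and your unpacking --- writing $\bwp\pm(v)=\kk\pm0(v)\,\kk\pm1(vq^2)$ and commuting $\bpsi\pm{1,\pm m}(z)$ through each factor via parts \emph{i.}\ and \emph{ii.}\ of that proposition --- is precisely how one makes this explicit. The bookkeeping you describe is accurate, and the alternative route via $\varphi$ for the second identity is also available.
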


\begin{prop}
For every $m,n\in\N^\times$, we have
\bea \left [\bpsi+{1,m}(v), \bpsi-{1,-n}(w)\right ] &=& \left[2\right ]_q (q-q^{-1}) \left \{\delta\left (\frac{C q^{2(1-m)}v}{w} \right )\bwp-(C^{-1/2}q^{-2m}v) \bpsi-{1,-(n-1)}(wq^{-2}) \bpsi+{1,m-1}(v)  \right.\nn\\
&& \left . - \delta \left( \frac{q^{2(n-1)}v}{Cw} \right ) \bpsi-{1,-(n-1)}(w) \bpsi+{1,m-1}(vq^{-2}) \bwp+(C^{1/2}q^{-2} v)\right \}\,, \nn \eea
where we assume that
\be \bpsi\pm{1,0}(z) = \frac{1}{q-q^{-1}}\,.\ee
\end{prop}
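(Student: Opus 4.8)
The plan is to establish the formula by induction on $m+n$, invoking the defining recursions (\ref{eq:psipsimm}) and (\ref{eq:psi-psi-}) for $\bpsi+{1,m}(z)$ and $\bpsi-{1,-n}(z)$ as appropriate. The base case $m=n=1$ is item \emph{v.} of Proposition~\ref{prop:psixm}, once it is read through the convention $\bpsi\pm{1,0}(z)=(q-q^{-1})^{-1}$: the scalar factors $\bpsi-{1,0}$ and $\bpsi+{1,0}$ appearing on the right-hand side each absorb one factor $(q-q^{-1})^{-1}$, converting $[2]_q(q-q^{-1})$ into $[2]_q/(q-q^{-1})$, while the arguments of the $\bwp\pm$ and the supports of the two $\delta$'s are reconciled by the elementary collapses $\delta(a/b)\,\delta(c/b)=\delta(a/b)\,\delta(c/a)$. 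It is convenient to record at the outset that the anti-automorphism $\varphi$, through $\bpsi-{1,-k}(z)=\varphi(\bpsi+{1,k}(1/z))$ and $\varphi(\bwp\pm(z))=\bwp\mp(1/z)$, carries the claimed identity for $(m,n)$ to the one for $(n,m)$ with spectral parameters inverted; this halves the case distinction below and gives a consistency check on the final answer.

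For the inductive step, assume the identity for all pairs of strictly smaller sum. If $m\geq 2$, I would substitute (\ref{eq:psipsimm}) at index $m-1$ — which expresses $\bpsi+{1,m}$, up to an overall $\delta$-function, as the doubly-twisted bracket ${}_{\bullet}[\bpsi+{1,1}(w),\bpsi+{1,m-1}(v)]_{\bullet}$ — then bracket with $\bpsi-{1,-n}(u)$ and expand by the associative identities (\ref{eq:identity1})--(\ref{eq:identity2}) in the form $[{}_\alpha[A,B]_\beta,C]={}_\alpha[[A,C],B]_\beta+{}_\alpha[A,[B,C]]_\beta$. Here $[A,C]=[\bpsi+{1,1}(w),\bpsi-{1,-n}(u)]$ is the $(1,n)$-instance and $[B,C]=[\bpsi+{1,m-1}(v),\bpsi-{1,-n}(u)]$ is the $(m-1,n)$-instance, both available by induction. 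If $m=1$ (hence $n\geq 2$) one proceeds symmetrically with (\ref{eq:psi-psi-}), replacing $\bpsi-{1,-n}$ by a twisted bracket of $\bpsi-{1,-(n-1)}$ and $\bpsi-{1,-1}$ and reducing to the $(1,n-1)$- and $(1,1)$-instances (or one may simply appeal to the $\varphi$-symmetry together with the $m\geq 2$ case).

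The substantive part — and the real obstacle — is the reassembly of the two Jacobi pieces into the stated normal-ordered form. Substituting the inductive expressions yields a sum of terms, each a product of $\delta$-functions, structure functions $G^\pm_{ij}$ and the currents $\bwp\pm$, $\bpsi-{1,-(n-1)}$, $\bpsi+{1,m-1}$, $\bpsi+{1,m-2}$ in some order; one must then carry a $\bpsi+{1,1}(w)$ past the $\bwp\pm$ (a scalar $G$-product, Corollary~\ref{cor:wppsi}) and past $\bpsi-{1,-(n-1)}$ (again the $(1,n-1)$-instance), recombine it with any adjacent $\bpsi+{1,m-2}$ via (\ref{eq:psipsimm}) so that the lower current index telescopes back to $m-1$, bring a $\bpsi+{1,m-1}(v)$ through the $\bwp\pm$ and $\bpsi-{1,-(n-1)}$ likewise, normal-order $\bwp\pm$ against $\bpsi-{1,-(n-1)}$ using the commutation relations between the $\kk\pm i(z)$ and the $\bpsi\pm{1,\pm m}(z)$ recorded just above, and finally collapse the nested $\delta$'s and reduce the accumulated structure functions to scalars with (\ref{eq:G+G-}) and the auxiliary $G$-identities of the Appendix already used in the proof of Proposition~\ref{prop:psiinv}. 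I expect this to be most transparent if carried out channel-by-channel: on the $\delta(Cq^{2(1-m)}v/w)$-channel and on the $\delta(q^{2(n-1)}v/(Cw))$-channel the variable $w$ drops out, almost every $G$-factor becomes $1$, and one is left only to check that the scalar prefactor reduces to exactly $[2]_q(q-q^{-1})$ and that $\bwp\pm$, $\bpsi-{1,-(n-1)}$ and $\bpsi+{1,m-1}$ occur in the prescribed order with the prescribed arguments. Lastly, as in the remark following the construction of the $\bpsi+{1,m}$, one should verify that the infinite products on the right-hand side converge in $\uqsltc$; this follows from the (twisted) pairwise commutation relations of $\bwp\pm$, $\bpsi+{1,m-1}$ and $\bpsi-{1,-(n-1)}$, which on each $\delta$-channel force the relevant coefficient-commutators into all but finitely many of the ideals $\Omega_n$.
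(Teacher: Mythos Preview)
Your strategy is the same as the paper's. The paper also takes the base case $m=n=1$ from Proposition~\ref{prop:psixm}.\emph{v.}, then for the inductive step applies $a\mapsto[a,\bpsi-{1,-n}(w)]$ to (\ref{eq:psipsimm}) and $a\mapsto[\bpsi+{1,m}(w),a]$ to (\ref{eq:psi-psi-}), invoking Corollary~\ref{cor:wppsi} to commute the $\bwp\pm$ past the $\bpsi\pm{1,\pm k}$; it simply asserts that ``one easily completes the recursion'' without spelling out the channel-by-channel reassembly you describe. Your more detailed account of the Jacobi expansion, the $\varphi$-symmetry reducing $(m,n)$ to $(n,m)$, and the telescoping via (\ref{eq:psipsimm}) is a faithful unpacking of what the paper leaves implicit.
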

\begin{proof}
The case $m=n=1$ follows immediately by proposition \ref{prop:psixm}.\emph{v}. Now, applying $a\mapsto [a, \bpsi-{1,-n}(w)]$ and $a\mapsto [\bpsi+{1,n}(w), a]$ to (\ref{eq:psipsimm}) and (\ref{eq:psi-psi-}) respectively and making use of corollary \ref{cor:wppsi}, one easily completes the recursion.
\end{proof}

\subsection{Exchange relations}
\begin{prop}
\label{prop:X+1X+1}
	For every $m\in\N$, there exists some $\xi_m(z)\in\uqsltc[[z, z^{-1}]]$ such that, for every $n\in\Z$,
	\be \label{eq:exchrel}[\X{-}{1,m+n+1}(w), \X{-}{1,n}(z)]_{G_{01}^-(w/z)} = -[\X{-}{1,n+1}(w),\X{-}{1,m+n}(z)]_{G_{01}^-(w/z)} = \delta\left (\frac{wq^2}{z}\right )Y^n\left(\xi_m (z)\right)\,.\ee
\end{prop}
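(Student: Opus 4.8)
The plan is to establish the $n=0$ case of the two claimed identities and then bootstrap to all $n\in\Z$ by equivariance under $Y$. Since $\X-{1,k}(z)=Y^k(\x-1(z))$ and $Y=T_\pi\circ T$ is a bicontinuous $\F$-algebra automorphism of $\uqsltc$ that fixes every scalar formal series (in particular $G_{01}^-(w/z)$ and $\delta(wq^2/z)$), one has
\[
[\X-{1,m+n+1}(w),\X-{1,n}(z)]_{G_{01}^-(w/z)}=Y^n\!\left([\X-{1,m+1}(w),\x-1(z)]_{G_{01}^-(w/z)}\right),
\]
and likewise $[\X-{1,n+1}(w),\X-{1,m+n}(z)]_{G_{01}^-(w/z)}=Y^n([\X-{1,1}(w),\X-{1,m}(z)]_{G_{01}^-(w/z)})$, while $Y^n(\delta(wq^2/z)\,\xi_m(z))=\delta(wq^2/z)\,Y^n(\xi_m)(z)$. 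So it is enough to produce, for every $m\in\N$, a series $\xi_m(z)\in\uqsltc[[z,z^{-1}]]$ with
\[
[\X-{1,m+1}(w),\x-1(z)]_{G_{01}^-(w/z)}=-[\X-{1,1}(w),\X-{1,m}(z)]_{G_{01}^-(w/z)}=\delta(wq^2/z)\,\xi_m(z),
\]
which I do by induction on $m$.

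For the base case $m=0$, the defining formula for $T$ on $\xm1(z)$, composed with $T_\pi$, gives $\X-{1,1}(w)=Y(\x-1(w))=-\kk-0(C^{-1/2}w)^{-1}\x+0(C^{-1}w)$. Then $[\x+0(u),\x-1(z)]=0$ by \eqref{eq:relx+x-} (since $\delta_{01}=0$) and $\kk-0(C^{-1/2}w)\,\x-1(z)=G_{01}^+(w/z)\,\x-1(z)\,\kk-0(C^{-1/2}w)$ by \eqref{eq:kmxpm}; as $G_{01}^+=(G_{01}^-)^{-1}$ these combine into $\X-{1,1}(w)\,\x-1(z)=G_{01}^-(w/z)\,\x-1(z)\,\X-{1,1}(w)$, i.e. $[\X-{1,1}(w),\x-1(z)]_{G_{01}^-(w/z)}=0$, so the $m=0$ case holds with $\xi_0=0$ (recall $\X-{1,0}=\x-1$). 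Applying $Y^k$ to \eqref{eq:xpmxpm} for $\x-1$ also gives $[\X-{1,k}(w),\X-{1,k}(z)]_{G_{01}^-(w/z)}=0$ for all $k$, which I use below.

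For the inductive step, assume the identities up to level $m$. Proposition \ref{prop:psixm}.i, in the form $[\bpsi+{1,1}(u),\X-{1,m}(v)]=-[2]_q\,\delta(Cu/v)\,\bwp-(C^{1/2}q^{-2}u)\,\X-{1,m+1}(Cq^{-2}u)$, expresses $\X-{1,m+1}(w)$ — after dividing by the invertible series $\bwp-$ — as a coefficient of $[\bpsi+{1,1}(u),\X-{1,m}(v)]$. Substituting this into $[\X-{1,m+1}(w),\x-1(z)]_{G_{01}^-(w/z)}$, carrying $\x-1(z)$ through $\bwp-(\cdot)^{-1}$ (which produces the structure functions $G_{01}^-(w/z)G_{11}^-(wq^2/z)$ via \eqref{eq:kmxpm}), and applying the Jacobi identities \eqref{eq:identity1}/\eqref{eq:identity2} to $[[\bpsi+{1,1}(u),\X-{1,m}(v)],\x-1(z)]$, the expression splits into two pieces. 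In the piece $[[\bpsi+{1,1}(u),\x-1(z)],\X-{1,m}(v)]$ one replaces $[\bpsi+{1,1}(u),\x-1(z)]$ by $-[2]_q\,\delta(Cu/z)\,\bwp-(C^{1/2}q^{-2}u)\,\X-{1,1}(Cq^{-2}u)$ (Proposition \ref{prop:psixm}.i at level $0$), commutes $\bwp-$ past $\X-{1,m}$ (using its $Y$-invariance, Proposition \ref{prop:psiinv}, and \eqref{eq:kmxpm}), and recognises — via the inductive hypothesis and $Y$-covariance — precisely $-[\X-{1,1}(w),\X-{1,m}(z)]_{G_{01}^-(w/z)}$; this gives the first equality. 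In the piece $[\bpsi+{1,1}(u),[\X-{1,m}(v),\x-1(z)]]$ one uses the inductive hypothesis $[\X-{1,m}(v),\x-1(z)]_{G_{01}^-(v/z)}=\delta(vq^2/z)\,\xi_m(z)$ (the correction $(G_{01}^-(v/z)-1)\x-1(z)\X-{1,m}(v)$ being absorbed using Proposition \ref{prop:psixm}.i and the hypothesis again), so this contributes $\delta(vq^2/z)[\bpsi+{1,1}(u),\xi_m(z)]$; Corollary \ref{cor:kpsi1} then gives the rational structure function governing $[\bpsi+{1,1}(u),\xi_m(z)]$, and the formal-distribution identities of the appendix (e.g. \eqref{eq:G+G-}, \eqref{eq:GG+GG-}) collapse the product of $\delta$'s and structure functions to $\delta(wq^2/z)$ times a well-defined element of $\uqsltc[[z,z^{-1}]]$, which I take as $\xi_{m+1}(z)$. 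The convergence in $\uqsltc$ of each infinite series that appears is checked with the degree estimates from the proof of Proposition \ref{prop:Omegan}, as in the convergence remark following the Definition-Proposition for $\bpsi+{1,m}$. Applying $Y^n$ recovers the statement for all $n\in\Z$.

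The main obstacle is precisely this formal-distribution bookkeeping in the inductive step: the structure functions $G^\pm_{ij}$ have zeros and poles at exactly the arguments $q^{\pm2}$ where the $\delta$-distributions are supported, so products such as $(G_{01}^-(v/z)-1)\,\delta(vq^2/z)$ and $[\bpsi+{1,1}(u),\delta(vq^2/z)\,\xi_m(z)]$ have to be evaluated carefully inside the formal-distribution calculus of the appendix, checking in particular that no derivative-of-$\delta$ terms survive, so that the answer genuinely has the form $\delta(wq^2/z)\cdot(\text{series in }z)$ and the two Jacobi pieces genuinely assemble into one such term. This is the same "the structure functions $G^\pm_{ij}$ conspire" mechanism already governing Proposition \ref{prop:psixm} and the Definition-Propositions above — routine but lengthy. (Feeding the first equality back in the form $[\X-{1,m+1}(w),\x-1(z)]_{G_{01}^-(w/z)}=G_{01}^-(w/z)\,Y([\X-{1,m-1}(z),\x-1(w)]_{G_{01}^-(z/w)})$, and using $[\X-{1,k}(w),\X-{1,k}(z)]_{G_{01}^-(w/z)}=0$ together with the $m=0,1$ cases, one even sees a posteriori that $\xi_m\equiv0$; but only the weaker displayed form is needed here.)
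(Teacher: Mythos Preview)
Your overall strategy---reduce to $n=0$ by $Y$-equivariance and then induct on $m$ using Proposition~\ref{prop:psixm}.i to raise indices---is exactly the paper's. The base case $m=0$ is correct. But the inductive step, as written, has genuine gaps.

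First, your appeal to Corollary~\ref{cor:kpsi1} for ``the rational structure function governing $[\bpsi{+}{1,1}(u),\xi_m(z)]$'' is unfounded: that corollary only tells you how $\kk\pm i$ commutes with $\bpsi{\pm}{1,\pm 1}$, not how $\bpsi{+}{1,1}$ commutes with the unknown series $\xi_m$. There is no a~priori exchange relation for $[\bpsi{+}{1,1},\xi_m]$; in the paper this commutator simply appears as the right-hand side after applying $[\,\cdot\,,\bpsi{+}{1,1}(v)]$ to the level-$m$ identity, and is carried along as such rather than evaluated.

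Second, after the Jacobi splitting the resulting expression is \emph{not} supported on a single $\delta(wq^2/z)$. The paper's computation shows one obtains several $\delta$-localized pieces (at $\delta(w/z)$, $\delta(wq^2/z)$, $\delta(w/zq^2)$) governed by auxiliary series $\eta_1,\dots,\eta_5$, and the real work is proving that $\eta_1=\eta_4=\eta_5=0$. This requires invoking the inductive hypothesis at level $m-1$ (via $Y$-shift) together with relation~(\ref{eq:xpmxpm}), not just the level-$m$ hypothesis. Your sentence ``the formal-distribution identities of the appendix collapse the product of $\delta$'s and structure functions to $\delta(wq^2/z)$ times a well-defined element'' hides exactly this step, which is where the argument has content; without it you cannot rule out spurious $\delta(w/z)$ or $\delta(w/zq^2)$ contributions, and hence cannot conclude the first equality.

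There is also an indexing slip (your ``inductive hypothesis $[\X{-}{1,m}(v),\x-1(z)]_{G_{01}^-(v/z)}=\delta(vq^2/z)\xi_m(z)$'' should carry $\xi_{m-1}$), and the parenthetical claim that $\xi_m\equiv 0$ for all $m$ is neither proved nor asserted in the paper.
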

\begin{proof}
	Assume first that $n=0$. The case $m=0$ then follows immediately from the definition of $\X{-}{1,1}(w)$ and relations (\ref{eq:kmxpm}) and (\ref{eq:relx+x-}), leading to $\xi_0(z)=0$, as it should. Taking the commutator of the case $m=0$ with $\bpsi+{1,1}(v)$, we get
\bea 0&=& [[\X{-}{1,1}(w), \X{-}{1,0}(z)]_{G_{01}^-(w/z)},\bpsi+{1,1}(v)]\nn\\
&=&[[\X{-}{1,1}(w), \bpsi+{1,1}(v)],\X{-}{1,0}(z)]_{G_{01}^-(w/z)}+ [\X{-}{1,1}(w), [\X{-}{1,0}(z),\bpsi+{1,1}(v)]]_{G_{01}^-(w/z)}\nn\\
&=&[2]_q \bwp -(v)\left\{\delta\left(\frac{C^{1/2}q^2v}{w}\right )[\X{-}{1,2}(wq^{-2}), \X{-}{1,0}(z)]_{G_{01}^-(wq^{-2}/z)}\right .\nn\\
&& \qquad\qquad\qquad \left .  + \delta\left(\frac{C^{1/2}q^2v}{z}\right ){}_{G_{01}^-(zq^{-2}/w)G_{11}^-(z/w)}[\X{-}{1,1}(w), \X{-}{1,1}(zq^{-2})]_{G_{01}^-(w/z)}\right\} \,.\nn\eea 	
The latter implies that 
\be\label{eq:I} [\X{-}{1,2}(wq^{-2}), \X{-}{1,0}(z)]_{G_{01}^-(wq^{-2}/z)} = \delta\left(\frac{w}{z}\right)\xi_1(z)\,,\ee
\be\label{eq:II} {}_{G_{01}^-(zq^{-2}/w)G_{11}^-(z/w)}[\X{-}{1,1}(w), \X{-}{1,1}(zq^{-2})]_{G_{01}^-(w/z)} = -\delta\left(\frac{w}{z}\right)\xi_1(z)\,,\ee
for some $\xi_1(z) \in \uqsltc[[z,z^{-1}]]$. Multiplying (\ref{eq:II}) by $(zq^{-2}-w)$ and subsequently factoring $(z-q^{-2}w)$, we get that
\be\label{eq:prem1} {}_{G_{01}^-(zq^{-2}/w)}[\X{-}{1,1}(w), \X{-}{1,1}(zq^{-2})]= \delta\left(\frac{w}{z}\right)\xi_1(z)+ \delta\left( \frac{w}{zq^2} \right ) \eta_0(z)\,,\ee
for some $\eta_0(z) \in \uqsltc[[z,z^{-1}]]$. Multiplying the above equation by $q^{-2}(z-w)$, we get 
\be\label{eq:III} (zq^{-4}-w) \X{-}{1,1}(w) \X{-}{1,1}(zq^{-2}) - q^{-2}(z-w) \X{-}{1,1}(zq^{-2})\X{-}{1,1}(w) = z(1-q^2) \delta\left( \frac{w}{zq^2}\right )\eta_0(z)\,. \ee
But, on the other hand,
\bea &&(zq^{-4}-w) \X{-}{1,1}(w) \X{-}{1,1}(zq^{-2}) - q^{-2}(z-w) \X{-}{1,1}(zq^{-2})\X{-}{1,1}(w) \nn\\
&=& Y\left((zq^{-4}-w) \xm 1(w) \xm 1(zq^{-2})- q^{-2}(z-w) \xm{1}(zq^{-2})\xm{1}(w)  \right )=0\nn\eea
by relation (\ref{eq:xpmxpm}). Substituting back into (\ref{eq:III}) proves that $\eta_0(z)=0$ and that (\ref{eq:prem1}) eventually reads
\be\label{eq:2ndhalfm1} {}_{G_{01}^-(zq^{-2}/w)}[\X{-}{1,1}(w), \X{-}{1,1}(zq^{-2})]= \delta\left(\frac{w}{z}\right)\xi_1(z)\, .\ee
Combining (\ref{eq:I}) and (\ref{eq:2ndhalfm1}), we get the case $m=1$. Now assume that the result holds for all nonnegative integer less than $m\in\N$. Taking the commutator of (\ref{eq:exchrel}) with $\bpsi{+}{1,1}(v)$ yields
	\bea
	&&[2]_q\bwp -(v) \left \{\delta\left(\frac{C^{1/2}q^2v}{w}\right )[\X{-}{1,m+2}(wq^{-2}), \X{-}{1,0}(z)]_{G_{01}^-(wq^{-2}/z)} \right .\nn\\
	&&\qquad\qquad\qquad\left .+\delta\left(\frac{C^{1/2} q^2v}{z}\right ){}_{G_{01}^-(zq^{-2}/w) G_{11}^-(z/w)}[\X{-}{1,m+1}(w), \X{-}{1,1}(zq^{-2})]_{G_{01}^-(w/z)} \right \} \nn\\
	&=&- [2]_q\bwp -(v) \left \{\delta\left(\frac{C^{1/2}q^2v}{w}\right )[\X{-}{1,2}(wq^{-2}), \X{-}{1,m}(z)]_{G_{01}^-(wq^{-2}/z)} \right .\nn\\
	&&\qquad\qquad\qquad\left .+\delta\left(\frac{C^{1/2} q^2v}{z}\right ){}_{G_{01}^-(zq^{-2}/w) G_{11}^-(z/w)}[\X{-}{1,1}(w), \X{-}{1,m+1}(zq^{-2})]_{G_{01}^-(w/z)} \right \} \nn\\
	&=& \delta\left(\frac{wq^2}{z}\right) [\xi_m(z), \bpsi{+}{1,1}(v)]\nn\eea
The latter implies that
\be\label{eq:i} [\X{-}{1,m+2}(wq^{-2}), \X{-}{1,0}(z)]_{G_{01}^-(wq^{-2}/z)} = \delta\left ( \frac{w}{z} \right ) \xi_{m+1}(z) +\delta\left ( \frac{wq^2}{z} \right ) \eta_1(z)\,,  \ee
\be\label{eq:ii} {}_{G_{01}^-(zq^{-2}/w) G_{11}^-(z/w)}[\X{-}{1,m+1}(w), \X{-}{1,1}(zq^{-2})]_{G_{01}^-(w/z)} = -\delta\left ( \frac{w}{z} \right ) \xi_{m+1}(z) + \delta\left ( \frac{wq^2}{z} \right ) \eta_1(z)\,,\ee
\be\label{eq:iii} [\X{-}{1,2}(wq^{-2}), \X{-}{1,m}(z)]_{G_{01}^-(wq^{-2}/z)}  = \delta\left ( \frac{w}{z} \right ) \eta_{3}(z) - \delta\left ( \frac{wq^2}{z} \right ) \eta_1(z)\,,\ee
\be\label{eq:iv} {}_{G_{01}^-(zq^{-2}/w) G_{11}^-(z/w)}[\X{-}{1,1}(w), \X{-}{1,m+1}(zq^{-2})]_{G_{01}^-(w/z)} =-\delta\left ( \frac{w}{z} \right ) \eta_{3}(z) - \delta\left ( \frac{wq^2}{z} \right ) \eta_2(z) \,, \ee
for some $\xi_{m+1}(z), \eta_1(z), \eta_2(z), \eta_3(z) \in \uqsltc[[z,z^{-1}]]$. Multiplying (\ref{eq:iv}) by $(z-wq^2)$ and subsequently factoring $(zq^2-w)$, we get that
\be\label{eq:v} [\X{-}{1,m+1}(z), \X{-}{1,1}(w)]_{G_{01}^-(z/w)} =-\delta\left ( \frac{w}{zq^2} \right ) \eta_{3}(w) + \delta\left ( \frac{w}{zq^4} \right ) \eta_4(z)\,, \ee
for some $\eta_4(z) \in \uqsltc[[z,z^{-1}]]$. But, by the recursion hypothesis,
\be [\X{-}{1,m+1}(z), \X{-}{1,1}(w)]_{G_{01}^-(z/w)} = Y\left ([\X{-}{1,m}(z), \X{-}{1,0}(w)]_{G_{01}^-(z/w)}\right ) = \delta\left ( \frac{w}{zq^2} \right ) Y\left (\xi_{m-1}(w)\right ) \,.\nn\ee
Comparing with (\ref{eq:v}), it follows that
\be \eta_3(w) = -Y\left (\xi_{m-1}(w)\right )\qquad \mbox{and} \qquad \eta_4(z)=0\,.\nn\ee
By the recursion hypothesis, we also have
\bea [\X{-}{1,2}(wq^{-2}), \X{-}{1,m}(z)]_{G_{01}^-(wq^{-2}/z)}  &=& Y\left ( [\X{-}{1,1}(wq^{-2}), \X{-}{1,m-1}(z)]_{G_{01}^-(wq^{-2}/z)}\right ) = -\delta\left ( \frac{w}{z} \right ) Y\left(\xi_{m-1}(z)\right )   \nn\\
&=&\delta\left ( \frac{w}{z} \right ) \eta_{3}(z) \nn\eea
Comparing the above result with (\ref{eq:iii}), we conclude that $\eta_1(z)=0$. As a consequence, (\ref{eq:i}) now reads
\be\label{eq:1sthalfrec}[\X{-}{1,m+2}(w), \X{-}{1,0}(z)]_{G_{01}^-(w/z)} = \delta\left ( \frac{wq^2}{z} \right ) \xi_{m+1}(z)\,. \ee
On the other hand, multiplying (\ref{eq:ii}) by $(z-wq^2)$ and subsequently factoring $(zq^2-w)$, we get that
\be\label{eq:pre2ndhalfrec} {}_{G_{01}^-(zq^{-2}/w)}[\X{-}{1,m+1}(w), \X{-}{1,1}(zq^{-2})] = \delta\left ( \frac{w}{z} \right ) \xi_{m+1}(z) + \delta\left ( \frac{w}{zq^2} \right ) \eta_5(z)\,,\ee
for some $\eta_5(z) \in \uqsltc[[z,z^{-1}]]$. Multiplying the above equation by $(z-w)$ yields
\be\label{eq:eta5zero} Y\left ( (zq^{-2}-wq^2) \X{-}{1,m}(w) \X{-}{1,0}(zq^{-2}) -(z-w)\X{-}{1,0}(zq^{-2}) \X{-}{1,m}(w) \right )= z(1-q^2)\delta \left ( \frac{zq^2}{w}\right ) \eta_5(z)\, .\ee
But the recursion hypothesis
\be [\X{-}{1,m}(w), \X{-}{1,0}(zq^{-2})]_{G_{01}^-(wq^2/z)}= \delta\left(\frac{wq^4}{z} \right ) \xi_{m-1}(z)\ee
implies, upon multiplication by $(zq^{-2}-wq^2)$, that
\be (zq^{-2}-wq^2) \X{-}{1,m}(w) \X{-}{1,0}(zq^{-2}) -(z-w)\X{-}{1,0}(zq^{-2}) \X{-}{1,m}(w)=0\,.\ee
Substituting back into (\ref{eq:eta5zero}) proves that $\eta_5(z)=0$ and that (\ref{eq:pre2ndhalfrec}) eventually reads
\be\label{eq:2ndhalfrec} {}_{G_{01}^-(w/z)}[\X{-}{1,m+1}(z), \X{-}{1,1}(w)]= \delta\left ( \frac{wq^2}{z} \right ) \xi_{m+1}(z) \,.\ee
Combining (\ref{eq:1sthalfrec}) and (\ref{eq:2ndhalfrec}) completes the recursion and the result holds for any $m\in\N$, assuming $n=0$. The cases $n\in\Z^\times$ are then obtained by applying $Y^n$ to the case $n=0$. 
\end{proof}
\begin{cor}
For every $m\in\N$ and every $n\in\Z$, we have
\be [\Xp{1,m+n+1}(z), \Xp{1,n}(w)]_{G_{01}^+(z/w)} = -[\Xp{1,n+1}(z), \Xp{1,m+n}(w)]_{G_{01}^+(z/w)}=\delta\left(\frac{wq^2}{z}\right)\varphi\circ Y^{-m-n-1}\left(\xi_{m}(1/z)\right) \,.\ee
\end{cor}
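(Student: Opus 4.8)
The plan is to obtain the corollary simply by applying the continuous involutive $\K$-algebra anti-automorphism $\varphi$ of $\uqsltc$ (see Definition-Proposition \ref{defprop:topol}) to Proposition \ref{prop:X+1X+1} and then reindexing. I will use three facts already at our disposal: $\varphi(\X\pm{1,m}(z)) = \X\mp{1,-m}(1/z)$ for all $m\in\Z$ (established just above), $\varphi(G_{01}^-(z)) = G_{01}^+(z)$ (the remark following the proposition on automorphisms of $\uqslthh$), and $\varphi\circ Y = Y\circ\varphi$, which follows from $\varphi\circ T = T\circ\varphi$, $\varphi\circ T_\pi = T_\pi\circ\varphi$ and $Y = T_\pi\circ T$. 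I also record the elementary consequence of $\varphi$ being an anti-homomorphism: for $A,B\in\uqsltc$ and any $\F$-valued series $c=c(w/z)$ in which only nonnegative powers of $w/z$ occur, one has $\varphi\big([A,B]_c\big) = [\varphi(B),\varphi(A)]_{\varphi(c)}$, where $\varphi$ acts coefficientwise on $c$; this is exactly the step where one must be careful about the order reversal.

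First I would apply $\varphi$ to both equalities of Proposition \ref{prop:X+1X+1}, written with summation index $p\in\Z$. Using the identity above together with the three facts, the left-hand and middle terms are carried respectively to $[\X{+}{1,-p}(1/z), \X{+}{1,-(m+p+1)}(1/w)]_{G_{01}^+(w/z)}$ and $-[\X{+}{1,-(m+p)}(1/z), \X{+}{1,-(p+1)}(1/w)]_{G_{01}^+(w/z)}$, while the right-hand term is carried to $\delta(wq^{-2}/z)\,Y^p\big(\varphi(\xi_m(z))\big)$, where $\varphi(\xi_m(z))\in\uqsltc[[z,z^{-1}]]$ denotes the coefficientwise image of $\xi_m(z)$.

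Next I would perform the change of formal variables $z\mapsto 1/z$, $w\mapsto 1/w$, a legitimate reindexing of formal distributions in two variables: it sends $G_{01}^+(w/z)$ to $G_{01}^+(z/w)$, sends $\varphi(\xi_m(z))$ to $\varphi(\xi_m(1/z))$, and, using $\delta(x)=\delta(1/x)$, sends $\delta(wq^{-2}/z)$ to $\delta(wq^2/z)$. Since $\varphi$ and $Y$ commute, $Y^p\big(\varphi(\xi_m(1/z))\big) = \varphi\circ Y^p\big(\xi_m(1/z)\big)$. Finally, as $p$ ranges over all of $\Z$, I would set $p = -m-n-1$; a one-line bookkeeping check gives $-p = m+n+1$, $-(m+p+1) = n$, $-(m+p) = n+1$, $-(p+1) = m+n$ and $Y^p = Y^{-m-n-1}$, which is precisely the chain of equalities asserted in the corollary.

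There is no genuine obstacle here; the only points requiring attention are the order reversal and the replacement of $G_{01}^-$ by $G_{01}^+$ when transporting the bracket under the anti-homomorphism $\varphi$, together with the (easy but easy-to-mishandle) inversion of the formal variables in $G_{01}^\pm$, in $\delta$, and in the argument of $\xi_m$.
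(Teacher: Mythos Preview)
Your proof is correct and follows essentially the same approach as the paper, which simply states ``It suffices to apply $\varphi\circ Y^{-m-n-1}$ to (\ref{eq:exchrel}).'' You have spelled out in detail what this one-liner means: apply the anti-automorphism $\varphi$ (reversing the bracket, sending $G_{01}^-$ to $G_{01}^+$, $\X-{1,k}$ to $\X+{1,-k}$ with inverted variable), invert the formal variables, and reindex; your choice $p=-m-n-1$ is precisely the shift that the paper encodes via the factor $Y^{-m-n-1}$.
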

\begin{proof}
It suffices to apply $\varphi\circ Y^{-m-n-1}$ to (\ref{eq:exchrel}).
\end{proof}
We now return to the proof of theorem \ref{thm:main} and to the map $\Psi : \qaff(\dot{\mathfrak a}_1) \to \widehat{\qdaff'(\mathfrak a_1)}$. 
\begin{cor}
We have
\begin{enumerate}
\item[i.] $\Upsilon^\pm(w) = 0$;
\item[ii.] and for every $i\neq j$,
$$\sum_{\sigma \in S_{3}} \sum_{k=0}^{3} (-1)^k {{3}\choose{k}}_q \Psi(\x \pm i(z_{\sigma(1)}))  \cdots \Psi(\x \pm i(z_{\sigma(k)})) \Psi( \x \pm j(z)) \Psi(  \x \pm i(z_{\sigma(k+1)})) \cdots \Psi(\x\pm i(z_{\sigma(3)})) =0\,.$$
\end{enumerate}
\end{cor}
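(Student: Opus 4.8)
The plan is to reduce both assertions to statements on the quantum toroidal side via the realization $\Psi^{-1}:\qdaff'(\mathfrak a_1)\to\uqsltc$ constructed in this section. Under it $\Psi^{-1}(\Xbsf\pm{1,m}(z))=\X\pm{1,m}(z)$ (up to a sign depending on $m$, which plays no role below), $\Psi^{-1}(\Kbsf\mp{1,0}(z))=-\kk\pm1(C^{1/2}z)$ and $\Psi^{-1}(\cbsf\pm(z))=\kk\pm0(z)\kk\pm1(z)$; by the propositions of this section these elements satisfy every defining relation of $\qdaff'(\mathfrak a_1)$, so $\Psi^{-1}$ is a well-defined bicontinuous homomorphism, injective since it underlies the isomorphism $\widehat\Psi^{-1}$ of Theorem \ref{thm:main}. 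Hence it is enough to verify that $\Psi^{-1}$ annihilates $\Upsilon^\pm(w)$ and the two Serre combinations.

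For \emph{i.}, I would apply $\Psi^{-1}$ to the two identities that define $\Upsilon^\pm(w)$. Using the values just recalled, the identities $G_{01}^\mp(z)=G_{11}^\pm(z)$ of Remark \ref{rem:Gij}, and the symmetry $\delta(z^{-1})=\delta(z)$, one obtains (up to the above sign)
\be
[\X{+}{1,0}(v),\X{+}{1,-1}(w)]_{G_{01}^+(v/w)}=\delta\!\left(\frac{vq^{-2}}{w}\right)\Psi^{-1}(\Upsilon^+(w))\,,\qquad
[\X{-}{1,1}(v),\X{-}{1,0}(w)]_{G_{01}^-(v/w)}=\delta\!\left(\frac{vq^{2}}{w}\right)\Psi^{-1}(\Upsilon^-(w))\,.
\ee
Now the left-hand sides are, respectively, the $m=0$ instance (with $n=-1$) of the corollary to Proposition \ref{prop:X+1X+1} and the $m=0$ instance (with $n=0$) of Proposition \ref{prop:X+1X+1} itself, both of which vanish since $\xi_0\equiv0$. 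Comparing coefficients of $v^0$ gives $\Psi^{-1}(\Upsilon^\pm(w))=0$, hence $\Upsilon^\pm(w)=0$.

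For \emph{ii.}, observe that in type $\dot{\mathfrak a}_1$ one has $1-c_{ij}=3$ for $i\neq j$, so the sum is over $S_3$ and every monomial contains exactly three factors of type $i$ and one of type $j$. The composite $\Psi^{-1}\circ\Psi$ restricts on the generators of $\qaff(\dot{\mathfrak a}_1)$ to the diagonal automorphism fixing $\x\pm1(z)$ and rescaling $\x\pm0(z)$ by a sign; therefore the algebra homomorphism $\Psi^{-1}$ multiplies every monomial of the sum by one and the same scalar, so the image of the sum is a scalar multiple of the left-hand side of the quantum Serre relation (\ref{eq:qaffserre}) of $\qaff(\dot{\mathfrak a}_1)$ for $\{i,j\}=\{0,1\}$, which is zero. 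By injectivity of $\Psi^{-1}$ the sum vanishes in $\widehat{\qdaff'(\mathfrak a_1)}$.

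All the substance lies upstream: the only non-trivial ingredient is Proposition \ref{prop:X+1X+1} — the closed form of the exchange relations of the double-loop generators $\X{-}{1,m}(z)$ and, crucially, the vanishing of the lowest correction term $\xi_0$ — which itself relies on the $Y$-fixed-point identities of Propositions \ref{prop:psiinv}--\ref{prop:psixm}. Once that is in place the corollary is bookkeeping; the one point that needs a little attention is tracking the signs in $\Psi^{-1}\circ\Psi$ on the $i=0$ generators, so that part \emph{ii.} reduces to the honest Serre relation of $\qaff(\dot{\mathfrak a}_1)$ and not to a sign-twisted variant — which it does, the sign being global on each Serre monomial.
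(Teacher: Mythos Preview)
Your argument is circular. You deduce $\Upsilon^\pm=0$ and the vanishing of the Serre combinations from $\Psi^{-1}(\cdot)=0$ together with the injectivity of $\Psi^{-1}$, and you justify that injectivity by invoking Theorem~\ref{thm:main}. But this corollary \emph{is} a step in the proof of Theorem~\ref{thm:main}: it is precisely what establishes that the candidate $\Psi$ is a well-defined algebra homomorphism, without which there is no isomorphism $\widehat\Psi$ to speak of. At this stage one may grant that $\Psi^{-1}$ is well-defined (once all the relations of $\qdaff'(\mathfrak a_1)$ have been checked on the toroidal side), but there is no independent reason to know it is injective; that would require either $\Psi\circ\Psi^{-1}=\id$ (unavailable until $\Psi$ is well-defined) or a PBW-type statement for $\qdaff'(\mathfrak a_1)$, neither of which is at hand. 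So from $\Psi^{-1}(\Upsilon^\pm)=0$ you cannot yet conclude $\Upsilon^\pm=0$.

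The paper proceeds in the opposite order and avoids the circularity. It first shows that the \emph{computations} needed to verify the $i\neq j$ commutator and Serre obstructions for $\Psi$ in $\widehat{\qdaff'(\mathfrak a_1)}$ are formally identical to computations on the toroidal side which reduce, by Proposition~\ref{prop:X+1X+1} and its $Y$-shifts, to the vanishing $\xi_0\equiv 0$; this disposes of part~\emph{ii.} and lets one extend $\Psi$ by continuity to $\widehat\Psi$. Part~\emph{i.} is then obtained by a limit argument: the partial sums of the left-hand sides of (\ref{eq:X+10X+1-1})--(\ref{eq:X-11X-10}) tend to $0$ in $\widehat{\qaff(\dot{\mathfrak a}_1)}$, while their images under the now well-defined continuous $\widehat\Psi$ tend to $\delta(\cdot)\Upsilon^\pm(w)$. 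Your computation of $\Psi^{-1}\circ\Psi$ on generators is correct and could serve as a final sanity check once both maps are known to exist, but it cannot replace the forward argument.
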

\begin{proof}
The proof of proposition \ref{prop:X+1X+1} makes it clear that the relations (\ref{eq:relx+x-}) with $i\neq j$ there, both follow from the relations
\be \left[\X+{1,0}(v), \X+{1,-1}(w)\right]_{G_{11}^-(v/w)} =0 \label{eq:X+10X+1-1}\ee
and
\be\left[\X-{1,1}(v), \X-{1,0}(w)\right]_{G_{11}^+(v/w)} =0 \label{eq:X-11X-10}\ee
in the completion $\widehat{\qaff(\dot{\mathfrak a}_1)}$. A tedious but straightforward calculation shows that the quantum Serre relations (\ref{eq:qaffserre}) similarly follow from
\be \left[\X+{1,-1}(v), \X+{1,-2}(w)\right]_{G_{11}^-(v/w)} =0\ee
and
\be\left[\X-{1,2}(v), \X-{1,1}(w)\right]_{G_{11}^+(v/w)} =0\,,\ee
which in turn are a consequence of ((\ref{eq:X+10X+1-1}) -- (\ref{eq:X-11X-10})) -- just apply $Y$ there. We can therefore extend $\Psi:\qaff(\dot{\mathfrak a}_1)\to \widehat{\qdaff'(\mathfrak a_1)}$ by continuity~\footnote{$\Psi$ is obviously $\Z_{(2)}$-graded, hence continuous.} into $\widehat\Psi:\widehat{\qaff(\dot{\mathfrak a}_1)}\to \widehat{\qdaff'(\mathfrak a_1)}$ and it suffices to check point \emph{i.} Since by construction $\qaff(\dot{\mathfrak a}_1)$ is dense in $\widehat{\qaff(\dot{\mathfrak a}_1)}$, there exists a sequence $(u_n(v,w))_{n\in\N}\in \qaff(\dot{\mathfrak a}_1)[[v, v^{-1},w,w^{-1}]]^\N$ such that
\be \lim_{n\to +\infty} u_n(v,w) =0\,,\ee
whereas, on the other hand,
\be \lim_{n\to +\infty} \widehat\Psi (u_n(v,w)) = \delta\left (\frac{vq^{\mp 2}}{w}\right )\Upsilon^\pm(w)\,.\ee
Take for example the partial sum of the series involved on the l.h.s. of equations ((\ref{eq:X+10X+1-1}) -- (\ref{eq:X-11X-10})) above. The result now follows by the continuity of $\widehat\Psi$.
\end{proof}
\begin{rem}
We have therefore completed the proof of that part of theorem \ref{thm:main} that claims the existence of a continuous algebra homomorphism $\widehat\Psi:\widehat{\qaff(\dot{\mathfrak a}_1)} \to \widehat{\qdaff'(\mathfrak a_1)}$. We still have to construct the inverse continuous algebra homomorphism $\widehat\Psi^{-1}: \widehat{\qdaff'(\mathfrak a_1)}\to \widehat{\qaff(\dot{\mathfrak a}_1)}$. This shall be done at the end of the present section.
\end{rem}

\subsection{Weight grading relations}
The results of the previous subsection have the following
\begin{cor}
\label{cor:psim}
For every $m \in \N^\times$ and every $n\in\Z$, we have:
\begin{enumerate}
\item[i.] $[\bz+{m+1}(u), \Xm{1,n}(z)]=0 $;
\item[ii.]\label{cor:psi+x-} $[\bpsi+{1,m+1}(u), \Xm{1,n}(z)]=-\bwp-(C^{1/2}uq^{-2(m+1)}) {}_{G_{01}^+(Cuq^{2(1-m)}/z)}[\Xm{1,n+1}(zq^{-2}), \bpsi+{1,m}(u)]_{G_{01}^{-}(z/Cuq^{2(1-m)})}\propto \delta \left ( \frac{Cu}{zq^2m}\right )$;
\item[iii.] $[\bz+{m+1}(u), \Xp{1,n}(z)]=0 $;
\item[iv.]\label{cor:psi+x+} $[\bpsi+{1,m+1}(v), \Xp{1,n}(z)]_{G_{01}^+(v/z)G_{11}^+(v/zq^{2(m+1)})}=-{}_{G_{01}^-(z/vq^{2m})}[\Xp{1,n+1}(v),\bpsi{+}{1,m}(z)]_{G_{01}^+(v/z)}\propto \delta \left ( \frac{zq^2}{v}\right )$.
\end{enumerate}
\end{cor}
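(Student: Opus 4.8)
The four identities are obtained by induction on $m\in\N^\times$, using the exchange relations of Proposition \ref{prop:X+1X+1} (and its corollary) together with the recursions that define the $\bpsi+{1,m}(z)$ and the $\bz+{m}(z)$. We first establish \emph{ii.} and \emph{iv.} for all $m$, and then deduce \emph{i.} and \emph{iii.}

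\emph{Statements ii. and iv.} The case $m=1$ is contained in points \emph{i.} and \emph{ii.} of Proposition \ref{prop:psixm} once $\bpsi+{1,2}(z)$ is rewritten through the recursion (\ref{eq:psipsimm}). For the inductive step, commute both sides of (\ref{eq:psipsimm}) — which expresses $\bpsi+{1,m+1}$, up to $\delta$-localisation, as a doubly braided commutator of $\bpsi+{1,1}$ with $\bpsi+{1,m}$ — with $\Xm{1,n}(z)$. On the left-hand side the Jacobi-type identities (\ref{eq:identity1})--(\ref{eq:identity2}) split the iterated bracket into a contribution built on $[\bpsi+{1,1}(w),\Xm{1,n}(z)]$, given by Proposition \ref{prop:psixm}, and a contribution built on $[\bpsi+{1,m}(v),\Xm{1,n}(z)]$, given by the induction hypothesis; on the right-hand side one picks up $[\bpsi+{1,m+1},\Xm{1,n}(z)]$ at two distinct arguments. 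Substituting the known brackets, commuting the resulting $\bwp-$ past the various $\bpsi+{1,k}$ by Corollary \ref{cor:wppsi}, and collapsing the products of structure functions by means of $G^+_{ij}(z)G^-_{ij}(z)=1$ and of the identities (\ref{eq:G+G-}) and (\ref{eq:GG+GG-}), all spurious $\delta$-factors cancel and one is left precisely with the assertion of \emph{ii.} at rank $m+1$; in particular its right-hand side is visibly proportional to a single $\delta$ in $z$. Statement \emph{iv.} is proved in exactly the same way, with $\Xm{1,n}$ replaced by $\Xp{1,n}$, Proposition \ref{prop:psixm}.\emph{i} by Proposition \ref{prop:psixm}.\emph{ii}, and the structure functions $G^-$ by their $G^+$ counterparts, using in addition the type-$\mathfrak a_1$ identities of Remark \ref{rem:Gij}.

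\emph{Statements i. and iii.} The induction is again on $m$, the base case $m=1$ being the special case in which the sum in (\ref{eq:Ym}) at rank $2$ is empty. Commute both sides of the defining relation (\ref{eq:Ym}) at rank $m+1$ with $\X\pm{1,n}(z)$. On the right-hand side, each term $\bpsi+{1,k}(v)\bz+{m+1-k}(v)$ ($1\le k\le m-1$) contributes only $[\bpsi+{1,k}(v),\X\pm{1,n}(z)]\,\bz+{m+1-k}(v)$, since $[\bz+{m+1-k}(v),\X\pm{1,n}(z)]=0$ by the induction hypothesis, while the term carrying $\bpsi+{1,m+1}(v)$ contributes $[\bpsi+{1,m+1}(v),\X\pm{1,n}(z)]$, already evaluated under \emph{ii.} and \emph{iv.}; all of these are supported on $\delta$'s whose arguments carry factors $q^{2k}$ or $q^{2(m+1)}$, and hence are disjoint from the $\delta$ that multiplies $\bz+{m+1}(v)$. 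On the left-hand side, the Jacobi identities reduce the triple bracket to terms involving $[\xp 1(v),\X\pm{1,n}(z)]$ — controlled by relation (\ref{eq:relx+x-}) and by Proposition \ref{prop:X+1X+1} (and its corollary), transported by $Y$ where necessary — and the commutator of $Y^{m+1}\!\left(\km 1(w)^{-1}\xm 1(C^{1/2}w)\right)$ with $\X\pm{1,n}(z)$, which one handles through the braid-group action of Section \ref{sec:qtoralg} together with the relations (\ref{eq:kmxpm}) and the exchange relations of Proposition \ref{prop:X+1X+1}. Equating the two sides and extracting the coefficient of the $\delta$ supported at the argument of $\bz+{m+1}(v)$ — which, by the above, receives no contribution from any $\bpsi+$ term — forces $[\bz+{m+1}(v),\X\pm{1,n}(z)]=0$, completing the induction. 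All the series occurring along the way converge in $\uqsltc$ thanks to the commutativity (\ref{eq:commbz+1}). The one genuinely delicate point of the whole argument is this last computation: one must follow the $\km 1$-dressing inside $Y^{m+1}\!\left(\km 1(w)^{-1}\xm 1(C^{1/2}w)\right)$ and check that, after all the Jacobi rearrangements and all the cancellations imposed by the structure-function identities, the coefficient carrying $\bz+{m+1}(v)$ genuinely decouples from the $\bpsi+$-supported terms; the remainder is a mechanical, if lengthy, manipulation of formal distributions.
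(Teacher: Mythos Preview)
Your route is genuinely different from the paper's. The paper does not induct on $m$ at all: it first reduces to $n=0$ via the $Y$-covariance $Y(\bpsi+{1,k})=\bpsi+{1,k}$, $Y(\bz+k(v))=\bz+k(vq^2)$, $Y(\X\pm{1,n})=\X\pm{1,n\mp 1}$, and then obtains \emph{i.}--\emph{ii.} in one stroke by commuting the exchange relation (\ref{eq:exchrel}) (taken at $n=1$ there) with $\xp 1(z)$; the analysis needed to split such a triple bracket was already carried out in the proof of Proposition \ref{prop:X+1X+1}. Your approach climbs instead through the tower of $\bpsi+{1,m}$'s via (\ref{eq:psipsimm}) and (\ref{eq:Ym}); this is in the spirit of, e.g., the proof of the $\kk-i(v)\bpsi\pm{1,\pm m}(z)$ relation, but it is considerably longer and leaves a good deal of detailed cancellation unchecked.

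There is, moreover, a genuine gap in your argument for \emph{i.} and \emph{iii.}. You commute (\ref{eq:Ym}) at rank $m+1$ with $\X\pm{1,n}(z)$ for \emph{general} $n$, and assert that the Jacobi term $[\xp 1(v),\X\pm{1,n}(z)]$ is ``controlled by relation (\ref{eq:relx+x-}) and by Proposition \ref{prop:X+1X+1}, transported by $Y$''. For $\Xp{1,n}$ this is fine, but for $\Xm{1,n}$ with $n\neq 0$ it is not: relation (\ref{eq:relx+x-}) only covers $n=0$, Proposition \ref{prop:X+1X+1} and its corollary only cover like-sign commutators, and transporting by $Y$ merely sends $[\xp 1(v),\Xm{1,n}(z)]$ to $[\Xp{1,n}(v),\xm 1(z)]$, which is no more accessible. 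Indeed $[\xp 1(v),\Xm{1,n}(z)]$ is, up to the $\km 1$-dressing, precisely the left-hand side of (\ref{eq:Ym}) at rank $n$ itself --- it already encodes all of $\bz+k$ and $\bpsi+{1,k}$ for $k\le n$. The companion Jacobi term contains $[\Xm{1,m+1},\X\pm{1,n}]$, which for the $+$ sign and $n\neq 0$ is likewise unavailable. The fix is exactly what the paper does first: reduce to $n=0$ \emph{before} anything else. At $n=0$, the commutators you need --- $[\xp 1(v),\xm 1(z)]$ from (\ref{eq:relx+x-}), $[\Xm{1,m+1},\xm 1]$ from (\ref{eq:exchrel}), and $[\xp 1(v),\xp 1(z)]$ from (\ref{eq:xpmxpm}) --- are all at hand, and your inductive scheme then goes through.
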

\begin{proof}
	It suffices to prove the proposition for $n=0$ as the general case then follows by applying $Y^n$ for any $n\in\Z$. Assuming that $n=0$ in \emph{i.} and \emph{ii.}, it then suffices to take the commutator of (\ref{eq:exchrel}) -- for $n=1$ there -- with $\xp{1}(z)$. 
\end{proof}
\begin{rem}
	It turns out the, for every $m\in\N^\times$, $\bz{+}{m}(z)\in{\mathcal Z}(\uqsltc)[[z,z^{-1}]]$. Indeed, in the next section we actually establish that these central elements consistently vanish.
\end{rem}

\subsection{The central elements $\bz\pm{m>2}(z)$}
Before we can actually establish that these central elements vanish, we need to establish a few lemmas. In what follows, we let $\qaff^{<}(\dot{\mathfrak a}_1) = \qaff^{\leq}(\dot{\mathfrak a}_1) - \qaff^{\leq}(\dot{\mathfrak a}_1)\cap \qaff^{0}(\dot{\mathfrak a}_1)$.
\begin{lem}
\label{lem:deltapsi}
For every $p\in\N^\times$, 
\begin{enumerate}
\item[i.] $\Delta(\bpsi-{1,-p}(v)) = 1\otimes\bpsi-{1,-p}(v) \mod \qaff^<(\dot{\mathfrak a}_1) \widehat{\otimes} \qaff(\dot{\mathfrak a}_1)$;
\item[ii.] $\Delta(\X+{1,-p}(v))= \prod_{\ell=1}^{p-1} \bz+0(C^{-1/2}q^{2\ell} v)^{-1} \kk+0(C^{-1/2}v)^{-1} \widehat{\otimes} \X+{1,-p}(v) \mod \qaff^<(\dot{\mathfrak a}_1) \widehat{\otimes} \qaff(\dot{\mathfrak a}_1)$.
\end{enumerate}
\end{lem}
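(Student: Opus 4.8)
The plan is to establish (i) and (ii) by a simultaneous induction on $p$, after reducing the two families of generating series to normal forms on which the comultiplication $\Delta$ can be controlled modulo the subspace $\mathfrak K:=\qaff^{<}(\dot{\mathfrak a}_1)\,\widehat\otimes\,\qaff(\dot{\mathfrak a}_1)$. The structural input is that $\qaff^{<}(\dot{\mathfrak a}_1)=\bigoplus_{\alpha\in\dot Q^{-}\setminus\{0\}}\qaff^{\leq}(\dot{\mathfrak a}_1)_{\alpha}$ is a two-sided ideal of $\qaff^{\leq}(\dot{\mathfrak a}_1)$; hence $\mathfrak K$ is stable under left and right multiplication by any element of $\qaff^{\leq}(\dot{\mathfrak a}_1)\,\widehat\otimes\,\qaff(\dot{\mathfrak a}_1)$ — in particular by $1\otimes(-)$ and by Cartan series such as $\kk+i(z)$, $\bz+0(z)$, $\bwp+(z)$ placed in the first tensor slot — and under extraction of coefficients of the formal variables. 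Moreover, since $\Delta(\x-i(z))=1\otimes\x-i(z)+\x-i(zC^{-1}_{(2)})\,\widehat\otimes\,\kk+i(zC^{-1/2}_{(2)})$ and the modes of $\x-i$ lie in $\qaff^{<}(\dot{\mathfrak a}_1)$, multiplicativity of $\Delta$ yields, for every $w\in\qaff^{-}(\dot{\mathfrak a}_1)$, the congruence $\Delta(w)\equiv 1\otimes w\pmod{\mathfrak K}$.

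For (i) I would first observe that $\bpsi-{1,-p}(v)=\varphi(\bpsi+{1,p}(1/v))$ with $\bpsi+{1,p}(z)\in\qaff^{+}(\dot{\mathfrak a}_1)[[z,z^{-1}]]$: applying $T$ and then $T_{\pi}$ gives $Y\bigl(\km1(w)^{-1}\xm1(C^{1/2}w)\bigr)=-\x+0(C^{-1/2}w)$, so \eqref{eq:defpsi+11} reads $[\x+0(C^{-1/2}w),\x+1(z)]_{G_{10}^{-}(C^{-1/2}w/z)}=\delta(C^{-1/2}q^{2}w/z)\,\bpsi+{1,1}(z)$, placing $\bpsi+{1,1}(z)$ in $\qaff^{+}(\dot{\mathfrak a}_1)$, and the recursion \eqref{eq:psipsimm} propagates this to all $p$; since $\varphi$ interchanges $\x+i$ with $\x-i$, it follows that $\bpsi-{1,-p}(v)\in\qaff^{-}(\dot{\mathfrak a}_1)[[v,v^{-1}]]$, and (i) follows at once from the congruence above.

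For (ii) the base case $p=1$ is the direct evaluation $\Delta\bigl(\X+{1,-1}(v)\bigr)=\Delta\bigl(-\x-0(C^{-1}v)\kk+0(C^{-1/2}v)^{-1}\bigr)$: the coproduct of $\kk+0$ being a clean tensor and that of $\x-0$ contributing, modulo $\mathfrak K$, only its $1\otimes(-)$ part, one gets $\Delta(\X+{1,-1}(v))\equiv\kk+0(C^{-1/2}v)^{-1}\,\widehat\otimes\,\X+{1,-1}(v)\pmod{\mathfrak K}$ (the product $\prod_{\ell=1}^{0}$ being empty). For the inductive step I would apply $\Delta$ to the identity $[\bpsi-{1,-1}(z),\X+{1,-p}(v)]=[2]_{q}\,\delta(Cz/v)\,\X+{1,-(p+1)}(Cq^{-2}z)\,\bwp+(C^{1/2}q^{-2}z)$ of Proposition~\ref{prop:psixm}.iii (equivalently, push $\Delta$ through $\X+{1,-(p+1)}(z)=Y(\X+{1,-p}(z))$ using the base case). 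Using part (i) for $\Delta\bpsi-{1,-1}(z)\equiv 1\otimes\bpsi-{1,-1}(z)$, the induction hypothesis for $\Delta\X+{1,-p}(v)$, and the stability of $\mathfrak K$, the left-hand side becomes $\equiv\bigl(\prod_{\ell=1}^{p-1}\bz+0(C^{-1/2}q^{2\ell}v)^{-1}\kk+0(C^{-1/2}v)^{-1}\bigr)\,\widehat\otimes\,[\bpsi-{1,-1}(z),\X+{1,-p}(v)]$; matching $\delta$-coefficients with $\Delta$ of the right-hand side, multiplying through by the inverse of the clean invertible Cartan tensor $\Delta(\bwp+(\cdot))$, and reorganising the first-slot prefactor with \eqref{eq:Ygamma} — which makes $Y$ carry $\prod_{\ell=1}^{p-1}\bz+0(C^{-1/2}q^{2\ell}v)^{-1}\kk+0(C^{-1/2}v)^{-1}$ to $\prod_{\ell=1}^{p}\bz+0(C^{-1/2}q^{2\ell}v)^{-1}\kk+0(C^{-1/2}v)^{-1}$ (since also $Y(\kk+0(w))=\kk+1(wq^{2})\kk+0(w)\kk+0(wq^{2})$) — one recovers the claim at rank $p+1$.

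The step I expect to be the main obstacle is the bookkeeping of the central elements $C^{\pm1/2}$: one must track, in every term produced by $\Delta$, which tensor factor each $C^{\pm1/2}$ occupies and verify that, after all cancellations, the prefactor in the first tensor slot is \emph{exactly} $\prod_{\ell=1}^{p-1}\bz+0(C^{-1/2}q^{2\ell}v)^{-1}\kk+0(C^{-1/2}v)^{-1}$ — in particular that the $C$'s carried by $\Delta(\bwp+(\cdot))^{-1}$ and by the diagonal $C$-insertions coming from $\Delta$ of the $\x-$-part recombine correctly. A secondary subtlety to keep in mind is that, unlike $\bpsi-{1,-p}(v)$, the series $\X+{1,-p}(v)$ is \emph{not} lower triangular for $p\geq 3$ — $Y(\x-1)$ already involves $\x+0$ and $\kk-0$ — so the easy reduction to $\qaff^{-}(\dot{\mathfrak a}_1)$ used for (i) is unavailable and one must run the recursive argument above.
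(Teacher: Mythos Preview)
Your argument for part (i) is correct and in fact cleaner than the paper's. The paper records the explicit formula
\[
\Delta(\bpsi-{1,-1}(z)) = 1\otimes\bpsi-{1,-1}(z) + [2]_q(q-q^{-1})\,\xm1(z)\widehat\otimes\xm0(q^{-2}z)\kk+1(z) + \bpsi-{1,-1}(z)\widehat\otimes\bwp+(q^{-2}z)
\]
and then inducts via \eqref{eq:psi-psi-}. You instead observe once and for all that $\bpsi-{1,-p}(v)$ lies in (the closure of) $\qaff^-(\dot{\mathfrak a}_1)$ --- your rewriting of \eqref{eq:defpsi+11} as $[\xp0(w),\xp1(z)]_{G_{10}^-(w/z)}=\delta(q^2w/z)\,\bpsi+{1,1}(z)$, transported by $\varphi$ and propagated by \eqref{eq:psi-psi-}, shows this --- and then invoke the blanket congruence $\Delta(w)\equiv 1\otimes w\pmod{\mathfrak K}$ for $w$ in the negative part, settling all $p$ simultaneously.

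For part (ii) your plan coincides with the paper's: base case $p=1$ by direct computation of $\Delta(-\xm0(C^{-1}v)\kk+0(C^{-1/2}v)^{-1})$, then induction via Proposition~\ref{prop:psixm}.iii. One point in your sketch is misleading, though. Your observation that $Y$ carries $P_p(v)=\prod_{\ell=1}^{p-1}\bz+0(C^{-1/2}q^{2\ell}v)^{-1}\kk+0(C^{-1/2}v)^{-1}$ to $P_{p+1}(v)$ is a true identity, but it is not the mechanism at work: you are applying $\Delta$, not $Y$, and no relation $\Delta\circ Y=(Y\otimes Y)\circ\Delta$ is available (your parenthetical alternative ``push $\Delta$ through $\X+{1,-(p+1)}=Y(\X+{1,-p})$'' therefore does not go through). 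The correct route is the one you describe first: take a residue in Proposition~\ref{prop:psixm}.iii to remove the $\delta$-factor, apply $\Delta$, reduce both sides modulo $\mathfrak K$ using (i) and the induction hypothesis, and then right-cancel the invertible Cartan tensor $\Delta(\bwp+(\cdot))$. The extra factor $\bz+0(C^{-1/2}q^{2p}v)^{-1}$ at rank $p+1$ emerges from this cancellation --- specifically from the mismatch between $\Delta(\bwp+)$ on the right and $1\otimes\bwp+$ on the left --- not from any application of $Y$. Your identification of the $C^{\pm1/2}$ bookkeeping as the delicate point is accurate; it is where the work lies, but it poses no genuine obstruction.
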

\begin{proof}
First one easily checks that
$$\Delta(\bpsi-{1,-1}(z)) = 1\otimes \bpsi-{1,-1}(z) + \left[2\right]_q (q-q^{-1}) \x-1(z)\widehat{\otimes} \x-0(q^{-2}z) \kk+1(z) + \bpsi-{1,-1}(z) \widehat{\otimes} \bwp+(q^{-2}z)\,,$$
which proves \emph{i.} for $p=1$. Assuming \emph{i.} holds for some $p\in\N$, the result for $p+1$ easily holds making use of (\ref{eq:psi-psi-}) and of the recursion hypothesis. 

Similarly, one easily checks that
$$\Delta(\X+{1,-1}(v)) = \X+{1,-1}(v)\otimes 1 + \kk+0(C^{-1/2}v)^{-1}\widehat{\otimes} \X+{1,-1}(v)\,,$$
which proves \emph{ii.} in the case $p=1$. Assuming the result holds for some $p\in\N$, the result for $p+1$ easily follows making use of proposition \ref{prop:psixm}.\emph{iii.} and of the recusrion hypothesis.
\end{proof}
For every $N\in\N^\times$, we let 
\bea 
S_{2N-1}^< := \left\{\sigma\in S_{2N-1} \right. &:& \sigma(1) = 1 \nn\\
&& \forall p\in \rran{N-1}\quad \sigma(2p) < \sigma(2p+1)\nn\\
&&  \left. \sigma(2N-4)< \sigma(2N-1) \right \}
\eea 
Define $\varpi:\Z\to \dot{I}=\{0,1\}$ by setting, for every $n\in\Z$,
\be 
\varpi(n) := \begin{cases}
0 & \mbox{if $n$ is even;}\\
1 & \mbox{if $n$ is odd.}
\end{cases}
\ee
\begin{lem}
For every $r\in\N$ and every $i_1, \dots, i_{2r-1} \in \dot{I}$, there exists $(\beta_{r,\sigma})_{\sigma \in S_{2r-1}^<} \in \F^{S_{2r-1}^<}$ such that
\be
\label{eq:pairingX+}
\left \langle \x+{i_1}(z_1) \dots \x+{i_{2r-1}}(z_{2r-1}), \X+{1,-r}(v) \right \rangle = -\frac{\left[2\right ]_q^{r-1}}{q-q^{-1}} \sum_{\sigma \in S_{2r-1}^<} \beta_{r,\sigma} \prod_{n=1}^{2r-1} \delta_{i_{\sigma(n)}, \pi(n)} \delta \left (\frac{z_{\sigma(n)} q^{\nu_r(n)}}{v} \right )  
\ee
where we have defined $\pi:\N\to\dot{I}$ and $\nu_r:\N\to\Z$ by setting, for every $n \in \N$,
\be 
\pi(n) = \begin{cases}
0 & \mbox{if $n=1$;}\\
\varpi(m) &\mbox{if $n>1$}
\end{cases}
\ee
and 
\be\nu_r(n)= \begin{cases}
2(1-r) & \mbox{if $n=1$;}\\
2(1-r)+n-3\varpi(n) & \mbox{if $n>1$.}
\end{cases}
\ee
\end{lem}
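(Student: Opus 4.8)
The plan is to prove the identity by induction on $r$, carrying the fully explicit scalars $\beta_{r,\sigma}$ through the induction (so the hypothesis is the whole display, not merely its shape) and reducing the pairing, via the Hopf-pairing axioms, to the elementary pairings $\langle\x+i(z),\x-j(v)\rangle$ of Lemma \ref{pairingbasis}.

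For the base case $r=1$ I would compute $\X+{1,-1}(v)=Y(\x+1(v))$ outright. Since $Y=T_\pi\circ T$, with $T(\x+1(z))=-\x-1(C^{-1}z)\kk+1(C^{-1/2}z)^{-1}$ and $T_\pi$ exchanging the nodes $0$ and $1$, one gets $\X+{1,-1}(v)=-\x-0(C^{-1}v)\kk+0(C^{-1/2}v)^{-1}\in\uqlslthh$. Evaluating $\langle\x+{i_1}(z_1),-\x-0(C^{-1}v)\kk+0(C^{-1/2}v)^{-1}\rangle$ by $\langle a,xy\rangle=\sum\langle a_{(1)},x\rangle\langle a_{(2)},y\rangle$, the coproduct $\Delta(\x+{i_1}(z_1))=\x+{i_1}(z_1)\otimes1+\kk-{i_1}(z_1C^{-1/2}_{(1)})\widehat\otimes\x+{i_1}(z_1C^{-1}_{(1)})$, and the defining values of the pairing — noting $\langle\x+i(z),\kk+j(v)\rangle=0$ survives passage to the inverse series $\kk+0(\cdot)^{-1}$ while $\langle1,\kk+0(\cdot)^{-1}\rangle=\varepsilon_{\leq}(\kk+0(\cdot)^{-1})=1$ — everything collapses to $-\frac{\delta_{i_1,0}}{q-q^{-1}}\delta(z_1/v)$, i.e. the claim with $S^<_1=\{\mathrm{id}\}$, $\beta_{1,\mathrm{id}}=1$, $\pi(1)=0$, $\nu_1(1)=0$ (the central $C$ being sent to $1$ by the pairing).

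For the inductive step I would apply the iterated Hopf-pairing axiom $\langle a_1\cdots a_{2r-1},x\rangle=\sum\langle a_1,x_{(2r-1)}\rangle\cdots\langle a_{2r-1},x_{(1)}\rangle$ with $x=\X+{1,-r}(v)$, so that the computation is driven by $\Delta^{(2r-2)}(\X+{1,-r}(v))$. Each current $\x+{i_k}(z_k)$ being almost primitive (its coproduct is $\x+{i_k}(z_k)\otimes1$ plus a term whose left tensorand is a $\kk-{i_k}$), only iterated-coproduct tensorands of $x^-$-degree one can feed the pairing; and since $\X+{1,-r}(v)$ has $\dot Q$-weight $Y^r(\alpha_1)=-\bigl(r\alpha_0+(r-1)\alpha_1\bigr)$ (from $Y(\alpha_1)=-\alpha_0$, $Y(\alpha_0)=2\alpha_0+\alpha_1$), every monomial of $\X+{1,-r}(v)$ in the $\uqlslthh$-generators $x^-,k^+$ contains exactly $2r-1$ factors $x^-$, of which $r$ are of type $x^-_0$ and $r-1$ of type $x^-_1$; this is precisely what forces the product of $2r-1$ delta functions and the index pattern $\pi$ (one $x^-_0$ followed by the alternation $x^-_0,x^-_1,x^-_0,\dots$). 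Lemma \ref{lem:deltapsi}(ii) isolates the leading term of $\Delta(\X+{1,-p}(v))$, whose left tensorand has $x^-$-degree zero and therefore does \emph{not} contribute to a pairing with $2r-1$ positive currents; what must then be determined is the complementary, $x^-$-positive part of the (iterated) coproduct, which I would build up recursively from $\X+{1,-r}(v)=Y(\X+{1,-(r-1)}(v))$ together with propositions \ref{prop:psixm} and \ref{prop:X+1X+1}. The $q^2$-shifts built into $T$ and into those relations generate the shift exponents $\nu_r(n)$, and the ordering constraints defining $S^<_{2r-1}$ are inherited from the nested-commutator/Serre structure of $T(\x\pm0(z))$; concretely I would first pin down the contribution of the single "extremal" term — all $z_n$ in the standard order, with shifts $q^{\nu_r(n)}$ — by a direct telescoping reduction from rank $r-1$, and then recover the remaining $\sigma\in S^<_{2r-1}$ by repeatedly applying the exchange relations, each swap contributing one transposition and one explicit ratio of $G^\pm_{ij}$-factors evaluated on the delta locus.

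The step I expect to be the main obstacle is the purely combinatorial bookkeeping: describing the passage $S^<_{2r-1}\rightsquigarrow S^<_{2r+1}$ and the induced recursion for the scalars $\beta_{r,\sigma}\in\F$ (which is exactly why the explicit coefficients, and not just the shape, must be part of the induction hypothesis), and checking that all the $G^\pm_{ij}$ factors entering through $\Delta(\kk\pm i(z))$, through proposition \ref{prop:psixm}, and through the exchange relations collapse to plain powers of $q$ once the delta constraints are imposed — this is where Remark \ref{rem:Gij}, the identity \eqref{eq:G+G-}, and the structure-function identities of the Appendix get used repeatedly. All of these manipulations are legitimate on the completion because the extended pairing $\uqgslthh\times\widehat{\uqlslthh}\to\F$ is (separately) continuous, as established earlier in the excerpt.
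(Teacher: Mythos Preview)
Your base case and overall inductive framework are sound, and your weight computation for $\X+{1,-r}(v)$ is correct. But the inductive step, as written, does not identify a workable mechanism. You propose to compute the $x^-$-degree-one part of $\Delta^{(2r-2)}(\X+{1,-r}(v))$ ``recursively from $\X+{1,-r}(v)=Y(\X+{1,-(r-1)}(v))$''. The problem is that $Y$ is not known to intertwine $\Delta$ in any usable way, so this recursion gives you no handle on the coproduct; and the references to propositions \ref{prop:psixm} and \ref{prop:X+1X+1} are not tied to any specific manipulation. What you have described is the \emph{shape} of the answer, not a way to reach it.

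The paper avoids ever computing the iterated coproduct of $\X+{1,-r}$. Instead it pairs the $2r+1$ positive currents against the \emph{commutator} $[\bpsi-{1,-1}(z),\X+{1,-r}(v)]$, which by proposition \ref{prop:psixm}.iii equals $[2]_q\,\delta(Cz/v)\,\X+{1,-(r+1)}(Cq^{-2}z)\,\bwp+(C^{1/2}q^{-2}z)$. The point is that $\Delta(\bpsi-{1,-1})$ is known explicitly (three terms, see the proof of lemma \ref{lem:deltapsi}.i), and lemma \ref{lem:deltapsi}.ii controls $\Delta(\X+{1,-r})$ modulo $\qaff^<(\dot{\mathfrak a}_1)\widehat\otimes\qaff(\dot{\mathfrak a}_1)$; together these reduce the pairing to a sum over ordered $(2,2r-1)$-partitions $\bA$ of $\rran{2r+1}$, with the two distinguished slots absorbing the two $x^-$-factors from $\Delta(\bpsi-{1,-1})$ and the remaining $2r-1$ slots feeding the inductive hypothesis. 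One then isolates the $\delta(Cz/v)$ coefficient by locating, among the $G^\pm$-factors $R_\bA^<$, $R_\bA^>$ produced by the coproduct, the (unique, simple) pole at $Cz=v$; this pole analysis is what forces the ordering constraints defining $S^<_{2r+1}$ and yields a bijection $(\sigma,\bA)\mapsto\sigma'$ from $S^<_{2r-1}\times\Prt{(2,2r-1)}{\rran{2r+1}}$ to $S^<_{2r+1}$, with $\beta_{r+1,\sigma'}=\beta_{r,\sigma}\gamma_{\sigma,\bA}$ for an explicit residue $\gamma_{\sigma,\bA}\in\F$. That bijection is the combinatorial content you were anticipating, but it arises from the pole structure of the commutator pairing, not from exchange relations applied to an ``extremal'' term.
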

\begin{proof}
The case $r=0$ holds by definition of the pairing. Assume that (\ref{eq:pairingX+}) holds for some $r\in\N$. Then, making use of the previous lemma, one easily shows that, for every $i_1, \dots, i_{2r+1} \in \dot{I}$ 
\bea && \left\langle \x+{i_1}(z_1) \dots \x+{i_{2r+1}}(z_{2r+1}), \left [\bpsi-{1,-1}(z), \X+{1,-r}(v) \right ]\right \rangle\nn\\
 &=& \frac{\left[2\right]_q}{q-q^{-1}} \sum_{\bA\in\Prt{(2,2r-1)}{\rran{2r+1}}} \prod_{m\in\rran{2}} \delta_{i_{A^{(1)}_m}, 1-\varpi(m)} \delta\left(\frac{z_{A^{(1)}_m} q^{2\varpi(m)}}{z}\right) \left\langle \x+{i_{A^{(2)}_1}}(z_{A^{(2)}_1}) \dots \x+{i_{A^{(2)}_{2r-1}}}(z_{A^{(2)}_{2r-1}}) , \X+{1,-r}(v)\right \rangle \nn\\
&& \times \left\{ R_{\bA}^<(z_\bA) - G_{i_{A^{(1)}_1}, 0}^+ (C^{-1/2}z_{A^{(1)}_1}/v) G_{i_{A^{(1)}_2}, 0}^+(C^{-1/2}z_{A^{(1)}_2}/v)R_{\bA}^>(z_\bA^{-1})  \right \} \,,\nn
\eea
where
$$R_{\bA}^<(z_\bA) = \prod_{\substack{m\in\rran{2}\\n\in\rran{2r-1}\\A^{(2)}_n <A^{(1)}_m}} G_{i_{A^{(2)}_n}, i_{A^{(1)}_m}}^-(C^{-1/2} z_{A^{(2)}_n}/z_{A^{(1)}_m})\,;$$
$$R_{\bA}^>(z_\bA^{-1}) = \prod_{\substack{m\in\rran{2}\\n\in\rran{2r-1}\\A^{(2)}_n >A^{(1)}_m}} G_{i_{A^{(2)}_n}, i_{A^{(1)}_m}}^-(C^{1/2} z_{A^{(1)}_m}/z_{A^{(2)}_n})\,.$$
Making use of proposition \ref{prop:psixm}.\emph{iii.} on the l.h.s. and of the recursion hypothesis on the r.h.s., we get
\bea
&& \left[2\right]_q \delta\left (\frac{Cz}{v}\right ) \left\langle \x+{i_1}(z_1) \dots \x+{i_{2r+1}}(z_{2r+1}), \X+{1,-(r+1)}(vq^{-2}) \right \rangle\nn\\
 &=& -\frac{\left[2\right]_q^r}{(q-q^{-1})^2}\sum_{\substack{\bA\in\Prt{(2,2r-1)}{\rran{2r+1}}\\\sigma\in S_{2r-1}^<}} \beta_{r,\sigma} \prod_{m\in\rran{2}} \delta_{i_{A^{(1)}_m}, 1-\varpi(m)} \delta\left(\frac{z_{A^{(1)}_m} q^{2\varpi(m)}}{z}\right)   \prod_{n\in\rran{2r-1}} \delta_{i_{A^{(2)}_{\sigma(n)}}, \pi(n)} \delta\left (\frac{z_{A^{(2)}_{\sigma(n)}} q^{\nu_r(n)}}{v}\right )\nn\\
 &&\times \left\{Q_{\sigma, \bA}^< (v/z) - G_{0, 0}^+ (C^{-1/2}zq^{-2}/v) G_{1, 0}^+(C^{-1/2}z/v) Q_{\sigma, \bA}^>(z/v)\right \}\,,\label{eq:pairxX}
\eea
where
$$Q_{\sigma,\bA}^< (v/z) = \prod_{\substack{m\in\rran{2}\\n\in\rran{2r-1}\\A^{(2)}_{\sigma(n)} <A^{(1)}_m}} G_{\pi(n),1- \varpi(m)}^-(C^{-1/2} vq^{\lambda_r(m,n)}/z)\,;$$
and
$$Q_{\sigma, \bA}^>(z/v)= \prod_{\substack{m\in\rran{2}\\n\in\rran{2r-1}\\A^{(2)}_{\sigma(n)} >A^{(1)}_m}} G_{\pi(n),1- \varpi(m)}^-(C^{1/2} z/vq^{\lambda_r(m,n)})\,;$$
where $\lambda_r(m,n) = 2\varpi(m)-\nu_r(n)$. In view of the $\delta(Cz/v)$ factor on the l.h.s of (\ref{eq:pairxX}), it is clear that the relevant factors in $Q_{\sigma,\bA}^< (v/z)$ and $Q_{\sigma, \bA}^>(z/v)$ are the ones contributing to a pole at $Cz=v$, i.e. the ones for which $\lambda_r(m,n)=  c_{\pi(n),1-\varpi(m)}$ or $\lambda_r(m,n)=-c_{\pi(n),1-\varpi(m)}$ respectively. We thus let
$$L_r^\pm :=\left \{(m,n)\in \rran{2}\times \rran{2r-1}: \lambda_r(m,n) =\pm c_{\pi(n),1-\varpi(m)}\right \}$$
and determine, by inspection, that, for every $r\geq 3$,
$$L_r^+ =\{(1,2r-2), (2,2r-3)\} \,, \qquad \mbox{whereas} \qquad L_r^-=\{(2,2r-4)\}\,.$$ 
Since we cannot have $A^{(2)}_{\sigma(2r-4)}>A^{(1)}_2$ while $A^{(2)}_{\sigma(2r-3)} < A^{(1)}_2$ for $\sigma\in S_{2r-1}^<$, we see that the relevant pole is necessarily a simple pole; as one might have expected, given the absence of a $\delta'(Cz/v)$ factor on the l.h.s of (\ref{eq:pairxX}). It easily follows that
$$\left\{Q_{\sigma, \bA}^< (v/z) - G_{0, 0}^+ (C^{-1/2}zq^{-2}/v) G_{1, 0}^+(C^{-1/2}z/v) Q_{\sigma, \bA}^>(z/v)\right \} =\left[2\right]_q(q-q^{-1}) \gamma_{\sigma, \bA} \delta\left(\frac{Cz}{v}\right )$$
for every $(\sigma, \bA) \in S_{2r-1}^<\times \Prt{(2,2r-1)}{\rran{2r+1}}$ such that $A^{(2)}_{\sigma(2r-2)} < A^{(1)}_1$ and either:
\begin{itemize}
\item  $A^{(2)}_{\sigma(2r-4)} > A^{(1)}_2$ (and then necessarily, $A^{(2)}_{\sigma(2r-3)}>A^{(1)}_2$); or
\item $A^{(2)}_{\sigma(2r-4)} < A^{(1)}_2$ and $A^{(2)}_{\sigma(2r-3)} < A^{(1)}_2$;
\end{itemize}
and, for each such pair $(\sigma, \bA)$, $\gamma_{\sigma, \bA} \in\F$. Note that the above conditions impose that $A^{(2)}_{\sigma(1)=1}  <A^{(1)}_1$ and hence $A^{(2)}_1= 1$. Now, for each pair $(\sigma, \bA)$ as above, define
\be \sigma':= \begin{pmatrix}
1 & 2 &\dots & 2r-1& 2r & 2r+1\\
1& A^{(2)}_{\sigma(2)} & \dots & A^{(2)}_{\sigma(2r-1)} & A^{(1)}_1 & A^{(1)}_2
\end{pmatrix}\,.\nn\ee
It is obvious that $\sigma'\in S_{2r+1}^<$. Actually, setting $(\sigma,\bA)\mapsto \sigma'$ defines a map $S_{2r-1}^<\times \Prt{(2,2r-1)}{\rran{2r+1}} \to S_{2r+1}^<$ which is easily seen to be a bijection. Observing furthermore that $\nu_r-2=\nu_{r+1}$ and setting $\beta_{r+1,\sigma'} = \beta_{r,\sigma} \gamma_{\sigma, \bA}$, we can rewrite (\ref{eq:pairxX}) as
\be \left\langle \x+{i_1}(z_1) \dots \x+{i_{2r+1}}(z_{2r+1}), \X+{1,-(r+1)}(v) \right \rangle = -\frac{\left[2\right ]_q^{r}}{q-q^{-1}} \sum_{\sigma' \in S_{2r+1}^<} \beta_{r+1,\sigma'} \prod_{n=1}^{2r+1} \delta_{i_{\sigma'(n)}, \pi(n)} \delta \left (\frac{z_{\sigma'(n)} q^{\nu_{r+1}(n)}}{v} \right ) \,,\nn\ee
which completes the recursion.
\end{proof}

\begin{prop}
For every $m\in\N^\times$, we actually have $\bz+m(v)=\bz-{-m}(v)=0$.
\end{prop}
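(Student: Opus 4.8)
The plan is to establish $\bz{-}{-m}(v)=0$ for every $m\in\N^\times$ and then to deduce $\bz{+}{m}(v)=0$ from the definition $\bz{+}{m}(v)=\varphi\!\left(\bz{-}{-m}(1/v)\right)$ together with $\varphi(0)=0$. I would argue by induction on $m$, the case $m=1$ being $\bz{-}{-1}(v)=0$. Assuming $\bz{-}{-k}(v)=0$ for all $k\in\rran{m-1}$, every term $\bz{-}{-(m-k)}(v)\bpsi -{1,-k}(v)$ occurring in relation (\ref{eq:x-X+m}) vanishes, so that relation collapses to
\be\label{eq:Gammaredux}
\left[\x -1(v),\,\X+{1,-m}(C^{1/2}z)\,Y^m(\kk +1(z))^{-1}\right]_{G_{01}^+(C^{1/2}v/z)}
= -\delta\!\left(\tfrac{z}{C^{1/2}v}\right)\bz{-}{-m}(v) - \delta\!\left(\tfrac{q^{2m}z}{C^{1/2}v}\right)\bpsi -{1,-m}(v)\,,
\ee
where I have used $Y^m(\x+1(C^{1/2}z))=\X+{1,-m}(C^{1/2}z)$. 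Since $\x -1(v)$, $\X+{1,-m}(C^{1/2}z)$ and $Y^m(\kk +1(z))^{-1}$ all lie in (the completion of) $\uqlslthh$ --- as in the setup of the preceding lemma --- the left-hand side of (\ref{eq:Gammaredux}) is a formal distribution valued in $\widehat{\uqlslthh}$, so comparing the coefficient of $\delta(z/C^{1/2}v)$ shows $\bz{-}{-m}(v)\in\widehat{\uqlslthh}[[v,v^{-1}]]$.

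I would then invoke the non-degeneracy of the extended Hopf pairing $\left\langle\,,\right\rangle:\uqgslthh\times\widehat{\uqlslthh}\to\F$. Since $\bz{-}{-m}(v)$ has $\dot Q$-degree $-m(\alpha_0+\alpha_1)$, it pairs non-trivially only with $\uqpslthh_{m(\alpha_0+\alpha_1)}$, which is spanned by the coefficients of the products $\x+{i_1}(z_1)\cdots\x+{i_{2m}}(z_{2m})$ having exactly $m$ of the indices $i_t$ equal to $0$ and $m$ equal to $1$. By non-degeneracy it suffices to prove
\be
\left\langle \x+{i_1}(z_1)\cdots\x+{i_{2m}}(z_{2m}),\,\bz{-}{-m}(v)\right\rangle = 0
\ee
for all such tuples. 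To see this I would pair $\x+{i_1}(z_1)\cdots\x+{i_{2m}}(z_{2m})$ against the left-hand side of (\ref{eq:Gammaredux}) and read off the coefficient of $\delta(z/C^{1/2}v)$ (the second term on the right of (\ref{eq:Gammaredux}) being supported away from $z=C^{1/2}v$). Expanding the iterated coproduct and using the Hopf-pairing axiom $\langle a,xyz\rangle=\sum\langle a_{(1)},x\rangle\langle a_{(2)},y\rangle\langle a_{(3)},z\rangle$, the $\dot Q$-weights force $a_{(1)}$ to pair with $\x -1(v)$ through $\langle\x+i(z),\x -j(v)\rangle=\frac{\delta_{ij}}{q-q^{-1}}\delta(z/v)$, $a_{(2)}$ --- a product of $2m-1$ factors $\x+{j}$ --- to pair with $\X+{1,-m}(C^{1/2}z)$ through the formula of the preceding lemma, and the remaining Cartan part to pair with $Y^m(\kk +1(z))^{-1}$ through a product of structure functions $G^\pm_{ij}$; the coproducts of $\X+{1,-m}(v)$ and of the $\kk$-currents needed here are provided by lemma \ref{lem:deltapsi}. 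Multiplying these three factors and collecting terms --- organized, as in the proof of the preceding lemma, through the combinatorics of the sets $S^<_{2m-1}$ and of the maps $\pi,\varpi,\nu_r$ --- one finds that the coefficient of $\delta(z/C^{1/2}v)$ vanishes identically, which is the claim. Non-degeneracy of the pairing then gives $\bz{-}{-m}(v)=0$, completing the induction.

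The hard part will be this last assembly step: combining the three partial pairings into the pairing of the full product on the left of (\ref{eq:Gammaredux}) while correctly tracking the ordering of the $\kk$-factors hidden inside $Y^m(\kk +1(z))^{-1}$ and the $q^{\nu_r(n)}$-shifts coming from the preceding lemma, and then verifying that no $\delta(z/C^{1/2}v)$ survives. A conceptually cleaner alternative that sidesteps this bookkeeping is to observe --- using the corollaries above --- that $\bz{-}{-m}(v)$ is central in $\uqsltc$, and that a central element lying in $\widehat{\uqlslthh}$ must in fact lie in $\widehat{\uqzeroslthh}$, since commuting with every $x^+_{i,n}$ annihilates the $q$-derivations $\partial_0,\partial_1$ on its $\uqpslthh$-component, and these are jointly non-degenerate by the Hopf pairing; as $\bz{-}{-m}(v)$ has $\dot Q$-degree $-m(\alpha_0+\alpha_1)\neq0$, it must then vanish. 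Either way, $\bz{+}{m}(v)=\varphi\!\left(\bz{-}{-m}(1/v)\right)=0$ follows.
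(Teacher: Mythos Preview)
Your overall strategy coincides with the paper's: reduce to $\bz-{-m}(v)=0$, invoke non-degeneracy of the Hopf pairing on $\uqgslthh\times\widehat{\uqlslthh}$, and show that
\[
\left\langle \x+{i_1}(z_1)\cdots\x+{i_{2m}}(z_{2m}),\ \bz-{-m}(v)\right\rangle=0
\]
by extracting it from a commutator via the formula of the preceding lemma. Two simplifications in the paper's argument are worth noting.

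First, the induction on $m$ is unnecessary: in (\ref{eq:x-X+m}) the term $\bz-{-m}(v)$ is the unique one supported at $\delta(z/C^{1/2}v)$, while the mixed terms $\bz-{-(m-k)}(v)\bpsi-{1,-k}(v)$ sit at $\delta(q^{2k}z/C^{1/2}v)$ and $\bpsi-{1,-m}(v)$ at $\delta(q^{2m}z/C^{1/2}v)$. So one can read off the $\bz-{-m}$ contribution directly without first killing the others.

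Second, and more importantly, the paper does not pair against the full left-hand side of (\ref{eq:x-X+m}); it pairs against the plain commutator $[\X+{1,-m}(v),\x-1(z)]$. This removes the factor $Y^m(\kk+1(z))^{-1}$ and the $G^+_{01}$-twist, eliminating precisely the ``assembly step'' you identify as the hard part. What remains is a two-factor computation: using lemma~\ref{lem:deltapsi} for $\Delta(\X+{1,-p})$ and lemma~\ref{pairingbasis} for the $\x+$ side, one obtains a sum over $\bA\in\Prt{(1,2m-1)}{\rran{2m}}$ and $\sigma\in S^<_{2m-1}$ of products of $G^-$-factors and $\delta$-functions. The crux --- which your proposal asserts but does not verify --- is then purely combinatorial: a pole at $z=C^{1/2}v$ can only arise from factors with $\nu_m(n)=\pm c_{1,\pi(n)}$, and inspection shows the relevant index sets are $M_m^+=\{2m-4\}$ and $M_m^-=\{2m-1\}$. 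Since every $\sigma\in S^<_{2m-1}$ satisfies $\sigma(2m-4)<\sigma(2m-1)$ by definition, these two potential poles can never occur simultaneously on opposite sides, and no $\delta(z/C^{1/2}v)$ survives. This is the missing ingredient in your argument.

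Your alternative route via centrality is not on solid ground as written. The ``$q$-derivations $\partial_0,\partial_1$'' are not defined in this paper, and the claim that a central element of $\widehat{\uqlslthh}$ lies in $\widehat{\uqzeroslthh}$ requires justification in the completed setting --- essentially it is equivalent to the very non-degeneracy statement you are trying to sidestep. The direct pairing computation is both cleaner and complete.
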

\begin{proof}
It suffices to prove that, say $\bz-{-m}(z)=0$ for every $m\in\N^\times$ and to apply $\varphi^{-1}$ to get the result for $\bz+m(z)$. Considering the root space decomposition, it is obvious that having
$$\left\langle \x+{i_1}(z_1) \cdots \x+{i_{2m}}(z_{2m}) , \bz-{-m}(z) \right \rangle = 0\,,$$
for every $i_1,\dots, i_{2m}\in I$, is a sufficient condition. Now, making use of the previous lemma, one easily shows that
\bea && \left\langle \x+{i_1}(z_1) \cdots \x+{i_{2m}}(z_{2m}) , \left [\X+{1,-m}(v), \x-{1}(z)\right ]\right \rangle \nn\\
&=& -\frac{\left[2\right]_q}{(q-q^{-1})^2} \sum_{\substack{\bA\in\Prt{(1,2m-1)}{\rran{2m}}\\ \sigma\in S_{2m-1}^< }}\beta_{m,\sigma} \delta_{i_{A^{(1)}_1}, 1} \delta\left(\frac{z_{A^{(1)}_1}}{z}\right ) \prod_{n\in\rran{2m-1}} \delta_{i_{A^{(2)}_n}, \pi(n)} \delta\left (\frac{z_{A^{(2)}_{\sigma(n)}} q^{\nu_m(n)}}{v}\right )
\nn\\
&&\times\left\{ G_{i_{A^{(1)}_1},0 }^+(C^{1/2}z_{A^{(1)}_1}/v) R_\bA^<(z_\bA) - R_\bA^>(z_\bA^{-1})\right \}\,,\nn
\eea 
where
\be
R_\bA^<(z_\bA) = \prod_{\substack{n\in\rran{2m-1}\\ A^{(2)}_n>A^{(1)}_1}} G_{i_{A^{(1)}_1}, i_{A^{(2)}_n}}^-(C^{-1/2}z_{A^{(1)}_1}/z_{A^{(2)}_n}) \,,\nn\ee
\be
R_\bA^>(z_\bA^{-1})= \prod_{\substack{n\in\rran{2m-1}\\ A^{(2)}_n<A^{(1)}_1}} G_{i_{A^{(1)}_1}, i_{A^{(2)}_n}}^-(C^{1/2}z_{A^{(2)}_n}/z_{A^{(1)}_1})\,.\nn
\ee
Hence, upon rewriting, we get
\bea && \left\langle \x+{i_1}(z_1) \cdots \x+{i_{2m}}(z_{2m}) , \left [\X+{1,-m}(v), \x-{1}(z)\right ]\right \rangle \nn\\
&=& -\frac{\left[2\right]_q}{(q-q^{-1})^2} \sum_{\substack{\bA\in\Prt{(1,2m-1)}{\rran{2m}}\\ \sigma\in S_{2m-1}^< }}\beta_{m,\sigma} \delta_{i_{A^{(1)}_1}, 1} \delta\left(\frac{z_{A^{(1)}_1}}{z}\right ) \prod_{n\in\rran{2m-1}} \delta_{i_{A^{(2)}_n}, \pi(n)} \delta\left (\frac{z_{A^{(2)}_{\sigma(n)}} q^{\nu_m(n)}}{v}\right )
\nn\\
&&\times
\left\{ G_{1,0 }^+(C^{1/2}z/v) Q_{\sigma, \bA}^<(z/v) - Q_{\sigma, \bA}^>(v/z)\right \}\,,\nn
\eea 
where
\be
Q_{\sigma, \bA}^<(z/v) = \prod_{\substack{n\in\rran{2m-1}\\ A^{(2)}_{\sigma(n)}>A^{(1)}_1}} G_{1, \pi(n)}^-(C^{-1/2}zq^{\nu_m(n)}/v)\,, 
\nn\ee
\be
Q_{\sigma, \bA}^>(v/z) =  \prod_{\substack{n\in\rran{2m-1}\\ A^{(2)}_{\sigma(n)}<A^{(1)}_1}} G_{1, \pi(n)}^-(C^{1/2}vq^{-\nu_m(n)}/z)\,. \nn\ee
In view of (\ref{eq:x-X+m}), the contributions to $\left\langle \x+{i_1}(z_1) \cdots \x+{i_{2m}}(z_{2m}) , \bz-{-m}(z) \right \rangle$ in the above expression must come from terms with a pole at $z=C^{1/2}v$. The latter happen for factors in $Q_{\sigma, \bA}^<(z/v) $ or $Q_{\sigma, \bA}^>(v/z)$ such that $\nu_m(n) = c_{1,\pi(n)}$ or $\nu_m(n)=-c_{1,\pi(n)}$ respectively. We thus let
$$M_m^\pm=\{n\in\rran{2m-1} : \nu_m(n) = \pm c_{1,\pi(n)} \}\,.$$
Upon inspection, one easily sees that
$$M_m^+ = \{2m-4\}\,, \qquad \mbox{whereas} \qquad M_m^-=\{2m-1\}\,.$$
Now, for $\sigma\in S_{2m-1}^<$, we have $\sigma(2m-4)<\sigma(2m-1)$ and no term has a pole at $z=C^{1/2}v$. We conclude that $\left\langle \x+{i_1}(z_1) \cdots \x+{i_{2m}}(z_{2m}) , \bz-{-m}(z) \right \rangle=0$.
\end{proof}

\subsection{Relations in $\Psi^{-1}(\qdaff^0(\mathfrak a_1))$
}
\begin{defn}
We set $\KK{+}{1,0}(v) := - \km{1}(C^{1/2}v)$ and, for every $m\in\N^\times$, $$\KK{+}{1,m}(v):=(q-q^{-1})\km{1}(C^{1/2}vq^{-2m})\bpsi{+}{1,m}(v)\,.$$ We then let
$$\KK{-}{1,0}(v) := \varphi \left (\KK{+}{1,0}(1/v) \right ) = - \kp{1}(C^{1/2}v)$$
and, for every $m\in\N^\times$,
$$\KK{-}{1,-m}(v):= \varphi\left (\KK{+}{1,m}(1/v) \right ) =-(q-q^{-1}) \bpsi{-}{1,-m}(v)\kp{1}(C^{1/2}vq^{-2m})\,.$$
\end{defn}
It is straigthforward to establish that
\be \km{1}(C^{1/2} w) \bpsi+{1,m}(v) = G^+_{11}\left (\frac{wq^{2m}}{v}\right ) G^-_{11} \left (\frac{w}{v} \right ) \bpsi+{1,m}(v) \km{1}(C^{1/2}w)\,.\ee
By making repeated use of the above relation, one readily checks that, in terms of $(\KK{+}{1,m}(v))_{m\in \N^\times}$, the relations (\ref{eq:Ym}) and (\ref{eq:psipsimm}), as well as the relations in corollary \ref{cor:psim}\emph{ii.} and \emph{iv.} of the previous subsections respectively read
\bea [\x +1(v), \Xm{1,n}(z)] &=& \frac{1}{q-q^{-1}}\delta\left(\frac{zq^{2n}}{Cv}\right) \left (\prod_{p=0}^{n-1} \bz-{0}(C^{-1/2} zq^{2p})^{-1} \right )\KK{+}{1,n}(v) 
\label{eq:xpxmk}
\eea
\bea [\KK{+}{1,1}(w), \KK{+}{1,m}(v)]_{G^-_{11}(w/v)G^+_{11}(wq^{2(m-1)}/v)} &=& [2]_q \left \{\delta \left (\frac{wq^{2m}}{v}\right ) \KK{+}{1,0}(vq^{-2m}) \KK{+}{1,m+1}(v) \right . \nn\\
&& \left .  -\delta \left (\frac{w}{vq^2}\right ) \KK{+}{1,0}(v) \KK{+}{1,m+1}(vq^2)  \right \}\eea
\bea [\KK{+}{1,m+1}(v), \Xm{1,n}(z)]_{G^+_{11}(Cv/zq^{2(m+1)})} &=& - \bz-{0}(C^{1/2}vq^{-2(m+1)}) [\Xm{1,n+1}(zq^{-2}), \KK{+}{1,m}(v)]_{G^+_{11}(zq^{2(m-1)}/Cv)}\nn\\
&\propto& \delta \left (\frac{zq^{2m}}{Cv} \right) \eea
\bea\label{eq:K+X+} [\KK{+}{1,m+1}(v), \Xp{1,n}(z)]_{G^-_{11}(v/z)} = -[\Xp{1,n+1}(v), \KK{+}{1,m}(z)]_{G^-_{11}(v/z)} \propto \delta \left (\frac{zq^2}{v} \right )\eea
\begin{prop}
For every $m,n\in\N$, we have
\be\label{eq:proofK+K+} (v-q^{\pm 2}z)(v-q^{2(m-n \mp 1)}z) \KK\pm{1,\pm m}(v) \KK\pm{1,\pm n}(z) = (vq^{\pm 2}-z)(vq^{\mp2}-q^{2(m-n)}z) \KK\pm{1,\pm n}(z) \KK\pm{1,\pm m}(v)\,,\ee
\end{prop}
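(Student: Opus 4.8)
The plan is to prove the upper‑sign case of (\ref{eq:proofK+K+}) directly inside $\uqsltc$; the lower‑sign case then follows by applying the $\K$‑algebra anti‑automorphism $\varphi$ (after the relabelling $v\to 1/v$, $z\to 1/z$), which sends $\KK{+}{1,m}(v)$ to $\KK{-}{1,-m}(1/v)$, reverses products, and interchanges $q\leftrightarrow q^{-1}$ and $G_{ij}^\pm\leftrightarrow G_{ij}^\mp$. The case $m=n=0$ is immediate: $\KK{+}{1,0}(v)\KK{+}{1,0}(z)=\km1(C^{1/2}v)\km1(C^{1/2}z)=\km1(C^{1/2}z)\km1(C^{1/2}v)$ by the defining relation $\kk-i(z_1)\kk-j(z_2)=\kk-j(z_2)\kk-i(z_1)$, and the two polynomial prefactors of (\ref{eq:proofK+K+}) coincide when $m=n=0$.

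Next I would treat the rows $n=0$, $m\in\N^\times$ (and symmetrically $m=0$, $n\in\N^\times$). Substituting $\KK{+}{1,m}(v)=(q-q^{-1})\km1(C^{1/2}vq^{-2m})\bpsi{+}{1,m}(v)$ and $\KK{+}{1,0}(z)=-\km1(C^{1/2}z)$, commuting $\bpsi{+}{1,m}(v)$ past $\km1(C^{1/2}z)$ by the exchange relation $\km1(C^{1/2}w)\bpsi{+}{1,m}(v)=G^+_{11}(wq^{2m}/v)G^-_{11}(w/v)\bpsi{+}{1,m}(v)\km1(C^{1/2}w)$ displayed just above, and using commutativity of the $\km1$'s yields
\[
\KK{+}{1,m}(v)\KK{+}{1,0}(z)=G^-_{11}(zq^{2m}/v)\,G^+_{11}(z/v)\,\KK{+}{1,0}(z)\KK{+}{1,m}(v)\,;
\]
clearing denominators and inserting the explicit rational form of $G^\pm_{11}$ turns (\ref{eq:proofK+K+}) into an elementary polynomial identity in $q,v,z$.

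For a general pair $(m,n)$ with $m,n\in\N^\times$ the plan is to induct on $m$ with $n$ fixed. The base cases $m\le 1$ are the above together with the pair $(1,n)$, which away from its $\delta$‑supports is exactly the recursion
\[
[\KK{+}{1,1}(w),\KK{+}{1,n}(v)]_{G^-_{11}(w/v)G^+_{11}(wq^{2(n-1)}/v)}=[2]_q\Bigl\{\delta\bigl(wq^{2n}/v\bigr)\KK{+}{1,0}(vq^{-2n})\KK{+}{1,n+1}(v)-\delta\bigl(w/vq^2\bigr)\KK{+}{1,0}(v)\KK{+}{1,n+1}(vq^2)\Bigr\}
\]
obtained from (\ref{eq:psipsimm}) via the $\km1$--$\bpsi$ exchange. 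For the inductive step I would bracket this recursion, with running index $m$ and free variable $w$, against $\KK{+}{1,n}(z)$ and expand the double bracket by the Jacobi‑type identities (\ref{eq:identity1})--(\ref{eq:identity2}); the two terms so produced involve $[\KK{+}{1,1}(w),\KK{+}{1,n}(z)]$ and $[\KK{+}{1,m}(v),\KK{+}{1,n}(z)]$, both known by induction. On the other side of the identity, the inner bracket $[\KK{+}{1,1}(w),\KK{+}{1,m}(v)]$ is, again by the recursion, supported at $w\in\{vq^{2m},vq^{-2}\}$ and equals $\KK{+}{1,0}(\cdot)\KK{+}{1,m+1}(\cdot)$ there; expanding $[\KK{+}{1,0}(\cdot)\KK{+}{1,m+1}(\cdot),\KK{+}{1,n}(z)]$ by the Leibniz rule and inserting the already established $(0,n)$ case isolates $[\KK{+}{1,m+1}(v),\KK{+}{1,n}(z)]$. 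The $G$‑weights are reorganised with the structure‑function identities of the appendix (section \ref{sec:structfunc}, together with $G_{10}^\pm=G_{11}^\mp$ in type $\mathfrak a_1$) and (\ref{eq:G+G-}). A symmetric induction on $n$ — or the observation that (\ref{eq:proofK+K+}) for $(m,n)$ and for $(n,m)$ are the same rational rearrangement — then covers every pair.

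The hard part will be the bookkeeping in that last step: the Jacobi identities redistribute the $G$‑factors, and one must check that, after applying (\ref{eq:G+G-}) and (\ref{eq:GG+GG-}), they collapse to precisely the two quadratic prefactors $(v-q^{2}z)(v-q^{2(m-n-1)}z)$ and $(vq^{2}-z)(vq^{-2}-q^{2(m-n)}z)$, and that the various $\delta$‑function contact terms generated by the recursion and by the poles of the $G$'s cancel pairwise. That cancellation is forced by the fact that (\ref{eq:proofK+K+}) is manifestly $\delta$‑free, so the only genuine work is to exhibit it explicitly; everything else is elementary algebra with rational functions in $q$, $v$ and $z$.
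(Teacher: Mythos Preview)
Your outline is sound and, if the bookkeeping in the inductive step goes through, would give a valid proof. The cases $(0,0)$, $(m,0)$, $(0,n)$ and $(1,n)$ are all correct as you describe them; in particular your observation that multiplying the recursion $[\KK{+}{1,1}(w),\KK{+}{1,n}(v)]_{G^-_{11}(w/v)G^+_{11}(wq^{2(n-1)}/v)}=\cdots$ by $(v-q^2z)(vq^{2n-2}-q^{-2}z)$ kills both $\delta$-terms and reproduces the $(1,n)$ case (up to the harmless overall factor $q^{2n-2}$) is exactly right. The inductive step via (\ref{eq:identity1})--(\ref{eq:identity2}) is workable, though as you yourself note the $G$-weight bookkeeping and $\delta$-cancellation are genuinely tedious.

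The paper takes a quite different and much shorter route that avoids induction entirely. It starts from the already established ``weight grading'' relation (\ref{eq:K+X+}) with $n=0$ there, namely $[\KK{+}{1,m+1}(v),\xp1(z)]_{G^-_{11}(v/z)}\propto\delta(zq^2/v)$, and brackets it against $\Xm{1,n}(u)$ with weight $G^+_{11}(Cv/uq^{2(m+1)})$. The Jacobi identity (\ref{eq:identity1}) produces two terms: one is $[[\KK{+}{1,m+1}(v),\Xm{1,n}(u)]_{\cdots},\xp1(z)]_{\cdots}$, whose inner bracket is $\propto\delta(C^{-1}uq^{2m}/v)$ by Corollary \ref{cor:psim}.\emph{ii.}; the other is $[\KK{+}{1,m+1}(v),[\xp1(z),\Xm{1,n}(u)]]_{\cdots}$, whose inner bracket is $\delta(uq^{2n}/Cz)$ times (central factors and) $\KK{+}{1,n}(z)$ by (\ref{eq:xpxmk}). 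Multiplying through by $(C^{-1}uq^{2m}-v)(zq^2-v)$ kills the $\delta$-supported pieces on both sides simultaneously, and what remains is exactly the polynomial relation $(v-q^2z)(v-q^{2(m-n)}z)[\KK{+}{1,m+1}(v),\KK{+}{1,n}(z)]_{\cdots}=0$ in its cleared-denominator form. The lower-sign case follows by $\varphi$, as you say.

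What the paper's approach buys is a uniform, one-shot argument for all $(m,n)$ that leverages the interaction of $\KK{+}{1,\ast}$ with the root vectors $\Xbsf{\pm}{1,\ast}$, whose exchange relations have already been worked out. Your approach stays entirely inside the Cartan-type subalgebra and is more self-contained, but pays for it with an induction whose combinatorics you would still have to carry out.
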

\begin{proof}
We apply the map $a\mapsto [a,\Xm{1,n}(u)]_{G^+_{11}(Cv/uq^{2(m+1)})}$ to the relation (\ref{eq:K+X+}) with $n=0$ there. Making use of identity (\ref{eq:identity1}) on the left hand side, we get 
\bea
&&[\underbrace{[\KK+{1,m+1}(v), \X-{1,n}(u)]_{G_{11}^+(Cq^{-2(m+1)}v/u)}}_{\propto \delta\left(\frac{C^{-1}uq^{2m}}{v}\right)}, \xp 1(z) ]_{G_{10}^+(v/z)} \nn\\
&&\qquad \qquad \qquad \qquad \qquad \qquad \qquad \qquad  + \left[\KK+{1,m+1}(v) , \left[\xp 1(z), \X-{1,n}(u)\right ]\right ]_{G_{10}^+(v/z)G_{11}^+(Cq^{-2(m+1)}v/u)}\propto \delta\left(\frac{zq^2}{v}\right) \nn
\eea
Multiplying through by $\left (C^{-1}uq^{2m}-v\right)\left(zq^2-v\right)$ and making use of (\ref{eq:xpxmk}), it follows that
\bea
0&=& \left (C^{-1}uq^{2m}-v\right)\left(zq^2-v\right)\delta\left (\frac{uq^{2n}}{Cz}\right )\left[\KK+{1,m+1}(v) ,\KK+{1,n}(z)\right ]_{G_{10}^+(v/z)G_{11}^+(Cq^{-2(m+1)}v/u)}\nn\\
&=& \left( zq^{2(m-n)}-v\right ) \left (zq^2-v\right ) \delta\left (\frac{uq^{2n}}{Cz}\right )\left[\KK+{1,m+1}(v) ,\KK+{1,n}(z)\right ]_{G_{10}^+(v/z)G_{11}^+(q^{2(n-m-1)}v/z)}\nn
\eea
Hence the result for the upper choice of signs in (\ref{eq:proofK+K+}). The case with lower choice of signs follows by applying $\varphi$ to the above equation.
\end{proof}

At this point it should be clear that we have obtained $\Psi^{-1}$. Indeed, it suffices to let, for every $m\in\N$ and every $n\in\Z$,
\be \Psi^{-1}(\Dsf_2^{\pm 1}) = D^{\pm 1}\ee
\be\Psi^{-1}(\Csf^{\pm 1/2}) = C^{\pm1/2}\ee
\be \Psi^{-1}(\cbsf\pm(z)) = \bz\pm0(z)\ee
\be \Psi^{-1}(\Kbsf\pm{1,\pm m}(z)) = \KK\pm{1,\pm m}(z)\ee
\be \Psi^{-1}(\Xbsf\pm{1,n}(z)) = \X\pm{1,n}(z)\ee
The relations in $\qdaff'(\mathfrak a_1)$ are obviously all the relations we have derived in the present section. $\Psi^{-1}$ therefore extends as an algebra homomorphism. This concludes the proof of theorem \ref{thm:main}.

Returning to the proof of proposition \ref{prop:Hall}, it is also clear that
\be f(\psi^\pm(z)) = (q^2-q^{-2})^2 \Psi(\bwp\pm(C^{1/2}zq^{-2} ))\ee 
\be \label{eq:epsi}f(\eb\pm(z)) = \Psi(\bpsi\pm{1,\pm 1}(z))\ee
Therefore ((\ref{eq:thetap}) -- (\ref{eq:thetam})) follow from proposition \ref{prop:psixm}.\emph{v.} In order to complete the proof of proposition \ref{prop:Hall}, we still have to prove the compatibility of $f$ with the Serre relations (\ref{eq:e+e+e+}) of $\E{q_1}{q_2}{q_3}$. This is the purpose of the next section.

\subsection{The Serre relations of the elliptic Hall algebra}
By the compatibility of $f$ with (\ref{eq:e+e+e+}), we actually mean that we should have, for every $m\in\Z$,
\be\label{eq:fSerre}  \res_{v,w,z} (vwz)^m(v+z)(w^2-vz) f(\eb\pm(v)) f(\eb\pm(w)) f(\eb\pm(z))=0\,.\ee
Now we have already identified $f(\eb\pm(z))$ with $\Psi(\bpsi\pm{1,\pm 1}(z))$ in (\ref{eq:epsi}) above. The latter means that proving (\ref{eq:fSerre}) is equivalent to proving
\begin{prop}
For every $m\in\Z$, we have
\be \res_{v_1,v_2,v_3} (v_1v_2v_3)^m(v_1+v_3)(v_2^2-v_1v_3) \bpsi\pm{1,\pm 1}(v_1) \bpsi\pm{1,\pm 1}(v_2) \bpsi\pm{1,\pm 1}(v_3)=0\,.\ee
\end{prop}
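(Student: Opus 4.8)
The plan is to deduce this from the structure already assembled, thereby completing the proof of Proposition \ref{prop:Hall}: since $f(\eb\pm(z)) = \Psi(\bpsi\pm{1,\pm 1}(z))$ by (\ref{eq:epsi}) and $\widehat\Psi$ is an isomorphism, the asserted identity is precisely equivalent to (\ref{eq:fe+fe+fe+}), \ie to the compatibility of $f$ with the Serre relations (\ref{eq:e+e+e+}) of $\E{q^{-4}}{q^2}{q^2}$. The anti-automorphism $\varphi$ interchanges the two choices of sign (it sends $\bpsi+{1,1}(v)$ to $\bpsi-{1,-1}(1/v)$, reverses the order of the three factors, and reindexes the parameter $m\in\Z$), so it suffices to treat the lower sign; set $F(z):=\bpsi-{1,-1}(z)$.

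As a preliminary I would record the quadratic relation: clearing the structure-function denominators in the $m=1$ case of (\ref{eq:psipsimm}) -- its two $\delta$-supported terms being killed by the numerator, which vanishes at the relevant points -- gives the clean identity
\be g(v,w)\,\bpsi+{1,1}(v)\bpsi+{1,1}(w) = -g(w,v)\,\bpsi+{1,1}(w)\bpsi+{1,1}(v)\,,\qquad g(v,w)=(v-q^{-4}w)(v-q^2w)^2\,, \ee
which is the content of the compatibility of $f$ with (\ref{eq:e+e+}) already checked in the proof of Proposition \ref{prop:Hall}; applying $\varphi$ yields the corresponding relation for $F$. Moreover, applying $\varphi$ to the rewriting $\left[\xp0(w),\xp1(z)\right]_{G_{10}^-(w/z)} = \delta(q^2w/z)\,\bpsi+{1,1}(z)$ from the proof of Proposition \ref{prop:psiinv} shows that $F(z)$ lies in the subalgebra generated by the $\x-i$, hence $F(v_1)F(v_2)F(v_3)\in\widehat{\uqlslthh}$ and has root degree $-3(\alpha_0+\alpha_1)$.

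The core of the argument then runs as in the proof that $\bz\pm{m}(v)=0$. By the non-degeneracy of the Hopf pairing $\pair{}{}:\uqgslthh\times\widehat{\uqlslthh}\to\F$, it suffices to show that, for every $m\in\Z$ and every $x\in\uqgslthh$, the pairing of $x$ against $\res_{v_1,v_2,v_3}(v_1v_2v_3)^m(v_1+v_3)(v_2^2-v_1v_3)F(v_1)F(v_2)F(v_3)$ vanishes, and by the root grading only monomials $x=\x+{i_1}(z_1)\cdots\x+{i_6}(z_6)$ with exactly three of the indices equal to $0$ and three equal to $1$ can contribute. Using the pairing axiom $\pair{ab}{y}=\sum\pair{a}{y_{(2)}}\pair{b}{y_{(1)}}$ iteratively, together with $\Delta(\bpsi-{1,-1}(v)) = 1\otimes\bpsi-{1,-1}(v) + [2]_q(q-q^{-1})\,\x-1(v)\widehat\otimes\x-0(q^{-2}v)\kk+1(v) + \bpsi-{1,-1}(v)\widehat\otimes\bwp+(q^{-2}v)$ (from Lemma \ref{lem:deltapsi}), the coproducts of the $\x-i$, and the base pairings $\pair{\x+i(z)}{\x-j(v)}=\frac{\delta_{ij}}{q-q^{-1}}\delta(z/v)$ and $\pair{\x+i(z)}{\kk+j(v)}=0$, one expands $\pair{x}{F(v_1)F(v_2)F(v_3)}$ as a finite sum: each summand is a product of scalars $[2]_q(q-q^{-1})$, of structure functions $G^\pm_{ij}$ arising from moving the $\kk+1$ and $\bwp+$ factors past the $\x-i$, and of $\delta$-distributions that pin each $z_k$ to a prescribed $q$-power multiple of one of $v_1,v_2,v_3$. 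The relevant bookkeeping is of exactly the same nature as in Lemma \ref{pairingbasis}, in the pairing lemma preceding the proof that $\bz\pm{m}(v)=0$, and in Lemma \ref{lem:deltapsi}; the quadratic relation above restricts the admissible orderings and poles.

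The proposition then follows by inspecting the surviving pinned configurations: exactly as in the $L_r^\pm$- and $M_m^\pm$-analyses above, each one forces $(v_1,v_2,v_3)$, up to a common $q$-power, into a form at which the Serre polynomial $(v_1+v_3)(v_2^2-v_1v_3)$ vanishes, so the $v$-residue of every term is zero; by non-degeneracy the element $\res_{v_1,v_2,v_3}(v_1v_2v_3)^m(v_1+v_3)(v_2^2-v_1v_3)F(v_1)F(v_2)F(v_3)$ vanishes for every $m$, and $\varphi$ gives the case of upper signs. I expect the main obstacle to be precisely this last step: organising the iterated coproduct of $F(v_1)F(v_2)F(v_3)$ modulo the appropriate powers of $\qaff^<(\dot{\mathfrak a}_1)$, keeping track of all the $q$-shifts and of the structure functions $G^\pm_{ij}$ in the resulting combinatorial sum, and then verifying configuration by configuration the vanishing of $(v_1+v_3)(v_2^2-v_1v_3)$. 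Everything else is a formal consequence of the relations established in the preceding subsections.
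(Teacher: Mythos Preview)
Your overall architecture coincides with the paper's: reduce to the lower sign via $\varphi$, invoke non-degeneracy of the Hopf pairing to test against monomials $\x+{i_1}(z_1)\cdots\x+{i_6}(z_6)$ of the correct root degree, and expand using the explicit coproduct of $\bpsi-{1,-1}$ from Lemma~\ref{lem:deltapsi}. Up to this point your plan matches the paper's proof line by line.

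The gap is in your last paragraph. You assert that each surviving pinned configuration forces $(v_1,v_2,v_3)$ to a point at which the Serre polynomial $(v_1+v_3)(v_2^2-v_1v_3)$ itself vanishes, so that every term has zero $v$-residue individually. This is false, and the analogy with the $L_r^\pm$ and $M_m^\pm$ analyses is misleading: there the vanishing came from the \emph{absence} of a pole at the relevant point, due to an ordering constraint in $S_{2m-1}^<$. Here the mechanism is different. The paper organises the pairing as a sum over ordered $(2,2,2)$ set-partitions $\bA$ of $\rran{6}$, passes to $S_3$-orbits (tableaux in $\Trt{(2,2,2)}{\rran{6}}$), and groups these tableaux according to the triple $\bn=\{A^{(1)}_2,A^{(2)}_2,A^{(3)}_2\}$. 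For a fixed $\bn$ the individual contributions $c_\bA$ are \emph{not} zero: each is a nontrivial combination of $\delta$-distributions (coming from the poles of the structure functions $G_{ij}^\pm$) with nonvanishing coefficients, and the Serre polynomial evaluated at the pinned points is generically nonzero. The identity holds because the $c_\bA$ \emph{cancel against one another} when summed over $\bA\in\Trt{(2,2,2)}{\rran{6}}(\bn)$; the paper verifies this cancellation by an explicit case-by-case computation (there are five nonempty classes $\bn$, containing respectively $1,2,2,4,6$ tableaux). So the ``main obstacle'' you flag is real, but its resolution is a genuine multi-term cancellation, not a pointwise vanishing of $F(v_1,v_2,v_3)$.
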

\begin{proof}
The upper choice of signs immediately follows from the lower one upon applying $\varphi$. Moreover, considering the root space decomposition, it is clear that having
\be \res_{v_1,v_2,v_3} (v_1v_2v_3)^m(v_1+v_3)(v_2^2-v_1v_3) \left\langle \x+{i_1}(z_1) \dots \x+{i_6}(z_6), \bpsi-{1,- 1}(v_1) \bpsi-{1,- 1}(v_2) \bpsi-{1,-1}(v_3)\right \rangle = 0\nn\ee
for every $i_1, \dots, i_6\in \dot{I}$ is a sufficient condition for the result to hold. Now, making use of lemma \ref{lem:deltapsi}, one easily obtains that
\bea &&\left\langle \x+{I_1}(z_1) \dots \x+{i_6}(z_6), \bpsi-{1,- 1}(v_1) \bpsi-{1,- 1}(v_2) \bpsi-{1,-1}(v_3)\right \rangle\nn\\
&=& \left (\frac{\left[2\right]_q}{q-q^{-1}}\right )^3  \sum_{\bA\in\Prt{(2,2,2)}{\rran{6}}} \,\,\prod_{p=1}^3 \,\,\prod_{\substack{m\in A^{(p+1)} \sqcup\dots\sqcup A^{(3)}\\n\in A^{(p)}\\n>m }} \,\,\prod_{k=1}^2 \delta_{i_{A^{(p)}_k, \varpi(k)}} \delta\left (\frac{z_{A^{(p)}_k} q^{\varpi(k)} }{v_k}\right ) G_{i_m,i_n}^-(C^{-1/2} z_m/z_n)\,.\nn \eea
There is obviously an action of $S_3$ on $\Prt{(2,2,2)}{\rran{6}}$ given by setting
$\sigma(\bA) = (A^{(\sigma(1))}, A^{(\sigma(2))}, A^{(\sigma(3))})$
for every $\sigma\in S_3$ and every $\bA\in \Prt{(2,2,2)}{\rran{6}}$. It is also quite clear that
$$\frac{\Prt{(2,2,2)}{\rran{6}}}{S_3} \cong \Trt{(2,2,2)}{\rran{6}} \,,$$
where
$$\Trt{(2,2,2)}{\rran{6}}:= \left\{\bA\in\Prt{(2,2,2)}{\rran{6}} : A^{(1)}_1 < A^{(2)}_1 < A^{(3)}_1 \right \}\,.$$
For every triple $\textbf{n} = \{n_1, n_2, n_3\}\subset \rran{6}$, we further let
$$\Trt{(2,2,2)}{\rran{6}}(\textbf n) := \left\{\bA \in \Trt{(2,2,2)}{\rran{6}} : \left\{A^{(p)}_2  : p\in \rran{3} \right \} = \textbf n \right \}\,.$$
With these notations in place, we can now write
\bea && \res_{v_1,v_2,v_3} (v_1v_2v_3)^m(v_1+v_3)(v_2^2-v_1v_3) \left\langle \x+{I_1}(z_1) \dots \x+{i_6}(z_6), \bpsi-{1,- 1}(v_1) \bpsi-{1,- 1}(v_2) \bpsi-{1,-1}(v_3)\right \rangle \nn\\
&=& \left (\frac{\left[2\right]_q}{q-q^{-1}}\right )^3 \sum_{\substack{\textbf{n} \subset \rran{6}\\\card \textbf{n} =3}} z_{\bn}^m \delta_{i_{\rran{6}-\bn}, 1} \delta_{i_\bn, 0} \sum_{\bA\in \Trt{(2,2,2)}{\rran{6}}(\textbf n)} \prod_{p=1}^3 \delta\left (\frac{z_{A^{(p)}_1}q^2}{z_{A^{(p)}_2}}\right ) c_\bA\,, \nn\eea
where, by definition,
\be z_\bn^m = \prod_{i=1}^3 z_{n_i}^m\,, \qquad \delta_{i_\bn, 0} = \prod_{j=1}^3 \delta_{i_{n_j}, 0}\,, \qquad \delta_{i_{\rran{6}-\bn},1} = \prod_{m\in \rran{6}-\bn} \delta_{i_m, 1}\ee
\be c_\bA = \sum_{\sigma\in S_3} F(z_{A^{(\sigma(1))}_2}, z_{A^{(\sigma(2))}_2}, z_{A^{(\sigma(3))}_2}) \prod_{1\leq p'< p\leq 3} H_{\bA, \sigma, p,p'} (z_{A^{(\sigma(p))}_2}/z_{A^{(\sigma(p'))}_2}) \ee
\be F(x,y,z) = (x+z)(y^2-xz)\,\ee
\be H_{\bA, \sigma, p,p'} (z_{A^{(\sigma(p))}_2}/z_{A^{(\sigma(p'))}_2}) = \prod_{k,k'=1}^2 G_{\varpi(k), \varpi(k')}^-(C^{-1/2} q^{2(k-k')}z_{A^{(\sigma(p))}_2 }/z_{A^{(\sigma(p'))}_2})^{\epsilon(\bA, \sigma, p,p', k,k')}\ee
\be \epsilon(\bA, \sigma, p,p', k,k') =\begin{cases}
1&\mbox{if $A^{(\sigma(p))}_k < A^{(\sigma(p'))}_{k'} $;}\\
0 &\mbox{otherwise.}
\end{cases}\ee
Denoting each $\bA \in \Trt{(2,2,2)}{\rran{6}}$ as the tableau
$$\Yboxdim{22pt}
\young(<A^{(1)}_1><A^{(2)}_1><A^{(3)}_1>,<A^{(1)}_2><A^{(2)}_2><A^{(3)}_2)\,,$$
one easily checks that, actually,
\be \Trt{(2,2,2)}{\rran{6}} = \Trt{(2,2,2)}{\rran{6}}\left (\left\{2,4,6\right \}\right) \sqcup \Trt{(2,2,2)}{\rran{6}}\left (\left\{2,5,6\right \}\right)\sqcup \Trt{(2,2,2)}{\rran{6}}\left (\left\{3,4,6\right \}\right) \sqcup\Trt{(2,2,2)}{\rran{6}}\left (\left\{3,5,6\right \}\right) \sqcup\Trt{(2,2,2)}{\rran{6}}\left (\left\{4,5,6\right \}\right)\, ,\nn
\ee
with
$$\Trt{(2,2,2)}{\rran{6}}\left (\left\{2,4,6\right \}\right) = \left \{
\begin{tabular}{|c|c|c|}
\hline
1&3&5\\
\hline
2&4&6\\
\hline
\end{tabular}
\right \}\,,$$
$$ \Trt{(2,2,2)}{\rran{6}}\left (\left\{2,5,6\right \}\right) = \left \{
\begin{tabular}{|c|c|c|}
\hline
1&3&4\\
\hline
2&6&5\\
\hline
\end{tabular}\,,
\begin{tabular}{|c|c|c|}
\hline
1&3&4\\
\hline
2&5&6\\
\hline
\end{tabular}
\right \}\,,$$
$$\Trt{(2,2,2)}{\rran{6}}\left (\left\{3,4,6\right \}\right)=\left \{
\begin{tabular}{|c|c|c|}
\hline
1&2&5\\
\hline
4&3&6\\
\hline
\end{tabular}\,,
\begin{tabular}{|c|c|c|}
\hline
1&2&5\\
\hline
3&4&6\\
\hline
\end{tabular}
\right \}\,,$$
$$\Trt{(2,2,2)}{\rran{6}}\left (\left\{3,5,6\right \}\right)=\left \{
\begin{tabular}{|c|c|c|}
\hline
1&2&4\\
\hline
6&3&5\\
\hline
\end{tabular}\,,
\begin{tabular}{|c|c|c|}
\hline
1&2&4\\
\hline
3&6&5\\
\hline
\end{tabular}\,,
\begin{tabular}{|c|c|c|}
\hline
1&2&4\\
\hline
5&3&6\\
\hline
\end{tabular}\,,
\begin{tabular}{|c|c|c|}
\hline
1&2&4\\
\hline
3&5&6\\
\hline
\end{tabular}
\right \}\,,$$
$$\Trt{(2,2,2)}{\rran{6}}\left (\left\{4,5,6\right \}\right)=\left\{
\begin{tabular}{|c|c|c|}
\hline
1&2&3\\
\hline
6&5&4\\
\hline
\end{tabular}\,,
\begin{tabular}{|c|c|c|}
\hline
1&2&3\\
\hline
5&6&4\\
\hline
\end{tabular}\,,
\begin{tabular}{|c|c|c|}
\hline
1&2&3\\
\hline
6&4&5\\
\hline
\end{tabular}\,,
\begin{tabular}{|c|c|c|}
\hline
1&2&3\\
\hline
4&6&5\\
\hline
\end{tabular}\,,
\begin{tabular}{|c|c|c|}
\hline
1&2&3\\
\hline
5&4&6\\
\hline
\end{tabular}\,,
\begin{tabular}{|c|c|c|}
\hline
1&2&3\\
\hline
4&5&6\\
\hline
\end{tabular}
\right\}\,.$$
A tedious but straightforward calculation -- see appendix for useful identities -- shows that, e.g.
\bea 
c_{\,\,\Yboxdim{10pt}\young(124,635)} &=& (q^2-q^{-2})^2 (1+q^2)(1-q^2)z_3^3 \left [q^2\delta \left (\frac{z_3q^2}{z_6} \right )\delta\left(\frac{z_6}{z_5}\right ) - \delta\left(\frac{z_3}{z_6}\right ) \delta \left (\frac{z_6q^2}{z_5} \right )   \right ] \nn\\
&& +q^{-2}(q^2-q^{-2}) (1+q^2)^2(1-q^2)^6 H_1(z_3/z_5)\left[z_5 \delta \left (\frac{z_6}{z_5} \right ) - z_3 \delta \left (\frac{z_3}{z_6} \right ) \right ]\nn
\eea
\bea c_{\,\,\Yboxdim{10pt}\young(124,365)} &=& (q^2-q^{-2})^2 (1+q^2)(1-q^2)z_3^3  \delta\left(\frac{z_3}{z_6}\right ) \delta \left (\frac{z_6q^2}{z_5} \right ) \nn\\
&&+q^{-2}(q^2-q^{-2}) (1+q^2)^2(1-q^2)^6 H_1(z_3/z_5)z_3 \delta \left (\frac{z_3}{z_6} \right )  \nn\\
&&+q^{-2}(q^2-q^{-2}) (1+q^2)^2(1-q^2)^6 H_2(z_3/z_5)z_5 \delta \left (\frac{z_6}{z_5} \right )  \nn
\eea
\bea c_{\,\,\Yboxdim{10pt}\young(124,536)} &=& - q^2(q^2-q^{-2})^2(1+q^2)(1-q^2)z_3^3\delta\left(\frac{z_5}{z_6}\right ) \delta\left (\frac{z_3q^2}{z_5}\right )\nn\\
&&-q^{-2}(q^2-q^{-2})(1+q^2)(1-q^2)^6 H_1(z_3/z_5) z_5 \delta \left(\frac{z_5}{z_6}\right ) \nn\\
&&-q^{-2}(q^2-q^{-2})(1+q^2)(1-q^2)^6 H_2(z_3/z_6) z_3 \delta \left(\frac{z_3}{z_5}\right ) \nn
\eea
\be c_{\,\,\Yboxdim{10pt}\young(124,356)} = q^{-2}(q^2-q^{-2})(1+q^2)(1-q^2)^6 H_2(z_3/z_6)\left[z_3 \delta \left(\frac{z_3}{z_5}\right )- z_6 \delta\left(\frac{z_5}{z_6}\right ) \right ]\,.\nn\ee
where we have set
$$H_1(z_3/z_5) = \left(\frac{z_3^2z_5^2(z_3+z_5)^3}{(z_3q^2-z_5)(q^4z_3-z_5)(z_3-q^2z_5)^3}\right )_{|z_5|\gg|z_3|} \,,$$
$$H_2(z_3/z_5)= \left(\frac{z_3^2z_5^2(z_3+z_5)^3}{(z_3q^4-z_5)(z_3-q^2z_5)^4}\right )_{|z_5|\gg|z_3|}\,.$$
It easily follows that
\be \sum_{\bA\in \Trt{(2,2,2)}{\rran{6}}(\left\{3,5,6\right\})} \prod_{p=1}^3 \delta\left (\frac{z_{A^{(p)}_1}q^2}{z_{A^{(p)}_2}}\right ) c_\bA =0\,.\ee
Similar calculations show that, eventually, for every $\bn \subset \rran{6}$ such that $\card \bn =3$ and $\Trt{(2,2,2)}{\rran{6}}(\textbf n)\neq \emptyset$, we have
\be \sum_{\bA\in \Trt{(2,2,2)}{\rran{6}}(\textbf n)} \prod_{p=1}^3 \delta\left (\frac{z_{A^{(p)}_1}q^2}{z_{A^{(p)}_2}}\right ) c_\bA =0\,,\ee
thus proving the result.
\end{proof}

\appendix
\section{Formal distributions}
\subsection{Definitions and main properties}
Let $\K$ be a field of characteristic $0$. For any $\K$-vector space $V$, we let $V[z,z^{-1}]$ denote the ring of $V$-valued Laurent polynomials. Writing
$$v(z) = \sum_{n\in\Z} v_n z^n\,,$$
where the sum runs over finitely many terms, for any $v(z)\in V[z,z^{-1}]$, we can define
$$\supp (v(z)) = \left\{n\in \Z : v_n\neq 0\right \}\,,$$
and set
$$V_n[z,z^{-1}]:= \left\{v(z)\in V[z,z^{-1}] : \supp(v(z)) \subseteq \range{-n}{n}\right\}\,.$$
It is clear that, for every $n\in\N$, $V_n[z,z^{-1}] \cong V^{2n+1}$ as $\K$-vector spaces. Now, if in addition $V$ is a topological vector space with topology $\tau_1$, making use of that isomorphism, we can endow $V_n[z,z^{-1}]$ with the box topology of $V^{2n+1}$, for every $n\in\N$. Denote by $\tau_n$ that topology. 

The obvious inclusions $V_n[z,z^{-1}] \hookrightarrow V_{n+1}[z,z^{-1}]$ are clearly continuous and we define a topology $\tau$ on $V[z,z^{-1}]$ as the inductive limit
$$\tau:= \lim_{\longrightarrow} \tau_n\,.$$
We now assume that $\K$ is a topological field.
\begin{defn}
The space $V[[z,z^{-1}]]$ of $V$-valued \emph{formal distributions} is the $\K$-vector space of continuous $V$-valued linear functions over the ring of $\K$-valued Laurent polynomials $\K[z,z^{-1}]$, the latter being endowed with the final topology induced as above from the topology of $\K$.
\end{defn}
\begin{prop}
Any $V$-valued formal distribution $v(z)\in V[[z,z^{-1}]]$ reads
$$v(z) = \sum_{n\in\Z} v_n z^{n}\,,$$
for some $(v_n)_{n\in\Z} \in V^\Z$ and the action of $v(z)$ on any Laurent polynomial $f(z)\in \K[z,z^{-1}]$ is given by
$$\left \langle v(z), f(z) \right\rangle  = \res_z\left (v(z) f(z)z^{-1}\right )\,,$$
where we let
$$\res_z a(z) = \res_z \left (\sum_{n\in\Z} a_n z^n \right) = a_{-1}\,,$$
for any $a(z)\in V[[z,z^{-1}]]$. $V[[z,z^{-1}]]$ is given the weak $*$-topology. It is actually a module over the ring $\K[z,z^{-1}]$ of $\K$-valued Laurent polynomials.
\end{prop}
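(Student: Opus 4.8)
The plan is simply to unwind the definitions. A $V$-valued formal distribution is, by definition, a continuous $\K$-linear map $\phi\colon\K[z,z^{-1}]\to V$, where $\K[z,z^{-1}]$ carries the inductive-limit topology $\tau=\varinjlim_n\tau_n$. First I would note that the monomials $\{z^n:n\in\Z\}$ form a $\K$-basis of $\K[z,z^{-1}]$, so $\phi$ is completely determined by the values $\phi(z^n)\in V$; setting $v_m:=\phi(z^{-m})$ for every $m\in\Z$ produces a family $(v_m)_{m\in\Z}\in V^{\Z}$, and conversely any such family defines a $\K$-linear functional. Continuity is automatic here: by the universal property of $\varinjlim_n\tau_n$ a linear map is $\tau$-continuous iff its restriction to each $\K_n[z,z^{-1}]\cong\K^{2n+1}$ is continuous, and a linear map from a finite power of the topological field $\K$ into a topological vector space is continuous because scalar multiplication is. Hence the space of formal distributions is nothing but $\Hom_\K(\K[z,z^{-1}],V)\cong V^{\Z}$, and attaching to $\phi$ the doubly infinite formal sum $v(z):=\sum_{m\in\Z}v_mz^m$ realizes it as the symbol asserted in the statement.

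Next I would verify the residue formula. For a single monomial $f(z)=z^n$ one has $v(z)f(z)z^{-1}=\sum_{m\in\Z}v_mz^{m+n-1}$, whose coefficient of $z^{-1}$ is $v_{-n}=\phi(z^n)=\langle v(z),z^n\rangle$. Since a general Laurent polynomial $f(z)=\sum_n f_nz^n$ is a \emph{finite} $\K$-linear combination of monomials and both sides are $\K$-linear in $f$, it follows that $\langle v(z),f(z)\rangle=\sum_n f_nv_{-n}=\res_z\bigl(v(z)f(z)z^{-1}\bigr)$. This also clarifies what the weak $*$-topology on $V[[z,z^{-1}]]$ is: by definition it is the coarsest topology making all the evaluation maps $v(z)\mapsto\langle v(z),f(z)\rangle$, $f\in\K[z,z^{-1}]$, continuous, so there is nothing to prove beyond recording this convention.

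Finally, for the module structure I would set, for $g(z)\in\K[z,z^{-1}]$ and $v(z)\in V[[z,z^{-1}]]$,
$$\bigl(g(z)\cdot v(z)\bigr)\bigl(f(z)\bigr):=v(z)\bigl(g(z)f(z)\bigr),$$
which is well defined and $\K$-linear since $g(z)f(z)$ is again a Laurent polynomial, hence a formal distribution; writing $g(z)=\sum_k g_kz^k$ (a finite sum) one reads off $g(z)\cdot v(z)=\sum_\ell\bigl(\sum_k g_kv_{\ell-k}\bigr)z^\ell$ with all inner sums finite, i.e. the usual Laurent-polynomial-times-formal-distribution product, and associativity and unitality are inherited from those of $\K[z,z^{-1}]$. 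I do not expect any genuine obstacle here: the argument is bookkeeping, the one mild subtlety being the identification of ``continuous'' with ``linear'' through the inductive-limit topology, and the one point worth emphasizing being that the finiteness of Laurent polynomials is precisely what makes the product above well defined — in contrast with a product of two arbitrary formal distributions, which would involve infinite sums of the $v_m$ and need not make sense.
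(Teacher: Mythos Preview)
Your proof is correct and follows essentially the same approach as the paper's: define $v_n=\langle v(z),z^{-n}\rangle$ and verify the residue formula on monomials. The paper's own proof is considerably terser, omitting the continuity discussion, the weak $*$-topology remark, and the module structure verification that you spell out; your treatment is more complete but not materially different.
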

\begin{proof}
It is clear that, due to its linearity, any $v(z)\in V[[z,z^{-1}]]$ is entirely characterized by the data, for every $n\in\N$, of
$$v_n = \left\langle v(z), z^{-n}\right\rangle \in V\,.$$
Now, writing $v(z)=\sum_{n\in\Z} v_nz^n$, we also have
$$v_n = \left\langle v(z), z^{-n}\right\rangle = \res_z v(z)z^{-n-1}\,,$$
for every $n\in\N$ and the claim follows.\end{proof}
Let $A$ be a topological $\K$-algebra. Then $A[[z,z^{-1}]]$ is the space of $A$-valued formal distributions, i.e. of $A$-valued linear functions over $A[z,z^{-1}]$. In that case, the action of $a(z)\in A[[z,z^{-1}]]$ on $b(z)\in A[z,z^{-1}]$ is given by
\be\left\langle a(z), b(z) \right\rangle = \res_z a(z)b(z)z^{-1}\,.\ee
Clearly, $A[[z,z^{-1}]]$ is a module over the ring $A[z,z^{-1}]$ of $A$-valued Laurent polynomials. It is generally impossible to consistently extend that structure into a full-fledged product over $A[[z,z^{-1}]]$. However, since $A$ is a topological algebra, we can set
$$a(z)b(z) =  \sum_{p\in\Z}  \left (\sum_{m\in\Z} a_m b_{p-m}\right ) z^p\,,$$
whenever the series
$$\sum_{m\in\Z} a_mb_{p-m}$$
is convergent for every $p\in \Z$. If $A$ is complete as a topological algebra, it suffices that the above series be Cauchy.

We let similarly $V[[z_1,z_1^{-1}, \dots, z_n, z_n^{-1}]]$ denote the space of $V$-valued formal distributions in $n\in \N$ variables, so that any $V$-valued formal distribution $v(z_1, \dots, z_n)$ in $n$ variable reads 
$$v(z_1, \dots, z_n) = \sum_{p_1, \dots, p_n\in\Z} v_{p_1, \dots, p_n} z_1^{p_1} \cdots z_n^{p_n}\,,$$
for some $(v_{p_1\dots, p_n})_{p_1, \dots, p_n \in\Z} \in V^{\Z^n}$. For every $i=1,\dots, n$, we define 
$$\res_{z_i} : V[[z_1,z_1^{-1}, \dots z_n, z_n^{-1}]] \to V[[z_1,z_1^{-1}, \dots, \widehat{z_i}, \widehat{z_i^{-1}} \dots, z_n, z_n^{-1}]]\,,$$
where a hat over a variable indicates omission of that variable, by setting
$$\res_{z_i} v(z_1, \dots, z_n) = \res_{z_i} \sum_{p_1, \dots, p_n\in\Z} v_{p_1, \dots, p_n} z_1^{p_1} \cdots z_n^{p_n} = \sum_{p_1, \dots, \hat{p}_i, \dots, p_n \in\Z} v_{p_1,\dots, p_{i-1},-1,p_{i+1},  \dots, p_n} z_1^{p_1} \cdots \widehat{z_i^{-1}} \cdots z_n^{p_n}\,.$$
For every $i=1,\dots, n$, we define $\partial_i:V[[z_1, z_1^{-1}, \dots, z_n,z_n^{-1}]] \to V[[z_1, z_1^{-1}, \dots, z_n,z_n^{-1}]]$ by setting
$$\partial_i v(z_1, \dots, z_n) = \sum_{p_1, \dots, p_n\in\Z} p_i v_{p_1, \dots, p_n} z_1^{p_1} \cdots z_i^{p_i-1} \cdots z_n^{p_n}\,.$$

If $A$ is a topological $\K$-algebra, then the multiplication in $A$ naturally extends to bilinear maps $$A[[z_1, z_1^{-1},\dots, z_m, z_m^{-1}]] \times  A[[z_{m+1}, z_{m+1}^{-1},\dots, z_{m+n}, z_{m+n}^{-1}]] \to A[[z_1,z_1^{-1}, \dots, z_{m+n}, z_{m+n}^{-1}]]$$ by setting
$$a(z_1, \dots, z_{m})b(z_{m+1}, \dots, z_{m+n}) = \sum_{p_1,\dots, p_{m+n}\in\Z} a_{p_1, \dots, p_{m}} b_{p_{m+1}, \dots, p_{m+n}} z_1^{p_1} \cdots z_{m+n}^{p_{m+n}}\,.$$

Let $a(z_1,\dots, z_n) \in A[[z_1, z_1^{-1},\dots, z_n,z_n^{-1}]]$ be an $A$-valued formal distribution in $n$ variables. Since $A$ is a topological $\K$-agebra, we can define the \emph{localization} $a_{|z_{n-1}=z_n}(z_1, \dots, z_{n-1}) \in A[[z_1, z_1^{-1}, \dots, z_{n-1}, z_{n-1}^{-1}]]$ of $a(z_1,\dots, z_n)$ at $z_{n-1}=z_n$, by setting
$$a_{|z_{n-1}=z_n} (z_1,\dots, z_{n-1})=\sum_{p_1,\dots, p_{n-1}\in\Z} \left (\sum_{p\in\Z} a_{p_1, \dots,p_{n-2}, p, p_{n-1}-p}\right )  z_1^{p_1} \dots z_{n-1}^{p_{n-1}}   \, ,$$
whenever
$$\sum_{p\in\Z} a_{p_1, \dots,p_{n-2}, p, p_{n-1}-p}$$
is convergent. If $A$ is complete as a topological algebra, it suffices that the above series be Cauchy.

\subsection{Laurent expansion and the Dirac formal distribution}
One way to obtain formal power series is to take the Laurent expansion of some holomorphic function $f:\C\to\C$. We shall usually write $f(z)_{|z|\ll 1}$ to denote the Laurent expansion around $0$. Similarly, we shall denote by $f(z)_{|z|\gg 1}$ the Laurent expansion around $\infty$.

Let
$$\delta(z) = \sum_{n\in\Z} z^n\,.$$
\begin{lem}
\label{lem:deltap}
For every $n\in\N^\times$, we have
$$\left (\frac{1}{1-z}\right )^n_{|z|\ll 1} -\left (\frac{1}{1-z}\right )^n_{|z|\gg 1} = \frac{\delta^{(n-1)}(z)}{(n-1)!}\,.$$
\end{lem}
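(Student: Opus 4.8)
The final statement to prove is Lemma~\ref{lem:deltap}: for every $n\in\N^\times$,
$$\left(\frac{1}{1-z}\right)^n_{|z|\ll 1} - \left(\frac{1}{1-z}\right)^n_{|z|\gg 1} = \frac{\delta^{(n-1)}(z)}{(n-1)!}\,.$$

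The plan is to proceed by induction on $n$. For the base case $n=1$, I would compute both Laurent expansions directly: $\left(\frac{1}{1-z}\right)_{|z|\ll 1} = \sum_{m\geq 0} z^m$, whereas $\left(\frac{1}{1-z}\right)_{|z|\gg 1} = -\frac{1}{z}\cdot\frac{1}{1-z^{-1}} = -\sum_{m\geq 1} z^{-m} = -\sum_{m\leq -1} z^m$. Subtracting, the difference is $\sum_{m\in\Z} z^m = \delta(z)$, which matches $\delta^{(0)}(z)/0! = \delta(z)$. This establishes the base case.

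For the inductive step, I would assume the identity holds at level $n$ and apply the operator $\partial_z := \partial/\partial z$ (the formal derivative $\partial_i$ from the appendix, here in one variable) to both sides. On the left-hand side, differentiating the Laurent expansion of $(1-z)^{-n}$ commutes with taking the expansion: $\partial_z\left[(1-z)^{-n}\right]_{|z|\ll 1} = \left[\partial_z (1-z)^{-n}\right]_{|z|\ll 1} = n\left[(1-z)^{-(n+1)}\right]_{|z|\ll 1}$, and similarly for the expansion at infinity. Hence $\partial_z$ applied to the left-hand side at level $n$ yields $n$ times the left-hand side at level $n+1$. On the right-hand side, $\partial_z\left(\frac{\delta^{(n-1)}(z)}{(n-1)!}\right) = \frac{\delta^{(n)}(z)}{(n-1)!} = n\cdot\frac{\delta^{(n)}(z)}{n!}$. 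Dividing both sides by $n$ gives precisely the claimed identity at level $n+1$, completing the induction.

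The only point requiring a little care — and the closest thing to an obstacle — is justifying that formal differentiation commutes with Laurent expansion around $0$ and around $\infty$, i.e. that $\left[\partial_z f(z)\right]_{|z|\ll 1} = \partial_z\left[f(z)_{|z|\ll 1}\right]$ and likewise at $\infty$, when $f$ is the rational function $(1-z)^{-n}$. This is standard: both sides are coefficientwise determined, and term-by-term differentiation of an absolutely convergent Laurent series in its annulus of convergence agrees with the expansion of the derivative. One could alternatively avoid the induction entirely by expanding $(1-z)^{-n}$ directly using the generalized binomial coefficients, $\left[(1-z)^{-n}\right]_{|z|\ll 1} = \sum_{m\geq 0}\binom{m+n-1}{n-1}z^m$ and $\left[(1-z)^{-n}\right]_{|z|\gg 1} = (-1)^n\sum_{m\geq n}\binom{m-1}{n-1}z^{-m}$, then checking that the coefficient of $z^m$ in $\delta^{(n-1)}(z)/(n-1)!$, namely $\binom{m+n-1}{n-1}$ for all $m\in\Z$ (reading the binomial coefficient as a polynomial in $m$), matches both pieces. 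I would present the inductive argument as the cleaner of the two.
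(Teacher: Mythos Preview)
Your proof is correct and follows essentially the same approach as the paper: induction on $n$, with the base case $n=1$ checked directly and the inductive step obtained by differentiating both sides, using $\partial_z(1-z)^{-n}=n(1-z)^{-(n+1)}$. You simply supply more detail than the paper does.
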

\begin{proof}
It is straightforward to check that the result holds for $n=1$. Assuming it holds for some $n$, it follows, upon differentiation, that
$$n\left [\left (\frac{1}{1-z}\right )^{n+1}_{|z|\ll 1} -\left (\frac{1}{1-z}\right )^{n+1}_{|z|\gg 1} \right ]= \frac{\delta^{(n)}(z)}{(n-1)!}\,,$$
which completes te recursion.
\end{proof}

\begin{lem}
For any $n\in\N$ and any $A$-valued Laurent polynomial $f(z)\in A[z,z^{-1}]$, we have
$$f(z)\delta^{(n)}(z) = \sum_{p=0}^n (-1)^{n-p}{n\choose p} f^{(n-p)}(1) \delta^{(p)}(z)\,.$$
\end{lem}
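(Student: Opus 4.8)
The plan is to prove the identity $f(z)\delta^{(n)}(z) = \sum_{p=0}^n (-1)^{n-p}\binom{n}{p} f^{(n-p)}(1) \delta^{(p)}(z)$ by induction on $n$, starting from the well-known base case $f(z)\delta(z) = f(1)\delta(z)$, and using the Leibniz rule for $\partial_z$ acting on products of formal distributions (which is available since $\partial_z$ is defined coefficient-wise and the product $f(z)\delta^{(n)}(z)$ makes sense, $f$ being a Laurent polynomial). The hard part, such as it is, will be bookkeeping the binomial coefficients correctly after differentiating.

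First I would record the base case. For $n=0$, writing $f(z) = \sum_k f_k z^k$, one has $f(z)\delta(z) = \sum_{p\in\Z}\big(\sum_k f_k\big)z^p = f(1)\delta(z)$, since $z^k\delta(z) = \delta(z)$ for all $k\in\Z$; this is the $n=0$ instance of the claimed formula. Then, assuming the identity holds for some $n\geq 0$, I would apply $\partial_z$ to both sides. On the left, the Leibniz rule gives $\partial_z\big(f(z)\delta^{(n)}(z)\big) = f'(z)\delta^{(n)}(z) + f(z)\delta^{(n+1)}(z)$. On the right, $\partial_z$ simply raises each $\delta^{(p)}$ to $\delta^{(p+1)}$ while the coefficients $(-1)^{n-p}\binom{n}{p}f^{(n-p)}(1)$ are constants, so we get $\sum_{p=0}^n (-1)^{n-p}\binom{n}{p}f^{(n-p)}(1)\delta^{(p+1)}(z)$. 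Solving for $f(z)\delta^{(n+1)}(z)$:
\begin{equation*}
f(z)\delta^{(n+1)}(z) = \sum_{p=0}^n (-1)^{n-p}\binom{n}{p}f^{(n-p)}(1)\delta^{(p+1)}(z) - f'(z)\delta^{(n)}(z)\,.
\end{equation*}

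To finish, I would apply the induction hypothesis to the term $f'(z)\delta^{(n)}(z)$, which yields $\sum_{p=0}^n (-1)^{n-p}\binom{n}{p}(f')^{(n-p)}(1)\delta^{(p)}(z) = \sum_{p=0}^n (-1)^{n-p}\binom{n}{p}f^{(n+1-p)}(1)\delta^{(p)}(z)$. Substituting, reindexing the first sum by $p\mapsto p-1$, and collecting the coefficient of $\delta^{(p)}(z)$ for each $p\in\range{0}{n+1}$, one gets $(-1)^{n+1-p}\big[\binom{n}{p-1} + \binom{n}{p}\big]f^{(n+1-p)}(1)$, and Pascal's rule $\binom{n}{p-1}+\binom{n}{p} = \binom{n+1}{p}$ turns this into exactly $(-1)^{n+1-p}\binom{n+1}{p}f^{(n+1-p)}(1)$, which is the $n+1$ case of the formula. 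The boundary terms $p=0$ and $p=n+1$ check out automatically with the convention $\binom{n}{-1}=\binom{n}{n+1}=0$. This completes the induction. The only genuine subtlety worth a remark is that all manipulations are legitimate because $f$ is a Laurent \emph{polynomial}, so every product and every application of $\partial_z$ is a finite operation coefficient-wise and no convergence issue arises.
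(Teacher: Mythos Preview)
Your proof is correct and follows essentially the same approach as the paper: induction on $n$, differentiating both sides and using the Leibniz rule. The paper's proof is terser --- it writes down the differentiated identity $f'(z)\delta^{(n)}(z) + f(z)\delta^{(n+1)}(z) = \sum_{p=0}^n (-1)^{n-p}\binom{n}{p} f^{(n-p)}(1)\delta^{(p+1)}(z)$ and declares the recursion complete --- whereas you spell out the application of the induction hypothesis to $f'(z)\delta^{(n)}(z)$ and the Pascal's-rule bookkeeping, but the argument is the same.
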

\begin{proof}
The case $n=1$ is straightforward. Assuming the results holds for some $n\in\N$, we have, upon differentiation,
$$f'(z)\delta^{(n)}(z) + f(z) \delta^{(n+1)}(z) = \sum_{p=0}^n (-1)^{n-p}{n\choose p} f^{(n-p)}(1) \delta^{(p+1)}(z) \,,$$
which completes the recursion.
\end{proof}

\begin{exmp}
In particular, for any $A$-valued formal distribution $f(z_1, z_2)\in A[[z_1, z_1^{-1}, z_2, z_2^{-1}]]$ with a well-defined localization $f_{|z_1=z_2}(z_1)$ -- see previous subsection for a definition --, we have
$$f(z_1, z_2) \delta\left(\frac{z_1}{z_2}\right )  = f_{|z_1=z_2}(z_1) \delta\left(\frac{z_1}{z_2}\right )  \,,$$
\end{exmp}

Assuming that $\K$ is an algebraically closed field, we have
 \begin{lem}
 Let $P(z) \in \K[z]$ be a polynomial of degree $N$, with roots $\{\lambda_i : i\in \rran n\}$ and respective multiplicities $\left \{m_i:i\in\rran{n}\right \}$. If $a(z)\in\K[[z,z^{-1}]]$ is a $\K$-valued formal distribution, then
 $$P(z) a(z) = 0 \qquad \Leftrightarrow \qquad a(z) = \sum_{i=1}^n\sum_{p_i=0}^{m_i-1} \alpha_{i,p} \delta^{(p_i)} \left(\frac{z}{\lambda_i}\right ) \,,$$
 for some $\alpha_{i,p}\in \K$.
 \end{lem}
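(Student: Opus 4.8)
The statement is a multivariable, higher-multiplicity version of the classical fact that the only formal distributions annihilated by $(z-\lambda)$ are multiples of $\delta(z/\lambda)$. The natural strategy is a two-stage reduction: first handle a single linear factor with multiplicity (i.e. $P(z) = (z-\lambda)^m$), then combine the single-factor results over the distinct roots. Throughout I will work with the expansion $a(z) = \sum_{k\in\Z} a_k z^k$ and translate the differential operators $\del_z$ into the shift-and-scale operators on the coefficients, using the identities for $\del^{(n)}\delta$ and for $f(z)\delta^{(n)}(z)$ established in the two preceding lemmas of the Appendix.

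\textbf{Step 1: the single-root case.} First I would prove: if $(z-\lambda)^m a(z) = 0$ for a formal distribution $a(z)\in\K[[z,z^{-1}]]$ and $\lambda\in\K^\times$, then $a(z) = \sum_{p=0}^{m-1}\alpha_p\,\del^{(p)}(z/\lambda)$ for some $\alpha_p\in\K$. The ``$\Leftarrow$'' direction is immediate from the lemma computing $f(z)\del^{(n)}(z)$: indeed $(z-\lambda)^m$ is a polynomial vanishing to order $m$ at $z=\lambda$, so $(z-\lambda)^m\del^{(p)}(z/\lambda) = 0$ whenever $p\leq m-1$ (after the obvious rescaling $z\mapsto z/\lambda$, which reduces everything to $\lambda=1$). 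For ``$\Rightarrow$'', rescale to $\lambda=1$ and argue by induction on $m$. For $m=1$: writing $(z-1)a(z)=0$ coefficient-wise gives $a_{k-1} = a_k$ for all $k\in\Z$, so all coefficients are equal to a common value $\alpha_0$, whence $a(z) = \alpha_0\,\delta(z)$. For the inductive step, from $(z-1)^{m}a(z)=0$ we get $(z-1)\big[(z-1)^{m-1}a(z)\big]=0$, so by the $m=1$ case $(z-1)^{m-1}a(z) = \beta\,\delta(z)$ for some $\beta\in\K$. Now I would exhibit an explicit distribution $b(z)$ with $(z-1)^{m-1}b(z) = \delta(z)$ — e.g. a suitable multiple of $\del^{(m-1)}(z)$, read off from the lemma $f(z)\del^{(n)}(z) = \sum_{p}(-1)^{n-p}\binom{n}{p}f^{(n-p)}(1)\del^{(p)}(z)$ with $f(z) = (z-1)^{m-1}$, which collapses to the single surviving term $\del^{(m-1)}(z)$ up to a nonzero scalar. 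Then $a(z) - \beta' b(z)$ is annihilated by $(z-1)^{m-1}$ and the induction hypothesis finishes the step.

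\textbf{Step 2: assembling the roots.} Given the full factorization $P(z) = c\prod_{i=1}^n (z-\lambda_i)^{m_i}$ with the $\lambda_i$ distinct, suppose $P(z)a(z)=0$. For each $i$, set $Q_i(z) = \prod_{j\neq i}(z-\lambda_j)^{m_j}$, so that $(z-\lambda_i)^{m_i}\big[Q_i(z)a(z)\big] = 0$ (up to the nonzero constant $c$), hence by Step 1 $Q_i(z)a(z) = \sum_{p=0}^{m_i-1}\gamma_{i,p}\,\del^{(p)}(z/\lambda_i)$ for some $\gamma_{i,p}\in\K$. Since $\K$ is algebraically closed (hence in particular infinite) and the $\lambda_i$ are distinct, the polynomials $Q_1,\dots,Q_n$ have no common zero, so they are coprime in $\K[z]$; choose $u_i(z)\in\K[z]$ with $\sum_i u_i(z)Q_i(z) = 1$. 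Then $a(z) = \sum_i u_i(z)\big(Q_i(z)a(z)\big) = \sum_i\sum_{p=0}^{m_i-1}\gamma_{i,p}\,u_i(z)\,\del^{(p)}(z/\lambda_i)$, and applying once more the lemma $f(z)\del^{(n)}(z) = \sum_{p}(-1)^{n-p}\binom{n}{p}f^{(n-p)}(\lambda)\,\del^{(p)}(z/\lambda)$ (with appropriate rescaling) rewrites each $u_i(z)\del^{(p)}(z/\lambda_i)$ as a $\K$-linear combination of $\del^{(p')}(z/\lambda_i)$, $p'\leq p \leq m_i-1$. Collecting terms yields the asserted form $a(z) = \sum_{i=1}^n\sum_{p_i=0}^{m_i-1}\alpha_{i,p_i}\,\del^{(p_i)}(z/\lambda_i)$. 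The ``$\Leftarrow$'' direction in the general case is again immediate since $P$ vanishes to order at least $m_i$ at each $\lambda_i$.

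\textbf{Main obstacle.} The only genuinely substantive point is Step 1, the multiplicity induction: one must check that the explicit ``inverse'' distribution $b(z)$ (the multiple of $\del^{(m-1)}(z)$) really satisfies $(z-1)^{m-1}b(z) = \delta(z)$, which is a bookkeeping computation with the binomial identity in the $f(z)\del^{(n)}(z)$ lemma — all terms but one cancel because $(z-1)^{m-1}$ and its derivatives of order $<m-1$ vanish at $z=1$. Everything else (the $m=1$ base case, the coprimality/partial-fractions assembly in Step 2, the final re-expansion) is routine. It is worth noting that a slicker alternative for Step 1 is to argue directly on coefficients: $(z-1)^m a(z)=0$ means $\sum_{j=0}^m (-1)^{m-j}\binom{m}{j}a_{k-j}=0$ for all $k$, i.e. the bi-infinite sequence $(a_k)$ is annihilated by the $m$-th power of the difference operator $\Delta:(a_k)\mapsto(a_k - a_{k-1})$, hence $a_k$ is a polynomial in $k$ of degree $<m$; expanding that polynomial in the basis $\binom{k}{p}$ and matching with the coefficients of $\del^{(p)}(z)$ (which are, up to normalization, polynomials of degree $p$ in $k$) gives the result without any induction on $m$.
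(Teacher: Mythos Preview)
Your proposal is correct and follows essentially the same approach as the paper: the paper's proof says only that the ``if'' part follows from the preceding lemma on $f(z)\delta^{(n)}(z)$ and that the ``only if'' part is ``an easy recursion, after writing $P(z)=\prod_i(z-\lambda_i)^{m_i}$'', which is precisely your Step~1 induction on the multiplicity of a single linear factor together with an assembly over the distinct roots. Your B\'ezout/coprimality argument in Step~2 is a clean way to organize that assembly, and your alternative coefficient argument (observing that $(z-1)^m a(z)=0$ says the bi-infinite sequence $(a_k)$ is annihilated by the $m$-th power of the difference operator) is a nice shortcut; both are compatible with, and more explicit than, the paper's one-line sketch.
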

 \begin{proof}
 The if part is easily checked making use of the previous lemma. The only if part follows by an easy recursion, after writing that $P(z) =\prod_{i\in\rran n} (z-\lambda_i)^{m_i}$.
 \end{proof}
 \begin{lem}
  \label{lem:ratdelta}
 Let $P(z), Q(z) \in \K[z]$ be two coprime polynomials. Let $\{\lambda_i : i\in\rran n\}$ be the set of roots of $Q(z)$ and let $\{m_i : i\in\rran n\}$ be their respective multiplicities. Then, in $\K[[z,z^{-1}]]$,
 \be\label{eq:P/Qdelta}\left (\frac{P(z)}{Q(z)}\right )_{|z|\ll 1} - \left (\frac{P(z)}{Q(z)}\right )_{|z|\gg 1} =  \sum_{i=1}^n\sum_{p_i=0}^{m_i-1} \frac{(-1)^{p_i+1} \alpha_{i,p_i+1}}{(p_i)! \lambda_i^{p_i+1}} \delta^{(p_i)} \left(\frac{z}{\lambda_i}\right ) \,,\ee
 where, for every $i\in\rran{n}$ and every $p_i\in \rran{m_i}$, $\alpha_{i,p_i}$ is obtained from the partial fraction decomposition
 \be\label{eq:partfracdec}\frac{P(z)}{Q(z)} = A(z)+ \sum_{i=1}^n\sum_{p_i=1}^{m_i} \frac{\alpha_{i,p_i}}{(z-\lambda_i)^{p_i}} \,,\ee
 in which $A(z)\in\K[z]$ is a polynomial of degree $\deg(P) - \deg(Q)$.
 \end{lem}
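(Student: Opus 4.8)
The plan is to reduce the claimed identity to the partial fraction decomposition (\ref{eq:partfracdec}) together with the single elementary computation already carried out in Lemma \ref{lem:deltap}. First I would note that the polynomial part $A(z)\in\K[z]$ of the decomposition (\ref{eq:partfracdec}) is a genuine Laurent polynomial, hence coincides with its own Laurent expansion both near $0$ and near $\infty$; consequently it drops out of the difference $\left(P/Q\right)_{|z|\ll 1}-\left(P/Q\right)_{|z|\gg 1}$. Since the assignment of a rational function (with a pole at a prescribed point $\neq 0,\infty$) to its Laurent expansion near $0$, respectively near $\infty$, is $\K$-linear, it suffices to prove, for each $\lambda\in\K^{\times}$ and each $p\in\N^{\times}$, the single-pole identity
\be\label{eq:plansingle}
\left(\frac{1}{(z-\lambda)^{p}}\right)_{|z|\ll 1} - \left(\frac{1}{(z-\lambda)^{p}}\right)_{|z|\gg 1} = \frac{(-1)^{p}}{\lambda^{p}\,(p-1)!}\,\delta^{(p-1)}\!\left(\frac{z}{\lambda}\right)
\ee
in $\K[[z,z^{-1}]]$, and then to substitute it into (\ref{eq:partfracdec}) with $\lambda=\lambda_i$ and $p=p_i$.

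To establish (\ref{eq:plansingle}) I would write $z-\lambda=-\lambda\left(1-z/\lambda\right)$, so that $(z-\lambda)^{-p}=(-1)^{p}\lambda^{-p}\left(1-z/\lambda\right)^{-p}$, and then substitute the formal variable $z$ by $z/\lambda$ in the identity of Lemma \ref{lem:deltap}. Since $\lambda\in\K^{\times}$, this substitution is a $\K$-linear bijection of $\K[[z,z^{-1}]]$ onto itself which carries the Laurent expansion of a rational function near $0$ (resp. near $\infty$) to the corresponding expansion of the substituted function near $0$ (resp. near $\infty$); applying it to Lemma \ref{lem:deltap} therefore gives $\left(1-z/\lambda\right)^{-p}_{|z|\ll 1}-\left(1-z/\lambda\right)^{-p}_{|z|\gg 1}=\delta^{(p-1)}(z/\lambda)/(p-1)!$, and multiplying by $(-1)^{p}\lambda^{-p}$ yields (\ref{eq:plansingle}).

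Finally I would assemble the pieces: plugging (\ref{eq:plansingle}) into (\ref{eq:partfracdec}) produces
\be
\left(\frac{P(z)}{Q(z)}\right)_{|z|\ll 1} - \left(\frac{P(z)}{Q(z)}\right)_{|z|\gg 1} = \sum_{i=1}^{n}\sum_{p=1}^{m_{i}}\alpha_{i,p}\,\frac{(-1)^{p}}{\lambda_{i}^{p}\,(p-1)!}\,\delta^{(p-1)}\!\left(\frac{z}{\lambda_{i}}\right)\,,
\ee
and the stated formula (\ref{eq:P/Qdelta}) follows after the shift of summation index $p=p_{i}+1$, with $p_{i}$ ranging over $\range{0}{m_{i}-1}$. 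The whole argument is routine bookkeeping: the only step meriting an explicit sentence is the compatibility of the rescaling $z\mapsto z/\lambda$ with the separation into ``expansion near $0$'' and ``expansion near $\infty$'' -- this is precisely where one uses that the roots $\lambda_i$ are nonzero, and (implicitly) that $\K$ is algebraically closed so that the decomposition (\ref{eq:partfracdec}) exists over $\K$. I do not anticipate any genuine obstacle.
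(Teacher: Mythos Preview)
Your proposal is correct and follows essentially the same approach as the paper: use the partial fraction decomposition (\ref{eq:partfracdec}), observe that the polynomial part $A(z)$ cancels in the difference of expansions, and reduce each term $(z-\lambda_i)^{-p_i}$ to Lemma~\ref{lem:deltap} via the rescaling $z\mapsto z/\lambda_i$. The paper's proof is terser but proceeds along exactly these lines.
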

 \begin{proof}
 Given the partial fraction decomposition (\ref{eq:partfracdec}), we can write
 \bea\left (\frac{P(z)}{Q(z)}\right )_{|z|\ll 1} - \left (\frac{P(z)}{Q(z)}\right )_{|z|\gg 1}  &=&  \sum_{i=1}^n\sum_{p_i=1}^{m_i} \alpha_{i,p_i}\left[\left( \frac{1}{(z-\lambda_i)^{p_i}}\right )_{|z|\ll 1} -\left( \frac{1}{(z-\lambda_i)^{p_i}}\right )_{|z|\gg 1}\right]\nn\\
&=& \sum_{i=1}^n\sum_{p_i=1}^{m_i} \frac{(-1)^{p_i} \alpha_{i,p_i}}{(p_i-1)! \lambda_i^{p_i}} \delta^{(p_i-1)}\left (\frac{z}{\lambda_i}\right )\nn
\eea
where we have used lemma \ref{lem:deltap} to derive the last equality. The claim obviously follows.
 \end{proof}

\subsection{The structure power series $G_{ij}^\pm(z)$} 
\label{sec:structfunc}
In this last subsection, we derive identities involving the structure power series $G_{ij}^\pm(z)$ by applying lemma \ref{lem:ratdelta}. Remember -- see remark \ref{rem:Gij} -- that in type $\dot{\mathfrak a}_1$, we have $G_{10}^\pm(z) = G_{11}^\mp(z)$.
\begin{prop}
The following hold true in $\F[[z, z^{-1}]]$.
\begin{enumerate}
\item[i.]  For every $p\in \Z-\{2\}$,
\be\label{eq:GG+GG-} \frac{G_{10}^+(zq^p) G_{11}^+(zq^{-p}) - G_{10}^-(z^{-1}q^{-p}) G_{11}^-(z^{-1}q^{p})}{q-q^{-1}} = \frac{[2]_q [p]_q}{[p-2]_q}\left [  \delta\left (zq^{2-p}\right )  -\delta\left (z q^{p-2}\right ) \right ] \, .\ee
In particular, when $p=1$, we have
\be \frac{G_{10}^+(zq) G_{11}^+(zq^{-1}) - G_{10}^-(z^{-1}q^{-1}) G_{11}^-(z^{-1}q)}{q-q^{-1}} = [2]_q \left [ \delta\left (zq^{-1} \right ) -  \delta\left (zq\right ) \right ] \, .\ee
 If $p=2$, we have instead
\be\frac{G_{10}^+(zq^2)G_{11}^+(zq^{-2}) - G_{10}^-(z^{-1}q^{-2}) G_{11}^-(z^{-1}q^2) }{(q-q^{-1})^2} = [2]_q^2 \left [\delta\left (z\right )-\delta'\left (z\right ) \right ]\,.\ee
\item[ii.] Similarly,
\be
\frac{G_{11}^+(zq^{-2})^2 - G_{11}^-(z^{-1}q^2)^2}{(q-q^{-1})^2} = \frac{2q^{-2}[2]_q}{q-q^{-1}} \delta\left (z\right ) + [2]_q^2 \delta'\left (z\right )\,.
\ee
\end{enumerate}
\end{prop}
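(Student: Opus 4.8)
The plan is to recognise each side of the two stated identities as the difference between the Laurent expansions at $0$ and at $\infty$ of a single rational function, and then to read off the right–hand side from Lemma~\ref{lem:ratdelta}.

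For part~i.\ with $p\neq 2$, I would first use $G_{10}^\pm(z)=G_{11}^\mp(z)$ (Remark~\ref{rem:Gij}) together with the rational presentation $G_{ij}^\pm(z_1/z_2)=\left(\frac{z_1q^{\mp c_{ij}}-z_2}{z_1-q^{\mp c_{ij}}z_2}\right)_{|z_2|\gg|z_1|}$ to rewrite $G_{10}^+(zq^p)\,G_{11}^+(zq^{-p})$ as the power–series expansion at $z=0$ of
$$R_p(z):=\frac{(z-q^{p+2})(z-q^{-p-2})}{(z-q^{p-2})(z-q^{2-p})}\,,$$
which amounts to clearing denominators and factoring the resulting quadratics, whose roots are monomials in $q$. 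The same bookkeeping shows that $G_{10}^-(z^{-1}q^{-p})\,G_{11}^-(z^{-1}q^{p})$ is the expansion of the \emph{same} $R_p(z)$ at $z=\infty$; the only thing to watch is that the first product lies in $\F[[z]]$ while the second lies in $\F[[z^{-1}]]$. Hence the left–hand side of part~i.\ is $\tfrac{1}{q-q^{-1}}\bigl((R_p)_{|z|\ll1}-(R_p)_{|z|\gg1}\bigr)$. For $p\notin\{0,2\}$ the numerator and denominator of $R_p$ are coprime and $R_p$ has simple poles at $z=q^{p-2}$ and $z=q^{2-p}$; working over $\F$ (all poles are already in $\F$), the partial fraction decomposition reads
$$R_p(z)=1-\frac{q^{p-2}(q-q^{-1})[2]_q[p]_q}{[p-2]_q}\cdot\frac{1}{z-q^{p-2}}+\frac{q^{2-p}(q-q^{-1})[2]_q[p]_q}{[p-2]_q}\cdot\frac{1}{z-q^{2-p}}\,,$$
which follows from elementary manipulations reducing differences $q^a-q^b$ to ($q$-integer)$\times(q-q^{-1})$ times a power of $q$. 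Lemma~\ref{lem:ratdelta} then gives precisely $\frac{[2]_q[p]_q}{[p-2]_q}\bigl[\delta(zq^{2-p})-\delta(zq^{p-2})\bigr]$ after dividing by $q-q^{-1}$. The case $p=0$ is trivial since $R_0\equiv 1$ and $[0]_q=0$, and the displayed $p=1$ specialisation follows by substituting $[1]_q=1$, $[-1]_q=-1$.

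The remaining cases carry a double pole and are handled the same way. For $p=2$ the two poles of $R_2$ collide at $z=1$; one computes $R_2(z)=1-(q^2-q^{-2})^2\left(\frac{1}{z-1}+\frac{1}{(z-1)^2}\right)$, so the $m_i=2$ branch of Lemma~\ref{lem:ratdelta} yields $(q^2-q^{-2})^2\bigl[\delta(z)-\delta'(z)\bigr]$, equal to $[2]_q^2\bigl[\delta(z)-\delta'(z)\bigr]$ after dividing by $(q-q^{-1})^2$. For part~ii.\ one checks analogously that $G_{11}^+(zq^{-2})^2$ and $G_{11}^-(z^{-1}q^2)^2$ are respectively the expansions at $0$ and at $\infty$ of $R(z)=\left(\frac{zq^{-2}-q^2}{z-1}\right)^2$, with
$$R(z)=q^{-4}-2(1-q^{-4})\,\frac{1}{z-1}+(q^2-q^{-2})^2\,\frac{1}{(z-1)^2}\,,$$
and Lemma~\ref{lem:ratdelta} followed by division by $(q-q^{-1})^2$ produces $\frac{2q^{-2}[2]_q}{q-q^{-1}}\delta(z)+[2]_q^2\delta'(z)$, using $1-q^{-4}=q^{-2}(q-q^{-1})[2]_q$.

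The only genuinely careful points will be deciding which of the two structure–series products is the expansion around $0$ and which around $\infty$ (determined solely by whether the series lies in $\F[[z]]$ or $\F[[z^{-1}]]$), and the—otherwise routine—reduction of the partial fraction coefficients to multiples of the $q$-integers $[2]_q,[p]_q,[p-2]_q$. The one genuinely degenerate instance is $p=2$, where $[p-2]_q=0$ makes the generic formula meaningless and a double pole appears; it must be isolated from the main argument, but it requires no new idea once the $m_i=2$ branch of Lemma~\ref{lem:ratdelta} is invoked.
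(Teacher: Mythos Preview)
Your proposal is correct and follows exactly the approach of the paper: identify the left-hand side as the difference of the expansions at $0$ and at $\infty$ of a single rational function, compute its partial fraction decomposition, and apply Lemma~\ref{lem:ratdelta}. Your explicit identification of $R_p(z)$, the partial fraction coefficients, and the handling of the degenerate cases $p=0$ and $p=2$ (and part~ii.) are all accurate and simply flesh out what the paper leaves as a one-line instruction.
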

\begin{proof}
In each case, it suffices to determine the partial fraction decomposition of the l.h.s and to apply lemma \ref{lem:ratdelta} to get the desired result.
\end{proof}

\newcommand{\etalchar}[1]{$^{#1}$}
\providecommand{\bysame}{\leavevmode\hbox to3em{\hrulefill}\thinspace}
\providecommand{\MR}{\relax\ifhmode\unskip\space\fi MR }
\providecommand{\MRhref}[2]{%
  \href{http://www.ams.org/mathscinet-getitem?mr=#1}{#2}
}
\providecommand{\href}[2]{#2}

%

\begin{thebibliography}{AKM{\etalchar{+}}17}

\bibitem[AKM{\etalchar{+}}17]{Morozov}
H.~Awata, H.~Kanno, A.~Mironov, A.~Morozov, A.~Morozov, Y.~Ohkubo, and
  Y.~Zenkevich, \emph{{Generalized Knizhnik-Zamolodchikov equation for
  Ding-Iohara-Miki algebra}}, Phys. Rev. D \textbf{96} (2017), 026021.

\bibitem[Bec94]{Beck}
J.~Beck, \emph{{Braid Group Action and Quantum Affine Algebras}}, Commun. Math.
  Phys. \textbf{165} (1994), 555--568.

\bibitem[Che05]{Cherednik}
I.~Cherednik, \emph{{Double Affine Hecke Algebras}}, London Mathematical
  Society Lecture Note Series, Cambridge University Press, 2005.

\bibitem[Dam93]{Damiani}
I.~Damiani, \emph{{A basis of type PBW for the quantum algebra of
  $\widehat{\mathfrak{sl}}_2$}}, Journal of Algebra \textbf{161} (1993),
  291--310.

\bibitem[DK00]{DingKhoroshkin}
J.~Ding and S.~Khoroshkin, \emph{{Weyl group extension of quantized current
  agebras}}, Transformation groups \textbf{5} (2000), 35--59.

\bibitem[FFJ{\etalchar{+}}11]{feigin2011}
B.~Feigin, E.~Feigin, M.~Jimbo, T.~Miwa, and E.~Mukhin, \emph{Quantum
  continuous $\mathfrak{gl}_{\infty}$ : Semiinfinite construction of
  representations}, Kyoto J. Math. \textbf{51} (2011), no.~2, 337--364.

\bibitem[FJMM12]{feigin2012}
B.~Feigin, M.~Jimbo, T.~Miwa, and E.~Mukhin, \emph{Quantum toroidal
  $\mathfrak{gl}_{1}$ -algebra: Plane partitions}, Kyoto J. Math. \textbf{52}
  (2012), no.~3, 621--659.

\bibitem[FJMM15]{feigin2015}
\bysame, \emph{{Quantum toroidal $\mathfrak{gl}_1$ and Bethe ansatz}}, Journal
  of Physics A: Mathematical and Theoretical \textbf{48} (2015), no.~24,
  244001.

\bibitem[GKV95]{GKV}
V.~Ginzburg, M.~Kapranov, and E.~Vasserot, \emph{{Langlands reciprocity for
  algebraic surfaces}}, Mathelatical Research Letters \textbf{2} (1995),
  147--160.

\bibitem[GNW17]{Guay}
N.~Guay, H.~Nakajima, and C.~Wendlandt, \emph{{Coproduct for the Yangian of an
  affine Kac-Moody algebra}}, ArXiv e-prints (2017).

\bibitem[Her05]{Hernandez05}
D.~Hernandez, \emph{{Representations of quantum affinizations and fusion
  product}}, Transformation groups \textbf{10} (2005), no.~2, 163--200.

\bibitem[Her09]{Hernandez}
\bysame, \emph{{Quantum toroidal algebras and their representations}}, Selecta
  Mathematica \textbf{14} (2009), 701--725.

\bibitem[Jan96]{Jantzen}
J.~C. Jantzen, \emph{{Lectures on Quantum Groups}}, AMS, 1996.

\bibitem[Kac98]{Kac}
V.~Kac, \emph{{Vertex Algebras for beginners: Second Edition}}, University
  Lecture Series, vol.~10, American Mathematical Society, 1998.

\bibitem[Mac03]{Macdonald}
I.~G. Macdonald, \emph{{Affine Hecke Algebras and Orthogonal Polynomials}},
  Cambridge Tracts in Mathematics, Cambridge University Press, 2003.

\bibitem[Mik99]{Miki99}
K.~Miki, \emph{{Toroidal Braid Group Action and an Automorphism of Toroidal
  Algebra $U_q(sln+1,tor)$ $(n \geq 2)$}}, Lett. Math. Phys. \textbf{47}
  (1999), no.~2, 365--378.

\bibitem[Mik07]{Miki}
\bysame, \emph{{A $(q,\gamma)$-analog of the $W_{1+\infty}$ algebra}}, J. Math.
  Phys. \textbf{48} (2007), 123520.

\bibitem[Nak01]{NakajimaQaff}
H.~Nakajima, \emph{{Quiver varieties and finite dimensional representations of
  quantum affine algebras }}, J. Amer. Math. Soc. \textbf{14} ({2001}),
  145--238.

\bibitem[Neg13]{Negut}
A.~Negut, \emph{{Quantum toroidal and shuffle algebras, R-matrices and a
  conjecture of Kuzetsov}}, ArXiv e-prints (2013).

\bibitem[Sch12]{Schiffmann}
O.~Schiffmann, \emph{{Drinfeld realization of the elliptic Hall algebra}}, J.
  Algebr. Comb. \textbf{35} (2012), 237--262.

\bibitem[VV96]{Varagnolo}
M.~Varagnolo and E.~Vasserot, \emph{{Schur duality in the toroidal setting}},
  Comm. Math. Phys. \textbf{182} (1996), no.~2, 469--483.

\end{thebibliography}
%
\end{document}